\definecolor{gray}{rgb}{.75,.75,.75}
\newtheorem{theorem}{Theorem}[section]
\newtheorem*{theorem*}{Theorem}
\newtheorem{lemma}[theorem]{Lemma}
\newtheorem{proposition}[theorem]{Proposition}
\newtheorem*{proposition*}{Proposition}
\newtheorem{corollary}[theorem]{Corollary}
\newtheorem*{corollary*}{Corollary}
\newtheorem{conjecture}[theorem]{Conjecture}
\newtheorem*{conjecture*}{Conjecture}
\newtheorem{observation}[theorem]{Observation}
\newtheorem{question}[theorem]{Question}
\newtheorem*{question*}{Question}
\newtheorem*{main:finiteness_conjecture}{Question~\ref{ques:finiteness_conjecture}}
\newtheorem*{main:borel_thomp_pos_fin_props}{Theorem~\ref{thrm:borel_thomp_pos_fin_props}}
\newtheorem*{main:thomp_abels_Finfty}{Theorem~\ref{thrm:thomp_abels_Finfty}}
\newtheorem*{main:Vmock_Vloop_Floop_conj}{Conjecture~\ref{conj:Vmock_conj},~\ref{conj:Vloop_Floop_conj}}
\newtheorem*{main:borel_thomp_full_fin_props}{Theorem~\ref{thrm:borel_thomp_full_fin_props}}
\newtheorem*{main:thompson_construction}{Proposition~\ref{prop:product_of_simple}}
\newtheorem*{main:existence_thompson_mock_loop}{Theorem~\ref{thm:existence_thompson_mock},~\ref{thm:existence_thompson_loop}}
\theoremstyle{definition}
\newtheorem{definition}[theorem]{Definition}
\newtheorem{remark}[theorem]{Remark}
\newtheorem{example}[theorem]{Example}
\newcommand{\Z}{\mathbb{Z}}
\newcommand{\N}{\mathbb{N}}
\newcommand{\Nz}{\mathbb{N}_0}
\newcommand{\Nnz}{\mathbb{N}}
\newcommand{\Q}{\mathbb{Q}}
\newcommand{\R}{\mathbb{R}}
\newcommand{\calO}{\mathcal{O}}
\newcommand{\calP}{\mathcal{P}}
\newcommand{\calQ}{\mathcal{Q}}
\newcommand{\calG}{\mathcal{G}}
\newcommand{\Preposet}{\widetilde{\mathcal{P}}}
\newcommand{\Poset}{\mathcal{P}}
\newcommand{\symm}{S}
\newcommand{\mock}[1]{{#1}^{\text{mock}}}
\newcommand{\match}{\mathcal{M}}
\newcommand{\independence}{\operatorname{Ind}}
\DeclareMathOperator{\Stab}{Stab}
\DeclareMathOperator{\Hom}{Hom}
\DeclareMathOperator{\Symm}{Symm}
\DeclareMathOperator{\Diff}{Diff}
\DeclareMathOperator{\lk}{lk}
\newcommand{\st}{\overline{\operatorname{st}}}
\DeclareMathOperator{\dlk}{{\lk}{\downarrow}}
\DeclareMathOperator{\Tor}{Tor}
\DeclareMathOperator{\id}{id}
\DeclareMathOperator{\PGL}{PGL}
\newcommand{\abels}{\mathit{Ab}}
\newcommand{\LB}{\mathit{LB}}
\newcommand{\PLB}{\mathit{PLB}}
\DeclareMathOperator{\Aut}{Aut}
\DeclareMathOperator{\SAut}{\Sigma\!\Aut}
\DeclareMathOperator{\PSAut}{P\!\Sigma\!\Aut}
\newcommand{\F}{\text{F}}
\newcommand{\FP}{\text{FP}}
\newcommand{\PB}{\mathit{PB}}
\newcommand{\barB}{\bar{B}}
\newcommand{\Fmonoid}{\mathscr{F}}
\newcommand{\Hmonoid}{\mathscr{H}}
\newcommand{\forest}{E}
\newcommand{\altforest}{F}
\newcommand{\tree}{T}
\newcommand{\alttree}{U}
\newcommand{\clone}{\kappa}
\newcommand{\symmclone}{\varsigma}
\newcommand{\gen}[1]{\langle #1 \rangle}
\newcommand{\ceil}[1]{\lceil #1 \rceil}
\newcommand{\floor}[1]{\lfloor #1 \rfloor}
\newcommand{\abs}[1]{\lvert #1 \rvert}
\newcommand{\into}{\hookrightarrow}
\newcommand{\len}{\operatorname{len}}
\newcommand{\image}{\operatorname{im}}
\newcommand{\rk}{\operatorname{rk}}
\newcommand{\realize}[1]{\lvert{#1}\rvert}
\newcommand{\defeq}{\mathbin{\vcentcolon =}}
\newcommand{\eqdef}{\mathbin{=\vcentcolon}}
\newcommand{\OneEdge}{J}
\newcommand{\optionalarg}[2]{
\ifthenelse{\equal{#2}{}}{%
#1}{%
#1(#2)}
}
\newcommand{\Thomp}[1]%
   {\optionalarg{\mathscr{T}}{#1}}                 % Thompson functor
\newcommand{\Thomphat}[1]%
   {\optionalarg{\widehat{\mathscr{T}}}{#1}}                 % Thompson functor hat
\newcommand{\Thkern}[1]%
   {\optionalarg{\mathscr{K}}{#1}}                 % Thompson kernel
\newcommand{\Stein}[1]%
   {\optionalarg{\mathscr{X}}{#1}}                 % Stein ``functor''
\newcommand{\dlkmodel}[2]                          %descending link
   {\optionalarg{\mathscr{L}_{#2}}{#1}}
\newcommand{\Fbr}%
   {F_{\operatorname{br}}}                 % braided F
\newcommand{\Tbr}%
   {T_{\operatorname{br}}}                 % braided V
\newcommand{\Vbr}%
   {V_{\operatorname{br}}}                 % braided V
\newcommand{\Vmock}%
   {V_{\operatorname{mock}}}                 % mock V
\newcommand{\Vloop}%
   {V_{\operatorname{loop}}}                 % loop braided V
\newcommand{\Floop}%
   {F_{\operatorname{loop}}}                 % loop braided F
\newcommand{\Fhat}%
   {\widehat{F}}       % F with hat
\newcommand{\GlobalField}{k}
\newcommand{\CoefficientField}{\ell}
\begin{document}\parindent=0pt

\title[Thompson groups for systems and finiteness properties]{Thompson groups for systems of groups,\\and their finiteness properties}
\date{\today}
\subjclass[2010]{Primary 20F65;   % ggt;
                 Secondary 57M07, % top methods in gp thry
                 20G30}           % LAG's over global fields and their integers

\keywords{Thompson's group, finiteness properties, upper triangular matrix, mock reflection group, loop braid group}

\author[S.~Witzel]{Stefan Witzel}
 \address{Faculty of Mathematics, Bielefeld University, Postfach 100131, 33501 Bielefeld, Germany}
 \email{switzel@math.uni-bielefeld.de}

\author[M.~C.~B.~Zaremsky]{Matthew C.~B.~Zaremsky}
\address{Department of Mathematics and Statistics, University at Albany (SUNY), Albany, NY 12222, USA}
\email{mzaremsky@albany.edu}

\begin{abstract}
 We describe a procedure for constructing a generalized Thompson group out of a family of groups that is equipped with what we call a cloning system. The previously known Thompson groups $F$, $V$, $\Vbr$ and $\Fbr$ arise from this procedure using, respectively, the systems of trivial groups, symmetric groups, braid groups and pure braid groups.
 
 We give new examples of families of groups that admit a cloning system and study how the finiteness properties of the resulting generalized Thompson group depend on those of the original groups. The main new examples here include upper triangular matrix groups, mock reflection groups, and loop braid groups. For generalized Thompson groups of upper triangular matrix groups over rings of $S$-integers of global function fields, we develop new methods for (dis-)proving finiteness properties, and show that the finiteness length of the generalized Thompson group is exactly the limit inferior of the finiteness lengths of the groups in the family.
\end{abstract}

\maketitle
\thispagestyle{empty}

%--------------------------------------------------------------------
\section*{Introduction}\label{sec:intro}

In 1965 Richard Thompson introduced three groups that today are usually denoted $F$, $T$, and $V$. These have received a lot of recent attention for their interesting and often surprising properties. Most prominently, $T$ and $V$ are finitely presented, infinite, simple groups, and $F$ is torsion-free with infinite cohomological dimension and of type~$\F_\infty$.

Numerous generalizations of Thompson's groups have been introduced in the literature; see for example \cite{higman74, stein92, guba97, roever99, brin04, hughes09, martinez-perez13, belk15}. Most of these constructions either generalize the way in which branching can occur, or mimic the self-similarity in some way. Here we describe a more algebraic construction of Thompson-like groups, which combines the usual branching of the group $F$ with a chosen family of groups. The construction is based on Brin's description of the braided Thompson group $\Vbr$ \cite{brin07}, which utilizes the family of braid groups. Another example is the pure braided Thompson group $\Fbr$ introduced by Brady, Burillo, Cleary and Stein in \cite{brady08}, using the pure braid groups. Classical examples include $F$, using the trivial group, and $V$, using the symmetric groups.

The input to our construction is a directed system of groups $(G_n)_{n \in \N}$ together with a \emph{cloning system}, which essentially determines how a group element is moved past a split. A cloning system consists of morphisms $G_n \to \symm_n$ (where $\symm_n$ is the symmetric group on $n$ symbols), and \emph{cloning maps} $\clone^n_{k} \colon G_n \to G_{n+1}$, $1 \le k \le n$, subject to certain conditions (see Definition~\ref{def:filtered_cloning_system}). The output is a group $\Thomp{G_*}$:

\begin{main:thompson_construction}
 Let $(G_n)_{n \in \N}$ be an injective directed system of groups equipped with a cloning system. Then there is a generalized Thompson group $\Thomp{G_*}$ that contains all of the $G_n$.
\end{main:thompson_construction}

The groups $F$, $V$, $\Fbr$ and $\Vbr$ are all examples of groups of the form $\Thomp{G_*}$.

One of our main motivations for constructing these new Thompson-like groups is the analysis of their finiteness properties. Recall that a group $G$ is of \emph{type~$\F_n$} if there is a $K(G,1)$ with finite $n$-skeleton. For example, $\F_1$ means finitely generated and $\F_2$ means finitely presented. We are in particular interested in understanding how the finiteness properties of $\Thomp{G_*}$ depend on the finiteness properties of the groups $G_n$. Our main results are:

\begin{main:borel_thomp_full_fin_props}
 Let $\GlobalField$ be a global function field, let $S$ be a set of places of $\GlobalField$, and let $\calO_S$ be the ring of $S$-integers in $\GlobalField$. Let $B_n$ denote the algebraic group of invertible upper triangular $n$-by-$n$ matrices. There is a generalized Thompson group $\Thomp{B_*(\calO_S)}$ and it is of type~$\F_{\abs{S}-1}$ but not of type~$\F_{\abs{S}}$.
\end{main:borel_thomp_full_fin_props}

To put this into context it is important to know that the groups $B_n(\calO_S)$ are themselves of type~$\F_{\abs{S}-1}$ but not of type~$\F_{\abs{S}}$ by \cite{bux04}. In particular, for every $n \in \N$, we get an example of a generalized Thompson group of type~$\F_{n-1}$ but not of type~$\F_n$.

\begin{main:thomp_abels_Finfty}
 Let $\abels_n(\Z[1/p])$ be the $n$th Abels group (see Section~\ref{sec:matrix_groups}). There is a generalized Thompson group $\Thomp{\abels_*(\Z[1/p])}$ and it is of type~$\F_\infty$.
\end{main:thomp_abels_Finfty}

The groups $\abels_n(\Z[1/p])$ are known to be of type~$\F_{n-1}$ but not of type~$\F_n$ by \cite{abels87,brown87}. To be of type~$\F_\infty$ for a generalization of Thompson's groups is a relatively common phenomenon, but what is interesting about this example is that it organizes the groups $\abels_n(\Z[1/p])$, none of which is individually of type~$\F_\infty$, into a group of type~$\F_\infty$.

To formulate the above statements in a unified way, it is helpful to introduce the \emph{finiteness length} $\phi(G)$ of a group $G$, which is just the supremum over all $n$ for which $G$ is of type~$\F_n$. Now Theorems~\ref{thrm:borel_thomp_full_fin_props} and \ref{thrm:thomp_abels_Finfty} can be formulated to say that
\begin{equation}\label{eq:limit}
\phi(\Thomp{G_*}) = \liminf_n \phi(G_n)
\end{equation}
for the respective groups.

This relation is not coincidental but is suggested by the structure of the groups. In fact, we give a general construction which reduces proving the inequality $\ge$ for~\eqref{eq:limit} to showing that certain complexes $\dlkmodel{G_*}{n}$ are asymptotically highly connected. This construction is an abstraction of the well developed methods from \cite{brown92,stein92,brown06,farley03,fluch13,bux14} (which were all used to prove that the respective groups are of type~$\F_\infty$). For this reason, the proof of the inequality $\ge$ in Theorem~\ref{thrm:borel_thomp_full_fin_props} works without change for the groups $B_n(R)$ where $R$ is an arbitrary ring. This evidence leads us to ask:

\begin{main:finiteness_conjecture}
 For which generalized Thompson groups $\Thomp{G_*}$ does~\eqref{eq:limit} hold?
\end{main:finiteness_conjecture}

The group $\Thomp{G_*}$ may be thought of as a limit of the groups $G_n$, for example since it contains all of them. From this point of view, it is rather remarkable that~\eqref{eq:limit} holds in such generality. For example compare this to an ascending direct limit of groups with good finiteness properties, which will not even be finitely generated.

Another reason why~\eqref{eq:limit} is interesting is that it describes how finiteness properties of groups change when they are subject to a certain operation (here Thompsonifying). A different such operation is braiding: when $V$ is ``braided,'' we get $\Vbr$, and similarly $F$ yields $\Fbr$. The question of the finiteness properties of $\Fbr$ and $\Vbr$ was answered in~\cite{bux14}; they are still of type~$\F_\infty$, just like $F$ and $V$. When reinterpreting $F$, $V$, $\Fbr$ and $\Vbr$ as Thompsonifications (of the trivial group, the symmetric groups, the pure braid groups, and the braid groups, respectively), they provide more examples where~\eqref{eq:limit} holds: in all of these cases all the groups $G_n$ are of type~$\F_\infty$ and so are the corresponding Thompson groups. This is in some cases related to a similar program carried out in \cite{bartholdi15} for wreath products, see Remark~\ref{rmk:wreath}.

\medskip

In addition to the groups discussed so far, we also construct generalized Thompson groups for more families of groups. All of them are relatives of the family of symmetric groups in some way and it is very natural to put them into a generalized Thompson group. The first is a family of mock reflection groups that were studied by Davis, Januszkiewicz and Scott \cite{davis03}. The groups naturally arise as blowups of symmetric groups and we call them mock symmetric groups. Constructing a generalized Thompson group for the mock symmetric groups was suggested to us by Januszkiewicz. The second family consists of loop braid groups, which are a melding of symmetric groups and braid groups.

\begin{main:existence_thompson_mock_loop}
There exist generalized Thompson groups $\Vmock$, $\Vloop$ and $\Floop$ built from (and thus containing) all mock symmetric groups, all loop braid groups, and all pure loop braid groups. The groups $\Vmock$ and $\Vloop$ surject onto $V$ and $\Floop$ surjects onto $F$.
\end{main:existence_thompson_mock_loop}

We expect that all of these groups belong to the list of groups that answer Question~\ref{ques:finiteness_conjecture} positively, and thus:

\begin{main:Vmock_Vloop_Floop_conj}
 $\Vmock$, $\Vloop$ and $\Floop$ are of type~$\F_\infty$.
\end{main:Vmock_Vloop_Floop_conj}

\medskip

To investigate the finiteness properties of a generalized Thompson group $\Thomp{G_*}$ we let it act on a contractible cube complex $\Stein{G_*}$ which we call the \emph{Stein--Farley complex}. This space exists for arbitrary cloning systems and in many cases has been used previously. When the cloning system is \emph{properly graded} (Definition~\ref{def:properly_graded}), the action has certain desirable properties: the cell stabilizers are subgroups of the groups $G_n$ and there is a natural cocompact filtration. To show that the generalized Thompson group is of type~$\F_n$, assuming that all the $G_n$ are, (which gives one half of~\eqref{eq:limit}) thus amounts to showing that the descending links $\dlkmodel{G_*}{n}$ in this filtration are eventually $(n-1)$-connected. This is the only part of the proof that needs to be done for every properly graded cloning system individually and depends on the nature of the concrete example. This treats the positive case, which so far has been sufficient for most existing Thompson groups since they have been of type~$\F_\infty$.

For the negative finiteness properties we have to develop new methods. For example we give a condition on a group homomorphism $G \to H$ that ensures that if the morphism factors through a group $K$ then $K$ cannot be of type~$\FP_n$, see Theorem~\ref{thrm:relative_brown} (type~$\FP_n$ is a homological, and slightly weaker, version of type~$\F_n$). This is a similar idea to that of~\cite{krstic97} and may be of independent use. Unlike the proof that $\Thomp{B_*(\calO_S)}$ is of type~$\F_{\abs{S}-1}$, the proof that it is not of type~$\FP_{\abs{S}}$ borrows large parts from the proof in~\cite{bux04} of the same fact for $B_n(\calO_S)$. For example, the space for $\Thomp{B_*(\calO_S)}$ is built out of the space for $B_2(\calO_S)$ (which is a Bruhat--Tits tree).

\bigskip

The paper is organized as follows. In Section~\ref{sec:preliminaries} we recall some background on monoids and the Zappa--Sz\'ep product. In Section~\ref{sec:defining_data} we introduce cloning systems (Definition~\ref{def:filtered_cloning_system}) and explain how they give rise to generalized Thompson groups. Section~\ref{sec:basic_properties} collects some group theoretic consequences that follow directly from the construction. To study finiteness properties, the Stein--Farley complex is introduced in Section~\ref{sec:spaces}. The filtration and its descending links are described in Section~\ref{sec:finiteness_props}, and we discuss some background on Morse theory and other related techniques for proving high connectivity, including a new method in Section~\ref{sec:relative_brown}.

Up to this point everything is mostly generic. The following sections discuss examples. Section~\ref{sec:direct_prods} gives an elementary example where $G_n = H^n$ for some group $H$. Section~\ref{sec:matrix_groups} discusses cloning systems for groups of upper triangular matrices. In Section~\ref{sec:mtx_fin_props} we study their finiteness properties. The last two sections \ref{sec:mock} and \ref{sec:loop} introduce the groups $\Vmock$ and $\Vloop$ and $\Floop$.

\subsection*{Acknowledgments} We are grateful to Matt Brin and Kai-Uwe Bux for helpful discussions, to Tadeusz Januszkiewicz for proposing to us the group $\Vmock$, and to Werner Thumann and an anonymous referee for many helpful comments. Both authors were supported by the SFB~878 in M\"unster. The first author was also supported directly by the DFG through project WI~4079/2 and by the SFB~701 in Bielefeld. All of this support is gratefully acknowledged.

\numberwithin{equation}{section}

%-------------------------------------------
\setcounter{section}{-1}
\section{Motivation}\label{sec:motivation}

Starting with the first section we will spend some ten pages introducing notions and technical results from the theory of monoids. Before we dive into these preparations, we want to explain why they are precisely the ones needed to describe generalized Thompson groups. We illustrate this on the example of $\Vbr$.

\begin{figure}[t]
\centering
\begin{tikzpicture}[line width=1pt]
  \draw
   (0,-2) -- (1,-1) -- (2,-2)   (1,-2) -- (.5,-1.5);
  \draw
   (1,-2) to [out=-90, in=90] (0,-4);
  \draw[white, line width=4pt]
   (0,-2) to [out=-90, in=90] (2,-4);
  \draw
   (0,-2) to [out=-90, in=90] (2,-4);
  \draw[white, line width=4pt]
   (2,-2) to [out=-90, in=90] (1,-4);
  \draw
   (2,-2) to [out=-90, in=90] (1,-4);
  \draw
   (0,-4) -- (1,-5) -- (2,-4)  (1,-4) -- (1.5,-4.5);
   \draw[decoration={brace,amplitude = 5},decorate,line width = 1] (-.5,-1.9) -- (-.5,-1.1);
   \node[anchor=east] at (-.7,-1.5) {splits};
   \draw[decoration={brace,amplitude = 5},decorate,line width = 1] (-.5,-3.9) -- (-.5,-2.1);
   \node[anchor=east] at (-.7,-3) {group element};
   \draw[decoration={brace,amplitude = 5},decorate,line width = 1] (-.5,-4.9) -- (-.5,-4.1);
   \node[anchor=east] at (-.7,-4.5) {merges};
\end{tikzpicture}
\hspace{1cm}
\begin{tikzpicture}[line width=1pt]
  \begin{scope}[yshift=-2.5cm,scale=.75]
  \node at (-.25,.75) {b};
  \begin{scope}
  \draw
   (0,1) -- (.5,.5) -- (1,1)
   (.5,-.5) -- (.5,.5)
   (0,-1) -- (.5,-.5) -- (1,-1);
  \end{scope}
  \node at (1.75,0) {$=$};   
 \begin{scope}[xshift=2.75cm]
  \draw
   (0,1) -- (0,-1)
   (1,1) -- (1,-1);
 \end{scope}
  \end{scope}
  %----------------------------------------------------
 \begin{scope}[scale=.75]
  \node at (-.25,.75) {a};
  \draw
   (.5,-1) -- (.5,-.75)
   (.5,1) -- (.5,.75)
   (0,-.25) -- (0,.25)
   (1,-.25) -- (1,.25)
   (0,-.25) -- (.5,-.75) -- (1,-.25)
   (0,.25) -- (.5,.75) -- (1,.25);
  \node at (1.5,0) {$=$};
  \draw
   (2.25,1) -- (2.25,-1);
 \end{scope}
 %----------------------------------------------------
  \begin{scope}[xshift=3cm,scale=.75]
  \node at (-.5,.75) {c};
  \begin{scope}
  \draw
   (0,1) -- (0,.75)
   (1,1) -- (1,.75)
   (0,.75) -- (.5,.25) -- (1,.75)
   (.5,-1) -- (.5,.25)
   (1.5,-1) -- (1.5,-.75)
   (2.5,-1) -- (2.5,-.75)
   (1.5,-.75) -- (2,-.25) -- (2.5,-.75)
   (2,1) -- (2,-.25);
  \end{scope}
  \node at (3,0) {$=$};   
  \begin{scope}[xshift=3.5cm]
  \draw
   (0,1) -- (0,-.25)
   (1,1) -- (1,-.25)
   (0,-.25) -- (.5,-.75) -- (1,-.25)
   (.5,-1) -- (.5,-.75)
   (1.5,-1) -- (1.5,.25)
   (2.5,-1) -- (2.5,.25)
   (1.5,.25) -- (2,.75) -- (2.5,.25)
   (2,1) -- (2,.75);
  \end{scope}
 \end{scope}
 %----------------------------------------------------
 \begin{scope}[xshift=4cm, yshift=-2.5cm]
  \node at (-.25,.625) {d};
 \begin{scope}
  \draw
   (0.25,0.75) to [out=-90, in=90, looseness=1] (1.25,-0.5) -- (1.25,-0.75);
  \draw[white, line width=4pt]
   (1.25,0.75) to [out=-90, in=90, looseness=1] (0.25,-0.5);
  \draw
   (1.25,0.75) to [out=-90, in=90, looseness=1] (0.25,-0.5);
  \draw
   (0,-0.75) -- (0.25,-0.5) -- (0.5,-0.75);
 \end{scope}
 \node at (2,0) {$=$};
 \begin{scope}[xshift=2.25cm]
  \draw
   (0.25,0.75) -- (.25,.5) to [out=-90, in=90, looseness=1] (1.25,-0.75);
  \draw
   (0.75,0.25) -- (1,0.5) -- (1.25,0.25)   (1,0.5) -- (1,0.75);
  \draw[white, line width=4pt]
   (0.75,0.25) to [out=-90, in=90, looseness=1] (0.25,-0.75)
   (1.25,0.25) to [out=-90, in=90, looseness=1] (0.75,-0.75);
  \draw
   (0.75,0.25) to [out=-90, in=90, looseness=1] (0.25,-0.75)
   (1.25,0.25) to [out=-90, in=90, looseness=1] (0.75,-0.75);
 \end{scope}
 \end{scope}
 %----------------------------------------------------
% \begin{scope}[yshift=-2.5cm, xshift=4cm]
% \begin{scope}[yscale=-1]
%  \draw
%   (0.25,0.75) to [out=-90, in=90, looseness=1] (1.25,-0.5) -- (1.25,-0.75);
%  \draw[white, line width=4pt]
%   (1.25,0.75) to [out=-90, in=90, looseness=1] (0.25,-0.5);
%  \draw
%   (1.25,0.75) to [out=-90, in=90, looseness=1] (0.25,-0.5);
%  \draw
%   (0,-0.75) -- (0.25,-0.5) -- (0.5,-0.75);
% \end{scope}
%  \node at (2,0) {$=$}; 
% \begin{scope}[xshift=2.5cm, yscale=-1]
%  \draw
%   (0.25,0.75) to [out=-90, in=90, looseness=1] (1.25,-0.5) -- (1.25,-0.75);
%  \draw
%   (0.75,0.25) -- (1,0.5) -- (1.25,0.25)   (1,0.5) -- (1,0.75);
%  \draw[white, line width=4pt]
%   (0.75,0.25) to [out=-90, in=90, looseness=1] (0,-0.75)
%   (1.25,0.25) to [out=-90, in=90, looseness=1] (0.5,-0.75);
%  \draw
%   (0.75,0.25) to [out=-90, in=90, looseness=1] (0,-0.75)
%   (1.25,0.25) to [out=-90, in=90, looseness=1] (0.5,-0.75);
% \end{scope}
% \end{scope}
\end{tikzpicture}

\caption{On the left, an element of $\Vbr$ in its standard form consisting of splitting, braiding and merging. On the right, some relations: (a) splitting and then merging is trivial; (b) merging and then splitting is trivial; (c) splits and merges on different strands commute. The main relation, (d), which is encoded by the Zappa--Sz\'ep product, is how splits and group elements interact.}
\label{fig:vbr_element}
\end{figure}

We want to think of an element of a Thompson group as consisting of a tree of splittings, followed by a group element from a chosen group (a braid in the example), and finally an inverse tree of merges. An element of $\Vbr$ is illustrated in Figure~\ref{fig:vbr_element}. Two elements are multiplied by stacking them on top of each other and reducing, as in Figure~\ref{fig:reducing}. Among the relations available to reduce an element are the fact that splitting and then merging again is a trivial operation, as well as merging and then splitting (Figure~\ref{fig:vbr_element}(a),(b)). Another relation that is implicit in the pictures is that a group element followed by another group element is the same as the product. However, these relations are not typically sufficient to bring a diagram into the form that we want: splits, group element, merges. To move all the splits to the top (and all the merges to the bottom), we eventually will have to move a split $\lambda$ past a group element $g$. In Figure~\ref{fig:reducing} this point is reached in the third step. Expressed algebraically, we need to rewrite $g \lambda  = \lambda' g'$ for some group element $g'$ and some split $\lambda'$ (Figure~\ref{fig:vbr_element}(d)). The algebraic operation that defines how a split is moved past a group element is the Zappa--Sz\'ep product.

The trees of splittings will be elements of the forest monoid $\Fmonoid$. We will then form the Zappa-Sz\'ep product $\Fmonoid \bowtie G$ with the chosen group $G$. To also obtain merges, we will pass to the group of fractions --- a merge is just the inverse of a split. For technical reasons, we will have started with infinitely many strands and in a final step have to reduce to elements that start and end with one strand. With this outline in mind, we hope the reader will find the following technical pages more illuminating.

\begin{figure}[htb]
\centering
\begin{tikzpicture}[line width=1pt,scale=.5]
  \begin{scope}
  \node at (-.5,-1.5) {$1$};
  \draw
   (0,-2) -- (1,-1) -- (2,-2)   (1,-2) -- (1.5,-1.5);
  \draw
   (2,-2) to [out=-90, in=90] (0,-4);
  \draw[white, line width=4pt]
   (1,-2) to [out=-90, in=90] (2,-4);
  \draw
   (1,-2) to [out=-90, in=90] (2,-4);
  \draw[white, line width=4pt]
   (0,-2) to [out=-90, in=90] (1,-4);
  \draw
   (0,-2) to [out=-90, in=90] (1,-4);
  \draw
   (0,-4) -- (1,-5) -- (2,-4)  (1,-4) -- (1.5,-4.5);
  \end{scope}
  \draw (1,-5) -- (1,-5.2);
  \begin{scope}[yshift=-4.2cm]
  \draw
   (0,-2) -- (1,-1) -- (2,-2)   (1,-2) -- (.5,-1.5);
  \draw
   (1,-2) to [out=-90, in=90] (0,-4);
  \draw[white, line width=4pt]
   (0,-2) to [out=-90, in=90] (2,-4);
  \draw
   (0,-2) to [out=-90, in=90] (2,-4);
  \draw[white, line width=4pt]
   (2,-2) to [out=-90, in=90] (1,-4);
  \draw
   (2,-2) to [out=-90, in=90] (1,-4);
  \draw
   (0,-4) -- (1,-5) -- (2,-4)  (1,-4) -- (1.5,-4.5);
  \end{scope}
  \begin{scope}[xshift=4cm]
  \node at (-.5,-1.5) {$2$};
  \begin{scope}
  \draw
   (0,-2) -- (1,-1) -- (2,-2)   (1,-2) -- (1.5,-1.5);
  \draw
   (2,-2) to [out=-90, in=90] (0,-4);
  \draw[white, line width=4pt]
   (1,-2) to [out=-90, in=90] (2,-4);
  \draw
   (1,-2) to [out=-90, in=90] (2,-4);
  \draw[white, line width=4pt]
   (0,-2) to [out=-90, in=90] (1,-4);
  \draw
   (0,-2) to [out=-90, in=90] (1,-4);
  \draw
  (1,-4) --  (1.5,-4.5) -- (2,-4);
  \end{scope}
  \draw (0,-4) to [out=-90, in=90] (.5,-5.5);
  \draw (1.5,-4.5) -- (2,-6);
  \begin{scope}[yshift=-4cm]
  \draw
  (1,-2) -- (.5,-1.5) -- (0,-2);
  \draw
   (1,-2) to [out=-90, in=90] (0,-4);
  \draw[white, line width=4.5pt]
   (0,-2) to [out=-90, in=90] (2,-4);
  \draw
   (0,-2) to [out=-90, in=90] (2,-4);
  \draw[white, line width=4pt]
   (2,-2) to [out=-90, in=90] (1,-4);
  \draw
   (2,-2) to [out=-90, in=90] (1,-4);
  \draw
   (0,-4) -- (1,-5) -- (2,-4)  (1,-4) -- (1.5,-4.5);
  \end{scope}
  \end{scope}
  \begin{scope}[xshift=9cm]
  \node at (-.5,-1.5) {$3$};
  \begin{scope}
  \draw
   (0,-2) -- (1,-1) -- (2,-2)   (1,-2) -- (1.5,-1.5);
  \draw
   (2,-2) to [out=-90, in=90] (.25,-4.5);
  \draw[white, line width=4pt]
   (1,-2) to [out=-90, in=90] (2.25,-5);
  \draw
   (1,-2) to [out=-90, in=90] (2.25,-5);
  \draw[white, line width=4pt]
   (0,-2) to [out=-90, in=90] (1.25,-5);
  \draw
   (0,-2) to [out=-90, in=90] (1.25,-5);
  \draw
  (1.25,-5) --  (1.75,-5.5) -- (2.25,-5);
  \end{scope}
  \begin{scope}[yshift=-4cm]
  \draw
   (-.25,-1) -- (.25,-.5)   -- (.75, -1);
  \draw
   (.75,-1) to [out=-90, in=90] (0,-4);
  \draw[white, line width=4pt]
   (-.25,-1) to [out=-90, in=90] (2,-4);
  \draw
   (-.25,-1) to [out=-90, in=90] (2,-4);
  \draw[white, line width=4pt]
   (1.75,-1.5) to [out=-90, in=90] (1,-4);
  \draw
   (1.75,-1.5) to [out=-90, in=90] (1,-4);
  \draw
   (0,-4) -- (1,-5) -- (2,-4)  (1,-4) -- (1.5,-4.5);
  \end{scope}
  \end{scope}
  \begin{scope}[xshift=13.5cm]
  \node at (-.5,-1.5) {$4$};
  \draw
   (0,-2) -- (1,-1) -- (2,-2)   (1,-2) -- (1.5,-1.5)  (1.5,-2) -- (1.75,-1.75);
  \draw
   (2,-2) to [out=-90, in=90] (0,-8);
  \draw[white, line width=4pt]
   (1.5,-2) to [out=-90, in=90] (0,-5.5) to [out=-90, in=90] (2,-8);
  \draw
   (1.5,-2) to [out=-90, in=90] (0,-5.5) to [out=-90, in=90] (2,-8);
  \draw[white, line width=4pt]
   (1,-2) to [out=-90, in=90] (2.25,-5);
  \draw
   (1,-2) to [out=-90, in=90] (2.25,-5);
  \draw[white, line width=4pt]
   (0,-2) to [out=-90, in=90] (1.25,-5);
  \draw
   (0,-2) to [out=-90, in=90] (1.25,-5);
  \draw
  (1.25,-5) --  (1.75,-5.5) -- (2.25,-5);
  \draw[white, line width=4pt]
   (1.75,-5.5) to [out=-90, in=90] (1,-8);
  \draw
   (1.75,-5.5) to [out=-90, in=90] (1,-8);
  \draw
   (0,-8) -- (1,-9) -- (2,-8)  (1,-8) -- (1.5,-8.5);
  \end{scope}
  \begin{scope}[xshift=18cm]
  \node at (-.5,-1.5) {$5$};
  \draw
   (0,-2) -- (1,-1) -- (2,-2)   (1,-2) -- (1.5,-1.5)  (1.5,-2) -- (1.75,-1.75);
  \draw
   (2,-2) to [out=-90, in=90] (0,-8);
  \draw[white, line width=4pt]
   (1.5,-2) to [out=-90, in=90] (2,-8);
  \draw
   (1.5,-2) to [out=-90, in=90] (2,-8);
  \draw[white, line width=4pt]
   (1,-2) to [out=-90, in=90] (1.5,-8);
  \draw
   (1,-2) to [out=-90, in=90] (1.5,-8);
  \draw[white, line width=4pt]
   (0,-2) to [out=-90, in=90]  (1,-8);
  \draw
   (0,-2) to [out=-90, in=90] (1,-8);
  \draw
   (0,-8) -- (1,-9) -- (2,-8)  (1,-8) -- (1.5,-8.5)  (1.5,-8) -- (1.25,-8.25);
  \end{scope}
\end{tikzpicture}
\caption{Computing the product of two elements of $\Vbr$. First, both elements are stacked onto each other. Second, pairs of merges and splits are resolved. Third, merges and splits are moved past each other. In the fourth and fifth step a merge and a split are moved past a group element (here a braid).}
\label{fig:reducing}
\end{figure}

%-------------------------------------------
\section{Preliminaries}\label{sec:preliminaries}

Much of the material in this section is taken from~\cite{brin07}.

\subsection{Monoids}\label{sec:monoids}

A \emph{monoid} is an associative binary structure with a two-sided identity. A monoid $M$ is called \emph{left cancellative} if for all $x,y,z\in M$, we have that $xy=xz$ implies $y=z$. Elements $x,y\in M$ have a \emph{common left multiple} $m$ if there exist $z,w\in M$ such that $zx=wy=m$. This is the \emph{least common left multiple} if for all $p,q\in M$ such that $px=qy$, we have that $px$ is a left multiple of $m$. There are the obvious definitions of \emph{right cancellative}, \emph{common right multiples} and \emph{least common right multiples}.
We say that $M$ \emph{has common right/left multiples} if any two elements have a common right/left multiple. It is said to \emph{have least common right/left multiples} if any two elements that have some common right/left multiple have a least common right/left multiple. Finally, we say $M$ is \emph{cancellative} if it is both left and right cancellative. The importance of these notions lies in the following classical theorem (see \cite[Theorems~1.23, 1.25]{clifford61}):

\begin{theorem}[Ore]\label{thm:ore}
A cancellative monoid with common right multiples has a unique group of right fractions.
\end{theorem}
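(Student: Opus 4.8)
The plan is to construct the group of right fractions $G$ directly, as a set of formal fractions $ab^{-1}$, and then to deduce uniqueness from an intrinsic description of when two such fractions coincide. Throughout I would use the elementary observation that, by iterating the hypothesis, \emph{any finite subset of $M$ has a common right multiple}; this is what lets one bring finitely many fractions ``over a common denominator''. On $M \times M$ — thinking of $(a,b)$ as $ab^{-1}$ — declare $(a,b) \sim (c,d)$ if there exist $p,q \in M$ with $ap = cq$ and $bp = dq$. Reflexivity and symmetry are immediate, and transitivity needs only common right multiples: given witnesses $(p,q)$ for $(a,b)\sim(c,d)$ and $(r,s)$ for $(c,d)\sim(e,f)$, choose $u,v$ with $qu = rv$; then $(pu,sv)$ witnesses $(a,b)\sim(e,f)$.

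Set $G \defeq (M\times M)/{\sim}$, write $[a,b]$ for the classes, and define a product by choosing $p,q$ with $bp = cq$ and putting $[a,b]\cdot[c,d] \defeq [ap, dq]$. The bulk of the work is checking that this is independent of the chosen common right multiple $(p,q)$ and of the chosen representatives: this is a routine diagram chase in which \emph{both} cancellation laws enter — e.g. if $bp = cq$ and $bp' = cq'$, pick $u,v$ with $pu = p'v$; then $cqu = bpu = bp'v = cq'v$ forces $qu = q'v$ by left cancellativity, which is exactly what is needed. Associativity is the classical chase, carried out by choosing one common right multiple that resolves all denominators appearing; $[1,1]$ is the identity, and $[b,a]$ is a two-sided inverse of $[a,b]$, so $G$ is a group. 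I expect this bookkeeping — tracking which of the two cancellation laws licenses each step, and enlarging denominators enough to resolve everything simultaneously — to be the only real obstacle; there is nothing deep.

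Next, the map $\iota\colon M \to G$, $a \mapsto [a,1]$, is a monoid homomorphism, and it is injective because $[a,1] = [b,1]$ yields $p,q$ with $ap = bq$ and $p = q$, hence $a = b$ by right cancellativity. Since $[a,b] = \iota(a)\iota(b)^{-1}$, the subgroup generated by $\iota(M)$ is all of $G$, so $(G,\iota)$ is a group of right fractions of $M$, establishing existence.

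For uniqueness, the key is the intrinsic lemma: if $(H,\iota')$ is any group of right fractions of $M$, then $\iota'(a)\iota'(b)^{-1} = \iota'(c)\iota'(d)^{-1}$ in $H$ if and only if there exist $p,q \in M$ with $ap = cq$ and $bp = dq$. The nontrivial direction is proved by choosing a common right multiple $bu = dv$ in $M$ (so that $\iota'(b)^{-1}\iota'(d) = \iota'(u)\iota'(v)^{-1}$), comparing with $\iota'(a)^{-1}\iota'(c) = \iota'(b)^{-1}\iota'(d)$, and using injectivity of $\iota'$ to get $cv = au$; then $p=u$, $q=v$ work. Granting this, for any two groups of right fractions $(H,\iota')$ and $(H',\iota'')$ the rule $\iota'(a)\iota'(b)^{-1}\mapsto \iota''(a)\iota''(b)^{-1}$ is well defined and injective (the lemma, applied in both), surjective (every element of $H'$ is such a fraction), and multiplicative (compute a product of two fractions by passing to a common denominator), hence an isomorphism; and it is the unique homomorphism compatible with $\iota'$ and $\iota''$ since those fractions generate. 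Equivalently, every group of right fractions of $M$ is initial among monoid homomorphisms from $M$ into a group, so any two are uniquely isomorphic, in particular to the $G$ constructed above.
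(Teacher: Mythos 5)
The paper does not prove Ore's theorem itself; it cites Clifford--Preston \cite[Theorems~1.23, 1.25]{clifford61} for both existence and uniqueness, so there is no ``paper's own proof'' to compare against. Your construction is the classical one from that source: equivalence classes of pairs $(a,b)$ under the relation witnessed by a common right multiple, with multiplication defined by resolving denominators, and the argument you sketch is correct. You correctly locate where each cancellation law enters (left cancellativity for well-definedness of the product, right cancellativity for injectivity of $M\to G$), and transitivity of $\sim$ indeed needs only common right multiples. Your uniqueness argument via the intrinsic characterization of when $\iota'(a)\iota'(b)^{-1}=\iota'(c)\iota'(d)^{-1}$ is also standard and fine; note only that injectivity of $\iota'$ is built into the paper's definition of ``group of right fractions'' (it requires $G$ to \emph{contain} $M$), which is what licenses the step $\iota'(cv)=\iota'(au)\Rightarrow cv=au$. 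Your closing ``equivalently'' remark about initiality among monoid morphisms to groups is true and is essentially the content of the paper's Lemma~\ref{lem:ore_fractions}, but it is not needed for the uniqueness you have already established directly, so it is best regarded as an aside rather than part of the proof.
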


Recall that for every monoid $M$ there exists a group $G_M$ and a monoid morphism $\omega \colon M \to G_M$ such that every monoid morphism from $M$ to a group factors through $\omega$ (namely the group generated by all the elements of $M$ subject to all the relations that hold in $M$). This is \emph{the group of fractions of $M$}. The morphism $\omega$ will be injective if and only if $M$ embeds into a group. A group $G$ is called \emph{a group of right fractions of $M$} if it contains $M$ and every element of $G$ can be written as $m \cdot n^{-1}$ with $m,n \in M$. A group of right fractions exists precisely in the situation of Ore's theorem and is unique up to isomorphism; see \cite[Section~1.10]{clifford61} for details.  We call a monoid satisfying the hypotheses of Theorem~\ref{thm:ore} an \emph{Ore monoid}. The group of right fractions of an Ore monoid is its group of fractions (see for example \cite[Theorem~7.1.16]{kashiwara06}):

\begin{lemma}\label{lem:ore_fractions}
 Let $M$ be an Ore monoid, let $G$ be its group of right fractions and let $H$ be any group. Let $\varphi \colon M \to H$ be a monoid morphism. Then the map $\tilde{\varphi} \colon G \to H$ defined by $\tilde{\varphi}(mn^{-1}) = \varphi(m) \cdot \varphi(n)^{-1}$ is a group homomorphism and $\varphi = \tilde{\varphi}|_M$.
\end{lemma}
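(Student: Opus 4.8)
The plan is to show that $\tilde\varphi$ is well-defined and multiplicative by systematically exploiting the Ore condition (common right multiples) and cancellativity. The identity $\varphi = \tilde\varphi|_M$ is then immediate since every $m \in M$ can be written as $m = m \cdot 1^{-1}$, and $\tilde\varphi(m1^{-1}) = \varphi(m)\varphi(1)^{-1} = \varphi(m)$.

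First I would record the two basic consequences of $M$ being an Ore monoid that make everything work. (1) Any two elements $n_1, n_2 \in M$ have a common right multiple, so $n_1 s_1 = n_2 s_2$ for some $s_1, s_2 \in M$; this lets one bring two fractions $m_1 n_1^{-1}$ and $m_2 n_2^{-1}$ to a common denominator. (2) If $ns = n's'$ in $M$, then in the group of fractions $G$ one has $n^{-1} n' = s (s')^{-1}$, so expressions of the form $n^{-1} m$ (with the inverse on the left) can be rewritten as $m'(n')^{-1}$ — i.e. one can always ``move'' inverses to the right, so every element of $G$ genuinely has the form $mn^{-1}$. These are standard facts about Ore monoids referenced around Theorem~\ref{thm:ore}, and I would cite \cite[Section~1.10]{clifford61} rather than reprove them.

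Next, \textbf{well-definedness.} Suppose $m_1 n_1^{-1} = m_2 n_2^{-1}$ in $G$; I must show $\varphi(m_1)\varphi(n_1)^{-1} = \varphi(m_2)\varphi(n_2)^{-1}$ in $H$. The cleanest route: the equality in $G$ rearranges to $n_2 (n_1)^{-1}$-type relations, but more concretely I would argue that $m_1 n_1^{-1} = m_2 n_2^{-1}$ holds in $G$ if and only if there exist $a, b \in M$ with $m_1 a = m_2 b$ and $n_1 a = n_2 b$ (this is the standard description of equality of fractions in an Ore localization, obtained by clearing denominators using common right multiples and then using cancellativity to pass from the relation in $G$ back to one in $M$). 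Applying the monoid morphism $\varphi$ gives $\varphi(m_1)\varphi(a) = \varphi(m_2)\varphi(b)$ and $\varphi(n_1)\varphi(a) = \varphi(n_2)\varphi(b)$ in $H$; since $H$ is a group these invert to yield $\varphi(m_1)\varphi(n_1)^{-1} = \varphi(m_2)\varphi(n_2)^{-1}$. For \textbf{multiplicativity}, given $m_1 n_1^{-1}$ and $m_2 n_2^{-1}$, use the Ore condition on $n_1$ and $m_2$ to get $n_1 s = m_2 t$ for some $s, t \in M$; then $n_1^{-1} m_2 = s t^{-1}$ in $G$, so $(m_1 n_1^{-1})(m_2 n_2^{-1}) = m_1 s t^{-1} n_2^{-1} = (m_1 s)(n_2 t)^{-1}$, which is again a fraction. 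Applying $\varphi$ and using $\varphi(n_1)\varphi(s) = \varphi(m_2)\varphi(t)$ shows $\tilde\varphi$ of the product equals the product of the $\tilde\varphi$'s. That $\tilde\varphi$ sends the identity to the identity is clear.

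The main obstacle — and really the only subtle point — is justifying the ``equality of fractions'' criterion used in the well-definedness step: that $m_1 n_1^{-1} = m_2 n_2^{-1}$ in $G$ forces the existence of $a, b \in M$ with $m_1 a = m_2 b$ and $n_1 a = n_2 b$. This is where cancellativity is genuinely needed (to descend a relation among words in $M \cup M^{-1}$ back to a single relation in $M$), and it is exactly the content of the Ore localization theory. In the write-up I would either invoke the explicit construction of the group of right fractions from \cite[Section~1.10]{clifford61} directly, or, if a self-contained argument is preferred, reduce a general equality in $G$ to this form by induction on the length of a word representing it, repeatedly applying the Ore condition to collect all positive letters to the left and all negative letters to the right. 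Everything after that point is a routine computation in the group $H$.
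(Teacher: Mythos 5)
Your proof is correct and the core computation (bring the inner two factors to a common right multiple via the Ore condition, apply $\varphi$, and rearrange in $H$) is exactly the paper's argument for the homomorphism property. The only difference is that you also spell out well-definedness of $\tilde{\varphi}$ via the standard ``equality of fractions'' criterion --- $m_1n_1^{-1}=m_2n_2^{-1}$ iff $m_1a=m_2b$ and $n_1a=n_2b$ for some $a,b\in M$ --- whereas the paper treats this as implicit (one could also deduce it from the multiplicativity computation applied to $m_1n_1^{-1}\cdot n_2m_2^{-1}=1$), so your version is a little more thorough but not a different route.
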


\begin{proof}
 That inverses map to inverses is clear. Let $m_1,m_2,n_1,n_2 \in M$ and let $n_1 \cdot x = m_2 \cdot y$ be a common right multiple so that $m_1 n_1^{-1} m_2 n_2^{-1} = m_1 x y^{-1}n_2^{-1}$. We have to check that
 \begin{equation}
 \label{eq:fraction_multiplication}
 \varphi(m_1) \varphi(n_1)^{-1} \varphi(m_2) \varphi(n_2)^{-1} = \varphi(m_1 x) \varphi(n_2 y)^{-1}\text{.}
 \end{equation}
 The fact that $\varphi$ is a monoid morphism means that $\varphi(n_1)\varphi(x) = \varphi(m_2) \varphi(y)$ which entails $\varphi(n_1)^{-1}\varphi(m_2) = \varphi(x)\varphi(y)^{-1}$. Extending by $\varphi(m_1)$ from the left and by $\varphi(n_2)^{-1}$ from the right gives \eqref{eq:fraction_multiplication}.
\end{proof}

\subsection{Posets from monoids}\label{sec:posets_from_monoids}

Throughout this section let $M$ be an Ore monoid and let $G$ be its group of right fractions. The notions of left/right multiple/factor are uninteresting for $G$ as a monoid because it is a group. Instead we introduce these notions relative to the monoid $M$. Concretely, assume that elements $a,b,c \in G$ satisfy
\[
ab=c \text{.}
\]
If $a \in M$ then we call $b$ a \emph{right factor} of $c$ and $c$ a \emph{left multiple} of $b$. If $b \in M$ then we call $a$ a \emph{left factor} of $c$ and $c$ a \emph{right multiple} of $a$. If $g$ is a left factor (respectively right multiple) of both $h$ and $h'$ then we say that it is a \emph{common left factor} (respectively \emph{common right multiple}). If $g$ is a common left factor of $h$ and $h'$ and any other left factor of $h$ and $h'$ is also a left factor of $g$ then $g$ is called a \emph{greatest common left factor}. If $g$ is a common right multiple of $h$ and $h'$ and every other right multiple is also a right multiple of $g$ then $g$ is called a \emph{least common right multiple} of $h$ and $h'$. Thus we obtain notions of when $G$ has (least) common right/left multiples and (greatest) common right/left factors. We say that two elements have \emph{no common right factor} if they have greatest common right factor $1$.

Under a moderate additional assumption, having least common right multiples is inherited by $G$ from $M$:

\begin{lemma}
 Let $M$ have least common right multiples. Let $n,n',m,m' \in M$ be such that $n$ and $m$ have no common right factor and neither do $n'$ and $m'$. Let $nv = n'u$ be a least common right multiple of $n$ and $n'$. Then $nv = n'u$ is a least common right multiple of $nm^{-1}$ and $n'{m'}^{-1}$.\qed
\end{lemma}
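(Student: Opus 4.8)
The plan is to verify the two defining properties of a least common right multiple of $nm^{-1}$ and $n'{m'}^{-1}$; the first is a formality and the second carries all the content.

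Since $M$ is a monoid, $mv$ and $m'u$ lie in $M$, and $nv=(nm^{-1})(mv)$ while $n'u=(n'{m'}^{-1})(m'u)$; hence $nv=n'u$ is a right multiple of each of $nm^{-1}$ and $n'{m'}^{-1}$. For minimality, let $d\in G$ be any common right multiple of $nm^{-1}$ and $n'{m'}^{-1}$, say $d=(nm^{-1})e=(n'{m'}^{-1})e'$ with $e,e'\in M$. I would first reduce to showing that $d$ is a right multiple of $n$ \emph{and} of $n'$: if $d=nr=n'r'$ with $r,r'\in M$, then $d$ itself lies in $M$ and is there a common right multiple of $n$ and $n'$, so by the hypothesis on $nv=n'u$ the element $d$ is a right multiple of $nv$, as desired. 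Now $d$ is a right multiple of $n$ exactly when $n^{-1}d=m^{-1}e$ lies in $M$, i.e.\ when $e$ is a right multiple of $m$, and similarly for $n'$ and $m'$. Comparing the two expressions for $d$ and using $n^{-1}n'=vu^{-1}$ (which follows from $nv=n'u$) one gets the identity $e=(mv)(m'u)^{-1}e'$ in $G$. So everything comes down to the assertion that $mv$ and $m'u$ have no common right factor: granting it, $e\,(e')^{-1}=(mv)(m'u)^{-1}$ expresses one element of $G$ as two fractions over $M$, the second in lowest terms, so uniqueness of reduced fractions (valid for cancellative monoids with least common right multiples of the kind at hand) gives $e=(mv)f$ for some $f\in M$; hence $e=m(vf)$ is a right multiple of $m$, and likewise $e'=(m'u)f$ is a right multiple of $m'$, which is what was needed.

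It remains to prove that $mv$ and $m'u$ have no common right factor, and this is where I expect the genuine work to be. Because $n$ and $m$ have no common right factor, right-cancellation identifies the greatest common right factor of $mv$ and $nv$ as $v$; symmetrically, the greatest common right factor of $m'u$ and $n'u=nv$ is $u$. Also $u$ and $v$ have no common right factor: if $u=u_0g$ and $v=v_0g$ then, cancelling $g$, $nv_0=n'u_0$ is a common right multiple of $n$ and $n'$ in $M$ while $nv=(nv_0)g$, and minimality of $nv$ in $M$ forces $g$ to be a unit. Consequently a common right factor $g$ of $mv$ and $m'u$ \emph{that is also a right factor of} $nv=n'u$ is a right factor of both $v$ and $u$, hence a unit. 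So the whole statement reduces to the following delicate point, which I expect to be the main obstacle: any common right factor of $mv$ and $m'u$ is automatically a right factor of $nv=n'u$. Concretely, if $mv=Pg$ and $m'u=Qg$ with $P,Q\in M$, then $L_0\defeq(nv)g^{-1}=n(vg^{-1})=n'(ug^{-1})$ is again a common right multiple of $nm^{-1}$ and $n'{m'}^{-1}$ — one checks $(nm^{-1})^{-1}L_0=P\in M$ and $(n'{m'}^{-1})^{-1}L_0=Q\in M$ — and it satisfies $nv=L_0\,g$; one must then deduce that $g$ is a unit. I expect this last deduction to require the calculus of fractions of \cite{brin07}; note in particular that it cannot follow from $L_0$ being a common right multiple alone, so both hypotheses — equivalently, the two coprimality conditions on $(n,m)$ and $(n',m')$ together with the minimality of $nv=n'u$ — must be combined, and it is the step on which I would spend the most effort.
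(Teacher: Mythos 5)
You correctly reduce the argument to showing that $mv$ and $m'u$ have no common right factor, and you rightly flag this as the step requiring justification. But the claim does not follow from the stated hypotheses; in fact the lemma is false as written. In $M = (\Nz, +)$, an Ore monoid with least common right multiples in which $0$ is the only unit (and where, in $G=\Z$, the right factors of $h$ are exactly the integers $\le h$), take $n=n'=0$, $m=2$, $m'=3$. Then $(n,m)$ and $(n',m')$ each have greatest common right factor $0$, a unit, and the least common right multiple of $n$ and $n'$ in $M$ is $nv=n'u=0$ with $v=u=0$. But the common right multiples of $nm^{-1}=-2$ and $n'{m'}^{-1}=-3$ relative to $M$ are $\{c\in\Z : c\ge -2\}$, whose least element is $-2$, not $0$. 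Consistently with your own analysis, $mv=2$ and $m'u=3$ have greatest common right factor $2$, a non-unit, and that is exactly what breaks. (Simpler still: taking $n=n'=1$ and $m=m'$ any non-unit, in any Ore monoid, gives $nm^{-1}=n'{m'}^{-1}=m^{-1}$, whose least common right multiple with itself is $m^{-1}$, not $1$.)

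Your calculation in fact identifies what the correct conclusion ought to be: the least common right multiple of $nm^{-1}$ and $n'{m'}^{-1}$, when it exists, is $nv\cdot g^{-1}$ where $g$ is a greatest common right factor of $mv$ and $m'u$; the lemma's stated conclusion holds precisely when that $g$ is a unit, which must be added as a hypothesis. Even the existence of such a $g$ requires $M$ to have greatest common right factors, not merely least common right multiples --- and this same structural assumption is hidden in the ``uniqueness of reduced fractions'' step you invoke, which needs more than cancellativity plus least common right multiples. The paper states this lemma with a qed symbol and no proof, so there is no argument to compare against; your careful attempt reveals that the statement is not the triviality it is presented as. In the paper's actual application $M$ is a BZS product $\Fmonoid\bowtie G$, and there the forest monoid $\Fmonoid$ supplies all the needed common-factor structure (Lemma~\ref{lem:fmonoid_properties}), so the downstream conclusions can be recovered, but not by citing this abstract lemma as stated.
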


We call a monoid homomorphism $\len \colon M \to \Nz$ a \emph{length function} if every element of the kernel is a unit. It induces a length function $\len \colon G \to \Z$. Note that if $M$ admits a length function then every element of $G$ can be written as $mn^{-1}$ where $m$ and $n$ are elements of $M$ with no common right factor.

The following is an extension of \cite[Lemma~2.3]{brin07} to $G$.

\begin{lemma}
Assume that $M$ admits a length function. Then $G$ has least common right multiples if and only if it has greatest common left factors.\qed
\end{lemma}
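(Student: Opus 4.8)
The statement to prove is: if $M$ is an Ore monoid admitting a length function $\len\colon M\to\Nz$, and $G$ is its group of right fractions, then $G$ has least common right multiples if and only if it has greatest common left factors. My plan is to exploit the symmetry of the situation under the anti-automorphism-like duality between "right multiples'' and "left factors,'' and to reduce each direction to the statement about the monoid $M$ via the normal forms $g=mn^{-1}$ with $m,n$ having no common right factor, whose existence is guaranteed by the remark immediately preceding the statement (this is exactly where the length function is used). Throughout I will freely use the previous lemma, which transports least common right multiples from $M$ to $G$ once one knows the relevant pairs of numerators/denominators have no common right factor.

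\emph{From greatest common left factors to least common right multiples.} Suppose $G$ has greatest common left factors. Take $g=nm^{-1}$ and $g'=n'(m')^{-1}$ in reduced form (numerator and denominator with no common right factor), which exist by the length-function remark. I would first argue that the numerators $n,n'\in M$ have a common right multiple in $M$: this follows because $M$ is Ore, so any two elements of $M$ have a common right multiple, and then $M$ having least common right multiples gives a least one $nv=n'u$. Since $n,m$ have no common right factor and $n',m'$ have no common right factor, the previous lemma applies and tells us $nv=n'u$ is a least common right multiple of $g$ and $g'$ \emph{in $G$}. The role of the greatest-common-left-factor hypothesis is more subtle: I expect it is needed to handle the case where $g,g'$ are arbitrary, i.e.\ to reduce the problem to comparing elements whose "denominators'' behave well — one pulls out the greatest common left factor, reducing to a pair with no common left factor, and then checks the remaining pair does have a least common right multiple by the monoid argument above. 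I would track carefully that taking greatest common left factors is compatible with multiplying on the left by a common element.

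\emph{From least common right multiples to greatest common left factors.} This is the dual direction, and I would run the symmetric argument: given $h,h'\in G$, write $h^{-1}h' = m^{-1}n$-type expressions, or more directly consider that $g$ is a common left factor of $h$ and $h'$ exactly when $g^{-1}h$ and $g^{-1}h'$ lie in the appropriate $M$-cone; passing to least common right multiples of suitable associated elements yields the greatest common left factor of $h,h'$. The length function again guarantees the reduced forms needed to invoke the transfer lemma. Concretely, I would express the greatest common left factor of $h$ and $h'$ as built from a least common right multiple of $h^{-1}$ and $(h')^{-1}$ (inverted), using that $a$ is a common left factor of $h,h'$ iff $h^{-1},(h')^{-1}$ are both left multiples of $a^{-1}$, i.e.\ iff $a^{-1}$ is a common right factor — and then dualize "least common right multiple'' to "greatest common right factor'' via inversion, which in turn relates to greatest common left factor by one more inversion.

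\emph{Main obstacle.} The genuine difficulty is not the monoid input — that is packaged in the two preceding lemmas — but rather the careful bookkeeping of how the operations "greatest common left factor'' and "least common right multiple'' interact with inversion and with left/right translation in $G$, since $G$ is not a monoid in the relevant sense and these operations are only defined relative to $M$. In particular, one must verify that a reduced expression $g=nm^{-1}$ is essentially unique up to a unit, so that the constructions are well-defined, and that the various "no common right factor'' hypotheses genuinely hold at each step where the transfer lemma is invoked. This is precisely the extension of \cite[Lemma~2.3]{brin07} alluded to in the text, and I would expect the proof to be a diagram-chase-style verification of these compatibilities rather than anything requiring a new idea.
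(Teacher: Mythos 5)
The paper itself gives no proof of this lemma (it carries a \qed), but the intended argument, extending \cite[Lemma~2.3]{brin07} to $G$, is a direct extremal-element argument using the length function, not the transfer machinery you sketch. For the direction $\text{GCLF} \Rightarrow \text{LCRM}$: among all common right multiples of $g,h$ (these exist since $G$ is the group of right fractions), take one, $c$, of minimal length (possible because $\len$ on common right multiples is bounded below by $\max(\len g,\len h)$). For any other common right multiple $c'$, let $d$ be the greatest common left factor of $c,c'$. Both $g$ and $h$ are common left factors of $c$ and $c'$, so $g\le d$ and $h\le d$; hence $d$ is itself a common right multiple, $\len d\le\len c$ forces equality, the element relating $d$ to $c$ is a unit, and $c'$ is therefore a right multiple of $c$. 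The reverse direction is the mirror image: take a common left factor of maximal length and use LCRM instead of GCLF.

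Your forward direction has a genuine gap. You invoke the previous (unnamed) transfer lemma, but that lemma requires as a \emph{hypothesis} that $M$ has least common right multiples, and you never derive this from the hypothesis that $G$ has greatest common left factors. The missing chain is: $G$ GCLF $\Rightarrow$ $M$ GCLF (any two $g,h\in M$ have $1$ as a common left factor, and their GCLF in $G$ lies in $M$ because $1$ is a left factor of it), and then $M$ GCLF together with the length function $\Rightarrow$ $M$ LCRM by Brin's monoid-level version of this very statement. Your own comment that you ``expect'' the GCLF hypothesis is needed ``to handle the case where $g,g'$ are arbitrary'' suggests you have not located where it actually enters; it is used precisely to establish that $M$ has LCRM at all.

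Your reverse direction is mathematically wrong as described. Inversion in $G$ does \emph{not} relate least common right multiples to greatest common left factors: if $c$ is a common right multiple of $g,h$, then writing $c=gm=hn$ with $m,n\in M$ and inverting gives $g^{-1}=mc^{-1}$, $h^{-1}=nc^{-1}$, so $c^{-1}$ is a common \emph{right} factor of $g^{-1},h^{-1}$. Thus the LCRM of $g^{-1},h^{-1}$ corresponds under inversion to the greatest common \emph{right} factor of $g,h$, not the greatest common \emph{left} factor. For a general $M$ (without an anti-automorphism, unlike $\Fmonoid$), there is no canonical passage between left factors and right factors, so ``dualize via inversion'' does not close the loop. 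The correct dual argument is the extremal one sketched above: take a common left factor of maximal length and use LCRM.
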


One reason for our interest in least common right multiples and greatest common left factors is order theoretic. Define a relation on $G$ by declaring $g \le h$ if $g$ is a left factor of $h$. This relation is reflexive and transitive but fails to satisfy antisymmetry if $M$ has non-trivial units. We denote the relation induced on $G/M^\times$ also by $\le$. It is an order relation so $G/M^\times$ becomes a partially ordered set (poset). Spelled out, the relation is given by $gM^\times \le hM^\times$ if $g^{-1}h \in M$.

The algebraic properties discussed before immediately translate into order theoretic properties: recall that a poset $P$ is a \emph{join-semilattice} if any two elements of $P$ have a supremum (their join). We say that $P$ \emph{has conditional meets} if any two elements that have a lower bound have an infimum.

\begin{observation}
 If $M$ has common right multiples, least common right multiples, and greatest common left factors then $M/M^\times$ is a join-semilattice with conditional meets. Similarly, if $G$ has common right multiples, least common right multiples and greatest common left factors then $G/M^\times$ is a join-semilattice with conditional meets.\qed
\end{observation}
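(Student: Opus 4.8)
The plan is to observe that the three algebraic hypotheses become, after unwinding the definition of $\le$, precisely the order-theoretic conclusions being claimed; so the proof is a translation exercise. I will carry it out for $G/M^\times$, the case of $M/M^\times$ being handled by the same translation applied inside $M$.

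First I would set up the dictionary between the algebra and the order. By definition $gM^\times \le hM^\times$ means $g^{-1}h\in M$, i.e., $g\cdot(g^{-1}h)=h$; thus $gM^\times\le hM^\times$ says exactly that $h$ is a right multiple of $g$, equivalently that $g$ is a left factor of $h$. Consequently an element $c$ is a common right multiple of $h$ and $h'$ if and only if $cM^\times$ is an upper bound of $\{hM^\times,h'M^\times\}$, and an element $g$ is a common left factor of $h$ and $h'$ if and only if $gM^\times$ is a lower bound of $\{hM^\times,h'M^\times\}$. I would also recall (as the text already notes) that $\le$ is genuinely antisymmetric on $G/M^\times$: if $g^{-1}h\in M$ and $h^{-1}g\in M$ then $g^{-1}h\in M^\times$, so $gM^\times=hM^\times$. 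In particular suprema and infima, once they exist, are unique.

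With this dictionary the two halves are immediate. The hypothesis that $G$ has common right multiples says that any two elements of $G/M^\times$ admit an upper bound, and the hypothesis that $G$ has least common right multiples says this upper bound can be chosen least: a least common right multiple $c$ of $h,h'$ is, by definition, a common right multiple through which every common right multiple $d$ factors, i.e., with $cM^\times\le dM^\times$ — so $cM^\times$ is the join $hM^\times\vee h'M^\times$. Hence every pair has a join and $G/M^\times$ is a join-semilattice. Dually, if $hM^\times$ and $h'M^\times$ have a lower bound then $h$ and $h'$ have a common left factor, and a greatest common left factor $g$ (a common left factor of which every common left factor is a left factor, i.e., with $g'M^\times\le gM^\times$ for all common left factors $g'$) is exactly the greatest lower bound; invoking that $G$ has greatest common left factors then gives that every pair possessing a lower bound has a meet, i.e., $G/M^\times$ has conditional meets.

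I do not expect any genuine obstacle here; the argument is bookkeeping. The only points that need a little care are that every statement must be read in the quotient by $M^\times$, where the algebraic least common right multiples and greatest common left factors — a priori unique only up to units — become honestly unique elements and so legitimately serve as suprema and infima; and that the universal properties defining ``least'' and ``greatest'' quantify over all common right multiples, respectively all common left factors, which they do. The statement for $M/M^\times$ follows verbatim from the same translation, carried out inside $M$.
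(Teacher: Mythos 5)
The paper marks this observation with \texttt{\textbackslash qed} and omits the proof entirely, treating it as an immediate translation between the algebraic hypotheses and the order-theoretic conclusions; your write-up supplies exactly that translation correctly, including the point about antisymmetry on the quotient $G/M^\times$ that makes suprema and infima genuinely unique.
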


Putting everything together, we find:

\begin{corollary}\label{cor:fraction_semilattice}
 Let $M$ be a cancellative monoid with common right multiples, least common right multiples and length function. Let $G$ be its group of right fractions. Then $G/M^\times$ is a join-semilattice with conditional meets.\qed
\end{corollary}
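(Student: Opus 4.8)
The plan is to deduce the corollary by chaining together the results of this section, the point being that each hypothesis imposed on $M$ has already been shown to transfer to $G$. Since $M$ is cancellative with common right multiples, Ore's theorem (Theorem~\ref{thm:ore}) guarantees that the group of right fractions $G$ exists, so the statement is meaningful. By the Observation immediately preceding the corollary, it then suffices to check that $G$ has common right multiples, least common right multiples, and greatest common left factors; the assertion that $G/M^\times$ is a join-semilattice with conditional meets follows at once.

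The one step with actual content is that $G$ has common right multiples, which I would prove by clearing denominators. Given $g,h \in G$, write $g = m_1 n_1^{-1}$ and $h = m_2 n_2^{-1}$ with $m_i,n_i \in M$ (possible since $G$ is a group of right fractions of $M$), so that $g n_1 = m_1$ and $h n_2 = m_2$ lie in $M$. As $M$ has common right multiples, $m_1$ and $m_2$ admit one, say $m_1 s = m_2 t$ with $s,t \in M$; then $g(n_1 s) = h(n_2 t)$ with $n_1 s, n_2 t \in M$, which is exactly a common right multiple of $g$ and $h$ in the sense defined for $G$ relative to $M$.

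For least common right multiples I would invoke the length function: by the remark following its definition, every element of $G$ has the form $nm^{-1}$ with $n,m\in M$ having no common right factor. Writing $g = nm^{-1}$ and $h = n'{m'}^{-1}$ in this form, the elements $n,n'\in M$ have a common right multiple by the previous paragraph, hence (as $M$ has least common right multiples) a least common right multiple $nv = n'u$; the lemma on the transfer of least common right multiples from $M$ to $G$ then gives that $nv = n'u$ is a least common right multiple of $g$ and $h$. Finally, since $M$ admits a length function and $G$ has least common right multiples, the lemma equating least common right multiples with greatest common left factors for $G$ yields that $G$ has greatest common left factors. All three conditions of the Observation now hold for $G$, which completes the argument. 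The main, and essentially only, obstacle is the denominator-clearing step establishing common right multiples in $G$; everything else is bookkeeping and citation of the preceding lemmas.
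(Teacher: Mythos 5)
Your proof is correct and follows exactly the route the paper intends: the corollary is stated with \qed because it is meant to be the immediate assembly of the preceding Observation, the length-function lemma, and the lemma on transferring least common right multiples from $M$ to $G$. The one genuinely substantive step — clearing denominators to show $G$ has common right multiples — you supply correctly; the rest is the bookkeeping the paper leaves implicit.
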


\subsection{The monoid of forests}\label{sec:forest_monoid}

Since we are interested in Thompson's groups, an important monoid in all that follows will be the \emph{monoid of forests}, which we define in this section.

For us, a \emph{tree} is always a finite rooted full binary tree. In other words, every vertex has either no outgoing edges or a left and right outgoing edge, and every vertex other than the root has an incoming edge. The vertices without outgoing edges are called \emph{leaves}. The distinction between left and right induces a natural order on the leaves. If a tree has only one leaf, then the leaf is also its root and the tree is the \emph{trivial tree}.

By a \emph{forest} we mean a sequence of trees $\forest=(\tree_i)_{i\in\N}$ such that all but finitely many $\tree_i$ are trivial. The roots are numbered in the obvious way, i.e., the $i$th root of $\forest$ is the root of $\tree_i$. If all the $\tree_i$ are trivial we call $\forest$ \emph{trivial}. If the $\tree_i$ are trivial for $i > 1$ then the forest is called \emph{semisimple} (here we deviate from Brin's notation; what we call ``semisimple'' is called ``simple'' in \cite{brin07}, and what we will later call ``simple'', Brin calls ``simple and balanced''). The \emph{rank} of $\forest$ is the least index $i$ such that $\tree_j$ is trivial for $j > i$. So $\forest$ is semisimple if it has rank at most $1$. The leaves of all the $\tree_i$ are called the \emph{leaves} of $\forest$. The order on the leaves of the trees induces an order on the leaves of the forest by declaring that any leaf of $\tree_i$ comes before any leaf of $\tree_j$, whenever $i<j$. We may equivalently think of the leaves as numbered by natural numbers. The number of \emph{feet} of a semisimple forest $(\tree_i)_{i \in \N}$ is the number of leaves of $\tree_1$ .

Let $\Fmonoid$ be the set of forests. Define a multiplication on $\Fmonoid$ as follows. Let $\forest=(\tree_k)$ and $\forest'=(\tree'_k)$ be forests, and set $\forest \forest'$ to be the forest obtained by identifying the $i$th leaf of $\forest$ with the $i$th root of $\forest'$, for each $i$. This product is associative, and the trivial forest is a left and right identity, so $\Fmonoid$ is a monoid. Some more details on $\Fmonoid$ can be found in Section~3 of \cite{brin07}. Figure~\ref{fig:forest_mult} illustrates the multiplication of two elements.

\begin{figure}[t]
\centering
\begin{tikzpicture}[line width=0.8pt, scale=0.44]
  \draw
   (-3,-3) -- (0,0) -- (3,-3)   (1,-3) -- (-1,-1)   (-1,-3) -- (0,-2);
  \filldraw
   (-3,-3) circle (1.5pt)   (0,0) circle (1.5pt)   (3,-3) circle (1.5pt)   (-1,-1) circle (1.5pt)   (-1,-3) circle (1.5pt)   (0,-2) circle (1.5pt)   (1,-3) circle (1.5pt)   (2,0) circle (1.5pt)   (4,0) circle (1.5pt)   (6,0) circle (1.5pt);
  \node at (8,0) {$\dots$};

  \begin{scope}[yshift=-5cm]
   \draw
    (-4,-1) -- (-3,0) -- (-2,-1)   (1,-2) -- (3,0) -- (5,-2)   (3,-2) -- (2,-1);
   \filldraw
    (-4,-1) circle (1.5pt)   (-3,0) circle (1.5pt)   (-2,-1) circle (1.5pt)   (-1,0) circle (1.5pt)   (1,0) circle (1.5pt)   (1,-2) circle (1.5pt)   (3,0) circle (1.5pt)   (5,-2) circle (1.5pt)   (3,-2) circle (1.5pt)   (2,-1) circle (1.5pt)   (5,0) circle (1.5pt);
  \node at (7,0) {$\dots$};   \node at (10,2) {$=$};
  \end{scope}
   
  \begin{scope}[xshift=16cm]
   \draw
    (-3,-3) -- (0,0) -- (3,-3)   (1,-3) -- (-1,-1)   (-1,-3) -- (0,-2)
    (-4,-4) -- (-3,-3) -- (-2,-4)   (1,-5) -- (3,-3) -- (5,-5)   (3,-5) -- (2,-4);
   \filldraw
   (-3,-3) circle (1.5pt)   (0,0) circle (1.5pt)   (3,-3) circle (1.5pt)   (-1,-1) circle (1.5pt)   (-1,-3) circle (1.5pt)   (0,-2) circle (1.5pt)   (1,-3) circle (1.5pt)   (2,0) circle (1.5pt)   (4,0) circle (1.5pt)   (6,0) circle (1.5pt)   (-4,-4) circle (1.5pt)   (-2,-4) circle (1.5pt)   (1,-5) circle (1.5pt)   (5,-5) circle (1.5pt)   (3,-5) circle (1.5pt);
   \node at (8,0) {$\dots$};
  \end{scope}
\end{tikzpicture}
\caption{Multiplication of forests.}
\label{fig:forest_mult}
\end{figure}

There is an obvious set of generators of $\Fmonoid$, namely the set of single-caret forests. Such a forest can be characterized by the property that there exists $k\in\N$ such that for $i<k$, the $i$th root is also the $i$th leaf, and for $i>k$, the $i$th root is also the $(i+1)$st leaf. Denote this forest by $\lambda_k$. Every tree in $\lambda_k$ is trivial except for the $k$th tree, which is a single caret.

\begin{proposition}[Presentation of the forest monoid]\label{prop:monoid_pres}\cite[Proposition~3.3]{brin07}
 $\Fmonoid$ is generated by the $\lambda_k$, and defining relations are given by
 \begin{equation}
 \label{eq:forest_relation}
 \lambda_j\lambda_i=\lambda_i\lambda_{j+1} \quad \text{for} \quad i<j \text{.}
 \end{equation}
 Every element of $\Fmonoid$ can be uniquely expressed as a word of the form $\lambda_{k_1}\lambda_{k_2}\cdots\lambda_{k_r}$ for some $k_1 \le \cdots \le k_r$.
\end{proposition}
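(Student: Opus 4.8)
The plan is to establish four facts and assemble them in the standard way. (1) The single-caret forests $\lambda_k$ generate $\Fmonoid$. (2) The relations \eqref{eq:forest_relation} hold in $\Fmonoid$. (3) Using only \eqref{eq:forest_relation}, every word $\lambda_{k_1}\cdots\lambda_{k_r}$ can be rewritten with $k_1\le\cdots\le k_r$. (4) Two distinct non-decreasing words represent distinct forests. Granting these, let $P$ be the abstract monoid with the stated presentation and $\pi\colon P\to\Fmonoid$ the morphism sending the $k$th generator to $\lambda_k$; it is well defined by (2) and onto by (1). By (3) every element of $P$ is represented by a non-decreasing word, and by (4) distinct non-decreasing words have distinct images under $\pi$, hence distinct classes in $P$; it follows that $\pi$ is injective, so it is an isomorphism, and moreover each forest is the image of exactly one non-decreasing word.

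Facts (1) and (2) are routine. For (1), induct on the number of carets of a forest $\forest$: if $\forest$ is non-trivial, choose a caret both of whose children are leaves, say the $k$th and $(k+1)$st leaf of $\forest$, and write $\forest=\forest'\lambda_k$ with $\forest'$ obtained by deleting that caret. For (2), the word $\lambda_j\lambda_i$ with $i<j$ splits the $j$th and then the $i$th leaf, producing the same two-caret forest as $\lambda_i\lambda_{j+1}$, since splitting the $i$th leaf first renumbers the old $j$th leaf as the $(j+1)$st. For (3) I would argue by a double induction: after sorting the length-$(r-1)$ tail of the word, it remains only to insert one generator $\lambda_m$ at the front of a non-decreasing word $\lambda_{l_1}\cdots\lambda_{l_s}$; if $m\le l_1$ this word is already sorted, while if $m>l_1$ we rewrite $\lambda_m\lambda_{l_1}=\lambda_{l_1}\lambda_{m+1}$ by \eqref{eq:forest_relation} and recurse on the strictly shorter insertion problem $\lambda_{m+1}\lambda_{l_2}\cdots\lambda_{l_s}$. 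This terminates, and since no index ever decreases under \eqref{eq:forest_relation}, prepending $\lambda_{l_1}$ leaves the outcome non-decreasing.

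The crux is (4), which I would prove by reading the normal form off a forest. For a non-trivial forest $\forest$, let $k$ be the least index for which the tree $\tree_k$ is non-trivial, and let $\forest_1$ be the forest that carries the left subtree of $\tree_k$ in position $k$, the right subtree of $\tree_k$ in position $k+1$, and $\tree_{i-1}$ in position $i$ for $i>k+1$, agreeing with $\forest$ in all positions below $k$; then $\forest=\lambda_k\forest_1$ and $\forest_1$ has one fewer caret than $\forest$. Define $\operatorname{nf}(\forest)=\lambda_k\,\operatorname{nf}(\forest_1)$, and $\operatorname{nf}$ of the trivial forest to be the empty word. An induction on the number of carets shows that $\operatorname{nf}(\forest)$ is non-decreasing (the least non-trivial position of $\forest_1$ is at least $k$) and that the forest spelled out by $\operatorname{nf}(\forest)$ is $\forest$; this re-proves (1) and the existence of the normal form. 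Conversely, for a non-decreasing word $w=\lambda_{k_1}\cdots\lambda_{k_r}$ one checks the invariant that after multiplying out any initial segment of $w$ the trees below position $k_1$ are trivial while the tree in position $k_1$ is non-trivial --- valid precisely because every $k_i\ge k_1$ --- so that $k_1$ is the least non-trivial position of the forest $\forest_w$ spelled out by $w$, and the passage $\forest\mapsto\forest_1$ inverts $\forest_1\mapsto\lambda_{k_1}\forest_1$ on such forests. Hence $\operatorname{nf}(\forest_w)=w$ by induction on $r$, so $w\mapsto\forest_w$ is injective on non-decreasing words, which is (4).

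I expect (4) to be the only real obstacle: the remaining parts are diagram manipulations and straightforward inductions, whereas here one must isolate an intrinsic feature of a forest that canonically recovers a letter of its normal form. Choosing ``the least position of a non-trivial tree'' works, but one must verify with care that prepending $\lambda_{k_1}$ to the forest spelled out by $\lambda_{k_2}\cdots\lambda_{k_r}$ --- whose least non-trivial position is $k_2\ge k_1$ --- leaves $k_1$ as the least non-trivial position, and that this operation is compatible with the subtree-splitting defining $\forest\mapsto\forest_1$.
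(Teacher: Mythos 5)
The paper does not give its own proof of Proposition~\ref{prop:monoid_pres}; it defers entirely to Brin \cite[Proposition~3.3]{brin07}. Your argument is correct and is the standard ``normal form'' proof of a monoid presentation: show that the proposed normal forms are reachable using the relations (your step~(3), an insertion-sort argument whose termination and correctness hinge on the fact that the rewriting rule preserves the minimum of the multiset of indices), and show that distinct normal forms name distinct forests (your step~(4)) by exhibiting an intrinsic feature of the forest --- the least index $k$ of a non-trivial tree --- that canonically recovers the first letter of the normal form and sets up an induction on caret count. The one place worth spelling out a bit more is your use of ``the passage $\forest\mapsto\forest_1$ inverts $\forest_1\mapsto\lambda_{k_1}\forest_1$'': implicitly this substitutes for left-cancellation by $\lambda_{k_1}$ (which is not available a priori), but you correctly restrict to the domain of forests whose non-trivial trees start at position $\ge k_1$, where the two maps are genuinely mutually inverse, and you verify the relevant constraint on $\forest_{w'}$ from the non-decreasing hypothesis. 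This closes the loop. The overall structure --- well-definedness and surjectivity of the evaluation map, confluence of the rewriting, and uniqueness of the normal form --- is exactly the expected route and, as far as the comparison goes, matches the approach one would find in Brin's treatment.
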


A consequence is that the number of carets is an invariant of a forest, and is exactly the length of the word in the $\lambda_k$ representing the forest. The following is part of \cite[Lemma~3.4]{brin07}.

\begin{lemma}\label{lem:fmonoid_properties}
 The monoid $\Fmonoid$ has the following properties.
 \begin{enumerate}
  \item It is cancellative.\label{eq:fmonoid_cancellative}
  \item It has common right multiples.\label{eq:fmonoid_common_right_multiples}
  \item It has no non-trivial units.\label{eq:fmonoid_no_units}
  \item There is a monoid homomorphism $\len \colon \Fmonoid \to \Nz$ sending each generator to $1$. \label{eq:fmonoid_length}
  \item It has greatest common right factors and least common left multiples. \label{eq:fmonoid_greatest_common_right_factors}
  \item It has greatest common left factors and least common right multiples.\label{eq:fmonoid_greatest_common_left_factors}
 \end{enumerate}
\end{lemma}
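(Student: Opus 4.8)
The plan is to derive all six items from the combinatorial picture of forests together with the presentation and normal form of Proposition~\ref{prop:monoid_pres}; since these statements are part of \cite[Lemma~3.4]{brin07}, this really amounts to spelling out how that verification goes. I would start with~(4), since having a length function in hand makes the rest cleaner: the defining relations $\lambda_j\lambda_i=\lambda_i\lambda_{j+1}$ have both sides of word-length $2$, so total word-length descends to a monoid homomorphism $\len\colon\Fmonoid\to\Nz$ with $\len(\lambda_k)=1$, which by the uniqueness part of Proposition~\ref{prop:monoid_pres} just counts carets and has kernel the trivial forest, i.e.\ the identity. Then~(3) drops out: if $\forest\altforest=1$ then $\len(\forest)+\len(\altforest)=0$, so $\forest$ and $\altforest$ have no carets and are trivial, so the only unit is the trivial forest.

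Item~(1) I would prove directly with the pictures. For left cancellativity, given $\forest\altforest=\forest\altforest'$, the $j$-th leaf of $\forest$ is a definite vertex of $\forest\altforest$ (it is glued to the $j$-th root of $\altforest$) and the subtree hanging below it is exactly the $j$-th tree of $\altforest$; comparing the two products forces the $j$-th trees of $\altforest$ and $\altforest'$ to agree for every $j$. For right cancellativity, Proposition~\ref{prop:monoid_pres} reduces the problem by induction on the number of carets to the case $\forest\lambda_k=\forest'\lambda_k$, where the $k$-th and $(k+1)$-st leaves of $\forest\lambda_k$ are the two children of a common parent (the former $k$-th leaf of $\forest$); contracting that caret is a canonical operation that recovers $\forest$, so $\forest=\forest'$.

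Items~(2) and~(6) I would handle by encoding a forest $\forest$ as the tuple of its trees $\tree_i(\forest)$, viewed as finite full prefix-closed subsets of $\{0,1\}^*$. The crucial observation is that left divisibility $\forest\le\forest'$ is exactly componentwise inclusion $\tree_i(\forest)\subseteq\tree_i(\forest')$; since the union and the intersection of two finite full prefix-closed trees are again of that kind, $\Fmonoid$ becomes a lattice with $\forest\vee\forest'=(\tree_i(\forest)\cup\tree_i(\forest'))_i$ and $\forest\wedge\forest'=(\tree_i(\forest)\cap\tree_i(\forest'))_i$. The join is then the least common right multiple and the meet the greatest common left factor, which is~(6); in particular joins exist, which is~(2).

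For item~(5) the right invariant is not the trees but the laminar family $\calI(\forest)$ of \emph{leaf intervals} of $\forest$ (the intervals spanned by the leaf-descendants of the vertices of $\forest$), because I am now after right divisibility: a right factor of $\forest$ corresponds to cutting $\forest$ along a frontier near the leaves. After establishing two facts — that the sub-intervals of a full binary tree which are laminar-compatible with all of its leaf intervals are exactly its leaf intervals, and that a laminar family of leaf intervals with no three-way split is the leaf interval family of a forest — one obtains: $\forest$ and $\forest'$ have a common left multiple if and only if $\calI(\forest)\cup\calI(\forest')$ is laminar, in which case that union is itself a leaf interval family and yields the least common left multiple; and, dually, calling an interval \emph{good} when it lies in both $\calI(\forest)$ and $\calI(\forest')$ and the subtrees of $\forest$ and $\forest'$ below it coincide, the maximal good intervals tile the leaf set and assemble into the greatest common right factor. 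The main obstacle is precisely this item~(5): unlike~(6) it is not a componentwise operation on trees, essentially because $\Fmonoid$ has no order-reversing symmetry — for instance $\lambda_1$ and $\lambda_2$ have no common left multiple at all — so the left and right versions must be argued separately, and the delicate part is verifying the two laminar-family facts above that make the leaf-interval bookkeeping go through, after which greatest common right factors and least common left multiples follow as sketched.
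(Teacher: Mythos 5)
The paper does not supply a proof of this lemma at all --- it cites \cite[Lemma~3.4]{brin07} --- so you are providing a self-contained argument, which is worth doing. Items (1)--(4), (2), and (6) are handled correctly: the length function is immediate because the defining relations are length-homogeneous; (3) drops out of (4); left cancellativity by reading off the subforest hanging below the leaves of the common left factor, right cancellativity by contracting a bottom caret and inducting on caret count; and viewing trees as full prefix-closed subsets of $\{0,1\}^*$ with left divisibility equal to componentwise inclusion is exactly the right picture, so componentwise union and intersection give (6), with (2) as a by-product.

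For (5) the leaf-interval encoding is the right one, but the pivot of the whole argument is left implicit: everything rests on the equivalence ``$\altforest$ is a right factor of $\forest$ if and only if $\calI(\altforest)\subseteq\calI(\forest)$.'' The nontrivial direction (sufficiency) is where your ``Fact~1'' actually earns its keep. One shows that if $I\in\calI(\altforest)\subseteq\calI(\forest)$ is a non-singleton, the two children of $I$ in $\calI(\altforest)$ must coincide with its two children in $\calI(\forest)$ (the decomposition of a full-binary-tree leaf interval into two leaf sub-intervals is unique), so by descent the subtree of $\forest$ below each maximal interval of $\calI(\altforest)$ \emph{is} the corresponding tree of $\altforest$; this should be stated explicitly, since without it the subsequent lclm and gcrf recipes are unmoored. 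The lclm claim also needs one further step you do not flag: laminarity of $\calI(\forest)\cup\calI(\forest')$ alone already rules out a three-way split, by the same ascent argument (any $J\in\calI(\forest')$ strictly between a child $I_1$ of $I\in\calI(\forest)$ and $I$ would have to absorb the sibling $I_2$, hence all of $I$, a contradiction), so the union satisfies the hypothesis of your ``Fact~2.'' With those two observations in place, your sketch of lclm and of the gcrf via maximal good intervals goes through. So: the proposal is correct, and the two laminar-family verifications you rightly identify as the delicate part both reduce to the same unique-decomposition/ascent observation; I would make that observation a standalone claim and deduce Facts~1 and~2 and the no-three-way-split step from it.
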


In view of Theorem~\ref{thm:ore}, properties~\eqref{eq:fmonoid_cancellative} and~\eqref{eq:fmonoid_common_right_multiples} imply that $\Fmonoid$ has a unique group of right fractions, which we denote $\Fhat$.

\subsection{Zappa--Sz\'ep products}\label{sec:zappa}

In this section we recall the background on Zappa--Sz\'ep products of monoids. Our main reference is \cite[Section~2.4]{brin07}, and also see \cite{brin05}. When the monoids are groups, Zappa--Sz\'ep products generalize semidirect products by dropping the assumption that one of the groups be normal.

The internal Zappa--Sz\'ep product is straightforward to define. Let $M$ be a monoid with submonoids $U$ and $A$ such that every $m\in M$ can be written in a unique way as $m=u\alpha$ for $u\in U$ and $\alpha\in A$. In particular, for $\alpha\in A$ and $u\in U$ there exist $u'\in U$ and $\alpha'\in A$ such that $\alpha u=u'\alpha'$, and the $u'$ and $\alpha'$ are uniquely determined by $\alpha$ and $u$, so we denote them $u'=\alpha\cdot u$ and $\alpha'=\alpha^u$, following \cite{brin07}. The maps $(\alpha,u)\mapsto \alpha\cdot u$ and $(\alpha,u)\mapsto \alpha^u$ should be thought of as mutual actions of $U$ and $A$ on each other. Then we can define a multiplication on $U\times A$ via
\begin{equation}\label{eq:zappa--szep_multiplication}
(u,\alpha)(v,\beta) \defeq (u(\alpha\cdot v),\alpha^v \beta)\text{,}
\end{equation}
for $u,v\in U$ and $\alpha,\beta\in A$, and the map $(u,\alpha)\mapsto u\alpha$ is a monoid isomorphism from $U\times A$ (with this multiplication) to $M$; see \cite[Lemma~2.7]{brin07}. We say that $M$ is the (internal) Zappa--Sz\'ep product of $U$ and $A$, and write $M=U\bowtie A$.

\begin{example}[Semidirect product]\label{ex:semidirect}
 Suppose $G$ is a group that is a semidirect product $G=U \ltimes A$ for $U,A\le G$. Then for $u\in U$ and $\alpha\in A$ we have $\alpha u=u(u^{-1} \alpha u)$, and $u^{-1} \alpha u\in A$, so the actions defined above are just $\alpha\cdot u = u$ and $\alpha^u=u^{-1} \alpha u$.
\end{example}

We actually need to use the \emph{external} Zappa--Sz\'ep product. This is discussed in detail in \cite[Section~2.4]{brin07} (and in even more detail in \cite{brin05}).

\begin{definition}[External Zappa--Sz\'ep product]\label{def:ext_ZS_prod_monoids}
 Let $U$ and $A$ be monoids with maps $(\alpha,u)\mapsto \alpha\cdot u \in U$ and $(\alpha,u)\mapsto \alpha^u \in A$ satisfying the following eight properties for all $u,v\in U$ and $\alpha,\beta \in A$:

\smallskip

 \begin{enumerate}[label={\arabic*)}]
  \item $1_A \cdot u = u$ \hfill (Identity acting on $U$)
  \item $(\alpha\beta)\cdot u = \alpha\cdot (\beta\cdot u)$ \hfill (Product acting on $U$)
  \item $\alpha^{1_U} = \alpha$ \hfill (Identity acting on $A$)
  \item $\alpha^{(uv)} = (\alpha^u)^v$ \hfill (Product acting on $A$)
  \item $(1_A)^u = 1_A$ \hfill ($U$ acting on identity)
  \item $(\alpha\beta)^u = \alpha^{(\beta\cdot u)}\beta^u$ \hfill ($U$ acting on product)
  \item $\alpha \cdot 1_U = 1_U$ \hfill ($A$ acting on identity)
  \item $\alpha \cdot (uv) = (\alpha\cdot u)(\alpha^u\cdot v)$. \hfill ($A$ acting on product)
 \end{enumerate}

 \smallskip

 \noindent Then the maps are called a \emph{Zappa--Sz\'ep action}. The set $U \times A$ together with the multiplication defined by \eqref{eq:zappa--szep_multiplication} is called the \emph{(external) Zappa--Sz\'ep product} of $U$ and $A$, denoted $U \bowtie A$.
\end{definition}

It is shown in Lemma~2.9 in \cite{brin07} that the external Zappa--Sz\'ep product  turns $U \bowtie A$ into a monoid and coincides with the internal Zappa--Sz\'ep product of $U$ and $A$ with respect to the embeddings $u\mapsto (u,1_A)$ and $\alpha\mapsto (1_U,\alpha)$.

Some pedantry about the use of the word ``action'' might now be advisable. The action of $U$ on $A$ is a right action described by a homomorphism of monoids $U \to \Symm(A)$, where $\Symm(A)$ is the symmetric group on $A$ (and is \emph{not} the group of monoid automorphisms). The action of $A$ on $U$ is a left action described by a homomorphism of monoids $A \to \Symm(U)$, again \emph{not} to $\Aut(U)$. In a phrase, both actions are actions \emph{of} monoids as monoids, but \emph{on} monoids as sets.

Brin \cite{brin07} regards the action $(\alpha,u) \mapsto \alpha^u$ of $U$ on $A$ as a family of maps from $A$ to itself parametrized by $U$ and defines properties of this family. For brevity we apply the same adjectives to the action itself but one should think of the family of maps. The action is called \emph{injective} if for all $u\in U$, $\alpha^u = \beta^u$ implies $\alpha = \beta$. It is \emph{surjective} if for every $\alpha \in A$ and $u \in U$ there exists a $\beta \in A$ with $\beta^u = \alpha$. The action is \emph{strongly confluent} if the following holds: if $u, v \in U$ have a least common left multiple $ru=sv$ and $\alpha = \beta^u = \gamma^v$ for some $\beta,\gamma \in A$ then there is a $\theta \in A$ such that $\theta^r = \beta$ and $\theta^s = \gamma$. Note that if the action is injective then for this to happen it is sufficient that $\theta^{ru} = \alpha$. The notions for the action of $A$ on $U$ are defined by analogy.

The following lemma can be found as Lemma~2.12 in \cite{brin07}, or as Lemma~3.15 in \cite{brin05}.

\begin{lemma}\label{lem:ZS_least_common_left_multiples}
 Let $U$ be a cancellative monoid with least common left multiples and let $A$ be a group. Let $U$ and $A$ act on each other via Zappa--Sz\'ep actions. Assume that the action $(\alpha,u) \mapsto \alpha^u$ of $U$ on $A$ is strongly confluent. Then $M = U \bowtie A$ has least common left multiples.
 
 A least common left multiple $(r,\alpha)(u,\theta) = (s,\beta)(v,\phi)$ of $(u,\theta)$ and $(v,\phi)$ in $M$ can be constructed so that $r(\alpha \cdot u) = s(\beta \cdot v)$ is the least common left multiple of $(\alpha \cdot u)$ and $(\beta \cdot v)$ in $U$. If $M$ is cancellative, every least common left multiple will have that property.
\end{lemma}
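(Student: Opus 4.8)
The plan is to produce a least common left multiple of two given elements $(u,\theta)$, $(v,\phi)$ of $M=U\bowtie A$ explicitly: start from a least common left multiple inside $U$, and use that $A$ is a group, together with strong confluence, to control the $A$-coordinate and to compare the various $A$-translates of $u$ and $v$ that occur in different common left multiples. Throughout I would use the eight axioms of Definition~\ref{def:ext_ZS_prod_monoids} freely, noting in particular that each map $u\mapsto\alpha\cdot u$ is a bijection of $U$ because $A$ is a group, and that $U\bowtie A$ is automatically left cancellative since $U$ is.

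First, unwind what a common left multiple of $(u,\theta)$ and $(v,\phi)$ looks like: it has the form $(r',\alpha)(u,\theta)=(s',\beta)(v,\phi)$, and comparing coordinates gives $r'(\alpha\cdot u)=s'(\beta\cdot v)$ in $U$ and $\alpha^u\theta=\beta^v\phi$ in $A$. Hence $\alpha\cdot u$ and $\beta\cdot v$ have a common left multiple in $U$; let $r(\alpha\cdot u)=s(\beta\cdot v)$ be their least common left multiple. Since $A$ is a group, the identity $\alpha^u\theta=\beta^v\phi$ forces $(r,\alpha)(u,\theta)=(s,\beta)(v,\phi)$, so this element --- call it $m$ --- is again a common left multiple of $(u,\theta)$ and $(v,\phi)$, and its first coordinate is, by construction, the least common left multiple of $\alpha\cdot u$ and $\beta\cdot v$ in $U$. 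Running the same recipe on an arbitrary common left multiple $n=(r_1,\alpha_1)(u,\theta)=(s_1,\beta_1)(v,\phi)$ --- that is, replacing $r_1$ by the least-common-left-multiple cofactor for the twist $(\alpha_1,\beta_1)$ --- produces a common left multiple $m(n)$; and since the first coordinate of $n$ is itself a common left multiple of $\alpha_1\cdot u$ and $\beta_1\cdot v$, right cancellation in $U$ shows that $m(n)$ is a left factor of $n$, with a cofactor of the form $(t,1_A)$.

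The core of the proof is then to show that the element $m=m(n_0)$ built from one fixed common left multiple $n_0$ is a left factor of $m(n)$ for every common left multiple $n$; together with the previous paragraph this makes $m$ a least common left multiple. This is where strong confluence is used. Write $m(n_0)=(r_0,\alpha_0)(u,\theta)=(s_0,\beta_0)(v,\phi)$ with first coordinate $L_0$ the least common left multiple of $\alpha_0\cdot u$ and $\beta_0\cdot v$ in $U$. The second-coordinate equations $\alpha_0^u\theta=\beta_0^v\phi$ and $\alpha_1^u\theta=\beta_1^v\phi$, combined with the ``$U$ acting on product'' axiom, give the key identity
\[
(\alpha_1\alpha_0^{-1})^{\alpha_0\cdot u}=\alpha_1^u(\alpha_0^u)^{-1}=\beta_1^v(\beta_0^v)^{-1}=(\beta_1\beta_0^{-1})^{\beta_0\cdot v}\text{.}
\]
Applying strong confluence to the least common left multiple $L_0=r_0(\alpha_0\cdot u)=s_0(\beta_0\cdot v)$, with $\beta:=\alpha_1\alpha_0^{-1}$ and $\gamma:=\beta_1\beta_0^{-1}$, yields $\chi\in A$ with $\chi^{r_0}=\alpha_1\alpha_0^{-1}$ and $\chi^{s_0}=\beta_1\beta_0^{-1}$. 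A short computation with the product-action axioms identifies $(1_U,\chi)\,m(n_0)$ with $(\chi\cdot r_0,\alpha_1)(u,\theta)=(\chi\cdot s_0,\beta_1)(v,\phi)$, a common left multiple whose first coordinate $\chi\cdot L_0$ is therefore a left multiple $eL_1$ of the first coordinate $L_1$ of $m(n)$; then $(e,1_A)\,m(n)=(1_U,\chi)\,m(n_0)$, so $m(n_0)=(1_U,\chi^{-1})(e,1_A)\,m(n)$ is a left multiple of $m(n)$, and symmetry gives the reverse. Thus $m$ is a least common left multiple with the asserted property. For the final sentence I would observe that if $M$ is cancellative then a least common left multiple is unique up to a unit of $M$, and the property transfers: given a least common left multiple $(r,\alpha)(u,\theta)=(s,\beta)(v,\phi)$ and an arbitrary common left multiple $p=a(\alpha\cdot u)=b(\beta\cdot v)$ of $\alpha\cdot u$ and $\beta\cdot v$ in $U$, the element $(a,\alpha)(u,\theta)=(b,\beta)(v,\phi)$ is a common left multiple of $(u,\theta)$ and $(v,\phi)$, hence a left multiple of the given one, and cancelling recovers that $p$ is a left multiple of $r(\alpha\cdot u)$.

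The step I expect to be the real obstacle, with everything else bookkeeping, is the reconciliation of the $A$-coordinate with the twisting: distinct common left multiples in $M$ a priori involve incomparable $A$-translates $\alpha\cdot u$, $\alpha_1\cdot u$ of $u$, sitting over different least common left multiples in $U$, and strong confluence is precisely the mechanism that moves between these copies. The delicate point is checking that its hypothesis --- the displayed equality of $(\alpha_1\alpha_0^{-1})^{\alpha_0\cdot u}$ and $(\beta_1\beta_0^{-1})^{\beta_0\cdot v}$ --- actually holds, which comes out only after feeding in both second-coordinate equations and the ``$U$ acting on product'' axiom.
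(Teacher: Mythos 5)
The paper states this lemma by citation to Brin (\cite[Lemma~2.12]{brin07}, \cite[Lemma~3.15]{brin05}) rather than proving it, so I am judging your argument on its own terms. Your argument for the first two sentences is correct and, I believe, essentially Brin's: reduce the $U$-cofactor of an arbitrary common left multiple to a least one to obtain $m(n)$; derive the identity $(\alpha_1\alpha_0^{-1})^{\alpha_0\cdot u}=(\beta_1\beta_0^{-1})^{\beta_0\cdot v}$ from the two second-coordinate equations and axiom (6); apply strong confluence at the $U$-lclm $L_0$ to get $\chi$, check that $(1_U,\chi)m(n_0)$ has the same twist $(\alpha_1,\beta_1)$ as $m(n)$ and hence first coordinate a left $U$-multiple of $L_1$; conclude that $m(n_0)$ and $m(n)$ are left multiples of each other and finish by transitivity. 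All of this is right, including the slightly fussy point that one really needs the ``reverse'' direction obtained by symmetry, since what the argument produces directly is $m(n_0)$ as a left multiple of $m(n)$.

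The final sentence is where there is a real gap. From $(a,\alpha)(u,\theta)=(c,\delta)(r,\alpha)(u,\theta)$ right cancellation yields $(a,\alpha)=(c,\delta)(r,\alpha)$, hence $a=c(\delta\cdot r)$ and $\delta^r=1_A$, and therefore $p=a(\alpha\cdot u)=c\bigl((\delta\cdot r)(\alpha\cdot u)\bigr)=c(\delta\cdot L)$ with $L=r(\alpha\cdot u)$. That exhibits $p$ as a left multiple of $\delta\cdot L$, not of $L$; and $\delta^r=1_A$ gives $\delta^L=1_A$, but absent injectivity of the action of $U$ on $A$ (which this lemma does not assume) this does not force $\delta=1_A$, and there is no a priori relation between $\delta\cdot L$ and $L$. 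So ``cancelling recovers that $p$ is a left multiple of $r(\alpha\cdot u)$'' does not follow as written. A clean repair: let $L'$ be a least common left multiple of $\alpha\cdot u$ and $\beta\cdot v$ in $U$, write $L=fL'$ and $L'=r'(\alpha\cdot u)=s'(\beta\cdot v)$. Then $(L',\alpha^u\theta)=(r',\alpha)(u,\theta)=(s',\beta)(v,\phi)$ is a common left multiple of $(u,\theta),(v,\phi)$, hence $(L',\alpha^u\theta)=c\,(L,\alpha^u\theta)$ for some $c\in M$; but also $(L,\alpha^u\theta)=(f,1_A)(L',\alpha^u\theta)$, so right cancellation gives $(f,1_A)c=1_M$, and left cancellation upgrades $(f,1_A)$ to a two-sided unit. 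A unit of the form $(f,1_A)$ forces $f\in U^\times$, so $L$ is a unit multiple of $L'$ and therefore itself a least common left multiple.
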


Being actions of monoids, Zappa--Sz\'ep actions are already determined by the actions of generating sets. It is not obvious, but also true, that they are often also determined by the actions of generating sets \emph{on} generating sets. This means that, in order to define the actions, we need only define $\alpha \cdot u$ and $u^\alpha$ where both $\alpha$ and $u$ come from generating sets. Brin \cite[pp.~768--769]{brin07} gives a sufficient condition for such partial actions to extend to well defined Zappa--Sz\'ep actions, which we restate here. Given sets $X$ and $Y$, let $X^*$ and $Y^*$ denote the free monoids generated respectively by them. Suppose maps $Y \times X \to Y^*, (\alpha,u) \mapsto \alpha^u$ and $Y \times X \to X, (\alpha,u) \mapsto \alpha \cdot u$ are given (so $\alpha\cdot u$ should be a single generator, but $\alpha^u$ may be a string of generators). Let $W$ be the set of relations $(\alpha u, (\alpha \cdot u)(\alpha^u))$ with $\alpha \in Y, u \in X$. Then
\[
\gen{X \cup Y \mid W}
\]
is a Zappa--Sz\'ep product of $X^*$ and $Y^*$. In particular, the above maps extend to Zappa--Sz\'ep actions $Y^* \times X^* \to Y^*$ and $Y^* \times X^* \to X^*$.

\begin{lemma}[{\cite[Lemma~2.14]{brin07}}]\label{lem:extend_zs-products}
 Let $U = \gen{X \mid R}$ and $A = \gen{Y \mid T}$ be presentations of monoids (with $X \cap Y = \emptyset$). Assume that functions $Y \times X \to Y^*, (\alpha,u) \mapsto \alpha^u$ and $Y \times X \to X, (\alpha,u) \mapsto \alpha \cdot u$ are given. Let $\sim_R$ and $\sim_T$ denote the equivalence relations on $X^*$ and $Y^*$ imposed by the relation sets $R$ and $T$.

 Extend the above maps to $Y^* \times X^*$ as above. Assume that the following are satisfied. If $(u,v) \in R$ then for every $\alpha \in Y$ we have $(\alpha \cdot u, \alpha \cdot v) \in R$ or $(\alpha \cdot v, \alpha \cdot u) \in R$, and also $\alpha^u \sim_T \alpha^v$. If $(\alpha,\beta) \in T$ then for all $u \in X$ we have $\alpha \cdot u = \beta \cdot u$ and $\alpha^u \sim_T \beta^u$.

 Then the lifted maps induce well defined Zappa--Sz\'ep actions and the restriction of the map $A \times U \to U$ to $A \times X$ has its image in $X$. A presentation for $U \bowtie A$ is
 \[
 \gen{X \cup Y \mid R \cup T \cup W}
 \]
 where $W$ consists of all pairs $(\alpha u, (\alpha \cdot u)(\alpha^u))$ for $(\alpha,u) \in Y \times X$.
\end{lemma}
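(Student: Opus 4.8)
The plan is to recognize this as a ``transport of structure along the quotient maps'' statement and to isolate the one genuinely combinatorial step. Write $P := X^* \bowtie Y^* = \gen{X \cup Y \mid W}$, which by the paragraph preceding the lemma is a bona fide Zappa--Sz\'ep product; so the partial data already extend to Zappa--Sz\'ep actions $Y^* \times X^* \to Y^*$, $(\alpha,u)\mapsto\alpha^u$, and $Y^* \times X^* \to X^*$, $(\alpha,u)\mapsto\alpha\cdot u$, satisfying axioms 1)--8) of Definition~\ref{def:ext_ZS_prod_monoids}. A short induction on $|\alpha|$ via axiom 2) shows $\alpha\cdot x \in X$ whenever $x \in X$, which will yield the ``image in $X$'' clause once the action is known to descend.

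First I would show the two actions descend to $U = X^*/{\sim_R}$ and $A = Y^*/{\sim_T}$, i.e.\ that
\[
u \sim_R v \ \Rightarrow\ \alpha\cdot u \sim_R \alpha\cdot v,\ \alpha^u \sim_T \alpha^v \qquad\text{and}\qquad \alpha \sim_T \beta \ \Rightarrow\ \alpha\cdot u \sim_R \beta\cdot u,\ \alpha^u \sim_T \beta^u
\]
for all $u,v \in X^*$, $\alpha,\beta \in Y^*$. Since $\sim_R$ and $\sim_T$ are the congruences generated by $R$ and $T$, it suffices to handle a single relation applied inside a word, say $u = pwq$, $v = pw'q$ with $(w,w')\in R$ (and the analogous $T$-case). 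Using axioms 2), 4), 6), 8) I would expand $\alpha\cdot(pwq)$ and $\alpha^{pwq}$ into products whose factors involve $\alpha\cdot p$, $\alpha^p$, $(\alpha^p)\cdot w$, $(\alpha^p)^w$, $((\alpha^p)^w)\cdot q$, etc., and likewise for $v$; comparing termwise, everything agrees except precisely where the hypotheses ``$(\alpha'\cdot w,\alpha'\cdot w')\in R$ or its reverse'' and ``$(\alpha')^w \sim_T (\alpha')^{w'}$'' are needed for the word $\alpha' = \alpha^p \in Y^*$ (and symmetrically the $T$-hypotheses for the other implication). Because the hypotheses are stated only for a single generator $\alpha \in Y$, the content here is to first bootstrap them to arbitrary words $\alpha'$; I expect this to require a carefully staged simultaneous induction on the four displayed implications, stripping one letter off $\alpha'$ (or off $q$) at a time and re-invoking axioms 2) and 4). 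This is the technical heart.

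With the actions descended, the eight axioms for the quotient maps follow immediately from those for $P$, so $U \bowtie A$ is a well-defined monoid. To identify it with $N := \gen{X\cup Y \mid R\cup T\cup W}$ I would write maps both ways. The assignment $x\mapsto(x,1_A)$, $y\mapsto(1_U,y)$ defines $\Phi\colon N \to U \bowtie A$ because $R,T$ hold in $U\bowtie A$ (they define $U,A$) and $W$ holds since $(1_U,\alpha)(u,1_A) = (\alpha\cdot u,\alpha^u) = (\alpha\cdot u,1_A)(1_U,\alpha^u)$ by~\eqref{eq:zappa--szep_multiplication}. Conversely $(u,\alpha)\mapsto\bar u\,\bar\alpha$ (product of images in $N$) defines $\Psi\colon U \bowtie A \to N$: it is well defined because $R,T$ hold in $N$, and multiplicative because $N$ is a quotient of $P$, so the word-level relations $\alpha v = (\alpha\cdot v)(\alpha^v)$ hold in $N$ and give $\bar u\,\bar\alpha\,\bar v\,\bar\beta = \overline{u(\alpha\cdot v)}\ \overline{\alpha^v\beta} = \Psi\big((u,\alpha)(v,\beta)\big)$. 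Since $\Phi$ and $\Psi$ invert each other on generators, $N \cong U\bowtie A$, which is the asserted presentation; and $\alpha\cdot x\in X$ gives the final clause.

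The hard part will be the descent step: every other step is a formal transport of the Zappa--Sz\'ep structure through the quotient maps, whereas promoting the single-generator compatibility hypotheses to words and verifying they kill all discrepancies produced by applying a relation in the middle of a word forces one to interleave the ``acting on a product'' rules 6) and 8) inside a simultaneous induction, and getting the induction order right (so that ``$(\cdot)\cdot q$ and $(\cdot)^q$ preserve $\sim_R$, $\sim_T$'' is available exactly when needed) is the delicate point.
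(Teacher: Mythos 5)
The paper does not actually prove this lemma; it is restated verbatim from \cite[Lemma~2.14]{brin07} and the authors defer to Brin for the argument. So your proposal has to be judged on its own merits rather than against a proof in this paper.

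Your outline is sound. The three-stage structure — lift the partial maps to the free Zappa--Sz\'ep product $P = X^* \bowtie Y^*$ (which the paragraph preceding the lemma already supplies), verify the two actions descend through the congruences $\sim_R$ and $\sim_T$, and then identify $U \bowtie A$ with the asserted presentation via mutually inverse monoid maps $\Phi$, $\Psi$ — is the right decomposition. Your verification that $\Phi$ respects $W$ using \eqref{eq:zappa--szep_multiplication}, and that $\Psi$ is multiplicative because $N$ is a quotient of $P$, is correct.

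One substantive comment on the descent step you correctly flag as the crux: it decouples more cleanly than a ``simultaneous'' induction would suggest, and seeing the decoupling is where the ``$\in R$ or its reverse $\in R$'' wording of the hypothesis earns its keep. First, for $(\alpha,\beta) \in T$ and a single letter $x \in X$, axioms 2) and 6) plus the hypothesis give $\alpha\cdot x = \beta\cdot x$ \emph{exactly} (using $b\cdot x \in X$ for any suffix $b$) and $\alpha^x \sim_T \beta^x$; an induction on $|u|$ alone then gives the whole $T$-direction. Second, for $(w,w')$ an $R$-relator one proves by induction on $|\alpha|$ that $(\alpha\cdot w,\, \alpha\cdot w') \in R^{\mathrm{sym}}$ — a relator, not merely $\sim_R$-equivalent — and $\alpha^w \sim_T \alpha^{w'}$; if the hypothesis only guaranteed $\sim_R$-equivalence, this induction would not close, because peeling off one $y \in Y$ via axioms 2) and 6) would leave you needing the conclusion for a chain of relators inside longer words, which is the general case you are trying to prove. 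Third, the general step $u = pwq$, $v = pw'q$ is then handled in one pass by expanding with axioms 4) and 8): the $p$-part is identical, the $w$-part is the second stage applied to $\alpha^p$, and the $q$-part is the first stage applied to $(\alpha^p)^w \sim_T (\alpha^p)^{w'}$. So the dependency is sequential ($T$-direction, then the relator-to-relator bootstrap, then the general $R$-step), not genuinely simultaneous; making that order explicit, and noticing that the single-relator form of the hypothesis is what makes the second stage terminate, is the piece your sketch leaves implicit. The idea is right, and the rest is formal transport as you say.
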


%--------------------------------------------------------------------
\section{Cloning systems and generalized Thompson groups}\label{sec:defining_data}

\subsection{Brin--Zappa--Sz\'ep products and cloning systems}\label{sec:BZS}

To construct Thomp\-son-like groups we now consider Zappa--Sz\'ep products $\Fmonoid \bowtie G$ of the forest monoid $\Fmonoid$ with a group $G$.

\begin{definition}[BZS products]\label{def:BZS_product}
 Suppose we have Zappa--Sz\'ep actions $(g,\forest)\mapsto g\cdot \forest$ and $(g,\forest)\mapsto g^\forest$ on $G \times \Fmonoid$, for $G$ a group. For each standard generator $\lambda_k$ of $\Fmonoid$ the map $\clone_k=\clone_{\lambda_k} \colon G\to G$ given by $g\mapsto g^{\lambda_k}$ is called the $k$th \emph{cloning map}. If every such cloning map is injective, we call the actions \emph{Brin--Zappa--Sz\'ep (BZS) actions} and call the monoid $\Fmonoid \bowtie G$ the \emph{Brin--Zappa--Sz\'ep (BZS) product}.
\end{definition}

Since the action of $\Fmonoid$ on $G$ is a right action we will also write the cloning maps $\clone_k$ on the right.

The monoid $\Fmonoid$ is cancellative and has common right multiples, and the same is true of $G$, being a group. Since $G$ is a group these properties are inherited by $\Fmonoid \bowtie G$:

\begin{observation}\label{obs:BZS_cancellative_lcrms}
 A BZS product $\Fmonoid \bowtie G$ is cancellative and has (least) common right multiples. In particular it has a group of right fractions.
\end{observation}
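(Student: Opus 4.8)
The plan is to deduce everything from Ore's theorem (Theorem~\ref{thm:ore}) together with the standing facts about $\Fmonoid$ and about Zappa--Sz\'ep products collected above. First I would establish cancellativity of $M \defeq \Fmonoid \bowtie G$. Using the product formula \eqref{eq:zappa--szep_multiplication}, suppose $(u,\alpha)(v,\beta) = (u,\alpha)(v',\beta')$, i.e.\ $u(\alpha\cdot v) = u(\alpha\cdot v')$ and $\alpha^v\beta = \alpha^{v'}\beta'$. Left cancellativity of $\Fmonoid$ (Lemma~\ref{lem:fmonoid_properties}\eqref{eq:fmonoid_cancellative}) gives $\alpha\cdot v = \alpha\cdot v'$; since $\alpha\in G$ is a group element, the left action of $G$ on $\Fmonoid$ is by a bijection (its inverse is $\alpha^{-1}\cdot(-)$, using axioms~1 and~2 of Definition~\ref{def:ext_ZS_prod_monoids}), so $v = v'$. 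Then $\alpha^v = \alpha^{v'}$ and right cancellativity in the group $G$ yields $\beta = \beta'$, so $M$ is left cancellative. Right cancellativity is symmetric: from $(v,\beta)(u,\alpha) = (v',\beta')(u,\alpha)$ one gets $v(\beta\cdot u) = v'(\beta'\cdot u)$ and $\beta^u\alpha = {\beta'}^u\alpha$; the second equation gives $\beta^u = {\beta'}^u$, and since $G$ is a group the cloning maps (more generally the right action of any $u$) are injective on $G$ — this is exactly the BZS hypothesis of Definition~\ref{def:BZS_product} for generators, and it propagates to all of $\Fmonoid$ because $u^{\alpha}$-type composites of injections are injective — hence $\beta = \beta'$; then $\beta\cdot u = \beta'\cdot u$ and right cancellativity in $\Fmonoid$ gives $v = v'$.

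Next I would verify common right multiples. Given $(u_1,\alpha_1),(u_2,\alpha_2)\in M$, use that $\Fmonoid$ has common right multiples (Lemma~\ref{lem:fmonoid_properties}\eqref{eq:fmonoid_common_right_multiples}) to find $w_1,w_2\in\Fmonoid$ with $u_1w_1 = u_2w_2 \eqdef w$. The element $(u_1,\alpha_1)(w_1,\beta_1)$ has $\Fmonoid$-component $u_1(\alpha_1\cdot w_1)$; since $\alpha_1\cdot(-)$ is a bijection of $\Fmonoid$ I may as well choose $w_1$ from the start so that $\alpha_1\cdot w_1$ is whatever right multiplier I need — more cleanly, set $v_1 \defeq \alpha_1^{-1}\cdot w_1'$ for a suitable $w_1'$. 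Actually the cleanest route: the $\Fmonoid$-component of a product $(u,\alpha)(v,\beta)$ ranges over $u\Fmonoid$ as $v$ ranges over $\Fmonoid$ (again because $\alpha\cdot(-)$ is onto), and the $G$-component $\alpha^v\beta$ can be made equal to \emph{any} prescribed element of $G$ by choosing $\beta$ appropriately (since $G$ is a group). So pick $v_1,v_2$ with $u_1(\alpha_1\cdot v_1) = u_2(\alpha_2\cdot v_2)$ — possible since both $u_1\Fmonoid$ and $u_2\Fmonoid$ meet by common right multiples — and then pick $\beta_1,\beta_2\in G$ with $\alpha_1^{v_1}\beta_1 = \alpha_2^{v_2}\beta_2$; this exhibits a common right multiple. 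Finally, Ore's theorem (Theorem~\ref{thm:ore}) applied to the cancellative monoid $M$ with common right multiples yields a unique group of right fractions, giving the "in particular" clause. For the stronger assertion of \emph{least} common right multiples I would instead invoke Lemma~\ref{lem:ZS_least_common_left_multiples} (or rather its left-right mirror): since $G$ is a group and $\Fmonoid$ has least common right multiples (Lemma~\ref{lem:fmonoid_properties}\eqref{eq:fmonoid_greatest_common_left_factors}) and a length function (\eqref{eq:fmonoid_length}), the relevant confluence condition is automatic — any two elements of the group $G$ trivially satisfy it — so $M$ inherits least common right multiples.

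The main obstacle is bookkeeping rather than anything deep: one must be careful that the BZS hypothesis is stated only for the cloning maps $\clone_k = (-)^{\lambda_k}$ on generators, and one needs the general right action $\alpha \mapsto \alpha^u$ of an \emph{arbitrary} $u\in\Fmonoid$ on $G$ to be injective. This follows by writing $u$ as a product of generators $\lambda_{k_1}\cdots\lambda_{k_r}$ and using axiom~4 of Definition~\ref{def:ext_ZS_prod_monoids}, $\alpha^{(uv)} = (\alpha^u)^v$, to see that $\alpha\mapsto\alpha^u$ is a composite of cloning maps (each injective) precomposed with the relevant $\alpha\mapsto\beta\cdot$ adjustments — a short induction. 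Dually one checks that $\alpha\cdot(-)\colon\Fmonoid\to\Fmonoid$ is a bijection for every $\alpha\in G$ (inverse $\alpha^{-1}\cdot(-)$, from axioms~1,2), which is what powers the common-right-multiple argument. Once these two routine facts are in hand, cancellativity and common right multiples are immediate from the corresponding properties of $\Fmonoid$ and $G$, and the conclusion follows from Ore. I would present the proof in three short steps — (i) the two bijectivity/injectivity lemmas, (ii) cancellativity, (iii) common (indeed least common) right multiples plus Ore — keeping each to a couple of lines.
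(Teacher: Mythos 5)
Your argument is essentially the paper's intended one: the paper's one-sentence proof cites precisely the unique Zappa--Sz\'ep factorization (which reduces cancellation to componentwise cancellation in $\Fmonoid$ and $G$) and the observation that $\forest$ is a right multiple of $(\forest,g)$ --- namely $(\forest,g)(1_\Fmonoid,g^{-1}) = (\forest,1_G)$ --- which lifts common right multiples from $\Fmonoid$ a little more directly than your component-choosing route. Your propagation of the BZS injectivity hypothesis from generators to all $u\in\Fmonoid$ via $\alpha^{uv}=(\alpha^u)^v$, and the use of bijectivity of $\alpha\cdot(-)$, are exactly the points that need to be said. The one imprecise step is the appeal to Lemma~\ref{lem:ZS_least_common_left_multiples} for least common \emph{right} multiples: that lemma is about least common \emph{left} multiples in $U \bowtie A$, and its honest left--right mirror (passing to the opposite monoid) would land you in $A^{\mathrm{op}} \bowtie U^{\mathrm{op}}$, putting the group in the $U$-slot, which is not our situation, so ``the confluence condition is automatic'' is not a correct justification. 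Fortunately no such lemma is needed: since $\alpha\cdot(-)$ is a bijection of $\Fmonoid$ and $G$ is a group, the right multiples of $(u,\alpha)$ are exactly the elements $(u',\alpha')$ with $u'\in u\Fmonoid$ and $\alpha'$ arbitrary; hence $(u'',1_G)$ is a least common right multiple of $(u_1,\alpha_1)$ and $(u_2,\alpha_2)$ if and only if $u''$ is a least common right multiple of $u_1,u_2$ in $\Fmonoid$, which exists by Lemma~\ref{lem:fmonoid_properties}. Replace the lemma citation with this direct observation and the proof is complete and matches the paper.
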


\begin{proof}
 This follows easily from the statements about $\Fmonoid$ using the unique factorization in Zappa--Sz\'ep products and that $\forest$ is a right multiple and left factor of $(\forest,g)$.
\end{proof}

In Definition~\ref{def:BZS_product} we have already simplified the data needed to describe BZS products by using the fact that $\Fmonoid$ is generated by the $\lambda_k$. In a similar fashion the following lemma reduces the data needed to describe the action of $G$ on $\Fmonoid$. We denote by $\symm_\omega$ the group $\Symm(\N)$ of permutations of $\N$ and by $\symm_\infty\le \symm_\omega$ the subgroup of permutations that fix almost all elements of $\N$.

\begin{lemma}[Carets to carets]\label{lem:shuffling_carets}
 Let $\Fmonoid \bowtie G$ be a BZS product. The action of $G$ on $\Fmonoid$ preserves the set $\Lambda=\{\lambda_k\}_{k\in\N}$ and so induces a homomorphism $\rho \colon G\to \symm_\omega$. Conversely, the action of $G$ on $\Fmonoid$ is completely determined by~$\rho$ and $(\clone_k)_{k\in\N}$.
\end{lemma}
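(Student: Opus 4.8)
The plan is to recognize $\Lambda$ intrinsically inside the monoid $\Fmonoid$ and then read off everything from the Zappa--Sz\'ep axioms of Definition~\ref{def:ext_ZS_prod_monoids}. First I would note that, by Proposition~\ref{prop:monoid_pres} together with Lemma~\ref{lem:fmonoid_properties}\eqref{eq:fmonoid_length} and~\eqref{eq:fmonoid_no_units}, a forest lies in $\Lambda$ if and only if it is non-trivial and cannot be written as a product of two non-trivial forests: writing $\forest = \lambda_{k_1}\cdots\lambda_{k_r}$ in normal form, additivity of $\len$ and the absence of non-trivial units show that a non-trivial factorization exists exactly when $r \ge 2$. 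I would also record that, since $G$ is a group, each $g \in G$ acts on $\Fmonoid$ as a bijection, with $g^{-1}\cdot(-)$ its inverse (axioms~(1) and~(2)); and axiom~(7) ($A$ acting on identity) says this bijection fixes the trivial forest, so $g\cdot\forest$ is trivial precisely when $\forest$ is.

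Next I would show the action of $G$ preserves $\Lambda$. Fix $g$ and $k$. Then $g\cdot\lambda_k$ is non-trivial, so suppose toward a contradiction that $g\cdot\lambda_k = \forest'\forest''$ with $\forest'$ and $\forest''$ both non-trivial. Applying $g^{-1}\cdot(-)$ and axiom~(8) ($A$ acting on product) gives
\[
\lambda_k = g^{-1}\cdot(g\cdot\lambda_k) = g^{-1}\cdot(\forest'\forest'') = (g^{-1}\cdot\forest')\bigl((g^{-1})^{\forest'}\cdot\forest''\bigr)\text{.}
\]
Since $g^{-1}$ acts bijectively fixing the trivial forest, $g^{-1}\cdot\forest'$ is non-trivial; since $(g^{-1})^{\forest'} \in G$ acts the same way, $(g^{-1})^{\forest'}\cdot\forest''$ is non-trivial. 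This contradicts the irreducibility of $\lambda_k$, so $g\cdot\lambda_k \in \Lambda$. Hence $\rho(g)\colon\N\to\N$, defined by $g\cdot\lambda_k = \lambda_{\rho(g)(k)}$, is well defined, and applying $g^{-1}\cdot(-)$ to this identity shows $\rho(g^{-1})$ is inverse to $\rho(g)$, so $\rho(g) \in \symm_\omega$. Axioms~(1) and~(2) then give $\rho(1) = \id$ and $\rho(gh)(k) = \rho(g)\bigl(\rho(h)(k)\bigr)$, so $\rho$ is a homomorphism.

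For the converse I would prove, by induction on $\len(\forest)$, that $g\cdot\forest$ is determined by $\rho$ and the cloning maps $\clone_k\colon G\to G$. The cases $\len(\forest)\le 1$ are immediate from $g\cdot 1 = 1$ and $g\cdot\lambda_k = \lambda_{\rho(g)(k)}$. If $\len(\forest) = r \ge 2$, write $\forest = \lambda_{k_1}\forest_0$ with $\len(\forest_0) = r-1$; axiom~(8) gives $g\cdot\forest = \lambda_{\rho(g)(k_1)}\,\bigl(\clone_{k_1}(g)\cdot\forest_0\bigr)$, and the second factor is determined by the inductive hypothesis applied to the group element $\clone_{k_1}(g)$. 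Thus $\rho$ and $(\clone_k)_{k\in\N}$ determine the whole action.

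The only genuine obstacle is the claim that $g$ carries carets to carets: the action of $G$ on $\Fmonoid$ is a priori just an action on the underlying set, so there is no multiplicative map to exploit directly. The point is to identify $\Lambda$ with the set of multiplicatively irreducible non-trivial forests and to observe that axiom~(8), applied through both $g$ and $g^{-1}$, transports non-trivial factorizations in both directions --- and it is exactly here that one uses that $G$ is a group rather than a mere monoid. Everything else is bookkeeping with the eight Zappa--Sz\'ep axioms.
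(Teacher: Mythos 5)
Your proof is correct and takes essentially the same route as the paper: both identify $\Lambda$ as the multiplicatively irreducible non-trivial forests (via the length function and absence of units), apply $g^{-1}\cdot(-)$ together with axiom~(8) to transport a non-trivial factorization of $g\cdot\lambda_k$ to one of $\lambda_k$ and derive a contradiction, and then deduce the converse by iterating $g\cdot(\lambda_k\forest)=\lambda_{\rho(g)k}\bigl((g)\clone_k\cdot\forest\bigr)$. You spell out a few steps the paper leaves implicit (the intrinsic characterization of $\Lambda$, that $\rho(g)$ is a bijection, that $\rho$ is a homomorphism), but the key idea and its execution are the same.
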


\begin{proof}
 For $g\in G$ and $\forest,\altforest \in \Fmonoid$, we know that $g\cdot(\forest\altforest)=(g\cdot \forest)(g^{\forest} \cdot \altforest)$ by Definition~\ref{def:ext_ZS_prod_monoids}. We show that the action of $G$ preserves $\Lambda$. If $g\cdot \lambda_k=\forest\altforest$ then $g^{-1} \cdot (\forest\altforest)=\lambda_k$, so one of $g^{-1} \cdot \forest$ or $(g^{-1})^{\forest}\cdot \altforest$ equals $1_\Fmonoid$. Again by Definition~\ref{def:ext_ZS_prod_monoids}, we see that either $\forest=1_\Fmonoid$ or $\altforest=1_\Fmonoid$. We conclude that $g\cdot \lambda_k$ equals $\lambda_\ell$ for some $\ell$ depending on $k$ and $g$. The map $\rho$ then is defined via $\rho(g)k=\ell$.
 
 To see that the action of $G$ on $\Fmonoid$ is determined by $\rho$ and $(\clone_k)$, we use repeated applications of the equation $g\cdot (\lambda_k \forest)=\lambda_{\rho(g)k} ((g)\clone_k \cdot \forest)$.
\end{proof}

As a consequence we see that the action of $G$ on $\Fmonoid$ preserves the length of an element:

\begin{corollary}\label{cor:length}
 There is a monoid homomorphism $\len \colon \Fmonoid \bowtie G \to \Nz$ taking $(\forest,g)$ to the length of $\forest$ in the standard generators. The kernel of $\len$ is $G = (\Fmonoid \bowtie G)^\times$. \qed
\end{corollary}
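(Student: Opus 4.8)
The plan is to take $\len(\forest,g)$ to be $\len(\forest)$, where on the right $\len\colon\Fmonoid\to\Nz$ is the caret-counting homomorphism of Lemma~\ref{lem:fmonoid_properties}\eqref{eq:fmonoid_length}, and to verify the two assertions in turn. The one fact that genuinely needs an argument (rather than unwinding definitions) is that the action of $G$ on $\Fmonoid$ is length-preserving, i.e.\ $\len(g\cdot\forest)=\len(\forest)$ for all $g\in G$ and $\forest\in\Fmonoid$; this was already announced in the paragraph preceding the statement, and I would prove it by induction on $\len(\forest)$. The base case $\forest=1_\Fmonoid$ is property~(7) of Definition~\ref{def:ext_ZS_prod_monoids}, $g\cdot 1_\Fmonoid=1_\Fmonoid$. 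For the inductive step, write $\forest=\lambda_k\altforest$ (possible by Proposition~\ref{prop:monoid_pres}); then, exactly as in the proof of Lemma~\ref{lem:shuffling_carets}, $g\cdot\forest=\lambda_{\rho(g)k}\bigl((g)\clone_k\cdot\altforest\bigr)$, so $\len(g\cdot\forest)=1+\len\bigl((g)\clone_k\cdot\altforest\bigr)=1+\len(\altforest)=\len(\forest)$ by the induction hypothesis.

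Next I would check the homomorphism property. By the Zappa--Sz\'ep multiplication formula, $(\forest,g)(\altforest,h)=\bigl(\forest(g\cdot\altforest),\,g^{\altforest}h\bigr)$, so
\[
\len\bigl((\forest,g)(\altforest,h)\bigr)=\len\bigl(\forest(g\cdot\altforest)\bigr)=\len(\forest)+\len(g\cdot\altforest)=\len(\forest)+\len(\altforest),
\]
using that $\len$ is a monoid homomorphism on $\Fmonoid$ together with the length-preservation just established; since also $\len(1_\Fmonoid,1_G)=0$, the map $\len\colon\Fmonoid\bowtie G\to\Nz$ is a monoid homomorphism.

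Finally, for the kernel: an element $(\forest,g)$ lies in $\ker\len$ iff $\len(\forest)=0$, which by the normal form in Proposition~\ref{prop:monoid_pres} happens iff $\forest=1_\Fmonoid$, so $\ker\len=\{1_\Fmonoid\}\times G$. A one-line computation with the multiplication formula (using properties~(3) and~(7) of Definition~\ref{def:ext_ZS_prod_monoids}) gives $(1_\Fmonoid,g)(1_\Fmonoid,h)=(1_\Fmonoid,gh)$, so $g\mapsto(1_\Fmonoid,g)$ identifies $G$ with the submonoid $\{1_\Fmonoid\}\times G$; as $G$ is a group, every element of this submonoid is a unit of $\Fmonoid\bowtie G$. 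Conversely, any unit of $\Fmonoid\bowtie G$ is carried by the monoid homomorphism $\len$ to a unit of $\Nz$, i.e.\ to $0$, hence lies in $\ker\len$. Therefore $(\Fmonoid\bowtie G)^\times=\ker\len=\{1_\Fmonoid\}\times G\cong G$, which is exactly the claim. I expect no real obstacle here: the content of the proof is concentrated in the short induction establishing length-preservation of the $G$-action, and everything else is formal.
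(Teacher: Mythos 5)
Your proof is correct and takes essentially the same route the paper intends: the paper marks this corollary with a \qed precisely because it follows from the induction $g\cdot(\lambda_k\altforest)=\lambda_{\rho(g)k}\bigl((g)\clone_k\cdot\altforest\bigr)$ stated at the end of the proof of Lemma~\ref{lem:shuffling_carets}, which is exactly the length-preservation argument you carry out, and the remaining assertions about the kernel and the unit group unwind formally just as you describe.
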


In particular, $\len$ is a length function in the sense of Section~\ref{sec:posets_from_monoids}. The induced morphism from the group of right fractions to $\Z$ (Lemma~\ref{lem:ore_fractions}) is also denoted $\len$.

The next result is a technical lemma that tells us that $\rho$ and the cloning maps always behave well together, in any BZS product.

\begin{lemma}[Compatibility]\label{lem:compatibility}
 Let $\Fmonoid \bowtie G$ be a BZS product. The homomorphism $\rho \colon G \to \symm_\omega$ and the maps $(\clone_k)_{k \in \N}$ satisfy the following compatibility condition for $k < \ell$:

 If $\rho(g)k < \rho(g)\ell$ then $\rho((g)\clone_\ell)k = \rho(g)k$ and $\rho((g)\clone_k)(\ell+1) = \rho(g)\ell + 1$.

 If $\rho(g)k > \rho(g)\ell$ then $\rho((g)\clone_\ell)k = \rho(g)k + 1$ and $\rho((g)\clone_k)(\ell+1) = \rho(g)\ell$.
\end{lemma}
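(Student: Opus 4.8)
The plan is to reduce everything to the defining equation of the Zappa--Sz\'ep action, namely $g \cdot (\lambda_k F) = \lambda_{\rho(g)k}\,\bigl((g)\clone_k \cdot F\bigr)$ from Lemma~\ref{lem:shuffling_carets}, together with the forest relation $\lambda_j \lambda_i = \lambda_i \lambda_{j+1}$ for $i < j$ (Proposition~\ref{prop:monoid_pres}) and property~(2) of Definition~\ref{def:ext_ZS_prod_monoids}, which says $(\alpha\beta)\cdot u = \alpha \cdot (\beta \cdot u)$. The point is that a single forest relation among the $\lambda$'s, when acted upon by $g$, produces an identity among indices that is exactly the asserted compatibility condition. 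So the proof is a bookkeeping computation with one genuine case split according to whether $\rho(g)$ reverses or preserves the order of $k$ and $\ell$.

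First I would fix $k < \ell$ and apply $g$ to both sides of the relation $\lambda_\ell \lambda_k = \lambda_k \lambda_{\ell+1}$. Using property~(2) and the formula $g \cdot \lambda_m = \lambda_{\rho(g)m}$, the left side becomes $g \cdot (\lambda_\ell \lambda_k) = \lambda_{\rho(g)\ell}\bigl((g)\clone_\ell \cdot \lambda_k\bigr) = \lambda_{\rho(g)\ell}\,\lambda_{\rho((g)\clone_\ell)k}$, and the right side becomes $g \cdot (\lambda_k \lambda_{\ell+1}) = \lambda_{\rho(g)k}\,\lambda_{\rho((g)\clone_k)(\ell+1)}$. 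Now both sides are two-caret forests, and by the uniqueness of the normal form in Proposition~\ref{prop:monoid_pres} (a two-caret forest $\lambda_a \lambda_b$ is determined, as a normal-form word $\lambda_{\min}\lambda_{\text{other}}$ or $\lambda_{\min}\lambda_{\text{other}-1}$ depending on whether $a \le b$, via relation~\eqref{eq:forest_relation}), I can read off the indices. Concretely: since $\rho(g)$ is injective, $\rho(g)k \ne \rho(g)\ell$, and I split into the two cases.

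In the case $\rho(g)\ell < \rho(g)k$ (which is $\rho(g)k > \rho(g)\ell$), the left-hand word $\lambda_{\rho(g)\ell}\,\lambda_{\rho((g)\clone_\ell)k}$ is already in normal form provided $\rho((g)\clone_\ell)k \ge \rho(g)\ell$, and the right-hand word $\lambda_{\rho(g)k}\,\lambda_{\rho((g)\clone_k)(\ell+1)}$ is \emph{not}: one applies relation~\eqref{eq:forest_relation} once to rewrite it as $\lambda_{\rho((g)\clone_k)(\ell+1)}\,\lambda_{\rho(g)k+1}$ — for this to be valid one needs $\rho((g)\clone_k)(\ell+1) < \rho(g)k$, which is part of what gets extracted. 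Matching the two normal forms gives $\rho((g)\clone_k)(\ell+1) = \rho(g)\ell$ and $\rho((g)\clone_\ell)k = \rho(g)k + 1$, exactly the second assertion. The case $\rho(g)k < \rho(g)\ell$ is symmetric with the roles of the two sides reversed: now the left word needs one rewrite and the right word is in normal form, yielding $\rho((g)\clone_\ell)k = \rho(g)k$ and $\rho((g)\clone_k)(\ell+1) = \rho(g)\ell + 1$, the first assertion.

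The main obstacle — and it is minor — is being careful that the cross-inequalities one needs to justify each normal-form rewrite (e.g.\ $\rho((g)\clone_k)(\ell+1) < \rho(g)k$ in the first case) actually hold. The clean way to handle this is not to assume them but to observe that the equation of two-caret forests forces the two words to have the \emph{same} normal form; since exactly one of the two words can fail to be in normal form (their first letters $\rho(g)\ell$ and $\rho(g)k$ are distinct, so at most one equals the smaller index), the smaller of $\rho(g)k, \rho(g)\ell$ is necessarily the first letter of the common normal form, which pins down both unknown indices simultaneously and retroactively verifies the inequality. Thus no connectivity or topology enters; it is purely the combinatorics of Proposition~\ref{prop:monoid_pres} applied to the $G$-image of relation~\eqref{eq:forest_relation}.
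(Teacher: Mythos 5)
Your proof is correct and takes essentially the same route as the paper: apply $g$ to the forest relation $\lambda_\ell\lambda_k = \lambda_k\lambda_{\ell+1}$, expand both sides via the Zappa--Sz\'ep axioms to get $\lambda_{\rho(g)\ell}\lambda_{\rho((g)\clone_\ell)k} = \lambda_{\rho(g)k}\lambda_{\rho((g)\clone_k)(\ell+1)}$, and then invoke uniqueness of normal forms in $\Fmonoid$ to read off the indices; the observation that exactly one of the two two-caret words is in normal form (since the first letters $\rho(g)\ell$ and $\rho(g)k$ differ) is precisely how the paper rules out the ``both ordered increasingly'' and ``both ordered decreasingly'' cases before extracting the two displayed identities. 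The only cosmetic difference is that you organize the case split by the sign of $\rho(g)k - \rho(g)\ell$ up front, whereas the paper organizes it by which word needs the normal-form rewrite, but as your closing paragraph notes these two splits are the same.
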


\begin{proof}
 For $k < \ell$ we know that
 \[
 g \cdot (\lambda_\ell \lambda_k) = g \cdot(\lambda_k \lambda_{\ell+1}) \text{.}
 \]
 Writing this out using the axioms for Zappa--Sz\'ep products we obtain that
 \[
 (g \cdot \lambda_\ell) (g^{\lambda_\ell} \cdot \lambda_k) = (g \cdot \lambda_k)(g^{\lambda_k} \cdot \lambda_{\ell + 1})
 \]
 which can be rewritten using the action morphism $\rho$ as
 \[
 \lambda_{\rho(g)\ell} \lambda_{\rho(g^{\lambda_\ell})k} = \lambda_{\rho(g)k}\lambda_{\rho(g^{\lambda_k})(\ell + 1)}\text{.}
 \]
 Using the normal form for $\Fmonoid$ (see Proposition~\ref{prop:monoid_pres}) we can distinguish cases for how this could occur. The first case is that both pairs of indices
 \[
 (\rho(g) \ell, \rho(g^{\lambda_\ell})k) \text{ and } (\rho(g)k,\rho(g^{\lambda_k})(\ell + 1))
 \]
 are ordered increasingly and coincide. But this is impossible because $\rho(g) \ell \ne \rho(g) k$. The second case is that both pairs are ordered strictly decreasingly and coincide, which is impossible for the same reason. The remaining two cases have that one pair is ordered increasingly and the other strictly decreasingly. In either case the monoid relation now yields a relationship among the indices, namely either
 \[
 \rho(g^{\lambda_k})(\ell+1)-1 = \rho(g)\ell > \rho(g^{\lambda_\ell})k = \rho(g)k
 \]
 or
 \[
 \rho(g)\ell = \rho(g^{\lambda_k})(\ell+1) < \rho(g)k = \rho(g^{\lambda_\ell})k - 1 \text{.}
 \]
 Finally, replacing the action of $\lambda_k$ by the map $\clone_k$ yields the result.
\end{proof}

The compatibility condition can also be rewritten as
\begin{equation}
\label{eq:compatibility_cases}
\rho((g)\clone_\ell)(k) =
\left\{
\begin{array}{ll}
\rho(g)(k) & k < \ell, \rho(g) k < \rho(g) \ell,\\
\rho(g)(k) + 1 & k < \ell, \rho(g) k > \rho(g) \ell,\\
\rho(g)(k-1) & k-1 > \ell, \rho(g)(k-1) < \rho(g) \ell,\\
\rho(g)(k-1)+1 & k-1 > \ell,\rho(g)(k-1) > \rho(g)\ell\text{.}
\end{array}
\right.
\end{equation}

\medskip

Lemma~\ref{lem:shuffling_carets} said that the action of $G$ on $\Fmonoid$ is uniquely determined by $\rho$ and the cloning maps. The action of $\Fmonoid$ on $G$ is also uniquely determined by the cloning maps, simply because $\Fmonoid$ is generated by the $\lambda_k$. Our findings can be summarized as:

\begin{proposition}[Uniqueness]\label{prop:BZS_uniqueness}
 A BZS product $\Fmonoid \bowtie G$ induces a homomorphism $\rho \colon G \to \symm_\omega$ and injective maps $\clone_k \colon G \to G, k \in \N$ satisfying the following conditions for $k, \ell \in \N$ with $k <\ell$ and $g,h \in G$:
 \smallskip
 \begin{enumerate}[label={(CS\arabic*)}, ref={CS\arabic*}, leftmargin=*]
  \item $(gh) \clone_k = (g)\clone_{\rho(h)k}(h)\clone_k$. \hfill (Cloning a product) \label{item:cs_cloning_a_product}
  \item $\clone_\ell \circ \clone_k = \clone_k \circ \clone_{\ell+1} $. \hfill (Product of clonings) \label{item:cs_product_of_clonings}
  \item If $\rho(g)k < \rho(g)\ell$ then $\rho((g)\clone_\ell)k = \rho(g)k$ and \\
   \indent $\rho((g)\clone_k)(\ell+1) = \rho(g)\ell + 1$.\\
   If $\rho(g)k > \rho(g)\ell$ then $\rho((g)\clone_\ell)k = \rho(g)k + 1$ and \\
   \indent $\rho((g)\clone_k)(\ell+1) = \rho(g)\ell$. \hfill(Compatibility) \label{item:cs_compatibility}
 \end{enumerate}
 
 \smallskip

 The BZS product is uniquely determined by these data. \qed
\end{proposition}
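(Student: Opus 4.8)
The plan is to verify the three conditions one at a time, reading each off from the Zappa--Sz\'ep axioms of Definition~\ref{def:ext_ZS_prod_monoids} and the forest relations \eqref{eq:forest_relation}, and then to deduce uniqueness from Lemma~\ref{lem:shuffling_carets}. The homomorphism $\rho \colon G \to \symm_\omega$ is provided by Lemma~\ref{lem:shuffling_carets}, and the injectivity of the cloning maps $\clone_k \colon G \to G$ is built into Definition~\ref{def:BZS_product}, so those parts need no further argument and recalling $g \cdot \lambda_k = \lambda_{\rho(g)k}$ is all the notation we need.

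For \eqref{item:cs_cloning_a_product} I would apply the ``$U$ acting on a product'' axiom $(\alpha\beta)^u = \alpha^{(\beta\cdot u)}\beta^u$ with $\alpha = g$, $\beta = h$, $u = \lambda_k$. Since $h \cdot \lambda_k = \lambda_{\rho(h)k}$, this becomes $(gh)^{\lambda_k} = g^{\lambda_{\rho(h)k}} h^{\lambda_k}$, which is exactly $(gh)\clone_k = (g)\clone_{\rho(h)k}(h)\clone_k$ after substituting the cloning maps for the actions of generators. For \eqref{item:cs_product_of_clonings} I would combine the ``product acting on $A$'' axiom $\alpha^{(uv)} = (\alpha^u)^v$ with the forest relation $\lambda_\ell\lambda_k = \lambda_k\lambda_{\ell+1}$ (valid since $k < \ell$): evaluating both sides at $g\in G$ gives $((g)\clone_\ell)\clone_k = ((g)\clone_k)\clone_{\ell+1}$, which is the asserted identity of composites read in the diagrammatic order forced by writing cloning maps on the right. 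Finally, \eqref{item:cs_compatibility} is literally the statement of the Compatibility Lemma~\ref{lem:compatibility}, already proved, so it can be quoted verbatim.

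It remains to see that the pair $(\rho,(\clone_k)_{k})$ determines the BZS product. By Lemma~\ref{lem:shuffling_carets} the left action of $G$ on $\Fmonoid$ is determined by $\rho$ and the $\clone_k$. The right action of $\Fmonoid$ on $G$ is determined by the $\clone_k$ alone: since $\Fmonoid$ is generated by the $\lambda_k$ (Proposition~\ref{prop:monoid_pres}), iterating the ``product acting on $A$'' axiom gives $g^{\lambda_{k_1}\cdots\lambda_{k_r}} = (\cdots((g)\clone_{k_1})\cdots)\clone_{k_r}$, so the action is forced on all of $\Fmonoid$. With both Zappa--Sz\'ep actions pinned down, the multiplication on $\Fmonoid \bowtie G$ is determined by \eqref{eq:zappa--szep_multiplication}, so two BZS products inducing the same $\rho$ and cloning maps coincide.

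I do not expect a genuine obstacle: the proposition is an assembly of results from the preceding subsections. The only point requiring care is bookkeeping with the left/right conventions --- making sure the index $\rho(h)k$ rather than $\rho(g)k$ appears in \eqref{item:cs_cloning_a_product}, and that the composition order in \eqref{item:cs_product_of_clonings} is read consistently with the convention of acting on the right --- but this is routine once the relevant axioms are written out.
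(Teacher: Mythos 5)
Your proposal is correct and takes exactly the route the paper intends: the proposition is stated with a $\qed$ because it is a bookkeeping summary of the preceding material, and your argument supplies precisely the intended details — (CS1) is axiom~(6) of Definition~\ref{def:ext_ZS_prod_monoids} read at $u=\lambda_k$, (CS2) is axiom~(4) applied to the forest relation $\lambda_\ell\lambda_k=\lambda_k\lambda_{\ell+1}$, (CS3) is Lemma~\ref{lem:compatibility} verbatim, and uniqueness follows from Lemma~\ref{lem:shuffling_carets} together with the fact that $\Fmonoid$ is generated by the $\lambda_k$ and the Zappa--Sz\'ep multiplication formula~\eqref{eq:zappa--szep_multiplication}. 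Your care with the right-action composition convention in (CS2) is the one place where bookkeeping actually matters, and you handled it correctly.
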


The converse is also true:

\begin{proposition}[Existence]\label{prop:BZS_existence}
 Let $G$ be a group, $\rho \colon G \to \symm_\omega$ a homomorphism and $(\clone_k)_{k \in \N}$ a family of injective maps from $G$ to itself. Assume that for $k < \ell$ and $g,h \in G$ the conditions \eqref{item:cs_cloning_a_product}, \eqref{item:cs_product_of_clonings} and \eqref{item:cs_compatibility} in Proposition~\ref{prop:BZS_uniqueness} are satisfied.

 Then there is a well defined BZS product $\Fmonoid \bowtie G$ corresponding to these data.
\end{proposition}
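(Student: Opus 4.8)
The plan is to construct the monoid $\Fmonoid \bowtie G$ directly from presentations using Lemma~\ref{lem:extend_zs-products}, with $\Fmonoid$ in the role of the monoid acting on the right (through the cloning maps) and $G$ in the role of the monoid acting on the left (through $\rho$). For $\Fmonoid$ I would use the presentation $\gen{X \mid R}$ of Proposition~\ref{prop:monoid_pres}, so that $X = \{\lambda_k\}_{k \in \N}$ and $R$ consists of the pairs $(\lambda_j\lambda_i, \lambda_i\lambda_{j+1})$ with $i < j$; for $G$ I would use its multiplication-table presentation $\gen{Y \mid T}$, with $Y = G$ as a generating set (disjoint from $X$) and $T$ the set of pairs $(gh, gh)$, where on the left $gh$ is the length-two word and on the right it names the product in $G$. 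On generators I define the two partial actions by
\[
\alpha \cdot \lambda_k \defeq \lambda_{\rho(\alpha)k} \in X
\qquad\text{and}\qquad
\alpha^{\lambda_k} \defeq (\alpha)\clone_k \in Y \subseteq Y^*
\]
for $\alpha \in Y = G$ and $\lambda_k \in X$. Note $\alpha \cdot \lambda_k$ is a single generator, as Lemma~\ref{lem:extend_zs-products} requires; and these are the only possible choices, cf.\ Lemma~\ref{lem:shuffling_carets}.

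The work then is to verify the two hypotheses of Lemma~\ref{lem:extend_zs-products}. For the first, I would fix $i < j$, so that $u = \lambda_j\lambda_i$ and $v = \lambda_i\lambda_{j+1}$ form a relation in $R$, and an arbitrary $\alpha \in G$. Expanding with the Zappa--Sz\'ep extension rules (axioms 8 and 4 of Definition~\ref{def:ext_ZS_prod_monoids}) gives
\[
\alpha \cdot (\lambda_j\lambda_i) = \lambda_{\rho(\alpha)j}\,\lambda_{\rho((\alpha)\clone_j)i},
\qquad
\alpha \cdot (\lambda_i\lambda_{j+1}) = \lambda_{\rho(\alpha)i}\,\lambda_{\rho((\alpha)\clone_i)(j+1)},
\]
and $\alpha^{\lambda_j\lambda_i} = ((\alpha)\clone_j)\clone_i$, $\alpha^{\lambda_i\lambda_{j+1}} = ((\alpha)\clone_i)\clone_{j+1}$. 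Feeding \eqref{item:cs_compatibility} into the first two equations and splitting into the cases $\rho(\alpha)i < \rho(\alpha)j$ and $\rho(\alpha)i > \rho(\alpha)j$ shows that, writing $a = \min\{\rho(\alpha)i, \rho(\alpha)j\}$ and $b = \max\{\rho(\alpha)i, \rho(\alpha)j\}$, the two length-two words are $\lambda_b\lambda_a$ and $\lambda_a\lambda_{b+1}$ in one order or the other, i.e.\ a relation of $R$ or its reverse; this is exactly the argument in the proof of Lemma~\ref{lem:compatibility}, run backwards. The two displayed words over $Y$ agree by \eqref{item:cs_product_of_clonings}, hence are $\sim_T$-equivalent. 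For the second hypothesis, I would fix a relation $(gh, gh) \in T$ and $\lambda_k \in X$; then $(gh)\cdot\lambda_k = g\cdot(h\cdot\lambda_k) = \lambda_{\rho(g)\rho(h)k} = \lambda_{\rho(gh)k} = (gh)\cdot\lambda_k$ since $\rho$ is a homomorphism, and $(gh)^{\lambda_k} = g^{(h\cdot\lambda_k)}\,h^{\lambda_k} = ((g)\clone_{\rho(h)k})\,((h)\clone_k)$, whose product in $G$ is $(gh)\clone_k = (gh)^{\lambda_k}$ by \eqref{item:cs_cloning_a_product}; so the two sides are $\sim_T$-equivalent.

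With both hypotheses in hand, Lemma~\ref{lem:extend_zs-products} yields well-defined Zappa--Sz\'ep actions of $\Fmonoid$ and $G$ on each other extending the given partial data, hence a monoid $\Fmonoid \bowtie G$. Since every cloning map $\clone_k = \clone_{\lambda_k} \colon g \mapsto g^{\lambda_k}$ is injective by assumption, these are BZS actions in the sense of Definition~\ref{def:BZS_product}, so $\Fmonoid \bowtie G$ is a BZS product; and the data it induces via Proposition~\ref{prop:BZS_uniqueness} are the original $\rho$ and $(\clone_k)$, since $\alpha$ acts on $\lambda_k$ as $\lambda_{\rho(\alpha)k}$ and $\alpha^{\lambda_k} = (\alpha)\clone_k$ by construction. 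I expect the only step carrying real content to be the case analysis that derives the $R$-relation between $\alpha\cdot(\lambda_j\lambda_i)$ and $\alpha\cdot(\lambda_i\lambda_{j+1})$ from \eqref{item:cs_compatibility}; everything else is bookkeeping with the Zappa--Sz\'ep axioms together with \eqref{item:cs_cloning_a_product} and \eqref{item:cs_product_of_clonings}, plus the elementary check that the multiplication-table presentation really presents $G$.
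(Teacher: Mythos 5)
Your proof is correct and takes essentially the same route as the paper: both invoke Lemma~\ref{lem:extend_zs-products} with $U = \Fmonoid$ under the presentation of Proposition~\ref{prop:monoid_pres} and $A = G$ under the multiplication-table presentation, define the partial actions on generators by $g \cdot \lambda_k = \lambda_{\rho(g)k}$ and $g^{\lambda_k} = (g)\clone_k$, and check the two hypotheses by appealing to \eqref{item:cs_product_of_clonings} and a case split via \eqref{item:cs_compatibility} for the forest relations, and to \eqref{item:cs_cloning_a_product} for the group relations. Your closing remark that the induced data agree with the original $\rho$ and $\clone_*$ is an explicit check the paper leaves implicit, but otherwise the arguments coincide.
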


\begin{proof}
 We will verify the assumptions of Lemma~\ref{lem:extend_zs-products}. This will produce a Zappa--Sz\'ep action, which will be a Brin--Zappa--Sz\'ep action by construction. We take $U$ to be $\Fmonoid$ with the presentation
 \[
 \gen{\lambda_k  \text{ for }  k \in \N \mid (\lambda_\ell\lambda_k,\lambda_k\lambda_{\ell+1})  \text{ for } k < l}\text{.}
 \]
 Let $R$ denote the set of relations used here and let $R^{\text{sym}}$ be the symmetrization. We take $A$ to be $G$ with the trivial presentation
 \[
 \gen{g  \text{ for } g \in G \mid (gh,g') \text{ for } gh=g'}\text{.}
 \]
 The maps on generators are defined as $g^{\lambda_k} \defeq (g)\clone_k$ and $g \cdot \lambda_k \defeq \lambda_{\rho(g)k}$.
 
 First, for $k < \ell$ and $g \in G$ we need to verify that
 \[
 (g \cdot (\lambda_\ell \lambda_k), g \cdot (\lambda_k \lambda_{\ell+1})) \in R^{\text{sym}} \quad \text{and} \quad g^{\lambda_\ell \lambda_k} = g^{\lambda_k \lambda_{\ell+1}} \text{.}
 \]
 The latter of these is just condition~\eqref{item:cs_product_of_clonings}. The former condition means that
 \[
 (\lambda_{\rho(g)\ell} \lambda_{\rho((g)\clone_\ell)k}, \lambda_{\rho(g)k} \lambda_{\rho((g)\clone_k)(\ell+1)})
 \]
 should lie in $R^{\text{sym}}$. If $\rho(g)k > \rho(g)\ell$ we can use condition~\eqref{item:cs_compatibility} to rewrite this as
 \[
 (\lambda_{\rho(g)\ell}\lambda_{\rho(g)k+1},\lambda_{\rho(g)k}\lambda_{\rho(g)\ell})
 \]
 which is in $R^{\text{sym}}$. If $\rho(g)k < \rho(g)\ell$ then the tuple is
 \[
 (\lambda_{\rho(g)\ell}\lambda_{\rho(g)k},\lambda_{\rho(g)k}\lambda_{\rho(g)\ell+1})
 \]
 which already lies in $R$.
 
 Second, for every relation $(gh,g')$ of $G$ and every $k \in \N$ we have to verify that
 \[
 (gh) \cdot \lambda_k = g' \cdot \lambda_k \quad \text{and} \quad (gh)^{\lambda_k} = {(g')}^{\lambda_k}
 \]
 for $k \in \N$. The former is not really a condition because the partial action was already defined using $G$ (rather than the free monoid spanned by $G$). The latter means that we need
 \[
 {(g')}^{\lambda_k} = g^{\lambda_{\rho(h)k}}h^{\lambda_k}
 \]
 which is just condition~\eqref{item:cs_cloning_a_product}.
\end{proof}

\begin{definition}\label{def:cloning_system}
 Let $G$ be a group, $\rho \colon G \to S_\omega$ a homomorphism and $(\clone_k)_{k \in \N} \colon G \to G$ a family of maps, also denoted $\clone_*$ for brevity. The triple $(G,\rho,\clone_*)$ is called a \emph{cloning system} if the data satisfy conditions \eqref{item:cs_cloning_a_product}, \eqref{item:cs_product_of_clonings} and \eqref{item:cs_compatibility} above. We may also refer to $\rho$ and $\clone_*$ as a forming a \emph{cloning system on} $G$.
\end{definition}

We now discuss an extended example, of the infinite symmetric group, and show that we have a cloning system. It is exactly the cloning system that gives rise to Thompson's group $V$.

\begin{example}[Symmetric groups]\label{ex:symm_gps}
 Let $G=\symm_\infty$. Let $\rho \colon \symm_\infty \to \symm_\omega$ just be inclusion. The action of $G$ on $\Fmonoid$ is thus given by $g \cdot \lambda_k = \lambda_{\rho(g)k} = \lambda_{gk}$.
 
 Since we will use the specific cloning maps in this example even in the future general setting, we will give them their own name, $\symmclone_\ell$. They are defined by the formula
 \begin{equation}
\label{eq:symmclone}
 ((g)\symmclone_k)(m) =
 \left\{
 \begin{array}{ll}
 gm & m \le k, gm \le gk\text{,}\\
 gm+1 & m < k, gm > gk\text{,}\\
 g(m-1) & m > k, g(m-1) < gk\text{,}\\
 g(m-1) + 1 & m > k, g(m-1) \ge gk\text{.}
 \end{array}
 \right.
 \end{equation}
 
 If we draw permutations as strands crossing each other, the word ``cloning'' becomes more or less literal: applying the $k$th cloning map creates a parallel copy of the $k$th strand, where we count the strands at the bottom. See Figure~\ref{fig:symm_clone} for an example.
 
 \begin{figure}[htb]
 \centering
 \begin{tikzpicture}[line width=0.8pt]
  \draw (0,-2) -- (1,0); \draw (1,-2) -- (0,0); \draw (2,-2) -- (2,0);
  \node at (3,-1) {$\stackrel{\symmclone_2}{\longrightarrow}$};
  \begin{scope}[xshift=4cm]
   \draw (0,-2) -- (2,0); \draw (1,-2) -- (0,0); \draw (2,-2) -- (1,0); \draw (3,-2) -- (3,0);
  \end{scope}
 \end{tikzpicture}
 \caption{An example of cloning in symmetric groups. Here we see that $(1~2)\symmclone_2 = (1~3~2)$.}
 \label{fig:symm_clone}
 \end{figure}

 We will prove that this defines a cloning system by verifying~\eqref{item:cs_cloning_a_product},~\eqref{item:cs_product_of_clonings} and~\eqref{item:cs_compatibility}. For this example we will just verify them directly, and not use any specific presentation for $\symm_\infty$. It is immediate from~\eqref{eq:symmclone} that the compatibility condition~\eqref{item:cs_compatibility} in the formulation~\eqref{eq:compatibility_cases} is satisfied.
 
 To aid in checking condition~\eqref{item:cs_cloning_a_product}, we define two families of maps, $\pi_k \colon \N \to \N$ and $\tau_k \colon \N \to \N$, for $k \in \N$:
 \begin{equation}
 \label{eq:pi_tau}
 \pi_k(m) = \left\{
 \begin{array}{ll}
 m & m \le k\text{,}\\
 m-1 & m > k
 \end{array}
 \right.
 \quad\text{and}\quad
 \tau_k(m) = \left\{
 \begin{array}{ll}
 m & m \le k\text{,}\\
 m + 1& m > k\text{.}
 \end{array}
 \right.
 \end{equation}
 Note that $\pi_k \circ \tau_k = \id$ and $\tau_k \circ \pi_k(m) = m$, unless $m = k+1$ in which case it equals $m-1$. In the $m=k+1$ case, we see that
 \[
 (gh)\symmclone_k(k+1) = gh(k) +1 = (g)\symmclone_{hk}(hk +1)= (g)\symmclone_{hk} (h)\symmclone_k(k+1) \text{,}
 \]
 by repeated use of the last case in the definition. It remains to check condition~\eqref{item:cs_cloning_a_product} in the $m \ne k+1$ case. According to the definitions, we have
 \[
 ((g)\symmclone_k)(m) = \tau_{gk}(g\pi_k(m))
 \]
 whenever $m \ne k+1$. Using this we see that
 \begin{align*}
 ((g)\symmclone_{hk}) \circ ((h)\symmclone_k)(m) &= \tau_{ghk} g \pi_{hk} \circ \tau_{hk} h \pi_k (m)\\
 & = \tau_{ghk} gh \pi_k(m)\\
 & = ((gh)\symmclone_k)(m)
 \end{align*}
 for $m \ne k+1$.
 
 To check condition~\eqref{item:cs_product_of_clonings}, we consider $k<\ell$. We first verify, from the definition, the special cases
 \begin{align*}
 ((g)\symmclone_\ell \circ \symmclone_k)(k+1) &= gk+1 = ((g)\symmclone_k \circ \symmclone_{\ell+1})(k+1)\quad  \text{and}\\
 ((g)\symmclone_\ell \circ \symmclone_k) (\ell+2) &= g\ell+2 = ((g)\symmclone_k \circ \symmclone_{\ell+1})(\ell+2)\text{.}
 \end{align*}
 For the remaining case, when $m \ne k+1,\ell+2$, we have
 \begin{align*}
 ((g)\symmclone_\ell \circ \symmclone_k) (m) &= \tau_k \tau_\ell g \pi_\ell \pi_k (m)\quad\text{and}\\
 ((g)\symmclone_k \circ \symmclone_{\ell+1}) (m) &= \tau_{\ell+1}\tau_k  g \pi_k \pi_{\ell+1} (m)
 \end{align*}
 and it is straightforward to check that
 \begin{equation}
 \label{eq:pi_and_tau_relations}
 \pi_\ell \pi_k = \pi_k \pi_{\ell+1} \text{ and }\tau_k \tau_\ell = \tau_{\ell+1}\tau_k\text{.}
 \end{equation}
 
 We conclude that $(\symm_\infty,\rho,(\symmclone_k)_k)$ is a cloning system.
\end{example}

\begin{remark}
Besides the example of symmetric groups there are two more examples of cloning systems previously existing in the literature (though of course not using this language): they are for the families of braid groups and pure braid groups and were used in \cite{brin07,brady08} to construct $\Vbr$ and $\Fbr$.
\end{remark}

\begin{observation}[Simplified compatibility]\label{obs:sync}
 Condition~\eqref{item:cs_compatibility} in Proposition~\ref{prop:BZS_uniqueness} can equivalently be rewritten as
 \[
 \rho((g)\clone_k)(i) = (\rho(g))\symmclone_k (i) \text{ for all } i\ne k,k+1 \text{.}
 \]
\end{observation}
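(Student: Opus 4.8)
The plan is to observe that, once the definitions are unwound, both formulations are literally the same explicit permutation formula, so the ``equivalence'' is essentially a matter of renaming variables. Concretely, I would first invoke the reformulation of condition~\eqref{item:cs_compatibility} already recorded in~\eqref{eq:compatibility_cases}: that~\eqref{item:cs_compatibility} holds for all $g$ and all $k<\ell$ if and only if the four-case formula~\eqref{eq:compatibility_cases} for $\rho((g)\clone_\ell)(k)$ holds for all $g$ and all $k\ne\ell,\ell+1$. (The two cases ``$k<\ell$'' of~\eqref{eq:compatibility_cases} encode the first conclusions of the two implications in~\eqref{item:cs_compatibility}, while the two cases ``$k-1>\ell$'' encode their second conclusions after interchanging the names of the two indices; nothing is asked at $k=\ell+1$, which is exactly the strand that $\clone_\ell$ splits.) It therefore suffices to check that the right-hand side of~\eqref{eq:compatibility_cases}, with $\ell$ renamed to $k$ and the old $k$ renamed to $i$, coincides with $(\rho(g))\symmclone_k(i)$ for every $i\ne k,k+1$.

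Second, I would evaluate $(\rho(g))\symmclone_k(i)$ directly from the defining formula~\eqref{eq:symmclone}, using two elementary facts. First, $\rho(g)$ is a permutation of $\N$, so $i\ne k$ forces $\rho(g)i\ne\rho(g)k$, which upgrades the weak inequalities $\le$, $\ge$ in~\eqref{eq:symmclone} to strict ones. Second, $i\notin\{k,k+1\}$ means either $i<k$ or $i-1>k$, so the borderline value $m=k+1$ of~\eqref{eq:symmclone} never arises and the endpoint $m=k$ falls under the case $m<k$. With these remarks the four cases of~\eqref{eq:symmclone} for the argument $i$ read off as $\rho(g)(i)$, $\rho(g)(i)+1$, $\rho(g)(i-1)$, $\rho(g)(i-1)+1$, selected by the sign of $\rho(g)i-\rho(g)k$ when $i<k$ and the sign of $\rho(g)(i-1)-\rho(g)k$ when $i-1>k$ --- which is term for term the renamed right-hand side of~\eqref{eq:compatibility_cases}. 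Chaining the equivalences (CS3) $\Leftrightarrow$ \eqref{eq:compatibility_cases} $\Leftrightarrow$ the $\symmclone_k$-identity then yields the statement.

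I do not expect a genuine obstacle here: the content is that~\eqref{eq:symmclone} restricted to arguments $i\ne k,k+1$ collapses onto~\eqref{eq:compatibility_cases}, and the proof is purely clerical. The only places that require attention are keeping the index substitution $\ell\leftrightarrow k$, $k\leftrightarrow i$ consistent throughout, and invoking injectivity of $\rho(g)$ at the right moment so that the $\le,\ge$ in~\eqref{eq:symmclone} become the $<,>$ appearing in~\eqref{eq:compatibility_cases}; everything else is definition-chasing.
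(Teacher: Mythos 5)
Your verification is correct, and it is exactly what the paper intends: the paper leaves Observation~\ref{obs:sync} without a proof, treating it as definition-chasing, and the argument you give supplies that missing verification. Having already invoked the equivalence (CS3) $\Leftrightarrow$~\eqref{eq:compatibility_cases}, which the paper explicitly states just after Lemma~\ref{lem:compatibility}, the remaining work reduces (as you say) to matching the four cases of~\eqref{eq:compatibility_cases}, with the renaming $(\ell,k)\mapsto(k,i)$, against the four cases of~\eqref{eq:symmclone} evaluated at $(\rho(g),i)$. Two points you handle that one could easily slip on: first, excluding $i=k,k+1$ is exactly what makes the dichotomy $i<k$ or $i-1>k$ exhaustive, matching the two halves of~\eqref{eq:compatibility_cases}; second, injectivity of the permutation $\rho(g)$ is what upgrades the non-strict inequalities $\le,\ge$ in~\eqref{eq:symmclone} to the strict ones in~\eqref{eq:compatibility_cases} (the non-strict versions in~\eqref{eq:symmclone} are only needed to pin down the otherwise-ambiguous boundary cases $m=k$ and $m=k+1$, which are precisely the ones excluded here). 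So the chain of equivalences closes, and the argument is complete.
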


All the examples in the later sections satisfy the condition in Observation~\ref{obs:sync} even when $i=k,k+1$.

\begin{remark}\label{rmk:axioms_via_pres}
Proposition~\ref{prop:BZS_existence} is an application of Lemma~\ref{lem:extend_zs-products} to the trivial presentation. As this example demonstrates, it can be rather involved to verify the conditions for a cloning system. If the group in question comes equipped with a presentation involving only short relations, it may be easier to re-run the proof of Proposition~\ref{prop:BZS_existence} with that presentation by applying Lemma~\ref{lem:extend_zs-products}. In this case one has to check \eqref{item:cs_product_of_clonings} and \eqref{item:cs_compatibility} only on generators, but also has to check a variant of \eqref{item:cs_cloning_a_product} for every relation.
\end{remark}

We finish by discussing the case when we have least common left multiples. Let $\clone_*$ be the cloning maps of a cloning system. For $\forest = \lambda_{k_1} \cdots \lambda_{k_r}$ define $\clone_\forest \defeq \clone_{k_1} \circ \cdots \circ \clone_{k_r}$. Note that this is well defined by condition~\eqref{item:cs_product_of_clonings} and is just the map $g \mapsto g^\forest$.

\begin{observation}\label{obs:BZS_lclm}
 Let $G$ be a group and let $(\rho,\clone_*)$ be a cloning system on $G$. The action of $\Fmonoid$ on $G$ defines a strongly confluent family if and only if $\image(\clone_{\forest_1}) \cap \image(\clone_{\forest_2}) = \image(\clone_{\altforest})$ whenever $\forest_1$ and $\forest_2$ have least common left multiple $\altforest$.
 
 In particular the BZS product $\Fmonoid \bowtie G$ has least common left multiples in that case.
\end{observation}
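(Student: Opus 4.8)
The plan is to unwind what ``strongly confluent'' means for the right action $(g,\forest)\mapsto g^\forest=(g)\clone_\forest$ of $\Fmonoid$ on $G$ and match it, term for term, with the displayed image condition. Two ingredients will be used throughout: the Zappa--Sz\'ep axiom $g^{\forest\altforest}=(g^\forest)^\altforest$ from Definition~\ref{def:ext_ZS_prod_monoids}, and the fact that every $\clone_\forest$ is injective, being a composite of the injective cloning maps $\clone_k$. Fix forests $\forest_1,\forest_2$ that have a least common left multiple $\altforest$, and write $\altforest=\forest_1'\forest_1=\forest_2'\forest_2$ with $\forest_1',\forest_2'\in\Fmonoid$.

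First I would record the inclusion that always holds: for any $g\in G$ we have $g^\altforest=(g^{\forest_1'})^{\forest_1}\in\image(\clone_{\forest_1})$, and symmetrically $g^\altforest\in\image(\clone_{\forest_2})$, so $\image(\clone_\altforest)\subseteq\image(\clone_{\forest_1})\cap\image(\clone_{\forest_2})$ unconditionally. Hence the asserted equality is equivalent to the reverse inclusion $\image(\clone_{\forest_1})\cap\image(\clone_{\forest_2})\subseteq\image(\clone_\altforest)$.

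Next I would translate strong confluence into exactly this reverse inclusion. An element $g$ lies in $\image(\clone_{\forest_1})\cap\image(\clone_{\forest_2})$ precisely when $g=h^{\forest_1}=k^{\forest_2}$ for some $h,k\in G$; this is the hypothesis of strong confluence for the configuration $u=\forest_1$, $v=\forest_2$, $r=\forest_1'$, $s=\forest_2'$, $\alpha=g$, $\beta=h$, $\gamma=k$, whose conclusion asks for $\theta\in G$ with $\theta^{\forest_1'}=h$ and $\theta^{\forest_2'}=k$. Because all cloning maps are injective, the remark immediately following the definition of strong confluence says it is enough to produce $\theta$ with $\theta^\altforest=g$; conversely, from such a $\theta$ one recovers $\theta^{\forest_1'}=h$ and $\theta^{\forest_2'}=k$ by applying injectivity of $\clone_{\forest_1}$ and $\clone_{\forest_2}$ to $(\theta^{\forest_1'})^{\forest_1}=g=h^{\forest_1}$ and its analogue. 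So strong confluence for this configuration is equivalent to $g\in\image(\clone_\altforest)$. Quantifying over all $g$ and all pairs $\forest_1,\forest_2$ with least common left multiple $\altforest$, the action is strongly confluent if and only if $\image(\clone_{\forest_1})\cap\image(\clone_{\forest_2})\subseteq\image(\clone_\altforest)$ for all such triples, which by the previous paragraph is the claimed equivalence.

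Finally, for the ``in particular'' clause: $\Fmonoid$ is cancellative and has least common left multiples by Lemma~\ref{lem:fmonoid_properties}, and $G$ is a group, so as soon as the action is known to be strongly confluent, Lemma~\ref{lem:ZS_least_common_left_multiples} applies verbatim and yields that $\Fmonoid\bowtie G$ has least common left multiples. I do not expect a genuine obstacle here; the only point needing care is the bookkeeping that identifies the two separate equations $\theta^{\forest_1'}=h$ and $\theta^{\forest_2'}=k$ of strong confluence with the single membership $g\in\image(\clone_\altforest)$ --- which works only because the cloning maps are injective --- together with keeping straight that in a least common left multiple $\altforest=\forest_1'\forest_1=\forest_2'\forest_2$ the extra factors sit on the left.
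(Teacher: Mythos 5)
Your argument is correct and matches the paper's proof in essence: both unwind the definition of strong confluence, use the remark before Lemma~\ref{lem:ZS_least_common_left_multiples} that for an injective action the conclusion $\theta^{ru}=\alpha$ suffices, and invoke injectivity of the cloning maps to pass back and forth between the two equations $\theta^{\forest_1'}=h$, $\theta^{\forest_2'}=k$ and the single membership $g\in\image(\clone_\altforest)$. The only cosmetic difference is that you explicitly record the always-true inclusion $\image(\clone_\altforest)\subseteq\image(\clone_{\forest_1})\cap\image(\clone_{\forest_2})$, which the paper leaves implicit.
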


\begin{proof}
 The proof is obtained just by unraveling the definition and using the remark before Lemma~\ref{lem:ZS_least_common_left_multiples}. Assume that the above condition holds. Write $\altforest = \altforest_1 \forest_1 = \altforest_2 \forest_2$. Assume that $g = g_1^{\forest_1} = g_2^{\forest_2}$, that is, $g \in \image(\clone_{\forest_1}) \cap \image(\clone_{\forest_2})$. By assumption there is an $h \in G$ such that $g = (h)\clone_{\altforest}$. That is $g = h^{\altforest} = h^{\altforest_1 \forest_1} = g_1^{\forest_1}$. Injectivity of the action of $\Fmonoid$ on $G$ now implies $h^{\altforest_1} = g_1$. A similar argument shows $h^{\altforest_2} = g_2$.
 
 Conversely assume that the action of $\Fmonoid$ on $G$ is strongly confluent and write $\altforest$ as before. Let $g \in \image(\clone_{\forest_1}) \cap \image(\clone_{\forest_2})$. Write $g = (g_1)\clone_{\forest_1}$ and $g = (g_2)\clone_{\forest_2}$, that is $g = g_1^{\forest_1}$ and $g = g_2^{\forest_2}$. By strong confluence there is an $h \in G$ such that $h^{\altforest_1} = g_1$ and $h^{\altforest_2} = g_2$. Then $g = h^{\altforest} = (h)\clone_{\altforest}$ as desired.
\end{proof}

To check this global confluence condition one either needs a good understanding of the action of $\Fmonoid$ on $G$ (as was the case for $\Vbr$ \cite[Section~5.3]{brin07}) or one has to reduce it to local confluence statements.

\subsection{Interlude: hedges}\label{sec:hedges}
In the above example of the symmetric group, the action of $\Fmonoid$ on $\symm_\infty$ factors through an action of a proper quotient. This amounts to a further relation being satisfied in addition to the product of clonings relation \eqref{item:cs_product_of_clonings}. The quotient turns out to be what Brin~\cite{brin07} called the monoid of \emph{hedges}. Without going into much detail we want to explain the action of the hedge monoid on $\symm_\infty$.

\begin{figure}[htb]
\centering
\begin{tikzpicture}[line width=0.8pt, scale=0.44]
  \draw
   (-2,-2) -- (0,0) -- (1,-1)   (-1,-1) -- (0,-2);
  \filldraw
   (-2,-2) circle (1.5pt)   (-1,-1) circle (1.5pt)   (0,-2) circle (1.5pt)   (0,0) circle (1.5pt)   (1,-1) circle (1.5pt);
  \draw
   (3,-1) -- (4,0) -- (5,-1);
  \filldraw
   (3,-1) circle (1.5pt)   (4,0) circle (1.5pt)   (5,-1) circle (1.5pt)   (2,0) circle (1.5pt)   (6,0) circle (1.5pt);
  \node at (8,0) {$\dots$};

  \begin{scope}[xshift=12cm]
   \draw
    (0,0) -- (0,-2)  (0,0) -- (2,-2)   (0,0) -- (4,-2)
    (2,0) -- (6,-2)
    (4,0) -- (8,-2)  (4,0) -- (10,-2)
    (6,0) -- (12,-2);
   \filldraw
    (0,0) circle (1.5pt)   (2,0) circle (1.5pt)   (4,0) circle (1.5pt)   (6,0) circle (1.5pt)
    (0,-2) circle (1.5pt)   (2,-2) circle (1.5pt)   (4,-2) circle (1.5pt)   (6,-2) circle (1.5pt)   (8,-2) circle (1.5pt)   (10,-2) circle (1.5pt) (12,-2) circle (1.5pt);
   \node at (13,-.8) {$\dots$};
  \end{scope}
\end{tikzpicture}
\caption{A forest and the corresponding hedge.}
\label{fig:forest_hedge}
\end{figure}

The \emph{hedge monoid} $\Hmonoid$ is the monoid of monotone surjective maps $\Nnz \to \Nnz$. Multiplication is given by composition: $f \cdot h = f \circ h$. There is an action of $\symm_\infty$ on $\Hmonoid$ given by the property that, for $g\in\symm_\infty$ and $f\in\Hmonoid$, the cardinality of $(g \cdot f)^{-1}(i)$ is that of $f^{-1}(g^{-1}i)$. There is an obvious equivariant morphism $c \colon \Fmonoid \to \Hmonoid$ (see Figure~\ref{fig:forest_hedge}) given by $c(\lambda_k) = \eta_k$ where
\[
\eta_k(m) = \left\{
\begin{array}{ll}
m & m \le k\text{,}\\
m-1 & m > k\text{.}
\end{array}
\right.
\]
This morphism is surjective but not injective, in fact (see \cite[Proposition~4.4]{brin07}):

\begin{lemma}
 The monoid $\Hmonoid$ has the presentation
 \[
 \gen{\eta_k, k \in \N\mid \eta_\ell \eta_k = \eta_k \eta_{\ell+1}, \ell \ge k}\text{.}
 \]
\end{lemma}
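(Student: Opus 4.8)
The statement asserts that the hedge monoid $\Hmonoid$ — the monoid of monotone surjections $\Nnz \to \Nnz$ under composition — has the presentation $\gen{\eta_k, k\in\N \mid \eta_\ell\eta_k = \eta_k\eta_{\ell+1}, \ell\ge k}$, where $\eta_k$ is the map collapsing $k$ and $k+1$ (i.e.\ $\eta_k(m)=m$ for $m\le k$, $\eta_k(m)=m-1$ for $m>k$). The plan is to proceed in three steps: (i) show the $\eta_k$ generate $\Hmonoid$; (ii) verify the relations $\eta_\ell\eta_k=\eta_k\eta_{\ell+1}$ for $\ell\ge k$ hold in $\Hmonoid$; and (iii) establish that these relations suffice, by producing a normal form. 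For (i): an element $f\in\Hmonoid$ is determined by the finite data of where the ``non-trivial collapsing'' happens, since $f$ is eventually $m\mapsto m-c$ for some fixed defect $c=c(f)\ge 0$; one factors $f$ by peeling off one caret at a time — if $f$ is non-trivial, pick the smallest $k$ with $f(k)=f(k+1)$, write $f = f'\circ\eta_k$ where $f'$ has one smaller defect, and induct on $c(f)$. For (ii): this is a direct check on each $m\in\Nnz$, splitting into the ranges $m\le k$, $k<m\le\ell$ (or $\ell+1$), $m>\ell+1$, using the explicit formulas for $\eta_k,\eta_\ell,\eta_{\ell+1}$; it is the routine verification already implicit in relation \eqref{eq:forest_relation} pushed along $c\colon\Fmonoid\to\Hmonoid$.

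The heart of the argument is step (iii). The plan is to show that every word in the $\eta_k$ can be brought, using only the relations $\eta_\ell\eta_k = \eta_k\eta_{\ell+1}$ for $\ell\ge k$, into a canonical form $\eta_{k_1}\eta_{k_2}\cdots\eta_{k_r}$ with $k_1\le k_2\le\cdots\le k_r$ (exactly as in Proposition~\ref{prop:monoid_pres} for $\Fmonoid$, but now the relations are available also when $\ell = k$, which is the extra relation distinguishing $\Hmonoid$ from $\Fmonoid$), and then that two such canonical forms represent the same hedge only if they are identical. The first half is a standard rewriting/bubble-sort argument: any adjacent descent $\eta_\ell\eta_k$ with $\ell > k$ can be rewritten as $\eta_k\eta_{\ell+1}$, moving the larger index to the right and strictly decreasing (say) the sum of the positions-times-indices, or more carefully one argues the rewriting terminates; when $\ell = k$ the relation $\eta_k\eta_k = \eta_k\eta_{k+1}$ lets us resolve equal adjacent indices in favor of the sorted-with-strict-increase-after-shift pattern — here one must be slightly careful to pick the right termination measure, but it is the same combinatorics Brin uses. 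For the uniqueness half, given a sorted word $\eta_{k_1}\cdots\eta_{k_r}$ one reads off the corresponding hedge $f$ explicitly and recovers $r$ as the defect $c(f)$ and the multiset $\{k_i\}$ from the ``jump pattern'' of $f$ (where consecutive integers get identified), so distinct sorted words give distinct hedges.

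The main obstacle I anticipate is making the rewriting argument in step (iii) genuinely terminating and confluent in the presence of the $\ell=k$ relation: unlike the $\Fmonoid$ case where all relations strictly move indices apart, here $\eta_k\eta_k\rightsquigarrow\eta_k\eta_{k+1}$ does not obviously decrease a naive lexicographic measure, so one needs to choose the rewriting system and well-founded order with some care (for instance, orienting all relations left-to-right as $\eta_\ell\eta_k\to\eta_k\eta_{\ell+1}$ for $\ell\ge k$ and checking that the number of ``inversions'' weighted appropriately, or the sum $\sum_i (r-i)k_i$ reversed, strictly drops). Alternatively — and this is likely the cleaner route the authors take — one sidesteps the rewriting entirely by invoking the fact (already recorded in the excerpt, \cite[Proposition~4.4]{brin07}) that $c\colon\Fmonoid\to\Hmonoid$ is surjective with kernel congruence generated precisely by $\eta_k\eta_k = \eta_k\eta_{k+1}$, so that $\Hmonoid = \Fmonoid / (\lambda_k\lambda_k \sim \lambda_k\lambda_{k+1})$ and the presentation follows by adjoining that one family of relations to Proposition~\ref{prop:monoid_pres}; then the only thing to check is that this congruence is exactly the kernel of $c$, which is again a normal-form computation but now only needs the $\Fmonoid$ normal form plus the observation that the $\eta$-relations already include the $\lambda$-relations. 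Given that the excerpt explicitly cites \cite[Proposition~4.4]{brin07} for the statement, I expect the ``proof'' to be essentially a pointer to that reference together with the remark that the displayed presentation is Proposition~\ref{prop:monoid_pres} augmented by the relations $\eta_k\eta_k = \eta_k\eta_{k+1}$.
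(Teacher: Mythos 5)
Your proposal is correct, and your closing guess about the paper is exactly right: the paper supplies no proof of its own for this lemma, it simply points to \cite[Proposition~4.4]{brin07} in the sentence introducing the statement. So at the level of ``what the paper actually does,'' you have matched it.

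Your from-scratch outline is a genuinely different (more self-contained) route, and it is sound. Step (i) works as you describe: every $f\in\Hmonoid$ has a finite defect $c(f)$, and peeling off the first collapse $\eta_k$ with $k$ minimal such that $f(k)=f(k+1)$ strictly decreases the defect, so finite generation by the $\eta_k$ follows by induction. Step (ii) is the routine pointwise check and is fine. For step (iii), the uniqueness half is also fine: a sorted word $\eta_{k_1}\cdots\eta_{k_r}$ with $k_1\le\cdots\le k_r$ determines and is determined by the hedge (one recovers $r$ as the defect and the $k_i$ from the fibers), so distinct sorted words give distinct elements. The one place you are right to flag a gap is termination of the rewriting to sorted form: a naive bubble-sort with the measure you suggest does not obviously decrease, because the relation $\eta_\ell\eta_k\to\eta_k\eta_{\ell+1}$ ($\ell\ge k$) strictly \emph{increases} any position-weighted sum. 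A cleaner fix is a select-sort strategy: among all letters in the word, pick one of minimal index $a$, at some position $p$; repeatedly apply $\eta_{a_{i}}\eta_a = \eta_a\eta_{a_i+1}$ (valid since $a_i\ge a$) to move $\eta_a$ to position $1$, noting that this only increments the indices it passes; then recurse on the length-$(r-1)$ suffix, whose indices are all still $\ge a$. Length strictly decreases with each recursive call, so this terminates, and it produces a sorted word. With that adjustment your plan is a complete proof; the paper just chose to outsource it to Brin.
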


Observe that the only difference between this and the presentation of $\Fmonoid$ is that the relation also holds for $\ell = k$, rather than only for $\ell > k$. It turns out that the action of $\Fmonoid$ on $\symm_\infty$ defined in Example~\ref{ex:symm_gps} factors through $c$:

\begin{observation}
 The maps $\symmclone_k$ defined in \eqref{eq:symmclone} satisfy $\symmclone_k \symmclone_k = \symmclone_k \symmclone_{k+1}$. Thus they define an action of $\Hmonoid$ on $\symm_\infty$.
\end{observation}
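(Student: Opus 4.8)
The plan is to establish the identity $\symmclone_k\circ\symmclone_k=\symmclone_k\circ\symmclone_{k+1}$ by a direct computation with the formula~\eqref{eq:symmclone}, and then to read off the statement about $\Hmonoid$ by comparing presentations. The computation becomes transparent once one observes that, fixing $g\in\symm_\infty$ and putting $h\defeq(g)\symmclone_k$, the only features of $h$ one needs are that $h$ is a bijection of $\N$ with $h(k)=gk$ and $h(k+1)=gk+1$: the values $h(k),h(k+1)$ read off immediately from~\eqref{eq:symmclone}, and $h\in\symm_\infty$ since $\symmclone_k$ is a cloning map on $\symm_\infty$ (Example~\ref{ex:symm_gps}). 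Thus $h$ carries the pair $\{k,k+1\}$ bijectively onto the consecutive pair $\{gk,gk+1\}$, which is the ``parallel strands'' phenomenon that should make cloning $h$ at position $k$ and at position $k+1$ have the same outcome.

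Writing the identity applied to $g$ as $(g)(\symmclone_k\circ\symmclone_k)=(h)\symmclone_k$ and $(g)(\symmclone_k\circ\symmclone_{k+1})=(h)\symmclone_{k+1}$, it suffices to compare the permutations $(h)\symmclone_k$ and $(h)\symmclone_{k+1}$ entrywise. For this I would use the factorization from Example~\ref{ex:symm_gps},
\[
((h)\symmclone_j)(m)=\tau_{h(j)}\bigl(h(\pi_j(m))\bigr)\ \text{ for }m\neq j+1,\qquad ((h)\symmclone_j)(j+1)=h(j)+1,
\]
with $j=k$ and $j=k+1$, substituting $h(k)=gk$ and $h(k+1)=gk+1$. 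For $m\notin\{k+1,k+2\}$ one has $\pi_k(m)=\pi_{k+1}(m)$, while $\tau_{gk}$ and $\tau_{gk+1}$ agree on every input other than $gk+1$; since $h$ is a bijection with $h^{-1}(gk+1)=k+1$, this exceptional input is never produced for such $m$, so the two entries coincide. The two remaining indices are dispatched directly: at $m=k+1$ the $\symmclone_{k+1}$-side equals $\tau_{gk+1}(h(k+1))=\tau_{gk+1}(gk+1)=gk+1$, which matches the exceptional value $h(k)+1$ on the $\symmclone_k$-side; at $m=k+2$ the $\symmclone_k$-side equals $\tau_{gk}(h(k+1))=\tau_{gk}(gk+1)=gk+2$, which matches the exceptional value $h(k+1)+1$ on the $\symmclone_{k+1}$-side. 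This gives $\symmclone_k\circ\symmclone_k=\symmclone_k\circ\symmclone_{k+1}$. The only thing that needs care is keeping straight the two exceptional source indices $k+1,k+2$ of the two cloning maps and the single exceptional target value $gk+1$ of the two functions $\tau$; there is no conceptual obstacle.

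For the last sentence of the statement, recall that the action of $\Fmonoid$ on $\symm_\infty$ from Example~\ref{ex:symm_gps} is the right action in which $\lambda_k$ acts as $\symmclone_k$, and that by Proposition~\ref{prop:monoid_pres} a defining set of relations for $\Fmonoid$ is $\lambda_\ell\lambda_k=\lambda_k\lambda_{\ell+1}$ for $\ell>k$. Comparing with the presentation $\gen{\eta_k\mid \eta_\ell\eta_k=\eta_k\eta_{\ell+1},\ \ell\ge k}$ of $\Hmonoid$ recalled above, the surjection $c\colon\Fmonoid\onto\Hmonoid$ with $c(\lambda_k)=\eta_k$ is precisely the quotient of $\Fmonoid$ by the additional relations $\lambda_k\lambda_k=\lambda_k\lambda_{k+1}$. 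The corresponding relations $\symmclone_\ell\circ\symmclone_k=\symmclone_k\circ\symmclone_{\ell+1}$ on the cloning maps hold for $\ell>k$ by~\eqref{item:cs_product_of_clonings} and for $\ell=k$ by the identity just proved, so the action of $\Fmonoid$ on $\symm_\infty$ factors through $c$, yielding a well-defined action of $\Hmonoid$ on $\symm_\infty$ in which $\eta_k$ acts as $\symmclone_k$.
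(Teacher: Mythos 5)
Your proof is correct and follows the same route as the paper's (very terse) proof, which simply asserts that the verification of the product-of-clonings relation from Example~\ref{ex:symm_gps} --- done via the $\pi_k,\tau_k$ factorization with special cases at $m=k+1$ and $m=\ell+2$ --- ``extends to the case $k=\ell$.'' Your entrywise computation is exactly that extension spelled out, and the organizing observation that $h=(g)\symmclone_k$ sends $k\mapsto gk$ and $k+1\mapsto gk+1$ (so $h^{-1}(gk+1)=k+1$) is precisely the fact that makes the boundary cases align; the final paragraph on $\Hmonoid$ supplies the routine presentation comparison that the paper leaves unstated.
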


\begin{proof}
The verification of \eqref{item:cs_product_of_clonings} above extends to the case $k = \ell$.
\end{proof}

\subsection{Filtered cloning systems}\label{sec:filtered_cloning_systems}

Typically one will want to think of Thompson's group $V$ not as built from $\symm_\infty$ but rather from the family $(\symm_n)_{n \in \N}$. We will now describe this approach. We regard $\symm_\infty$ as the direct limit $\varinjlim \symm_n$ where the maps $\iota_{m,n} \colon \symm_m \to \symm_n$ are induced by the inclusions $\{1,\ldots,m\} \into \{1,\ldots,n\}$.

Let $(G_n)_{n\in\N}$ be a family of groups with monomorphisms $\iota_{m,n} \colon G_m \to G_n$ for each $m\le n$. For convenience we will sometimes write $G_*$ for $(G_n)_{n \in \N}$; note that in this case the index set is always $\N$. The maps $\iota_{m,n}$ will be written on the right, e.g., $(g)\iota_{m,n}$ for $g\in G_m$. Suppose that $\iota_{m,m}=\id$ and $\iota_{m,n} \circ \iota_{n,\ell} = \iota_{m,\ell}$ for all $m\le n\le \ell$. Then $((G_n)_{n\in\N},(\iota_{m,n})_{m\le n})$ is a directed system of groups with a direct limit $G \defeq \varinjlim G_n$. Since all the $\iota_{m,n}$ are injective, we may equivalently think of a group $G$ filtered by subgroups $G_n$.

Consider injective maps $\clone^n_k \colon G_n \to G_{n+1}$ for $k,n \in \N, k \le n$. We call such maps a \emph{family of cloning maps} for the directed system $(G_n)_{n \in \N}$ if for $m,k \le n$ they satisfy
\begin{equation}
\label{eq:cloning_graded}
\iota_{m,n} \circ \clone^n_k = 
\left\{
\begin{array}{ll}
\clone^m_k \circ \iota_{m+1,n+1} &\text{ if } k\le m \\
\iota_{m,n+1} &\text{ if } m<k\text{.}
\end{array}
\right.
\end{equation}

This amounts to setting $\clone^n_k \defeq \iota_{n,n+1}$ for $k > n$ and requiring that
\[
\iota_{m,n} \circ \clone^n_k = \clone^m_k \circ \iota_{m+1,n+1}\text{,}
\]
i.e., that the family $(\clone_k^n)_{n \in \N}$ defines a morphism of directed systems of sets. From that it is clear that a family of cloning maps induces a family of injective maps $\clone_k \colon G \to G$ by setting
\[
(g)\iota_{n} \circ \clone_k = (g)\clone_k^n \circ \iota_{n+1}
\]
for $g \in G_n$. Here $\iota_n \colon G_n \to G$ denotes the map given by the universal property of $G$.

\begin{definition}[Properly graded]\label{def:properly_graded}
 We say that the cloning maps are \emph{properly graded} if the following strong confluence condition holds: if $g \in G_{n+1}$ can be written as $(h)\clone_k^n = g = (\bar{g})\iota_{n,n+1}$ then there is an $\bar{h} \in G_{n-1}$ with $(\bar{h})\clone_k^{n-1} = \bar{g}$ and $(\bar{h})\iota_{n-1,n} = h$.
\end{definition}

In view of the injectivity of all maps involved this is equivalent to saying that
\begin{equation}\label{eq:properly_graded}
 \image \clone_k^n \cap \image \iota_{n,n+1} \subseteq \image (\iota_{n-1,n} \circ \clone_k^n)
\end{equation}
(where the converse inclusion is automatic) or to saying that the diagram
\begin{diagram}
 G_{n-1} & \rTo^{\iota_{n-1,n}} & G_n\\
 \dTo^{\clone_k^{n-1}} && \dTo^{\clone_k^n} \\
 G_n & \rTo^{\iota_{n,n+1}} & G_{n+1}
\end{diagram}
is a pullback diagram of sets. A formulation in terms of the direct limit $G$ is that if $(h)\clone_k \in G_n$ for $k\le n$ then $h \in G_{n-1}$. Note that a filtered cloning system satisfying the confluence condition of Observation~\ref{obs:BZS_lclm} is automatically properly graded.

\begin{example}
 Take $G_n = \symm_n$ as in Example~\ref{ex:symm_gps}. A family of cloning maps $\symmclone^n_k$ is obtained by restriction of the maps from Example~\ref{ex:symm_gps}:
 \begin{equation}
  \label{eq:symmetric_cloning_maps}
  \symmclone^n_k \defeq \symmclone_k|_{\symm_n}^{\symm_{n+1}}\text{.}
 \end{equation}
 
 This family of cloning maps is properly graded: if $g \in \image \iota_{n,n+1}$ then $g$ fixes $n+1$; if moreover $g = (h)\symmclone_k$ then it follows from~\eqref{eq:symmclone} that $h$ fixes $n$ so $h \in \image \iota_{n-1,n}$.
\end{example}

Now suppose further that we have a family of homomorphisms $\rho_n \colon G_n \to \symm_n$ for each $n\in\N$ that are compatible with the directed systems, i.e., $\rho_n((g)\iota_{m,n})= (\rho_m(g))\iota_{m,n}$ for $m < n$ and $g \in G_m$. Let $\rho \colon G \to \symm_\infty$ be the induced homomorphism. We are of course interested in the case when $\rho$ and the family $(\clone_k)_{k \in \N}$ define a cloning system on $G$. The corresponding defining formulas are obtained by adding decorations to the formulas from Section~\ref{sec:BZS}:

\begin{definition}[Cloning system]\label{def:filtered_cloning_system}
 Let $((G_n)_{n \in \N}, (\iota_{m,n})_{m \le n})$ be an injective directed system of groups. Let $(\rho_n)_{n \in \N} \colon G_n \to \symm_n$ be a homomorphism of directed systems of groups and let $(\clone^n_k)_{k \le n} \colon G_n \to G_{n+1}$ be a family of cloning maps. The quadruple
 \[
 ((G_n)_{n \in \N},(\iota_{m,n})_{m \le n},(\rho_n)_{n \in \N},(\clone^n_k)_{k \le n})
 \]
 is called a \emph{cloning system} if the following hold for all $k \le n$, $k<\ell$, and $g,h\in G_n$:
 \begin{enumerate}[label={(FCS\arabic*)}, ref={FCS\arabic*}, leftmargin=*]
  \item $(gh) \clone_k^n = (g)\clone_{\rho(h)k}^n(h)\clone_k^n$. \label{item:fcs_cloning_a_product}\hfill (Cloning a product)
  \item $\clone_\ell^n \circ \clone_k^{n+1} = \clone_k^n \circ \clone_{\ell+1}^{n+1}$.\label{item:fcs_product_of_clonings}\hfill (Product of clonings)
  \item $\rho_{n+1}((g)\clone^n_k)(i) = (\rho_n(g))\symmclone^n_k (i)$ for all $i\ne k,k+1$ \label{item:fcs_compatibility}\hfill(Compatibility)
 \end{enumerate}
 We may also refer to $\rho_*$ and $(\clone_k^n)_{k \le n}$ as forming a \emph{cloning system on} the directed system $G_*$. The cloning system is \emph{properly graded} if the cloning maps are properly graded.
\end{definition}

Note that condition~\eqref{item:fcs_compatibility} is phrased more concisely than \eqref{item:cs_compatibility}, but this is just in light of Observation~\ref{obs:sync}. Again, condition~\eqref{item:fcs_compatibility} will in practice often be satisfied even when $i=k,k+1$.

\begin{observation}\label{obs:filtered_and_filtration_preserving_cloning_systems}
 Let $(G_n)_{n \in \N}$ be an injective directed system of groups. A cloning system on $(G_n)_{n \in \N}$ gives rise to a cloning system on $G \defeq \varinjlim G_n$. Conversely a cloning system on $G$ gives rise to a cloning system on $(G_n)_{n \in \N}$ provided $(G_n)\clone^n_k \subseteq G_{n+1}$ and $\rho_n(G_n) \subseteq \symm_n$.
\end{observation}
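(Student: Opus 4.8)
My plan is to prove both implications by transporting the three cloning-system axioms across the structure maps $\iota_n\colon G_n\to G$ of the direct limit; one direction is pure bookkeeping, and the other is equally easy to set up but involves one rigidity statement that I expect to be the main obstacle. For the implication \emph{from $(G_n)$ to $G$}: the discussion preceding Definition~\ref{def:filtered_cloning_system} already produces, out of a family of cloning maps and a morphism of directed systems $(\rho_n)$, the induced injective maps $\clone_k\colon G\to G$ and the induced homomorphism $\rho\colon G\to\symm_\infty\subseteq\symm_\omega$. To verify \eqref{item:cs_cloning_a_product}, \eqref{item:cs_product_of_clonings} and \eqref{item:cs_compatibility} for these, I would fix the finitely many elements $g,h$ and indices $k<\ell$ appearing in the identity to be checked; since the $\iota_{m,n}$ are injective we may choose $n$ with $k,\ell\le n$ together with $\bar g,\bar h\in G_n$ mapping to $g,h$. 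Because $\bar h\in G_n$ we have $\rho_n(\bar h)\in\symm_n$, so $\rho(h)$ fixes everything above $n$ and indices such as $\rho(h)k$ remain $\le n$. Applying $\iota_{n+1}$ (or $\iota_{n+2}$ for \eqref{item:cs_product_of_clonings}) to the identities \eqref{item:fcs_cloning_a_product}, \eqref{item:fcs_product_of_clonings}, \eqref{item:fcs_compatibility} for $\bar g,\bar h$ in $G_n$, using the intertwining relations $(\bar g)\iota_n\circ\clone_k=(\bar g)\clone_k^n\circ\iota_{n+1}$ and $\rho\circ\iota_n=\iota_n\circ\rho_n$, and (in the compatibility case) rewriting $(\rho(g))\symmclone_k$ by means of \eqref{eq:symmclone} exactly as in the derivation of \eqref{eq:compatibility_cases}, then yields \eqref{item:cs_cloning_a_product}--\eqref{item:cs_compatibility} for $g,h$.

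For the implication \emph{from $G$ to $(G_n)$}: granting the two provisos, I would set $\rho_n\defeq\rho|_{G_n}$ and $\clone_k^n\defeq\clone_k|_{G_n}$ for $k\le n$. The provisos say precisely that $\rho_n$ has image in $\symm_n$ and $\clone_k^n$ has image in $G_{n+1}$, and these maps are injective as restrictions of injective ones; that $(\rho_n)$ is a morphism of directed systems, and that the case $k\le m$ of \eqref{eq:cloning_graded} holds, are both automatic, since the $\rho_n$ (resp. the $\clone_k^n$) are all restrictions of the single map $\rho$ (resp. $\clone_k$). Conditions \eqref{item:fcs_cloning_a_product}, \eqref{item:fcs_product_of_clonings}, \eqref{item:fcs_compatibility} then follow from \eqref{item:cs_cloning_a_product}, \eqref{item:cs_product_of_clonings}, \eqref{item:cs_compatibility} by restriction to $G_n$, once one checks — using the provisos and $\rho_n(G_n)\subseteq\symm_n$ — that every term that occurs (for instance $(g)\clone_{\rho(h)k}^n$, with $\rho(h)k\le n$) again lies in the relevant $G_{n+1}$, so that products agree whether computed in $G_{n+1}$ or in $G$.

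The one non-formal point — and where I expect the real work — is the remaining case $m<k$ of \eqref{eq:cloning_graded}: one must show that $\clone_k$ restricts to the inclusion $G_m\into G_{m+1}$ whenever $m<k$ (intuitively clear, since cloning the $k$-th strand leaves an element whose $k$-th strand is trivial unchanged, but in need of an abstract argument). Here is how I would attack it: for $g\in G_m$ and $k>m$ the hypothesis $\rho(G_m)\subseteq\symm_m$ gives $\rho(g)k=k$, so \eqref{item:cs_cloning_a_product} becomes $(g'g)\clone_k=(g')\clone_k\,(g)\clone_k$; taking $g'\in G_m$ shows that $\clone_k|_{G_m}\colon G_m\to G_{m+1}$ is an injective homomorphism, and \eqref{item:cs_compatibility}, in the form of Observation~\ref{obs:sync} together with $\rho(G_{m+1})\subseteq\symm_{m+1}$, forces $\rho((g)\clone_k)=\rho(g)$, so this homomorphism lies over the inclusion $\symm_m\into\symm_{m+1}$. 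To conclude that it \emph{equals} $\iota_{m,m+1}$ I would propagate the statement among the indices $k$ using \eqref{item:cs_product_of_clonings} (the relation $\clone_\ell\circ\clone_k=\clone_k\circ\clone_{\ell+1}$ lets one trade a cloning of large index applied to an element of $G_m$ for ones of smaller index) and settle the remaining low-index cases separately; everything else in the proof is just a matter of tracking which $G_n$ a given element belongs to.
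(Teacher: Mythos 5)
Your treatment of the forward direction, and of the routine parts of the converse, is correct and is presumably all the paper has in mind (it offers no explicit proof of the observation). The genuine problem is exactly where you flag it, namely the case $m<k$ of \eqref{eq:cloning_graded}, but the attack you sketch cannot succeed. At the point where you conclude that $\clone_k|_{G_m}$ is an injective homomorphism ``$G_m\to G_{m+1}$'', the provisos only give $\clone_k(G_m)\subseteq\clone_k(G_k)\subseteq G_{k+1}$, not $\subseteq G_{m+1}$; the target $G_{m+1}$ is part of what you are trying to prove, and the same flaw propagates into your appeal to $\rho(G_{m+1})\subseteq\symm_{m+1}$. Further, \eqref{item:cs_product_of_clonings} in the form $\clone_\ell\circ\clone_k=\clone_k\circ\clone_{\ell+1}$ (with $k<\ell$) never lowers the minimal index occurring, since both sides have minimal index $k$; it cannot ``trade a cloning of large index for ones of smaller index'' as you propose.

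The deeper issue is that the identity $\clone_k|_{G_m}=\iota_{m,m+1}$ for $m<k$ simply does not follow from \eqref{item:cs_cloning_a_product}--\eqref{item:cs_compatibility} together with the two stated provisos. For a counterexample take $G_n=\Z$ for all $n$ with every $\iota_{m,n}=\id$, every $\rho_n$ trivial, and $\clone_k\defeq 2\cdot(-)\colon\Z\to\Z$ for every $k$. This is a cloning system on $G=\Z$: with $\rho$ trivial, \eqref{item:cs_cloning_a_product} just says each $\clone_k$ is an (injective) endomorphism, \eqref{item:cs_product_of_clonings} is the identity $\phi\circ\phi=\phi\circ\phi$, and \eqref{item:cs_compatibility} holds because all the permutations in sight are the identity. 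Both provisos hold trivially, yet $\clone_k(1)=2\ne 1=\iota_{m,m+1}(1)$. So the $m<k$ half of \eqref{eq:cloning_graded} has to be imposed as an additional hypothesis on the cloning system on $G$ --- presumably this is part of what the paper means by ``preserves the filtration'' in the sentence after the observation --- and the plan to derive it from the remaining hypotheses should be abandoned rather than completed.
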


We will usually not distinguish explicitly between a cloning system on $G_*$ and a cloning system on $\varinjlim G_*$ that preserves the filtration. In particular, given a cloning system on a directed system of groups we will implicitly define $\rho \defeq \varinjlim \rho_n$ and $\clone_k \defeq \varinjlim \clone_k^n$.

\subsection{Thompson groups from cloning systems}\label{sec:BZS_to_thomp}

Let $(G,\rho,(\clone_k)_{k\in\N})$ be a cloning system and let $\Fmonoid \bowtie G$ be the associated BZS product. We now define a group $\Thomphat{G}$ for the cloning system. This is a supergroup of the actual group $\Thomp{G_*}$ that we construct later in the case when $G$ arises as a limit of a family $(G_n)_n$ (Definition~\ref{def:thompson}).

\begin{definition}[Thompson group of a cloning system]\label{def:big_thompson}
The group of right fractions of $\Fmonoid \bowtie G$ is denoted by $\Thomphat{G}$ and is called the \emph{large generalized Thompson group} of $G$. If more context is required we denote it $\Thomphat{G,\rho,(\clone_k)_k}$ and call it the large generalized Thompson group of the cloning system $(G,\rho,(\clone_k)_k)$.
\end{definition}

By Observation~\ref{obs:BZS_cancellative_lcrms} and Theorem~\ref{thm:ore} every element $t$ of $\Thomphat{G}$ can be written as $t = (\forest_-,g)(\forest_+,h)^{-1}$ for some $\forest_-,\forest_+ \in \Fmonoid$ and $g,h \in G$. If it can also be written $t = (\forest_-,g')(\forest_+,h')^{-1}$ then $gh^{-1} = g'{h'}^{-1}$. It therefore makes sense to represent it by just the triple $(\forest_-,gh^{-1},\forest_+)$. Of course, this representation is still not unique, for example $(\forest,1_G,\forest)$ represents the identity element for every $\forest \in \Fmonoid$. We will denote the element represented by $(\forest_-,g,\forest_+)$ by $[\forest_-,g,\forest_+]$. Note that $[\forest_-,g,\forest_+]^{-1} = [\forest_+,g^{-1},\forest_-]$. We will call $(\forest_-(g\cdot \altforest),g^\altforest,\forest_+ \altforest)$ an \emph{expansion} of $(\forest_-,g,\forest_+)$, and the latter a \emph{reduction} of the former, so any reduction or expansion of a triple $(\forest_-,g,\forest_+)$ represents the same element of $\Thomphat{G}$ as $(\forest_-,g,\forest_+)$.

\medskip

Now assume that $G = \varinjlim G_n$ is an injective direct limit of groups $(G_n)_{n \in \N}$ and that the cloning system is a cloning system on $(G_n)_{n \in \N}$. Recall from Section~\ref{sec:forest_monoid} that a forest $\forest$ is called semisimple if all but its first tree are trivial and in that case its number of feet is the number of leaves of the first tree.

We collect some facts about semisimple elements of $\Fmonoid$.

\begin{observation}\label{obs:forest_semisimple_elements}
 Let $\forest,\forest_1,\forest_2,\altforest \in \Fmonoid$.
 \begin{enumerate}
  \item The number of feet of a non-trivial semisimple element of $\Fmonoid$ is its length plus one.
  \item Any two semisimple elements of $\Fmonoid$ have a semisimple common right multiple.\\
   More generally, any two elements of rank at most $m$ have a common right multiple of rank at most $m$.\label{item:forest_semisimple_right_common_multiple}
  \item If $\forest$ is semisimple with $n$ feet then $\forest\altforest$ is semisimple if and only if $\altforest$ has rank at most $n$.\\
   More generally, if $\forest$ is non-trivial of rank $m$ and length $n-m$ then $\forest\altforest$ has rank $m$ if and only if $\altforest$ has rank at most $n$.\label{item:forest_right_multiple_rank}
  \item If $\forest_1, \forest_2$ are semisimple with $n$ feet then $\forest_1\forest$ is semisimple if and only if $\forest_2\forest$ is.
 \end{enumerate}
\end{observation}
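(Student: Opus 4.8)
The plan is to establish the four items essentially in the order given, with (2) and (3) being the substantive ones. I would begin by recording the elementary fact, proved by a one-line induction on the number of carets, that a non-trivial tree with $c$ carets has exactly $c+1$ leaves (removing a caret decreases both counts by one). Item (1) is then immediate: a non-trivial semisimple forest is a single non-trivial tree $\tree_1$ with all other trees trivial, so its number of feet is the number of leaves of $\tree_1$ and its length is the number of carets of $\tree_1$; hence feet $=$ length $+\,1$.

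For (2), the key input I would use is the standard fact that any two trees $\tree,\alttree$ admit a \emph{common expansion}: a tree $W$ obtained from $\tree$ by grafting trees onto its leaves and also obtained from $\alttree$ by grafting trees onto its leaves. This is proved by induction on the total number of carets, matching up the left subtrees and the right subtrees of the two root carets. (One cannot simply invoke that $\Fmonoid$ has common right multiples, since that gives no control on the rank, which is exactly what we need here.) Granting this, if $\forest_1=(\tree^{(1)}_i)_i$ and $\forest_2=(\tree^{(2)}_i)_i$ both have rank at most $m$, I choose for each $i\le m$ a common expansion $W_i$ of $\tree^{(1)}_i$ and $\tree^{(2)}_i$, and set $W_i$ trivial for $i>m$. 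Then $W=(W_i)_i$ has rank at most $m$ and by construction refines both $\forest_1$ and $\forest_2$, so it is a common right multiple of the desired rank. Taking $m=1$ gives the statement about semisimple elements.

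For (3) I would unwind the definitions directly. Suppose $\forest$ is non-trivial of rank $m$ and length $n-m$; by the caret count above, the trees $\tree_1,\dots,\tree_m$ of $\forest$ together have $(n-m)+m=n$ leaves, so for $i>m$ the $i$th root of $\forest$ is exactly the $\big(n+(i-m)\big)$th leaf of $\forest$. Writing $\altforest=(\alttree_j)_j$, the $i$th tree of $\forest\altforest$ for $i>m$ is therefore $\alttree_{\,n+(i-m)}$, while the $m$th tree of $\forest\altforest$ is $\tree_m$ with some trees grafted onto its leaves and hence is non-trivial. Thus $\forest\altforest$ has rank $m$ if and only if $\alttree_{n+1},\alttree_{n+2},\dots$ are all trivial, i.e.\ if and only if $\altforest$ has rank at most $n$; the edge case of trivial $\forest$ is immediate. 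Specializing to $m=1$ (so $\forest$ is semisimple with $n$ feet and length $n-1$) yields the first sentence of (3). Finally, (4) follows at once: if $\forest_1$ and $\forest_2$ are both semisimple with $n$ feet, then by (3) each of ``$\forest_1\forest$ is semisimple'' and ``$\forest_2\forest$ is semisimple'' is equivalent to ``$\forest$ has rank at most $n$'', hence to the other.

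The main obstacle is the rank bookkeeping in (2): although $\Fmonoid$ is already known to have common right multiples, producing one of controlled rank forces us to go through the explicit treewise common-expansion construction rather than appeal abstractly to Lemma~\ref{lem:fmonoid_properties}. Everything else reduces to careful but routine counting of leaves and carets.
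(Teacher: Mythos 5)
Your proof is correct. The paper states this observation with neither a proof nor a \qed, evidently regarding all four items as routine caret-and-leaf bookkeeping, so there is no house argument to compare against; your treewise common-expansion construction for (2) and the explicit indexing in (3) spell out exactly what the authors elided.
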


Now we upgrade these facts to $\Fmonoid \bowtie G$. We say that an element $(\forest,g) \in \Fmonoid \bowtie G$ is \emph{semisimple} if $\forest$ is semisimple with $n$ feet (for some $n$) and $g\in G_n$. In this case we also say $(\forest,g)$ has \emph{$n$ feet}.

\begin{lemma}\label{lem:semisimple_elements}
 Let $\forest,\forest_1,\forest_2,\altforest \in \Fmonoid$ and $g,h \in G$.
 \begin{enumerate}
  \item The number of feet of a semisimple element of $\Fmonoid \bowtie G$ is its length plus one.\label{item:type_length}
  \item Any two semisimple elements of $\Fmonoid \bowtie G$ have a semisimple common right multiple.\label{item:semisimple_right_common_multiple}
  \item If $(\forest,g)$ is semisimple then $(\forest,g) \altforest = (\forest (g \cdot \altforest),g^{\altforest})$ is semisimple if and only if $\forest (g \cdot \altforest)$ is semisimple.\label{item:monoid_semisimplicity_cancel_g}
  \item If $(\forest,g)$ is semisimple with $n$ feet then $(\forest,g) \altforest$ is semisimple if and only if $\altforest$ has rank at most $n$.\label{item:monoid_semisimplicity_multiple}
  \item If $(\forest_1,g)$ and $(\forest_2,h)$ are semisimple with same number of feet then $(\forest_1,g) \forest$ is semisimple if and only if $(\forest_2,g)\forest$ is semisimple.\label{item:monoid_semisimplicity_common_multiple}
 \end{enumerate}
\end{lemma}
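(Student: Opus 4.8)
The plan is to deduce the five statements from the corresponding facts about $\Fmonoid$ in Observation~\ref{obs:forest_semisimple_elements}, carefully tracking how the length function (Corollary~\ref{cor:length}) and the filtration of $G$ by the $G_n$ interact with the product in $\Fmonoid \bowtie G$. I will use repeatedly that $(\forest,g)\altforest = (\forest(g\cdot\altforest), g^\altforest)$, that $\forest\mapsto g\cdot\forest$ is a length-preserving bijection of $\Fmonoid$, and that $g^\altforest = (g)\clone_\altforest$ carries $G_n$ into $G_{n+\len(\altforest)}$, being a composite of $\len(\altforest)$ level-raising maps.

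Statement~\eqref{item:type_length} is immediate: the length of $(\forest,g)$ is $\len(\forest)$, and a semisimple forest with $n$ feet has length $n-1$ by Observation~\ref{obs:forest_semisimple_elements}(1) (the trivial case $n=1$ included). For~\eqref{item:monoid_semisimplicity_cancel_g} one implication is trivial; conversely, if $\forest(g\cdot\altforest)$ is semisimple with $m$ feet then it has length $m-1$, so $\altforest$ has length $m-n$, where $n$ is the number of feet of $(\forest,g)$; since $g\in G_n$, it follows that $g^\altforest\in G_m$, so $(\forest(g\cdot\altforest),g^\altforest)$ is semisimple.

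The heart of the matter is the claim that \emph{for $g\in G_n$, the bijection $\forest\mapsto g\cdot\forest$ of $\Fmonoid$ maps forests of rank $\le n$ to forests of rank $\le n$}. Granting this, \eqref{item:monoid_semisimplicity_multiple} follows: by \eqref{item:monoid_semisimplicity_cancel_g}, $(\forest,g)\altforest$ is semisimple iff $\forest(g\cdot\altforest)$ is, and since $\forest$ is semisimple with $n$ feet this holds iff $g\cdot\altforest$ has rank $\le n$ by Observation~\ref{obs:forest_semisimple_elements}(3), which by the claim applied to $g$ and to $g^{-1}$ is equivalent to $\altforest$ having rank $\le n$; then \eqref{item:monoid_semisimplicity_common_multiple} is immediate, as this criterion is independent of the group element. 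To prove the claim I would induct on $\len(\forest)$, with the statement quantified over all $n$ and all $g\in G_n$. Writing $\forest=\lambda_{k_1}\altforest'$ via the normal form of Proposition~\ref{prop:monoid_pres} (so $k_1$ is minimal), a short computation with grafting of trees shows that $k_1\le n$ and that $\altforest'$ has rank $\le n+1$, and conversely that $\lambda_j\altforest''$ has rank $\le n$ whenever $j\le n$ and $\altforest''$ has rank $\le n+1$. By Lemma~\ref{lem:shuffling_carets}, $g\cdot\forest=\lambda_{\rho(g)k_1}\bigl((g)\clone_{k_1}\cdot\altforest'\bigr)$; here $\rho(g)k_1\le n$ because $\rho(g)\in\symm_n$, and $(g)\clone_{k_1}\in G_{n+1}$ because $k_1\le n$. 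The inductive hypothesis applied to $(g)\clone_{k_1}\in G_{n+1}$ and the shorter forest $\altforest'$ of rank $\le n+1$ gives that $(g)\clone_{k_1}\cdot\altforest'$ has rank $\le n+1$, and the converse grafting observation then gives rank $\le n$ for $g\cdot\forest$.

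Finally, for \eqref{item:semisimple_right_common_multiple}: given semisimple $(\forest_1,g_1)$ and $(\forest_2,g_2)$, choose by Observation~\ref{obs:forest_semisimple_elements}(2) a semisimple common right multiple $E=\forest_1\altforest_1'=\forest_2\altforest_2'$ of the two forests, and set $\altforest_i:=g_i^{-1}\cdot\altforest_i'$ (legitimate since the $G$-action is bijective). Then $(\forest_i,g_i)(\altforest_i,1_G)=(\forest_i\altforest_i',h_i)=(E,h_i)$ with $h_i:=(g_i)\clone_{\altforest_i}$, and exactly as in \eqref{item:monoid_semisimplicity_cancel_g} one checks $h_i\in G_m$ for $m$ the number of feet of $E$, so $(E,h_i)$ is semisimple. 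Multiplying the first of these on the right by the unit $(1_\Fmonoid,h_1^{-1}h_2)$ yields $(\forest_1,g_1)(\altforest_1,h_1^{-1}h_2)=(E,h_2)=(\forest_2,g_2)(\altforest_2,1_G)$, a semisimple common right multiple. The main obstacle is the rank claim: the grafting computations driving the induction are elementary but must be set up with care, whereas everything else is routine bookkeeping with Observation~\ref{obs:forest_semisimple_elements}.
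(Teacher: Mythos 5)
Your proposal is correct and follows the same overall strategy as the paper: reduce each statement to Observation~\ref{obs:forest_semisimple_elements} using the length function and the fact that $\clone_\altforest$ raises the filtration level by $\len(\altforest)$. The one place where you go beyond the paper is part~\eqref{item:monoid_semisimplicity_multiple}: the paper simply asserts that $\rk(g\cdot F)\le n\Leftrightarrow\rk(F)\le n$ ``because $\rho(G_n)\subseteq\symm_n$,'' whereas you supply a genuine proof by induction on $\len(F)$ via the normal form and the recursion $g\cdot(\lambda_{k}\altforest')=\lambda_{\rho(g)k}\bigl((g)\clone_{k}\cdot\altforest'\bigr)$ from Lemma~\ref{lem:shuffling_carets}; this is a legitimate and arguably necessary elaboration, since the paper's one-line justification is implicitly leaning on the compatibility axiom through exactly the recursion you make explicit. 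Your handling of~\eqref{item:semisimple_right_common_multiple} is also a bit more concrete (building the semisimple common right multiple $(E,h_i)$ directly, rather than observing that $\forest$ is already a right multiple of $(\forest,g)$), but both are sound.
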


\begin{proof}
 The first statement is clear by definition. The second statement can be reduced to the corresponding statement in $\Fmonoid$ because $\forest$ is a right multiple of $(\forest,g)$.
 
 In the third statement only the implication from right to left needs justification, namely that $g^\altforest \in G_n$ where $n$ is the number of feet of $\forest (g \cdot \altforest)$. This is because if $g \in G_m$ and $\len E = k$ then $g^E \in G_{m+k}$ as can be seen by induction on $\len E$ using $\clone_k(G_n) \subseteq G_{n+1}$.
 
 For \eqref{item:monoid_semisimplicity_multiple} note that $g \in G_n$. But $\rho(G_n) \subseteq S_n$ so having rank at most $n$ is preserved under the action of $G_n$, i.e., $\rk (g \cdot F) \le n \Leftrightarrow  \rk F \le n$. Thus the statement follows from the one for $\Fmonoid$. The last statement is immediate from \eqref{item:monoid_semisimplicity_multiple}.
\end{proof}

\begin{definition}[Simple]\label{def:simple}
 A triple $(\forest_-,g,\forest_+)$ (and the element $[\forest_-,g,\forest_+]$ represented by it) is said to be \emph{simple} if $\forest_-$ and $\forest_+$ are semisimple, both of them with $n$ feet and $g \in G_n$. This is the case if it can be written as $(\forest_-,g) (\forest_+,h)^{-1}$ with both factors semisimple with the same number of feet.
\end{definition}

\begin{proposition}\label{prop:product_of_simple}
 The set of simple elements in $\Thomphat{G}$ is a subgroup.
\end{proposition}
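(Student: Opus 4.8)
The plan is to verify the subgroup criterion directly. The identity of $\Thomphat{G}$ is represented by $(\forest_0,1_G,\forest_0)$ with $\forest_0$ the trivial forest, which is semisimple with one foot, and $1_G\in G_1$; so the identity is simple. Closure under inversion is immediate, since $[\forest_-,g,\forest_+]^{-1}=[\forest_+,g^{-1},\forest_-]$ and the very same data ($n$ feet, group element in $G_n$) witness simplicity of the inverse. So everything comes down to closure under multiplication.

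Let $s=[\forest_-,g,\forest_+]$ and $t=[\altforest_-,h,\altforest_+]$ be simple. By Definition~\ref{def:simple} we may write $s=(\forest_-,a)(\forest_+,b)^{-1}$ and $t=(\altforest_-,c)(\altforest_+,d)^{-1}$ with all four displayed factors semisimple --- the first two with $n$ feet, the last two with $m$ feet. Then $st=(\forest_-,a)\cdot(\forest_+,b)^{-1}(\altforest_-,c)\cdot(\altforest_+,d)^{-1}$, so the plan is to resolve the inner product $(\forest_+,b)^{-1}(\altforest_-,c)$. Since $(\forest_+,b)$ and $(\altforest_-,c)$ are semisimple elements of $\Fmonoid\bowtie G$, Lemma~\ref{lem:semisimple_elements}(\ref{item:semisimple_right_common_multiple}) supplies a \emph{semisimple} common right multiple $P$: there are $X,Y\in\Fmonoid\bowtie G$ with $(\forest_+,b)X=(\altforest_-,c)Y=P$. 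Hence $(\forest_+,b)^{-1}(\altforest_-,c)=XY^{-1}$ and
\[
st=\bigl[(\forest_-,a)X\bigr]\bigl[(\altforest_+,d)Y\bigr]^{-1}.
\]

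What remains --- and this is the main obstacle --- is the bookkeeping needed to see that $(\forest_-,a)X$ and $(\altforest_+,d)Y$ are both semisimple with the same number of feet $N$, namely the number of feet of $P$; then $st$ is a product $(\text{semisimple})(\text{semisimple})^{-1}$ with equal feet and Definition~\ref{def:simple} finishes the argument. Writing $X=(D,x)$ with $D$ a forest, the forest part of $P=(\forest_+,b)X$ is $\forest_+(b\cdot D)$, whose semisimplicity forces, via Observation~\ref{obs:forest_semisimple_elements}(\ref{item:forest_right_multiple_rank}) and the fact that $\forest_+$ has $n$ feet, the bound $\rk(b\cdot D)\le n$; since $\rho(G_n)\subseteq\symm_n$, the action of $G_n$ (and, being a group, of its inverses) on $\Fmonoid$ preserves ``rank $\le n$'', so $\rk D\le n$ and hence $\rk(a\cdot D)\le n$, making $\forest_-(a\cdot D)$ --- the forest part of $(\forest_-,a)X$ --- semisimple, with the same number of feet $N=n+\len D$ as $\forest_+(b\cdot D)$ (lengths are preserved by the action, Corollary~\ref{cor:length}). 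For the group part $a^D x$ of $(\forest_-,a)X$: iterating $\clone_k(G_j)\subseteq G_{j+1}$ gives $a^D,b^D\in G_{n+\len D}=G_N$, and since the group part $b^D x$ of the semisimple element $P$ lies in $G_N$, so does $x$, whence $a^D x\in G_N$. The factor $(\altforest_+,d)Y$ is handled in exactly the symmetric way, using that $\altforest_-,\altforest_+$ have $m$ feet and that counting the feet of $P$ from the $\altforest_-$-side again yields $N$. Thus $st$ is simple, and the simple elements form a subgroup. None of this is conceptually hard, since every needed tool (Lemma~\ref{lem:semisimple_elements}, Observation~\ref{obs:forest_semisimple_elements}, $\rho(G_n)\subseteq\symm_n$, and $\clone_k(G_j)\subseteq G_{j+1}$) is already available; the difficulty is purely organizational.
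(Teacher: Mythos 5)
Your proof is correct and takes essentially the same route as the paper's. Both arguments hinge on finding a semisimple common right multiple of the two ``inner'' factors and then verifying that the resulting left and right pieces are semisimple with the same number of feet; the paper takes the common right multiple of the forests $\forest_+$, $\altforest_-$ in $\Fmonoid$ via Observation~\ref{obs:forest_semisimple_elements}\eqref{item:forest_semisimple_right_common_multiple} and then invokes Lemma~\ref{lem:semisimple_elements}\eqref{item:monoid_semisimplicity_common_multiple} plus a length count, whereas you take the common right multiple of $(\forest_+,b)$, $(\altforest_-,c)$ in $\Fmonoid\bowtie G$ via Lemma~\ref{lem:semisimple_elements}\eqref{item:semisimple_right_common_multiple} (itself proved from the same Observation) and then redo by hand the rank and cloning estimates that Lemma~\ref{lem:semisimple_elements}\eqref{item:monoid_semisimplicity_common_multiple} packages. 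Same idea, slightly more explicit bookkeeping.
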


\begin{proof}
The proof closely follows \cite[Section~7]{brin07}.

Consider two simple elements $s = [\forest_-,g,\forest_+], t=[\altforest_-,h,\altforest_+]$. Let
 \begin{equation}
 \label{eq:common_simple_right_multiple}
 \forest_+ \forest = \altforest_- \altforest
 \end{equation}
 be a semisimple common right multiple of $\forest_+$ and $\altforest_-$ (Observation~\ref{obs:forest_semisimple_elements} \eqref{item:forest_semisimple_right_common_multiple}). Then
 \begin{align}
 st & = \forest_- g \forest \altforest^{-1} h \altforest_+^{-1}\nonumber\\
 & = (\forest_-(g \cdot \forest),g^\forest)(\altforest_+(h^{-1} \cdot \altforest), (h^{-1})^\altforest)^{-1}\label{eq:simple_representative}\\
 & = [\forest_-(g \cdot \forest),g^\forest h^{h^{-1} \cdot \altforest},\altforest_+(h^{-1} \cdot \altforest)] \text{.}\nonumber
 \end{align}
 In the last line we used that $(h^\altforest)^{-1} = (h^{-1})^{h \cdot \altforest}$ so that $((h^{-1})^{\altforest})^{-1} = h^{h^{-1} \cdot \altforest}$.

 We claim that the last expression of~\eqref{eq:simple_representative} is simple. Indeed, $(\forest_-,g)$ and $\forest_+$ are semisimple with the same number of feet and $\forest_+ \forest$ is semisimple so $(\forest_-,g) \forest = (\forest_- (g \cdot \forest),g^\forest)$ is semisimple by Lemma~\ref{lem:semisimple_elements} \eqref{item:monoid_semisimplicity_common_multiple}. Similar reasoning applies to $(\altforest_+(h^{-1} \cdot \altforest), {h^{-1}}^\altforest)$. Moreover, we can use Corollary~\ref{cor:length} to compute
 \[
 \len (\forest_-,g) + \len \forest \stackrel{\mathclap{s \text{ simple}}}{\strut=} \len \forest_+ + \len \forest \stackrel{\eqref{eq:common_simple_right_multiple}}{=} \len \altforest_- + \len \altforest \stackrel{\mathclap{t \text{ simple}}}{\strut=} \len (\altforest_+,(h^{-1})^F) + \len \altforest\text{.}
 \]
 By Lemma~\ref{lem:semisimple_elements}~\eqref{item:type_length} this shows that the last expression of~\eqref{eq:simple_representative} is simple.
\end{proof}

\begin{definition}[Thompson group of a filtered cloning system]\label{def:thompson}
 The group of simple elements in $\Thomphat{G}$ is denoted $\Thomp{G_*}$ and called the \emph{generalized Thompson group} of $G_*$. If we need to be more precise, as with $\Thomphat{G}$, we can include other data from the cloning system in the notation as in $\Thomp{G_*,\rho_*,(\clone^*_k)_k}$.
\end{definition}

Notationally, when we talk about a generalized Thompson group, the asterisk will always take the position of the index of the family. For instance, the generalized Thompson group for the family $(G^n)_{n \in \N}$ of direct powers in Section~\ref{sec:direct_prods} will be denoted $\Thomp{G^*}$; and the generalized Thompson group for the family of matrix groups $(B_n(R))_{n \in \N}$ in Section~\ref{sec:matrix_groups} will be denoted $\Thomp{B_*(R)}$.

Recall from the discussion after Corollary~\ref{cor:length} that there is a length morphism $\len \colon \Thomphat{G} \to \Z$ which takes an element $[\forest,g,\altforest]$ to $\len(\forest) - \len(\altforest)$. The group $\Thomp{G_*}$ lies in the kernel of that morphism, that is, simple elements have length $0$.

Given a simple element $[\forest,g,\altforest]$ with $\forest=(\tree_i)_{i\in\N}$ and $\altforest=(\alttree_i)_{i\in\N}$, since all the $\tree_i$ and $\alttree_i$ are trivial for $i>1$, we will often write our element as $[\tree_1,g,\alttree_1]$ instead. In other words, we view an element of $\Thomp{G_*}$ as being a tree with $n$ leaves, followed by an element of $G_n$, followed by another tree with $n$ leaves.

\begin{remark}
 Constructing $\Thomp{G_*}$ as the subgroup of simple elements of $\Thomphat{G}$ is somewhat artificial as can be seen in some of the proofs above. The more natural approach would be to have each element of $\Fmonoid$ ``know'' on which level it can be applied. This amounts to considering the category of forests $\calP$ that has objects the natural numbers and morphisms $\lambda_k^n \colon n \to n+1, 1 \le k \le n$ subject to the forest relations \eqref{eq:forest_relation}, cf.\ \cite[Section~7]{belk04}. Let $\calG$ be another category that also has objects the natural numbers and morphisms from $n$ to $n$ that form a group $G_n$. So while $\calP$ has only ``vertical'' arrows, $\calG$ has only ``horizontal'' arrows. One would then want to form the Zappa--S\'zep product $\calP \bowtie \calG$ which would be specified by commutative squares of the form $\gamma \lambda_{k}^n  = \lambda_{\rho(\gamma)k}^{n} \gamma^{\lambda_k}$ with $\gamma \in G_n$ and $\gamma^{\lambda_k} \in G_{n+1}$. Localizing everywhere one would obtain a groupoid of fractions $\calQ$ and $\Thomp{G_*}$ should be just $\Hom_\calQ(1,1)$.

 The reason that we have not chosen that description is simply that Zappa--S\'zep products for categories are not well-developed to our knowledge, while for monoids all the needed statements were already available thanks to Brin's work \cite{brin05, brin07}.

 Artifacts of this approach, which should be overcome by the general approach above, include the maps $\iota_{n,n+1}$ and the property of being properly graded. Not having to collect all the groups $G_n$ in a common group $G$ would also make it possible to construct, for example, the Thompson groups $T$ and $\Tbr$.
\end{remark}

\subsection{Morphisms}\label{sec:morphisms}

Let $(G,\rho^G,(\clone^G_k)_{k \in \N})$ and $(H,\rho^H,(\clone^H_k)_{k \in \N})$ be cloning systems. A homomorphism $\varphi \colon G \to H$ is a \emph{morphism of cloning systems} if
\begin{enumerate}
 \item $(\varphi(g)) \clone^H_k = \varphi((g) \clone^G_k)$ for all $k\in\N$ and $g\in G$, and\label{item:morphism_group}
 \item $\rho^H \circ \varphi = \rho^G$.\label{item:morphism_monoid}
\end{enumerate}

\begin{observation}\label{obs:thomphat_morphism}
 Let $\varphi \colon G \to H$ be a morphism of cloning systems. There is an induced homomorphism $\Thomphat{\varphi} \colon \Thomphat{G} \to \Thomphat{H}$. If $\varphi$ is injective or surjective then so is $\Thomphat{\varphi}$. In particular, if Observation~\ref{obs:sync} holds even for $i=k,k+1$, there is always a homomorphism $\Thomphat{G} \to \Thomphat{\symm_\omega}$.
\end{observation}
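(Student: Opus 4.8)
The plan is to lift $\varphi$ first to a homomorphism of the underlying BZS products and then to pass to the groups of right fractions via Ore's theorem.

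First I would define $\varphi_\ast\colon\Fmonoid\bowtie G\to\Fmonoid\bowtie H$ on underlying sets by $\varphi_\ast(\forest,g)=(\forest,\varphi(g))$, that is, $\varphi_\ast=\id_\Fmonoid\times\varphi$, and check it is a monoid homomorphism. Inspecting the Zappa--Sz\'ep multiplication~\eqref{eq:zappa--szep_multiplication}, this reduces to the two identities $g\cdot\forest=\varphi(g)\cdot\forest$ in $\Fmonoid$ and $\varphi(g^\forest)=\varphi(g)^\forest$ in $H$, valid for all $g\in G$ and $\forest\in\Fmonoid$. For single carets $\forest=\lambda_k$ the first identity is condition~\eqref{item:morphism_monoid} in the definition of a morphism of cloning systems (both sides equal $\lambda_{\rho(g)k}$) and the second is condition~\eqref{item:morphism_group} (both sides equal $(\varphi(g))\clone^H_k$); one then extends both to arbitrary $\forest$ by induction on $\len\forest$, using respectively the ``$A$ acting on a product'' and ``product acting on $A$'' axioms of Definition~\ref{def:ext_ZS_prod_monoids}. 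Since $\varphi_\ast$ clearly fixes the identity, it is a monoid homomorphism.

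Next, by Observation~\ref{obs:BZS_cancellative_lcrms} both $\Fmonoid\bowtie G$ and $\Fmonoid\bowtie H$ are cancellative with common right multiples, hence are Ore monoids that embed (by cancellativity; see the discussion around Theorem~\ref{thm:ore}) into their groups of right fractions $\Thomphat{G}$ and $\Thomphat{H}$. Composing $\varphi_\ast$ with $\Fmonoid\bowtie H\hookrightarrow\Thomphat{H}$ gives a monoid morphism into a group, so Lemma~\ref{lem:ore_fractions} supplies a unique group homomorphism $\Thomphat{\varphi}\colon\Thomphat{G}\to\Thomphat{H}$ extending it, given explicitly by $\Thomphat{\varphi}(mn^{-1})=\varphi_\ast(m)\varphi_\ast(n)^{-1}$. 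If $\varphi$ is surjective then $\varphi_\ast=\id\times\varphi$ is surjective, and as every element of $\Thomphat{H}$ is a right fraction of elements of $\Fmonoid\bowtie H$, so is $\Thomphat{\varphi}$. If $\varphi$ is injective then $\varphi_\ast$ is injective as a map of sets; for $t=mn^{-1}\in\ker\Thomphat{\varphi}$ we get $\varphi_\ast(m)=\varphi_\ast(n)$ in $\Thomphat{H}$, hence already in $\Fmonoid\bowtie H$ because the monoid embeds, whence $m=n$ and $t=1$. Finally, the last sentence is the special case $H=\symm_\omega$ equipped with the cloning system whose formulas are those of Example~\ref{ex:symm_gps} (which make sense verbatim over $\symm_\omega$, with $\rho=\id$), and $\varphi=\rho$, which is a morphism of cloning systems by Observation~\ref{obs:sync}.

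The only step with genuine content is the injectivity assertion, and its subtlety is entirely that $\Fmonoid\bowtie G$ must actually \emph{embed} into its group of right fractions: for a general monoid an injective homomorphism need not descend to an injection of groups of fractions, so cancellativity from Observation~\ref{obs:BZS_cancellative_lcrms} does real work here. Everything else is a mechanical unwinding of the Zappa--Sz\'ep axioms.
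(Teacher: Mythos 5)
Your proof is correct and follows essentially the same strategy as the paper's: define the induced map $(\forest,g)\mapsto(\forest,\varphi(g))$ on the BZS products, verify well-definedness on generators using the two conditions defining a morphism of cloning systems and extend by induction on $\len(\forest)$, then pass to groups of right fractions via Lemma~\ref{lem:ore_fractions}. You spell out the injectivity argument that the paper waves off as ``clear,'' and correctly observe that it hinges on the embedding $\Fmonoid\bowtie G\hookrightarrow\Thomphat{G}$ furnished by cancellativity (Observation~\ref{obs:BZS_cancellative_lcrms}), which is a genuine and worthwhile point. One caveat on your justification of the final sentence: Observation~\ref{obs:sync} guarantees $\rho((g)\clone_k)(i)=(\rho(g))\symmclone_k(i)$ only for $i\neq k,k+1$, whereas for $\rho$ to be a morphism of cloning systems (with $\symm_\omega$ carrying the standard cloning system $\symmclone_*$) you need this identity for all $i$; the remark following Observation~\ref{obs:sync} explicitly notes that the stronger condition holds in every example in the paper but is not forced by the axioms. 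The paper's proof is silent on the ``in particular'' claim, so you inherit rather than introduce this subtlety.
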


\begin{proof}
 We show that a morphism of cloning systems induces a homomorphism $\Fmonoid \bowtie G \to \Fmonoid \bowtie H$. The statement then follows from Lemma~\ref{lem:ore_fractions}. Naturally, $\Thomphat{\varphi}$ is defined by $\Thomphat{\varphi}(\forest g) = \forest \varphi(g)$. Well definedness amounts to $\Thomphat{\varphi}((g \cdot \forest)g^E) = (\varphi(g) \cdot \forest)(\varphi(g)^\forest)$ which follows from \eqref{item:morphism_group} and \eqref{item:morphism_monoid} above by writing $\forest$ as a product of $\lambda_k$s and inducting on the length.
 
 The injectivity and surjectivity statements are clear.
\end{proof}

Similarly let $(G_n)_{n \in \N}$ and $(H_n)_{n \in \N}$ be injective direct systems equipped with cloning systems. A morphism of directed systems of groups $\varphi_* \colon G_* \to H_*$ is a \emph{morphism of cloning systems} if
\begin{enumerate}
 \item $(\varphi_n(g)) \clone^{H,n}_k = \varphi_{n+1}((g) \clone^{G,n}_k)$ for all $1\le k\le n$ and $g\in G_n$, and
 \item $\rho^H_n \circ \varphi_n = \rho^G_n$ for all $n\in\N$.
\end{enumerate}

\begin{observation}\label{obs:thomp_morphism}
 Let $\varphi_* \colon G_* \to H_*$ be a morphism of cloning systems. There is an induced homomorphism $\Thomp{\varphi} \colon \Thomp{G_*} \to \Thomp{H_*}$. If $\varphi$ is injective or surjective then so is $\Thomp{\varphi}$. In particular, if \eqref{item:fcs_compatibility} holds even for $i=k,k+1$, there is always a homomorphism $\Thomp{G_*} \to \Thomp{\symm_*}$, the latter being Thompson's group $V$.
\end{observation}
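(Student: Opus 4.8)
The plan is to deduce the filtered statement from the unfiltered Observation~\ref{obs:thomphat_morphism} by passing to direct limits, and then to check that the resulting homomorphism respects the subgroups of simple elements. Write $G = \varinjlim G_n$ and $H = \varinjlim H_n$. First I would observe that a morphism of cloning systems $\varphi_* \colon G_* \to H_*$ induces a morphism of cloning systems $\varphi \defeq \varinjlim \varphi_n \colon G \to H$ in the sense of Section~\ref{sec:morphisms}: for $g \in G_n$ and $k \le n$ the defining equation $(\varphi_n(g))\clone^{H,n}_k = \varphi_{n+1}((g)\clone^{G,n}_k)$ passes to the colimit to give $(\varphi(g))\clone^H_k = \varphi((g)\clone^G_k)$ (the case $k > n$ being trivial, since there both cloning maps are the relevant structure maps), and $\rho^H \circ \varphi = \rho^G$ follows likewise from $\rho^H_n \circ \varphi_n = \rho^G_n$; compare Observation~\ref{obs:filtered_and_filtration_preserving_cloning_systems}. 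By Observation~\ref{obs:thomphat_morphism} we obtain a homomorphism $\Thomphat{\varphi} \colon \Thomphat{G} \to \Thomphat{H}$, which on representatives acts by $[\forest_-, g, \forest_+] \mapsto [\forest_-, \varphi(g), \forest_+]$ and which is injective (resp.\ surjective) whenever every $\varphi_n$ is.

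The one step with any real content is to see that $\Thomphat{\varphi}$ carries simple elements to simple elements, so that it restricts to the desired $\Thomp{\varphi} \colon \Thomp{G_*} \to \Thomp{H_*}$ (necessarily a group homomorphism, being the restriction of $\Thomphat{\varphi}$ to a subgroup). Let $s \in \Thomp{G_*}$ and choose a simple representative $(\forest_-, g, \forest_+)$, i.e.\ $\forest_-$ and $\forest_+$ are semisimple with the same number $n$ of feet and $g \in G_n$ (Definition~\ref{def:simple}). Since $\varphi_*$ is in particular a morphism of \emph{directed systems}, $\varphi(g) = \varphi_n(g) \in H_n$, so $(\forest_-, \varphi(g), \forest_+)$ is again simple and $\Thomphat{\varphi}(s) = [\forest_-, \varphi(g), \forest_+] \in \Thomp{H_*}$; thus this step amounts to nothing more than the hypothesis that $\varphi_*$ preserves the filtration. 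Injectivity of $\Thomp{\varphi}$ is then inherited from $\Thomphat{\varphi}$, and for surjectivity one lifts a simple $t = [\forest_-, h, \forest_+] \in \Thomp{H_*}$ with $h \in H_n$ to the simple element $[\forest_-, g, \forest_+] \in \Thomp{G_*}$, where $g \in G_n$ is any preimage of $h$ under $\varphi_n$ (here is where "surjective" is read levelwise).

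Finally, for the stated consequence I would apply this to $\varphi_* = \rho_* \colon G_* \to \symm_*$, the target carrying the symmetric-group cloning system of Example~\ref{ex:symm_gps}, for which $\Thomp{\symm_*} = V$. The condition $\rho^{\symm}_n \circ \rho_n = \rho^G_n$ holds automatically (the relevant composite $\symm_n \to \symm_n$ is the identity), while the remaining condition $(\rho_n(g))\symmclone^n_k = \rho_{n+1}((g)\clone^n_k)$ is precisely the compatibility axiom~\eqref{item:fcs_compatibility}, now asked as an equality of permutations at \emph{all} positions $i$. As remarked after Definition~\ref{def:filtered_cloning_system} (see also Observation~\ref{obs:sync}), \eqref{item:fcs_compatibility} holds in practice, and in all of our later examples, also at the two exceptional positions $i = k, k+1$; granting this, $\rho_*$ is a morphism of cloning systems and induces $\Thomp{\rho_*} \colon \Thomp{G_*} \to \Thomp{\symm_*} = V$. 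I expect this compatibility at the exceptional positions to be the only point requiring genuine attention; everything else is a transparent transport of Observation~\ref{obs:thomphat_morphism} along $\varinjlim$ together with the bookkeeping that simplicity is preserved.
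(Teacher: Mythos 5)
Your argument for the main assertion is correct and is essentially the paper's: route through $\Thomphat{\varphi}$ from Observation~\ref{obs:thomphat_morphism}, then check that a semisimple $\forest g$ with $n$ feet maps to the semisimple $\forest\varphi(g)$ because $\varphi_n(G_n) \subseteq H_n$. The paper's proof is exactly that last sentence, and your expanded version (passing to colimits, noting the $k>n$ case reduces to compatibility with the structure maps) is a faithful unpacking of the intended argument. The injectivity/surjectivity inheritance is handled the same way.

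The caveat you raise about the ``in particular'' clause flags a genuine subtlety that the paper passes over silently. For $\rho_*$ to be a morphism of cloning systems in the sense of Section~\ref{sec:morphisms}, one needs $\rho_{n+1}((g)\clone^n_k) = (\rho_n(g))\symmclone^n_k$ as an equality of permutations, i.e.\ condition~\eqref{item:fcs_compatibility} at \emph{all} positions $i$. The axiom only asks for $i \ne k,k+1$, and Lemma~\ref{lem:compatibility} (which is what forces the axiom in any BZS product) likewise constrains nothing at those two positions: both a permutation agreeing with $(\rho(g))\symmclone_k$ everywhere, and the one that additionally swaps the images of $k$ and $k+1$, are consistent with the forest-monoid relations. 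The paper asserts the maps to $\Thomphat{\symm_\omega}$ and to $V$ without proof, and the remark after Observation~\ref{obs:sync} concedes that the stronger compatibility is an extra condition that happens to hold in all of its examples. So your explicit ``granting this'' is the accurate reading of what the cloning-system axioms actually deliver; you are, if anything, more careful than the source here.

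One small point: you read ``surjective'' levelwise (each $\varphi_n$ onto), whereas the paper's $\varphi$ is the colimit map and these are not equivalent for an injective directed system. If only the colimit is onto, lift $h \in H_n$ after first expanding $[\forest_-,h,\forest_+]$ via~\eqref{eq:lift_group_element} until $h$ sits in some $H_m$ where a preimage in $G_m$ exists. The paper does not address this either, so it is not a defect relative to the source, but it is worth spelling out if you want the statement as literally written.
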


\begin{proof}
 We have to show that if $\forest g \in \Fmonoid \bowtie G$ is semisimple with $n$ feet then so is $\Thomphat{\varphi}(\forest g) = \forest \varphi(g)$. But this follows since $\forest$ is semisimple with $n$ feet and $g \in G_n$, so $\varphi(g) \in H_n$.
\end{proof}

Functoriality is straightforward:

\begin{observation}\label{obs:functoriality}
 If $\varphi \colon G_* \to H_*$ and $\psi \colon H_* \to K_*$ are morphisms of cloning systems then $\Thomphat{\psi \varphi} = \Thomphat{\psi} \Thomphat{\varphi} \colon \Thomphat{G} \to \Thomphat{K}$. If $\varphi$ and $\psi$ are morphisms of filtered cloning systems then $\Thomp{\psi \varphi} = \Thomp{\psi} \Thomp{\varphi} \colon \Thomp{G_*} \to \Thomp{K_*}$.\qed
\end{observation}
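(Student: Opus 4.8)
The plan is to reduce the identity to the level of the monoids $\Fmonoid\bowtie G$, where the induced maps are given by the explicit formula $\forest g\mapsto \forest\varphi(g)$ recorded in the proof of Observation~\ref{obs:thomphat_morphism}, and then to transport it up to the groups of fractions using the uniqueness built into Lemma~\ref{lem:ore_fractions}.

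First I would check that a composite of morphisms of cloning systems is again a morphism of cloning systems, so that $\Thomphat{\psi\varphi}$ (resp.\ $\Thomp{\psi\varphi}$) is defined at all: condition~\eqref{item:morphism_group} gives
\[
(\psi\varphi(g))\clone^K_k=\psi\bigl((\varphi(g))\clone^H_k\bigr)=\psi\bigl(\varphi((g)\clone^G_k)\bigr)=(\psi\varphi)\bigl((g)\clone^G_k\bigr),
\]
and condition~\eqref{item:morphism_monoid} gives $\rho^K\circ(\psi\varphi)=\rho^H\circ\varphi=\rho^G$; in the filtered setting the same computation works in each degree $n$, using the degree-$n$ versions of the two axioms.

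Next, recall that by Observation~\ref{obs:thomphat_morphism} the homomorphism $\Thomphat{\varphi}$ is obtained by applying Lemma~\ref{lem:ore_fractions} to the monoid morphism $\Fmonoid\bowtie G\to\Fmonoid\bowtie H$, $\forest g\mapsto\forest\varphi(g)$. On the monoid level functoriality is then a one-line check,
\[
\forest g\;\longmapsto\;\forest\varphi(g)\;\longmapsto\;\forest\psi(\varphi(g))=\forest(\psi\varphi)(g),
\]
so the monoid morphism induced by $\psi\varphi$ coincides with the composite of those induced by $\varphi$ and by $\psi$. Now both $\Thomphat{\psi\varphi}$ and $\Thomphat{\psi}\circ\Thomphat{\varphi}$ are group homomorphisms $\Thomphat{G}\to\Thomphat{K}$ whose restriction to $\Fmonoid\bowtie G$ is this common monoid morphism; since every element of $\Thomphat{G}$ has the form $(\forest g)(\altforest h)^{-1}$, a homomorphism out of $\Thomphat{G}$ is determined by its restriction to $\Fmonoid\bowtie G$ (the uniqueness implicit in Lemma~\ref{lem:ore_fractions}), and the two maps must agree. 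For the filtered case I would invoke the proof of Observation~\ref{obs:thomp_morphism}: $\Thomp{\varphi}$ is simply the restriction of $\Thomphat{\varphi}$ to the subgroup of simple elements, which it preserves, so the identity $\Thomphat{\psi\varphi}=\Thomphat{\psi}\Thomphat{\varphi}$ restricts to $\Thomp{\psi\varphi}=\Thomp{\psi}\Thomp{\varphi}$. I do not expect a genuine obstacle here; the only point that needs a moment's care is the passage from monoids to groups of fractions, i.e.\ justifying that agreement on the generating submonoid $\Fmonoid\bowtie G$ suffices to conclude agreement on all of $\Thomphat{G}$.
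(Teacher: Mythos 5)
Your proposal is correct and fills in the verification the paper leaves as a \qed'd observation: you reduce to the explicit monoid-level formula $\forest g\mapsto\forest\varphi(g)$ from the proof of Observation~\ref{obs:thomphat_morphism}, note that composition is transparent there, and then use that $\Fmonoid\bowtie G$ generates $\Thomphat{G}$ (every element is a right fraction) so agreement on the submonoid forces agreement on the group; restricting to simple elements handles the filtered case. This is exactly the argument the paper regards as immediate, and your preliminary check that a composite of morphisms of cloning systems is again one is the right thing to include since it is needed for $\Thomphat{\psi\varphi}$ to even be defined.
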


%---------------------------------------------------------
\section{Basic properties}\label{sec:basic_properties}

Throughout this section let $\Thomp{G_*}$ be the generalized Thompson group of a cloning system on an injective directed system of groups $(G_n)_{n \in \N}$ and let $G = \varinjlim G_n$. We collect some properties of $\Thomp{G_*}$ that follow directly from the construction.

\subsection{A short exact sequence}

\begin{observation}
 Let $\tree \in \Fmonoid$ be semisimple with $n$ feet. The map $g \mapsto [\tree,g,\tree]$ is an injective homomorphism $G_n \to \Thomp{G_*}$.
\end{observation}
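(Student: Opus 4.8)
The plan is to exhibit the map $\varphi \colon G_n \to \Thomp{G_*}$, $g \mapsto [\tree, g, \tree]$, as the composite of two maps we already understand. First I would note that by Definition~\ref{def:simple}, each triple $(\tree, g, \tree)$ with $\tree$ semisimple with $n$ feet and $g \in G_n$ is simple, so $[\tree, g, \tree] \in \Thomp{G_*}$ and the map is well-defined. The homomorphism property comes from the multiplication rule in $\Thomphat{G}$: since the right forest of the first factor and the left forest of the second factor are literally equal (both $\tree$), no common right multiple needs to be taken, and $[\tree, g, \tree][\tree, h, \tree] = (\tree, g)(\tree, h)^{-1}(\tree, h)\cdots$ — more cleanly, using the representation as fractions, $(\tree,g)(\tree,1)^{-1}(\tree,h)(\tree,1)^{-1} = (\tree, g)(\tree, h)(\tree,1)^{-1}$ after cancelling the middle $(\tree,1)^{-1}(\tree,1)$, which represents $[\tree, gh, \tree]$. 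One should check that in $\Fmonoid \bowtie G$ we have $(\tree, g)(\tree, h) = (\tree \cdot \text{(something)}, \ldots)$ — here the point is that $\tree$ as a forest is a trivial forest combinatorially only in the sense of being the left factor; actually $(\tree,g)$ times $(\tree, h)$ requires moving $\tree$ past $g$. Let me instead argue via the internal product structure: write $s = \tree \in \Fmonoid$ viewed inside $\Fmonoid \bowtie G$; then $[\tree,g,\tree] = (\tree, g)(\tree, 1_G)^{-1}$ in $\Thomphat{G}$, and the product of two such is $(\tree,g)(\tree,1)^{-1}(\tree,h)(\tree,1)^{-1} = (\tree,g)(\tree,h)(\tree,1)^{-1} = (\tree, gh)(\tree,1)^{-1}$, using that $(\tree,g)(\tree,h) = (\tree, gh)$ because $(\tree,g)(\tree,h) = (\tree(g\cdot\tree_{\mathrm{id}}), g^{\mathrm{id}} h)$ — this is where I must be careful, because the left forest of the second factor, when written in $\Fmonoid$, is not the identity of $\Fmonoid$.

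So the cleanest route, and the one I would actually carry out, is to invoke Observation~\ref{obs:thomp_morphism} together with the inclusion $G_n \into \Thomp{G_*}$ already implicit in the construction, but more directly: realize $\varphi$ as a conjugate of the ``obvious'' inclusion. When $\tree$ is the trivial tree (one leaf, $n=1$), the map $g \mapsto [\tree, g, \tree]$ with $g \in G_1$ is manifestly a homomorphism since $[\tree, g, \tree][\tree, h, \tree]$ involves no expansion and the $\Fmonoid \bowtie G$ multiplication on the $G$-part is just the group operation. For general semisimple $\tree$ with $n$ feet, pick any element $x = [\altforest, 1_G, \tree] \in \Thomp{G_*}$ where $\altforest$ is the trivial tree with... no — the issue is the number of feet differs. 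The honest fix: $\Thomp{G_*}$ by construction contains each $G_n$ via some chosen embedding; the statement is precisely verifying that $g \mapsto [\tree,g,\tree]$ is one such, for each semisimple $\tree$ with $n$ feet. I would prove it head-on: injectivity is clear because $[\tree, g, \tree] = [\tree, h, \tree]$ forces $g = h$ after reducing to a common representative (both triples already have the same forests, and $\Fmonoid \bowtie G$ has unique factorization, so the $G$-components agree — formally, if the triples represent the same element then $(\tree, g)(\tree, h^{-1})\cdots$, apply the length morphism and the structure of fractions in an Ore monoid). For the homomorphism property, compute the product in $\Thomphat{G}$ directly: $[\tree, g, \tree] \cdot [\tree, h, \tree]$; the recipe (as in the proof of Proposition~\ref{prop:product_of_simple}) takes a common right multiple of the ``inner'' forests $\tree$ and $\tree$, and the trivial such multiple is $\tree \cdot 1_\Fmonoid = \tree \cdot 1_\Fmonoid$, i.e. one may take $\forest = 1_\Fmonoid$, giving $st = (\tree(g \cdot 1_\Fmonoid), g^{1_\Fmonoid})(\tree(h^{-1}\cdot 1_\Fmonoid), (h^{-1})^{1_\Fmonoid})^{-1} = (\tree, g)(\tree, h^{-1})^{-1} = [\tree, gh, \tree]$, using the Zappa–Sz\'ep axioms $g \cdot 1_\Fmonoid = 1_\Fmonoid$ (actually $\cdot 1_\Fmonoid$ means the identity forest, axiom~7 of Definition~\ref{def:ext_ZS_prod_monoids}) and $g^{1_\Fmonoid} = g$ (axiom~3).

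Assembling this: I would state that $\varphi$ is well-defined by Definition~\ref{def:simple}; that $\varphi(gh) = \varphi(g)\varphi(h)$ by the computation of the previous paragraph, where the only subtlety is confirming that the semisimple common right multiple of $\tree$ with itself may be taken to be $\tree$ itself (equivalently, take the auxiliary forest $\forest$ in \eqref{eq:simple_representative} to be trivial), so that the formula \eqref{eq:simple_representative} collapses to $(\tree, g)(\tree, h^{-1})^{-1}$, which represents $[\tree, gh, \tree]$ since $g^{1_\Fmonoid} = g$ and $(h^{-1})^{1_\Fmonoid} = h^{-1}$; and that $\varphi$ is injective because $[\tree, g, \tree] = 1$ in $\Thomphat{G}$ means $(\tree, g) = (\tree, 1_G)$ in $\Fmonoid \bowtie G$ after the Ore-fraction identification, and unique factorization in a Zappa–Sz\'ep product then forces $g = 1_G$. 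The main obstacle is purely bookkeeping: making sure the ``common right multiple'' step in the $\Thomphat{G}$-multiplication is legitimately trivial here, i.e. that one is allowed to choose the identity forest as the common right multiple of $\tree$ and $\tree$, and then that the Zappa–Sz\'ep action of the identity forest on $G$ is trivial — both of which are immediate from Definition~\ref{def:ext_ZS_prod_monoids}, but worth spelling out so the reader sees why no cloning actually occurs.
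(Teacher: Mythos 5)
Your argument is correct and ultimately lands on the same observation the paper uses: once you note that $[\tree,g,\tree]=\tree\, g\, \tree^{-1}$ in $\Thomphat{G}$, the map is conjugation by (the image of) $\tree$ precomposed with the chain of injections $G_n \into G \into \Fmonoid \bowtie G \into \Thomphat{G}$, which is what the paper means by ``the maps \dots are all injective'' and ``visibly a homomorphism''. Your explicit pass through \eqref{eq:simple_representative} with $\forest = \altforest = 1_\Fmonoid$ and the Zappa--Sz\'ep identity axioms is just a spelled-out version of that one-liner, so the two proofs are essentially identical in substance.
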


\begin{proof}
 The maps $G_n \to G \to \Fmonoid \bowtie G \to \Thomphat{G}$ are all injective. The element $[\tree,g,\tree]$ is simple, so the image lies in $\Thomp{G_*}$. The map is visibly a homomorphism.
\end{proof}

In fact, this can be explained more globally. For a semisimple forest $\tree$ with $n$ feet let $G_\tree$ denote the subgroup (isomorphic to $G_n$) of $\Thomp{G_*}$ that consists of elements $[\tree,g,\tree]$. The cloning map $\clone_k$ induces an embedding $G_\tree \into G_\alttree$ where $\alttree$ is obtained from $\tree$ by adding a split to the $k$th foot (so $\alttree=\tree\lambda_k$). Finite binary trees form a directed set and the condition \eqref{item:fcs_product_of_clonings} (product of clonings) ensures that that the groups $(G_\tree)_\tree$ form a directed system of groups.

\begin{lemma}
Consider a cloning system that satisfies condition~\eqref{item:fcs_compatibility} even for $i =k, k+1$ (this is the case in particular if $\rho = 0$). There is a directed subsystem $(K_T)_T$ of $(G_T)_T$ and a short exact sequence
\[
1 \to \varinjlim_\tree K_\tree \to \Thomp{G_*}\to W \to 1
\]
where the quotient morphism is the morphism $\Thomp{\rho_*}$ from Observation~\ref{obs:thomp_morphism} and $W$ is its image.
\end{lemma}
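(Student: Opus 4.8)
The plan is to set $K_n\defeq\ker(\rho_n\colon G_n\to\symm_n)$ and, for a semisimple forest $\tree$ with $n$ feet, to let $K_\tree\le G_\tree$ be the subgroup of those $[\tree,g,\tree]$ with $g\in K_n$. The strengthened form of~\eqref{item:fcs_compatibility} assumed here says exactly that $\rho_{n+1}((g)\clone^n_k)=(\rho_n(g))\symmclone^n_k$ in $\symm_{n+1}$, and since $(1)\symmclone^n_k=1$ this shows $\clone^n_k$ carries $K_n$ into $K_{n+1}$. Every $g\in K_n$ fixes $k$ under the $\rho_n$-action, so~\eqref{item:fcs_cloning_a_product} gives $(gh)\clone^n_k=(g)\clone^n_{\rho(h)k}(h)\clone^n_k=(g)\clone^n_k(h)\clone^n_k$ for $g,h\in K_n$; hence $\clone^n_k|_{K_n}\colon K_n\to K_{n+1}$ is a group homomorphism, injective because cloning maps are. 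Together with~\eqref{item:fcs_product_of_clonings}, which makes composites of cloning maps along a tree well defined, this assembles the $K_\tree$ into a directed system of groups --- indexed by the semisimple forests, a directed poset by Observation~\ref{obs:forest_semisimple_elements}\eqref{item:forest_semisimple_right_common_multiple} --- and this system is a subsystem of $(G_\tree)_\tree$.

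Next I would show that $\varinjlim_\tree K_\tree$ sits inside $\Thomp{G_*}$ as the increasing union $\bigcup_\tree K_\tree$. For $g\in K_n$ one has $\rho(g)k=k$, so $g\cdot\lambda_k=\lambda_k$, and hence the $\lambda_k$-expansion of $[\tree,g,\tree]$ is the element $[\tree\lambda_k,(g)\clone_k,\tree\lambda_k]$, which lies in $K_{\tree\lambda_k}$ (as $(g)\clone_k\in K_{n+1}$) and is precisely the image of $[\tree,g,\tree]$ under the connecting map $K_\tree\to K_{\tree\lambda_k}$. Thus each connecting map is just an inclusion of one subgroup of $\Thomp{G_*}$ into another, so the colimit is the increasing union of these nested subgroups and in particular the canonical homomorphism $\varinjlim_\tree K_\tree\to\Thomp{G_*}$ is injective, giving exactness on the left.

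It then remains to identify $\bigcup_\tree K_\tree$ with $\ker\Thomp{\rho_*}$. The strengthened~\eqref{item:fcs_compatibility} is precisely what makes $\rho_*$ a morphism of cloning systems, so $\Thomp{\rho_*}$ exists (Observation~\ref{obs:thomp_morphism}) and sends $[\forest_-,g,\forest_+]$ to $[\forest_-,\rho_n(g),\forest_+]$. The inclusion $\bigcup_\tree K_\tree\subseteq\ker\Thomp{\rho_*}$ is immediate since $\Thomp{\rho_*}[\tree,g,\tree]=[\tree,1,\tree]$ is trivial. For the reverse inclusion, write $x\in\Thomp{G_*}$ as $[\forest_-,g,\forest_+]$ with $\forest_-,\forest_+$ semisimple, both with $n$ feet, and $g\in G_n$ (Definition~\ref{def:simple}). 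Then $\Thomp{\rho_*}(x)=[\forest_-,\rho_n(g),\forest_+]=(\forest_-,\rho_n(g))(\forest_+,1)^{-1}$ in the group of right fractions of $\Fmonoid\bowtie\symm_\infty$; if this equals $1$ then $(\forest_-,\rho_n(g))=(\forest_+,1)$ already in $\Fmonoid\bowtie\symm_\infty$, because that monoid embeds into its group of right fractions (Observation~\ref{obs:BZS_cancellative_lcrms} and Ore's theorem). Uniqueness of the Zappa--Sz\'ep factorization then forces $\forest_-=\forest_+$ and $\rho_n(g)=1$, i.e.\ $g\in K_n$, so $x\in K_{\forest_-}\subseteq\bigcup_\tree K_\tree$. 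Since $W=\image\Thomp{\rho_*}$ by definition, surjectivity onto $W$ is automatic, and the short exact sequence follows.

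The step I expect to be the real obstacle is the reverse inclusion above, and within it the assertion that a simple element of $V$ represented by a semisimple triple $[\forest_-,\sigma,\forest_+]$ can be trivial only when $\forest_-=\forest_+$ and $\sigma=1$; this is where one genuinely uses that $\Fmonoid\bowtie\symm_\infty$ injects into its group of fractions, together with uniqueness of Zappa--Sz\'ep factorizations. I would also flag that the strengthening of~\eqref{item:fcs_compatibility} is used twice and essentially --- to have $\Thomp{\rho_*}$ at all, and to keep $\ker\rho_n$ invariant under cloning --- whereas the unstrengthened condition only gives that $\rho_{n+1}((g)\clone_k)$ agrees with $(\rho_n(g))\symmclone_k$ off $\{k,k+1\}$, which does not suffice.
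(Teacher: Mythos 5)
Your proof is correct and takes essentially the same route as the paper's, which is much terser and leaves most of the work to the reader. You correctly identify that the strengthened compatibility is needed twice: to make $\rho_*$ a morphism of cloning systems (so $\Thomp{\rho_*}$ exists at all) and to get $\clone^n_k(K_n)\subseteq K_{n+1}$; and your use of~\eqref{item:fcs_cloning_a_product} together with $\rho(h)k=k$ for $h\in K_n$ to show $\clone^n_k|_{K_n}$ is a group homomorphism is a detail the paper glosses over. The crux of the paper's ``this is clear once one realizes'' is the reverse inclusion $\ker\Thomp{\rho_*}\subseteq\bigcup_\tree K_\tree$, and your argument there is sound: since any simple element has a representative $[\forest_-,g,\forest_+]$ with $\forest_\pm$ semisimple with the same number of feet, applying $\Thomp{\rho_*}$ gives $(\forest_-,\rho(g))(\forest_+,1)^{-1}=1$ in the group of right fractions of $\Fmonoid\bowtie\symm_\infty$; since an Ore monoid embeds in its group of fractions this forces equality $(\forest_-,\rho(g))=(\forest_+,1)$ in the monoid, and uniqueness of the Zappa--Sz\'ep decomposition then forces $\forest_-=\forest_+$ and $\rho(g)=1$, i.e.\ $g\in K_n$. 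This is exactly what makes the kernel consist of elements of the form $[\tree,g,\tree]$ with $g\in K_\tree$, as the paper asserts.
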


Note that $W$ contains Thompson's group $F$.

\begin{proof}
For each $\tree$, say with $n$ feet, let $K_\tree$ be the kernel of $\rho_n \colon G_\tree \to \symm_n$. The assumption on the cloning system implies that if $\rho(g) = 1$ then $\rho((g)\clone_k) = 1$, showing that $(K_T)_T$ is indeed a subsystem of $(G_T)_T$. It remains to see that the direct limit is isomorphic to the kernel of $\Thomp{\rho_*}$. This is clear once one realizes that it consists of all elements that can be written in the form $[\tree,g,\tree]$, for some $\tree$ and $g\in K_\tree$.
\end{proof}

In what follows we will concentrate on the case where $\rho=0$ is the trivial morphism $\rho(g) = 1$, so $K_\tree=G_\tree$ for all $\tree$. Examples are $F$ and $\Fbr$ but not $V$ and $\Vbr$.

\begin{observation}\label{obs:thomp_kern}
Suppose $\rho = 0$. Then $\Thomp{G_*}=\Thkern{G_*}\rtimes F$.
\end{observation}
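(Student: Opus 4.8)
The plan is to exhibit $F$ as a subgroup of $\Thomp{G_*}$ that splits the surjection $\Thomp{\rho_*} \colon \Thomp{G_*} \to W$ from the previous lemma. When $\rho = 0$ the image $W$ is exactly Thompson's group $F$: indeed $\Thomp{\rho_*}$ has target $\Thomp{\symm_*} = V$, but since every $\rho_n$ is trivial the permutation attached to any simple element is trivial, so the image of $[\tree, g, \alttree]$ is the element of $V$ given by the pair of trees $(\tree,\alttree)$ with trivial permutation --- and these are precisely the elements of $F \le V$. Since the previous lemma already identifies the kernel of $\Thomp{\rho_*}$ with $\varinjlim_\tree K_\tree = \varinjlim_\tree G_\tree = \Thkern{G_*}$ (using $K_\tree = G_\tree$ when $\rho = 0$), the statement reduces to producing a section $F \into \Thomp{G_*}$ of $\Thomp{\rho_*}$.

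The section is built in the obvious way: an element of $F$ is a pair of trees $(\tree, \alttree)$ with the same number $n$ of leaves (up to common expansion), and we send it to $[\tree, 1_{G_n}, \alttree] \in \Thomp{G_*}$. First I would check this is well defined: a common expansion of the pair $(\tree,\alttree)$ by a forest $\altforest$ replaces it by $(\tree\altforest, \alttree\altforest)$, and the corresponding triple $[\tree\altforest, 1, \alttree\altforest]$ is a reduction of $[\tree,1,\alttree]$ because $1 \cdot \altforest = \altforest$ and $1^\altforest = 1$ (the cloning maps are group homomorphisms when $\rho = 0$, or at least $\clone_k$ fixes the identity), so both triples represent the same element of $\Thomphat{G}$. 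Next, that it is a homomorphism: to multiply $[\tree_1, 1, \alttree_1]$ and $[\tree_2, 1, \alttree_2]$ one picks a common right multiple $\alttree_1 \altforest = \tree_2 \altforest'$ and computes, via the formula in the proof of Proposition~\ref{prop:product_of_simple}, the product $[\tree_1 \altforest, 1, \alttree_2 \altforest']$ (all group-element contributions are trivial since $1^\altforest = 1$); this matches exactly the recipe for multiplying $(\tree_1,\alttree_1)$ and $(\tree_2,\alttree_2)$ in $F$. Injectivity is clear since distinct reduced tree pairs give distinct elements of $V$ already, hence distinct elements of $\Thomphat{G}$. Finally, $\Thomp{\rho_*}$ applied to $[\tree, 1, \alttree]$ gives the tree pair $(\tree,\alttree)$ with trivial permutation, i.e.\ the original element of $F$, so this is a genuine section.

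Assembling these: we have a short exact sequence $1 \to \Thkern{G_*} \to \Thomp{G_*} \xrightarrow{\Thomp{\rho_*}} F \to 1$ together with a splitting $F \into \Thomp{G_*}$, which gives the semidirect product decomposition $\Thomp{G_*} = \Thkern{G_*} \rtimes F$. I do not expect any serious obstacle here; the only mildly delicate point is bookkeeping the well-definedness and multiplicativity of the section through the "triple" representation and its reductions/expansions, i.e.\ making sure the trivial-group-element contributions really do propagate trivially under cloning --- but this is exactly the content of $\rho = 0$ forcing $g \cdot \altforest = \altforest$ and $g^\altforest$ to stay in the relevant $G_n$, and it is already implicit in the proof of Proposition~\ref{prop:product_of_simple}.
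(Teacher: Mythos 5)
Your proposal is correct and takes essentially the same route as the paper: the section $F \into \Thomp{G_*}$ is exactly the map $\Thomp{\iota_*}$ induced by the trivial morphism $\iota_* \colon \{1\} \to G_*$, i.e.\ $(\tree,\alttree) \mapsto [\tree,1,\alttree]$. The only difference is economy of exposition --- the paper leaves well-definedness, multiplicativity, injectivity, and the section property to the earlier functoriality machinery (Observations~\ref{obs:thomp_morphism} and \ref{obs:functoriality}, applied to the morphisms $\iota_* \colon \{1\} \to G_*$ and $\rho_* \colon G_* \to \symm_*$), whereas you verify these by hand on triples.
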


\begin{proof}
 Since each $\rho_n=0$, we have $W=F$, which is $\Thomp{\{1\}}$. Then the splitting map $F\to \Thomp{G_*}$ is $\Thomp{\iota_*}$ where $\iota_* \colon \{1\} \to G_*$ is the trivial homomorphism.
\end{proof}

\begin{remark}
\label{rmk:wreath}
Bartholdi, Cornulier, and Kochloukova \cite{bartholdi15} studied finiteness properties of wreath products. Observation~\ref{obs:thomp_kern} shows how this relates to our groups. A wreath product is built by taking a direct product of copies of a group $H$, indexed by a set $X$, and combining this with another group $G$ acting on $X$. The generalized Thompson groups in Observation~\ref{obs:thomp_kern} can be viewed as the result of taking a direct limit (instead of product) of groups from a family $(G_\tree)_\tree$, indexed by a set of trees $\tree$ on which there is a \emph{partial} (instead of full) action of $F$, and combining these data into a group $\Thomp{G_*}$.
\end{remark}

The question of whether $F$ is amenable or not is probably the most famous question about Thompson's groups. The following observation does not purport to be deep, but it seems worth recording nonetheless.

\begin{observation}[Amenability]\label{obs:amenability}
 Suppose $\rho = 0$. Then $\Thomp{G_*}$ is amenable if and only if $F$ and every $G_n$ is amenable.
\end{observation}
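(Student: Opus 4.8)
The plan is to reduce everything to the structural description of $\Thomp{G_*}$ obtained in Observation~\ref{obs:thomp_kern}, namely $\Thomp{G_*} = \Thkern{G_*} \rtimes F$, together with the standard closure properties of the class of amenable groups: it is closed under passing to subgroups and quotients, under extensions, and under directed unions (direct limits along a directed system of injections).

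For the ``if'' direction, assume $F$ and every $G_n$ are amenable. Recall from the discussion preceding Observation~\ref{obs:thomp_kern} that $\Thkern{G_*} = \varinjlim_\tree G_\tree$, where $\tree$ ranges over finite binary trees and $G_\tree \cong G_n$ when $\tree$ has $n$ feet; the maps in this directed system are the injections induced by the cloning maps. Hence $\Thkern{G_*}$ is a directed union of groups each isomorphic to some $G_n$, so it is amenable. Now $\Thomp{G_*}$ is an extension of the amenable group $F$ by the amenable normal subgroup $\Thkern{G_*}$, hence amenable.

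For the ``only if'' direction, assume $\Thomp{G_*}$ is amenable. By Observation~\ref{obs:thomp_kern} the group $F = \Thomp{\{1\}}$ is a quotient (indeed a retract) of $\Thomp{G_*}$, so $F$ is amenable. For each $n$, fix a semisimple tree $\tree$ with $n$ feet; by the first Observation of Section~\ref{sec:basic_properties} the map $g \mapsto [\tree,g,\tree]$ embeds $G_n$ into $\Thomp{G_*}$, so $G_n$ is amenable as a subgroup of an amenable group. This proves both directions.

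There is no real obstacle here; the only point requiring a little care is invoking the identification $\Thkern{G_*} \cong \varinjlim_\tree G_\tree$ and the fact that directed unions of amenable groups are amenable (which one may cite as a standard property, the monotone union being realized over the directed set of finite binary trees). Everything else is a direct application of the closure properties listed at the outset.
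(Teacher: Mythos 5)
Your proposal is correct and matches the paper's argument: both rely on the decomposition $\Thomp{G_*}=\Thkern{G_*}\rtimes F$ from Observation~\ref{obs:thomp_kern}, the identification of $\Thkern{G_*}$ as a directed union of copies of the $G_n$, and the closure of amenability under subgroups, directed unions, quotients, and extensions. Your write-up is a bit more explicit about the two directions, but the content is the same.
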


\begin{proof}
We have seen that $\Thkern{G_*}$ is a direct limit of copies of $G_n$. Since amenability is preserved under taking subgroups and direct limits, this tells us that $\Thkern{G_*}$ is amenable if and only if every $G_n$ is. Then since $\Thomp{G_*}=\Thkern{G_*}\rtimes F$, the conclusion follows since amenability is also closed under group extensions.
\end{proof}

\begin{observation}[Free group-free]\label{obs:free_gps}
 Suppose $\rho = 0$. If none of the $G_n$ contains a non-abelian free group then neither does $\Thomp{G_*}$.
\end{observation}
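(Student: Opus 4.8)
The plan is to build on the semidirect-product decomposition $\Thomp{G_*}=\Thkern{G_*}\rtimes F$ of Observation~\ref{obs:thomp_kern}, using two further inputs: Thompson's group $F$ has no non-abelian free subgroup (Brin--Squier), and $\Thkern{G_*}$ is a direct limit of copies of the $G_n$ along the injective cloning maps. The latter is recalled in the proof of Observation~\ref{obs:amenability}; more precisely $\Thkern{G_*}=\varinjlim_\tree G_\tree$ over the directed poset of finite binary trees, where $G_\tree$ is isomorphic to $G_n$ when $\tree$ has $n$ leaves and the connecting maps are cloning maps, hence injective.

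First I would check that $\Thkern{G_*}$ contains no non-abelian free subgroup. Any finitely generated subgroup of a directed union $\varinjlim_\tree G_\tree$ lies in a single term $G_\tree\cong G_n$; a non-abelian free group is finitely generated, so a copy of one inside $\Thkern{G_*}$ would embed in some $G_n$, contrary to hypothesis.

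Next, suppose toward a contradiction that $\Phi\le\Thomp{G_*}$ is free of rank at least two, and let $\pi\colon\Thomp{G_*}\twoheadrightarrow F$ be the projection onto the $F$-factor, so $\ker\pi=\Thkern{G_*}$. If $\pi|_\Phi$ is injective then $F$ contains a copy of $\Phi$, contradicting Brin--Squier. Otherwise $N\defeq\Phi\cap\Thkern{G_*}=\ker(\pi|_\Phi)$ is a nontrivial normal subgroup of the non-abelian free group $\Phi$, and I claim $N$ itself contains a non-abelian free subgroup. Indeed, pick $1\ne x\in N$, let $w$ be the primitive root of $x$ in $\Phi$, and choose $g\in\Phi\setminus\langle w\rangle$ (possible since $\langle w\rangle$ is cyclic while $\Phi$ is not). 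By normality $x$ and $gxg^{-1}$ both lie in $N$, and they do not commute: commuting elements of a free group lie in a common maximal cyclic subgroup, which would force $gwg^{-1}=w^{\pm 1}$ and hence, using that $\Phi$ is torsion-free with unique roots, $g\in\langle w\rangle$. Since any two non-commuting elements of a free group generate a free group of rank two, $N$ --- and therefore $\Thkern{G_*}$ --- contains a non-abelian free subgroup, contradicting the previous paragraph. Hence $\Thomp{G_*}$ has no non-abelian free subgroup.

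The argument is essentially bookkeeping on top of the structural results already established for $\Thomp{G_*}$. The one step requiring a little care is the purely group-theoretic claim that a nontrivial normal subgroup of a non-abelian free group contains a non-abelian free subgroup, which I would settle as above (or, alternatively, via the Nielsen--Schreier rank formula in the finite-index case together with the fact that a finitely generated normal subgroup of a free group is trivial or of finite index in the infinite-index case). I also use the Brin--Squier theorem for $F$ as a black box.
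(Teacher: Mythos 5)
Your proof is correct and follows essentially the same approach as the paper: decompose $\Thomp{G_*}=\Thkern{G_*}\rtimes F$, observe $\Thkern{G_*}=\varinjlim_\tree G_\tree$ has no non-abelian free subgroup since such a group is finitely generated and would embed in some $G_n$, and then analyze a hypothetical free subgroup $\Phi$ via its intersection with the kernel. The paper phrases the final step positively (for $1\neq x\in H\cap\Thkern{G_*}$, $x$ and $x^y$ commute for all $y\in H$, hence $H$ is abelian), leaving the ``hence abelian'' step implicit; your contrapositive version with the primitive root $w$ of $x$ and a choice of $g\in\Phi\setminus\gen{w}$ makes that step explicit, but it is the same underlying argument.
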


\begin{proof}
 Suppose $H\le \Thomp{G_*}$ is free. If $H\cap \Thkern{G_*}=\{1\}$ then $H$ embeds into $F$, and so $H$ must be cyclic, since $F$ does not contain a non-abelian free group. Now suppose there is some $1\neq x\in H\cap \Thkern{G_*}$. For any $y\in H$, the conjugate $x^y$ is in $H\cap \Thkern{G_*}$. Since $\Thkern{G_*}$ is a direct limit of copies of the $G_n$, it does not contain a non-abelian free group by assumption, and so $\langle x,x^y\rangle$ is abelian. But $y\in H$ was arbitrary, so $H$ must already be abelian.
\end{proof}

The next result does not require $\rho=0$. It fits into the context of this section but to prove it we need some of the tools of Section~\ref{sec:stein_space}.

\begin{lemma}[Torsion-free]\label{lem:torfree}
 Assume that the cloning system is properly graded. If all the $G_n$ are torsion-free then so is $\Thomp{G_*}$.
\end{lemma}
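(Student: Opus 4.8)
The plan is to use the action of $\Thomp{G_*}$ on the Stein--Farley cube complex $\Stein{G_*}$ introduced in Section~\ref{sec:stein_space}, exploiting the fact that for a properly graded cloning system the cell stabilizers are subgroups of the groups $G_n$. First I would recall that $\Stein{G_*}$ is contractible (hence, in particular, the relevant fact that it is acyclic or at least that it admits no nontrivial finite group action with empty fixed-point set in the way we need). Since $\Stein{G_*}$ is a CAT(0) cube complex on which $\Thomp{G_*}$ acts by cell-permuting automorphisms, any finite subgroup $Q \le \Thomp{G_*}$ fixes a point of $\Stein{G_*}$, and hence fixes a cell, by the standard Bruhat--Tits type fixed-point theorem for finite (or bounded) group actions on CAT(0) spaces. (Alternatively one can argue directly from contractibility plus properness/cocompactness of the filtration, but the CAT(0) fixed-point argument is cleanest.)

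Next I would use the structural statement from Section~\ref{sec:stein_space} that, because the cloning system is properly graded, the stabilizer in $\Thomp{G_*}$ of a cell of $\Stein{G_*}$ is isomorphic to a subgroup of some $G_n$ (this is exactly the ``desirable property'' of properly graded cloning systems advertised in the introduction: cell stabilizers are subgroups of the $G_n$). Combining the two previous steps: a finite subgroup $Q \le \Thomp{G_*}$ fixes a cell, hence is contained in that cell's stabilizer, hence embeds into some $G_n$. Since every $G_n$ is torsion-free by hypothesis, $Q$ is trivial. Thus $\Thomp{G_*}$ has no nontrivial finite subgroups, i.e.\ it is torsion-free.

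The main obstacle I anticipate is the bookkeeping needed to invoke the fixed-point theorem correctly: one must be sure that a finite subgroup of $\Thomp{G_*}$ actually stabilizes a \emph{cell} (not merely a point of the geometric realization that might lie in the interior of a cell whose setwise stabilizer permutes its vertices nontrivially) and that the stabilizer of that cell is genuinely a subgroup of some $G_n$ rather than an extension involving a finite permutation group of the leaves. In the Stein--Farley setup the action on a cube is typically such that the stabilizer of a cube fixes it pointwise up to the issue of permuting factors; one resolves this either by passing to the barycentric subdivision (so that the stabilizer of a simplex fixes it pointwise) or by noting that for properly graded cloning systems the relevant vertex stabilizers are already the groups $G_n$ and the cube stabilizers are intersections of these. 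A secondary point is that if Section~\ref{sec:stein_space} only proves contractibility (rather than CAT(0)-ness) of $\Stein{G_*}$, then instead of the CAT(0) fixed-point theorem one should use that a finite group acting on a contractible finite-dimensional complex has nonempty fixed point set only under extra hypotheses — so the cleanest route is really to establish the CAT(0) cubical structure, or to use that $\Stein{G_*}$ is built from a ``Stein--Farley'' poset on which finite groups act with contractible fixed subposets; I would cite the appropriate lemma from Section~\ref{sec:stein_space} for whichever of these is actually available there.
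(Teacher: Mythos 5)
Your core idea is exactly the one the paper uses: a finite-order element of $\Thomp{G_*}$ must fix a vertex of $\Stein{G_*}$, and since vertex stabilizers are isomorphic to the torsion-free groups $G_n$ (Lemma~\ref{lem:stabs}, which requires the properly graded hypothesis), the element is trivial. Where your proposal diverges is in how the fixed vertex is produced. You lean on the CAT(0) fixed-point theorem, but the paper never establishes that $\Stein{G_*}$ is CAT(0): it is shown to be a cubical complex and to be contractible (Proposition~\ref{prop:stein_space_contractible}), but Gromov's link condition is not verified, so the Bruhat--Tits/Cartan fixed-point argument is not actually available from what is proved in Section~\ref{sec:stein_space}. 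You flag this concern yourself at the end and gesture at ``using the Stein--Farley poset,'' which is essentially the route the paper takes, but you stop short of developing it.

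The paper's actual fixed-point argument is more elementary and bypasses geometry entirely. It uses Lemma~\ref{lem:poset_semilattice}: $\Poset_1$ is a join-semilattice, so any finite subset of $\Poset_1$ has a unique least upper bound (join). If $g \in \Thomp{G_*}$ has finite order, then for any $x \in \Poset_1$ the set $\gen{g}.x$ is finite, and its unique join is necessarily fixed by $g$ (since $g$ permutes $\gen{g}.x$ and joins are unique). That join is a vertex of $\Stein{G_*}$, so its stabilizer is some $G_n$, which is torsion-free; hence $g = 1$. Both approaches ``buy'' the same conclusion, but the join-semilattice argument requires no metric geometry, no properness, no finite-dimensionality, and no barycentric subdivision to handle cell stabilizers versus pointwise stabilizers --- it lands directly on a fixed \emph{vertex}. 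Your proposal would become correct if supplemented either with a proof of the link condition for $\Stein{G_*}$ or, more cleanly, by replacing the CAT(0) invocation with the join-semilattice observation, which is the missing key step.
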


\subsection{Truncation}\label{sec:truncation}
For $g \in G_n$ and $k \le n$ we have the equation $g \lambda_k = (g \cdot \lambda_k) g^{\lambda_k}$ in $\Fmonoid \bowtie G$ where $g^{\lambda_k} \in G_{n+1}$. In $\Thomphat{G}$ this implies
\begin{equation}
\label{eq:lift_group_element}
g = (g\cdot \lambda_k) g^{\lambda_k} \lambda_k^{-1}\text{.}
\end{equation}
This elementary observation has an interesting consequence. Let $N \in \N$ be arbitrary and define a directed system of groups $(G'_n)_{n \in \N}$ by $G'_n \defeq \{1\}$ for $n \le N$ and $G'_n \defeq G_n$ for $n > N$. Define a cloning system on $G'_*$ by letting $(\clone')^n_k \colon G'_n \to G'_{n+1}$ be the trivial homomorphism when $n\le N$, and $(\clone')^n_k = \clone^n_k$ and $\rho'_n = \rho_n$ when $n > N$. We call $G'_*$ the \emph{truncation} of $G_*$ at $N$ and $((\rho'_n)_n,((\clone')^n_k)_{k \le n})$ the truncation of $((\rho_n)_n, ({\clone}^n_k)_{k \le n})$ at $N$.

\begin{proposition}[Truncation isomorphism]\label{prop:truncation_isomorphism}
 Let $G'_*$ be the truncation of $G_*$ at $N$. The morphism $\Thomp{G'_*} \to \Thomp{G_*}$ induced by the obvious homomorphism $G'_* \to G_*$ is an isomorphism.
\end{proposition}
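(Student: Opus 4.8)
The plan is to exhibit the obvious homomorphism $\varphi_* \colon G'_* \to G_*$ (the identity $G_n \to G_n$ for $n > N$, and the unique map $\{1\} \to G_n$ for $n \le N$) as a morphism of cloning systems, and then to show that the induced map $\Thomp{\varphi} \colon \Thomp{G'_*} \to \Thomp{G_*}$ is bijective. Injectivity will come for free from Observation~\ref{obs:thomp_morphism}; all the content sits in surjectivity, which I would deduce from the lifting identity~\eqref{eq:lift_group_element}.

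First I would check that the truncated data form a cloning system and that $\varphi_*$ is a morphism of cloning systems. Both checks are routine: at levels $n > N$ the truncated cloning maps and $\rho'_n$ agree with the originals and $\varphi_n,\varphi_{n+1}$ are identities, so the axioms~\eqref{item:fcs_cloning_a_product}--\eqref{item:fcs_compatibility} and the two conditions defining a morphism reduce to the ones already known; at levels $n \le N$ every group involved is trivial and both sides of every relevant equation are forced to be the identity, using that cloning maps and the $\rho_n$ fix the identity (immediate from~\eqref{item:fcs_cloning_a_product} and from~\eqref{eq:symmclone} with $g=1$). Since each $\varphi_n$ is injective, Observation~\ref{obs:thomp_morphism} gives that $\Thomp{\varphi}$ is injective, so it only remains to prove surjectivity.

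For surjectivity I would take an arbitrary simple element $s = [\tree,g,\alttree] \in \Thomp{G_*}$, with $\tree,\alttree$ trees with $n$ leaves and $g \in G_n$. If $n > N$ then $g \in G_n = G'_n$, so the same triple is simple for $G'_*$ and $\Thomp{\varphi}$ carries it to $s$. If $n \le N$, fix a forest $\altforest$ of rank at most $n$ and length $\ell$ with $n + \ell > N$ (for instance a left comb of $\ell$ carets on the first tree), and replace $(\tree,g,\alttree)$ by its expansion $(\tree(g\cdot\altforest),g^\altforest,\alttree\altforest)$, which represents the same element $s$ by the discussion following Definition~\ref{def:big_thompson}. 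Using Lemma~\ref{lem:semisimple_elements} one checks that the expanded triple is again simple with $n+\ell$ feet: $(\tree,g)\altforest = (\tree(g\cdot\altforest),g^\altforest)$ is semisimple because $(\tree,g)$ is semisimple with $n$ feet and $\rk\altforest \le n$, likewise for $\alttree\altforest$, the feet count is $n+\ell$ by Lemma~\ref{lem:semisimple_elements}\eqref{item:type_length}, and $g^\altforest \in G_{n+\ell}$ since each cloning map raises the level by one. As $n+\ell > N$ we get $g^\altforest \in G_{n+\ell} = G'_{n+\ell}$, so the expanded triple is a simple triple for $G'_*$, and $\Thomp{\varphi}$ sends the element it represents to $s$ (it only changes the group entry of a triple, and $\varphi_{n+\ell}$ is the identity there). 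Hence $\Thomp{\varphi}$ is surjective, and together with injectivity this completes the proof.

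The only delicate point is the bookkeeping in the surjectivity step: one must choose $\altforest$ of rank at most $n$ so that passing to the expansion keeps the triple \emph{simple}, not merely an equal element of $\Thomphat{G}$, and must track the number of feet of the expanded triple. Given Lemma~\ref{lem:semisimple_elements} and Observation~\ref{obs:forest_semisimple_elements} this is entirely routine, and I do not expect a genuine obstacle.
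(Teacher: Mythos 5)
Your proof is correct and takes essentially the same route as the paper: injectivity via the general morphism machinery, and surjectivity by expanding a simple triple $[\tree,g,\alttree]$ until its number of leaves exceeds $N$. The only cosmetic difference is that the paper applies the lifting identity~\eqref{eq:lift_group_element} one caret at a time inductively, whereas you perform a single expansion by a forest of length $\ell$ all at once; you also spell out the routine verification that the truncation is a cloning system and that the expanded triple remains simple, which the paper leaves implicit.
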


\begin{proof}
 The morphism $G'_* \to G_*$ is injective hence so is $\Thomp{G'_*} \to \Thomp{G_*}$. To show that it is surjective let $[\tree,g,\alttree] \in \Thomp{G_*}$ be such that $\tree$ and $\alttree$ have $n$ leaves. If $n > N$ there is nothing to show. Otherwise use \eqref{eq:lift_group_element} to write
 \[
 [\tree,g,\alttree] = [\tree (g \cdot \lambda_k) , g^{\lambda_k}, \alttree \lambda_k]
 \]
 for some $k \le n$. The trees in the right hand side expression have $n+1$ leaves. Proceeding inductively, we obtain an element whose trees have $N+1$ leaves and therefore the element is in $\Thomp{G'_*}$.
\end{proof}

This proposition is in line with treating $\Thomp{G_*}$ as a sort of limit of $G_*$ since it does not depend on an initial segment of data.

%-------------------------------
\section{Spaces for generalized Thompson groups}\label{sec:spaces}

The goal of this section is to produce for each generalized Thompson group $\Thomp{G_*}$ a space on which it acts. The space will be contractible and have stabilizers isomorphic to the groups $G_n$, assuming the cloning system on $G_*$ is properly graded. The ideas used in the construction were used before in \cite{stein92, brown92, farley03, brown06, fluch13, bux14}. Throughout let $G_*$ be an injective directed system of groups equipped with a cloning system and let $G = \varinjlim G_*$.

As a starting point we note that Corollary~\ref{cor:fraction_semilattice}, Observation~\ref{obs:BZS_cancellative_lcrms} and Corollary~\ref{cor:length} imply that $\Thomphat{G}/G$ is a join-semilattice with conditional meets, under the relation $xG \le yG$ if $x^{-1}y \in \Fmonoid \bowtie G$. Later on it will be convenient to have a symbol for the quotient relation so we let $x \sim_G y$ if $x^{-1}y \in G$.

\subsection{Semisimple group elements}\label{sec:semisimple}
We generalize some of the notions that were introduced in Sections~\ref{sec:forest_monoid} and \ref{sec:BZS_to_thomp}. We say that an arbitrary (not necessarily semisimple) element $\forest$ of $\Fmonoid$ has \emph{$n$ feet} if it has rank $m$ and length $n-m$. Visually this means that the last leaf that is not a root is numbered $n$. An element $(\forest,g)$ of $\Fmonoid \bowtie G$ has \emph{$n$ feet} if $\forest$ has at most $n$ feet and $g \in G_n$. Finally, we call an element $[\forest,g,\altforest]$ of $\Thomphat{G}$ \emph{semisimple} if $(\forest,g)$ is semisimple with $n$ feet and $\altforest$ has at most $n$ feet (note $\altforest$ need not be semisimple). This is consistent with the previous definition of ``semisimple'': If an element of the group $\Thomphat{G}$ is semisimple in this sense, and is an element of the monoid $\Fmonoid\bowtie G$, then it must be semisimple in the monoid. We let $\Preposet_1$ denote the set of all semisimple elements of $\Thomphat{G}$.

\begin{lemma}\label{lem:simple_times_semisimple}
 If $[\forest_1,g_1,\altforest_1]$ is simple and $[\forest_2,g_2,\altforest_2]$ is semisimple then their product $[\forest_1,g,\altforest_1][\forest_2,g,\altforest_2]$ is semisimple. As a consequence, $\Thomp{G_*}$ acts on $\Preposet_1$.
\end{lemma}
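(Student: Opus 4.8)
The plan is to multiply the two triples in $\Thomphat{G}$, just as in the proof of Proposition~\ref{prop:product_of_simple}, but now keeping track only of the weaker property ``semisimple'' for the product, rather than ``simple''. First I would use Observation~\ref{obs:forest_semisimple_elements}~\eqref{item:forest_semisimple_right_common_multiple}: since $\altforest_1$ is semisimple and $\forest_2$ is semisimple, they have a semisimple common right multiple, say $\altforest_1 \forest = \forest_2 \altforest$ with $\altforest_1 \forest$ semisimple. Writing the product out,
\[
[\forest_1,g_1,\altforest_1][\forest_2,g_2,\altforest_2] = [\forest_1(g_1 \cdot \forest), g_1^{\forest}\, g_2^{g_2^{-1}\cdot \altforest}, \altforest_2(g_2^{-1}\cdot \altforest)]\text{,}
\]
exactly as in~\eqref{eq:simple_representative}. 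So the content of the lemma is that this triple is semisimple, i.e.\ that its left pair $(\forest_1(g_1\cdot\forest), g_1^\forest)$ is semisimple with some number of feet $n$, and that the right forest $\altforest_2(g_2^{-1}\cdot\altforest)$ has at most $n$ feet.

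For the left pair: $(\forest_1,g_1)$ is semisimple, say with $m$ feet, and $\altforest_1$ is semisimple with the same number $m$ of feet (here I use that $[\forest_1,g_1,\altforest_1]$ is \emph{simple}). Since $\altforest_1\forest$ is semisimple, Lemma~\ref{lem:semisimple_elements}~\eqref{item:monoid_semisimplicity_common_multiple} applied to the two semisimple elements $(\forest_1,g_1)$ and $(\altforest_1,1)$ (both with $m$ feet) shows that $(\forest_1,g_1)\forest = (\forest_1(g_1\cdot\forest),g_1^\forest)$ is semisimple; let $n$ be its number of feet. By Lemma~\ref{lem:semisimple_elements}~\eqref{item:type_length}, $n = \len(\forest_1,g_1) + \len\forest + 1 = \len\altforest_1 + \len\forest + 1 = \len(\altforest_1\forest) + 1$, using that $\len\forest_1 = \len\altforest_1$ because $\forest_1$ and $\altforest_1$ are semisimple with the same number of feet.

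For the right forest: $\altforest_2(g_2^{-1}\cdot\altforest)$. By the identity $\altforest_1\forest = \forest_2\altforest$ we have $\len(\forest_2\altforest) = \len(\altforest_1\forest)$, so $\len\altforest = \len(\altforest_1\forest) - \len\forest_2 \le \len(\altforest_1\forest)$ (using $\len\forest_2\geq 0$; if sharper is needed, note $\len\forest_2 = \len\forest_1$ as $[\forest_1,g_1,\altforest_1]$ is simple with $\forest_1,\altforest_1$ of $m$ feet, but $\len\forest_2 \ge 0$ suffices). The action of $g_2^{-1}$ preserves length (Corollary~\ref{cor:length}), so $\len(g_2^{-1}\cdot\altforest) = \len\altforest$, and hence $\len(g_2^{-1}\cdot\altforest)$ is at most the number of carets of $\altforest_1\forest$, namely $n-1$. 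Now $\altforest_2$ has at most $\len\forest_2 + 1$ feet (it need not be semisimple but $[\forest_2,g_2,\altforest_2]$ is semisimple, so $\altforest_2$ has at most as many feet as $\forest_2$, which is semisimple). I would then bound the number of feet of $\altforest_2(g_2^{-1}\cdot\altforest)$ by the number of feet of $\altforest_2$ plus $\len(g_2^{-1}\cdot\altforest)$ — composing forests adds carets, and the number of feet grows by at most the number of carets appended — giving $\le (\len\forest_2+1) + (n-1-\len\forest_2) = n$, as required.

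\textbf{Main obstacle.} The delicate point is the bookkeeping of ``feet'' for the non-semisimple forest $\altforest_2$ and its product with $g_2^{-1}\cdot\altforest$; the notion of ``feet'' for a general forest (rank $m$, length $n-m$, so $n$ feet) from Section~\ref{sec:semisimple} behaves additively under multiplication only up to the rank, so I would want to state and use a small lemma (or invoke Observation~\ref{obs:forest_semisimple_elements}~\eqref{item:forest_right_multiple_rank} in the generalized form) that multiplying a forest with $\le p$ feet on the right by a forest of $\le q$ carets yields a forest with $\le p+q$ feet, and check that $g_2^{-1}$ acting on $\altforest$ does not change this count. Everything else is a direct transcription of the computation in Proposition~\ref{prop:product_of_simple}. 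Finally, the ``as a consequence'' clause: $\Thomp{G_*}$ acts on $\Preposet_1$ by left multiplication, and the above shows that for $s \in \Thomp{G_*}$ (simple) and $x \in \Preposet_1$ (semisimple), $sx$ is again semisimple, so the action restricts to $\Preposet_1$; that it is an action is immediate since it is the restriction of left multiplication in $\Thomphat{G}$.
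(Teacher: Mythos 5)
Your approach is the same as the paper's, which proves this lemma simply by noting it is ``shown analogously to Proposition~\ref{prop:product_of_simple},'' and your attempt to supply the details is in the right spirit. Two of the details do not quite hold up as written, however.

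First, the ``small lemma'' you propose---that multiplying a forest with at most $p$ feet on the right by a forest with $q$ carets gives a forest with at most $p+q$ feet---is false. For instance $\lambda_1$ has $2$ feet and $\lambda_3$ has one caret, yet $\lambda_1\lambda_3$ has rank $2$ and length $2$, hence $4$ feet, not at most $3$. The issue is that $\lambda_3$ has rank greater than the number of feet of $\lambda_1$, so a new foot appears further out. The correct bookkeeping distinguishes whether $\rk(\altforest')$ exceeds the number of feet of $\forest$; if so, $\forest\altforest'$ has as many feet as $\altforest'$ itself. In your setting this is still harmless, but you must use the bound $n_2$ (the number of feet of $\forest_2$) throughout rather than the possibly smaller number of feet of $\altforest_2$: the forest $\altforest_2$ has at most $n_2$ feet, and $g_2^{-1}\cdot\altforest$ has rank at most $n_2$ (since $\altforest$ does and $\rho(g_2^{-1})\in\symm_{n_2}$) and length $\len\altforest = n - n_2$, hence at most $n$ feet. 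In either branch of the case distinction, $\altforest_2(g_2^{-1}\cdot\altforest)$ then has at most $n$ feet.

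Second, you verify that $\forest_1(g_1\cdot\forest)$ is semisimple with $n$ feet and that $g_1^\forest\in G_n$, but the middle entry of the triple for the product is $g_1^\forest g_2^{g_2^{-1}\cdot\altforest}$, so you must also check $g_2^{g_2^{-1}\cdot\altforest}\in G_n$. This is quick: $g_2\in G_{n_2}$ and $\len(g_2^{-1}\cdot\altforest)=\len\altforest$, so by the computation in Lemma~\ref{lem:semisimple_elements} the cloning lands in $G_{n_2+\len\altforest}=G_n$. (Your parenthetical ``$\len\forest_2 = \len\forest_1$'' has no basis---$\forest_2$ comes from the second, unrelated factor---but as you note it is not needed.)
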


\begin{proof}
 This is shown analogously to Proposition~\ref{prop:product_of_simple}.
\end{proof}

If $[\forest,g,\altforest]$ is semisimple we say that it has $\len([\forest,g,\altforest]) + 1 = \len(\forest) - \len(\altforest) + 1$ \emph{feet}, which is well defined by Corollary~\ref{cor:length}. This can be visualized as the number of roots of $\altforest$ that can be ``reached'' from the first root of $\forest$. We let $\Preposet_{1,n}$ denote the set of all semisimple elements with at most $n$ feet. We define $\Poset_{1,n}$ to be the quotient $\Preposet_{1,n}/{\sim}_G$ and call the passage from $\Preposet_{1,n}$ to $\Poset_{1,n}$ \emph{dangling}. Note that $\Poset_{1,n}$ is a subposet of $\Thomphat{G}/G$. We also denote $\Preposet_1/{\sim_G}$ by $\Poset_1$.

For context, the term ``dangling'' comes from the case when $G_*$ is the system of braid groups $B_*$, and the elements of $\Poset_{1,n}$ can be pictured as ``dangling braided strand diagrams'' \cite{bux14}, originating on one strand and ending on $n$ strands.

The next lemma is the reason for having introduced the notion of a cloning system being properly graded.

\begin{lemma}\label{lem:dangling}
Assume that the cloning system is properly graded. If $x,y \in \Preposet_{1,n}$ are semisimple then $x \sim_G y$ if and only if $x^{-1}y \in G_n$.
\end{lemma}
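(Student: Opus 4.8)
The direction ``$\Leftarrow$'' is immediate since $G_n\subseteq G$, so the plan is to show that if $h:=x^{-1}y\in G$ then in fact $h\in G_n$. First I would use the length homomorphism $\len\colon\Thomphat{G}\to\Z$ of Corollary~\ref{cor:length}: since $\len h=0$ we get $\len x=\len y$, so $x$ and $y$ have the same number of feet $f\le n$. Write $x=[\forest_x,g_x,\altforest_x]$ and $y=[\forest_y,g_y,\altforest_y]$ with $(\forest_x,g_x)$ and $(\forest_y,g_y)$ semisimple. Using Observation~\ref{obs:forest_semisimple_elements}\eqref{item:forest_semisimple_right_common_multiple} I would pick a semisimple common right multiple $w$ of $\forest_x$ and $\forest_y$, say with $q$ feet, together with forests $a_x,a_y$ with $\forest_x a_x=w=\forest_y a_y$, and expand $x$ by $g_x^{-1}\cdot a_x$ and $y$ by $g_y^{-1}\cdot a_y$. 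Both expansions then have left forest $w$; by Lemma~\ref{lem:semisimple_elements}\eqref{item:monoid_semisimplicity_cancel_g} they stay semisimple, and a short count of feet shows they take the form $x=[w,g_1,\mu_1]$, $y=[w,g_2,\mu_2]$ with $g_1,g_2\in G_q$, with $\mu_1,\mu_2$ forests having at most $q$ feet, and with $\len\mu_1=\len\mu_2=q-f=:c$.

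Next I would compute, directly in the group of fractions, that $x^{-1}y=[\mu_1,g_1^{-1}g_2,\mu_2]$ (the two copies of $w$ cancel). Setting this equal to $h=(1_\Fmonoid,h)$ and invoking uniqueness of the Zappa--Sz\'ep factorization in $\Fmonoid\bowtie G$, I would extract the two equations $\mu_1=h\cdot\mu_2$ and $(h)\clone_{\mu_2}=h^{\mu_2}=g_1^{-1}g_2$. The second is the crucial one: since $g_1,g_2\in G_q$ it says $(h)\clone_{\mu_2}\in G_q$.

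Finally I would cancel the clonings. Write $\mu_2=\lambda_{\ell_1}\cdots\lambda_{\ell_c}$ as a product of single carets, building the forest up one tree at a time from the left; the fact that $\mu_2$ has at most $q$ feet then forces the index bound $\ell_j<q-c+j$ for every $j$. Since $\clone_{\mu_2}$ is independent of this choice of word (by \eqref{item:cs_product_of_clonings}), we have $(h)\clone_{\mu_2}=(\cdots((h)\clone_{\ell_1})\cdots)\clone_{\ell_c}\in G_q$, and applying the properly graded condition (in its direct-limit formulation, ``$(h')\clone_k\in G_m$ with $k\le m$ implies $h'\in G_{m-1}$'') once for each caret --- peeling off $\clone_{\ell_c}$, then $\clone_{\ell_{c-1}}$, and so on, and descending from $G_q$ to $G_{q-1}$ to $G_{q-2}$ and so on down to $G_{q-c}$ --- I would arrive at $h\in G_{q-c}=G_f\subseteq G_n$. (If $h$ happens to lie in some $G_k$ with $k\le f$ to begin with, there is nothing to descend.)

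The conceptual heart is short: unique factorization converts the hypothesis $x^{-1}y\in G$ into the single statement $(h)\clone_{\mu_2}\in G_q$, and ``properly graded'' is precisely what is needed to strip clonings off an element that has landed back in the filtration. I expect the main obstacle to be purely bookkeeping: verifying that the expansions in the first step stay semisimple and keep at most $q$ feet (routine from Lemma~\ref{lem:semisimple_elements} together with the observation, already used in its proof, that $\rho(G_q)\subseteq\symm_q$ preserves having rank at most $q$), and pinning down a decomposition of $\mu_2$ whose generator indices are small enough to feed the iterated descent. The latter is the only genuinely combinatorial point, and it is just the quantitative form of ``$\mu_2$ has at most $q$ feet''.
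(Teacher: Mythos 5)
Your argument is correct and follows the same route as the paper: after expanding $x$ and $y$ to a common semisimple left forest $w$, the triple $[\mu_1,g_1^{-1}g_2,\mu_2]$ you obtain is literally the paper's $[A,b,C]$, and both proofs finish by unique Zappa--Sz\'ep factorization in $\Fmonoid\bowtie G$ followed by descending through the cloning maps. The useful thing you add is carrying out the final descent explicitly where the paper compresses it to the single assertion that properly-gradedness gives $d\in G_{m+1-\len(C)}$; your index bound $\ell_j<q-c+j$, i.e.\ $\ell_j\le q-c+j-1$, is exactly what each application of the pullback square requires, and it is correct --- it holds already for the normal form $\lambda_{\ell_1}\cdots\lambda_{\ell_c}$ (with $\ell_1\le\cdots\le\ell_c$) of Proposition~\ref{prop:monoid_pres}, since having at most $q$ feet forces $\ell_c\le q-1$ (the number of feet is at least $\ell_c+1$) and leaves $\lambda_{\ell_1}\cdots\lambda_{\ell_{c-1}}$ with at most $q-1$ feet, so the bound follows by induction on $c$.
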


\begin{proof}
 What needs to be shown is that if $x^{-1} y \in G$ then $x^{-1} y\in G_n$. Write $x = [\forest_1,g^{-1},\altforest_1]$ and $y = [\forest_2,h^{-1},\altforest_2]$. Let $\forest = \forest_1 \forest_1' = \forest_2 \forest_2'$ be a common right multiple so that $x^{-1}y = [\altforest_1 (g \cdot \forest_1'),g^{\forest_1'} (h^{\forest_2'})^{-1},\altforest_2 (h \cdot \forest_2')] \eqdef [A,b,C]$. For this to equal some $d \in G$ it is necessary that $Ab = dC$ in $\Fmonoid \bowtie G$, that is, $A = d \cdot C$ and $b = d^C$.
 
 Say that $\forest$ has length $m$. Then we compute that $\len(A) = \len(C) \ge m-n+1$. Since the cloning system is properly graded, the fact that $b=d^C$ implies that $d$ has to lie in $G_{m+1-\len(C)} \subseteq G_n$.
\end{proof}

\subsection{Poset structure}\label{sec:poset}

Consider the geometric realization $\realize{\Poset_1}$. This is the simplicial complex with a $k$-simplex for each chain $x_0\le\cdots\le x_k$ of elements of $\Poset_1$, and face relation given by subchains.

\begin{lemma}\label{lem:poset_semilattice}
 The poset $\Poset_1$ is a join-semilattice with conditional meets, in particular $\realize{\Poset_1}$ is contractible.
\end{lemma}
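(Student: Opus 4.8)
The plan is to deduce the join-semilattice-with-conditional-meets structure of $\Poset_1$ from the corresponding structure on $\Thomphat{G}/G$, which was already established via Corollary~\ref{cor:fraction_semilattice} together with Observations~\ref{obs:BZS_cancellative_lcrms} and Corollary~\ref{cor:length}. Since $\Poset_1 = \Preposet_1/{\sim_G}$ sits inside $\Thomphat{G}/G$ as a subposet, the essential point is to check that $\Poset_1$ is closed under joins and under conditional meets taken in the ambient semilattice. Concretely, I would argue: if $x,y \in \Preposet_1$ are semisimple, then their join $x\vee y$ in $\Thomphat{G}/G$ is represented by a common right multiple, and I need to see that this common right multiple can be chosen semisimple. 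This is exactly what Observation~\ref{obs:forest_semisimple_elements}\eqref{item:forest_semisimple_right_common_multiple} and Lemma~\ref{lem:semisimple_elements}\eqref{item:semisimple_right_common_multiple} provide — any two semisimple elements of $\Fmonoid \bowtie G$ have a semisimple common right multiple — so the join stays in $\Poset_1$.

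For conditional meets, suppose $x,y \in \Poset_1$ have a lower bound $z$ in $\Thomphat{G}/G$. Their meet $x\wedge y$ exists in $\Thomphat{G}/G$ by Corollary~\ref{cor:fraction_semilattice}. I would show $x\wedge y$ is again semisimple: $x\wedge y \le x$ means $x = (x\wedge y)\cdot m$ for some $m \in \Fmonoid\bowtie G$, so $x\wedge y$ is obtained from the semisimple element $x$ by ``pruning''; since a left factor of a semisimple element of $\Fmonoid\bowtie G$ is again semisimple (using the rank/length bookkeeping of Observation~\ref{obs:forest_semisimple_elements} and that $\rho(G_n)\subseteq \symm_n$), the meet lies in $\Preposet_1/{\sim_G} = \Poset_1$. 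Here one should be slightly careful that the representative computations descend correctly through the $\sim_G$ quotient, but Lemma~\ref{lem:dangling} (properly graded — though actually we only need that $\sim_G$ on semisimple elements with bounded feet is $G_n$-equivalence) makes the dangling operation compatible with the order.

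Finally, the contractibility of $\realize{\Poset_1}$ follows from a standard fact: the geometric realization of a join-semilattice is contractible, since the join with a fixed element gives a poset map homotopic to the constant map at the top-ish direction, yielding a conical contraction. More precisely, for any $x_0 \in \Poset_1$ the maps $x \mapsto x$, $x\mapsto x\vee x_0$, and $x\mapsto x_0$ give a chain of poset maps $\id \le (-\vee x_0) \ge \mathrm{const}_{x_0}$, and poset maps that are comparable induce homotopic maps on realizations, so $\id \simeq \mathrm{const}_{x_0}$. (The conditional-meets part is not needed for contractibility; it is recorded because it will be used later for descending-link arguments.)

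The main obstacle I anticipate is the verification that joins and conditional meets computed in $\Thomphat{G}/G$ actually land back in the subposet $\Poset_1$ of semisimple classes — i.e.\ the closure properties — rather than the contractibility, which is formal. This amounts to carefully tracking the ``semisimple'' and ``$n$ feet'' conditions through products and factors in $\Fmonoid\bowtie G$, for which Observation~\ref{obs:forest_semisimple_elements} and Lemma~\ref{lem:semisimple_elements} are precisely the tools assembled in advance.
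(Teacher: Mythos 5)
Your proof is correct and follows essentially the same route as the paper: deduce the semilattice structure on $\Poset_1$ by checking it is closed under joins and conditional meets inside $\Thomphat{G}/G$, using the facts that semisimple elements have semisimple common right multiples and that left factors of semisimple elements are semisimple, and then get contractibility as a formal consequence of the join-semilattice structure. The parenthetical worry about descent through $\sim_G$ is unnecessary (and does not require Lemma~\ref{lem:dangling} or the properly graded hypothesis), since $\Poset_1$ is by definition the set of classes in $\Thomphat{G}/G$ admitting a semisimple representative, so exhibiting one semisimple representative of the join or meet suffices.
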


\begin{proof}
 We already know that $\Thomphat{G}/G$ is a join-semilattice with conditional meets so it suffices to show that $\Poset_1$ is closed under taking suprema and infima. In other words, it suffices to show that least common right multiples of semisimple elements are semisimple and that left factors of semisimple elements are semisimple. The first is similar to the proof of Proposition~\ref{prop:product_of_simple} and the second is easy.
\end{proof}

In $\realize{\Poset_1}$ every vertex is contained in a simplex of arbitrarily large dimension, which makes it too big for practical purposes. It has proven helpful to consider a subspace called the \emph{Stein--Farley complex}, which we introduce next.

\subsection{The Stein--Farley complex}\label{sec:stein_space}

The preorder on $\Preposet_1$ was defined by declaring that $x \le y$ if $y = x(\forest,g)$ for some $(\forest,g) \in \Fmonoid \bowtie G$. The basic idea in constructing the Stein--Farley complex is to regard this relation as a transitive hull of a finer relation $\preceq$ and to use this finer relation in constructing the space. It is defined by declaring $x \preceq y$ if $y = x(\forest,g)$ for some $(\forest,g) \in \Fmonoid \bowtie G$ with the additional assumption that $\forest$ is elementary. An \emph{elementary} forest is one in which every tree has at most two leaves. That is, a forest is elementary if it can be written as $\lambda_{k_1} \cdots \lambda_{k_r}$ with $k_{i+1} > k_i + 1$ for $i < r$. Note that if $x \in \Preposet_{1,n}$, in order for $x(E,g)$ to be in $\Preposet_1$ as well, it is necessary that $E$ has rank at most $n$ and that $g \in G_{n + \len(E)}$. Note also that if $\forest$ is elementary then so is $g\cdot \forest$ for any $g\in G$ because the action of $G$ (via $\rho \colon G \to \symm_\omega$) just permutes the trees of $\forest$.

As a consequence $\preceq$ is invariant under dangling and we also write $\preceq$ for the relation induced on $\Poset_1$. Note that $\preceq$ is not transitive, but it is true that if $x\preceq z$ and $x\le y\le z$ then $x\preceq y\preceq z$. Given a simplex $x_0\le\cdots\le x_k$ in $\realize{\Poset_1}$, call the simplex \emph{elementary} if $x_0\preceq x_k$. The property of being elementary is preserved under passing to subchains, so the elementary simplices form a subcomplex.

\begin{definition}[Stein--Farley complex]\label{def:stein_space}
 The subcomplex of elementary simplices of $\realize{\Poset_1}$ is denoted by $\Stein{G_*}$ and called the \emph{Stein--Farley complex} of $\Thomp{G_*}$.
\end{definition}

The Stein--Farley complex has the structure of a cubical complex, which we now describe. The key point is:

\begin{observation}\label{obs:fmonoid_elementary_boolean}
 If $\forest$ is elementary then the set of right factors of $\forest$ forms a boolean lattice under $\preceq$. \qed
\end{observation}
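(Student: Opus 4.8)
The plan is to identify the right factors of an elementary forest with the subsets of its set of carets, and to read the boolean lattice structure off from this bijection.

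Let $\forest$ be elementary, and let $C$ be its (finite) set of carets; since every tree of $\forest$ has at most two leaves, $\forest$ is determined by $C$ together with the positions of the carets among the roots. First I would note that in any factorization $\forest=\altforest_1\altforest$ in $\Fmonoid$, both $\altforest_1$ and $\altforest$ are again elementary: such a factorization amounts to choosing, on each root-to-leaf path of $\forest$, a vertex at which to ``cut'' (the chosen vertices form the leaves of $\altforest_1$ and the roots of $\altforest$), and on a tree of height at most one the only cuts are at the root (its caret, if any, ends up in the bottom factor $\altforest$) or at the two leaves (its caret ends up in the top factor $\altforest_1$). Hence each caret of $\forest$ lies entirely in exactly one of the two factors, and a right factor $\altforest$ is completely determined by the subset $S\subseteq C$ of carets it contains: given $S$ there is a unique admissible cut (cut the carets in $S$ at their roots and every other tree at its leaves), hence a unique such $\altforest$, which I denote $\forest_S$. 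This exhibits $S\mapsto\forest_S$ as a bijection from $2^C$ onto the set of right factors of $\forest$, with $\forest_\emptyset=1$ and $\forest_C=\forest$, and in normal form $\forest_S$ is just the subword of $\forest=\lambda_{k_1}\cdots\lambda_{k_r}$ obtained by keeping the generators indexed by $S$.

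Next I would match the orders. If $S\subseteq S'$, then $\forest_S$ is a right factor of $\forest_{S'}$: inside $\forest_{S'}$ the carets present are exactly those in $S'$, and since distinct carets sit on disjoint trees one may cut $\forest_{S'}$ so that the carets of $S$ lie below the cut and those of $S'\setminus S$ lie above it, giving $\forest_{S'}=\altforest_1\forest_S$ with $\altforest_1$ elementary. Conversely, if $\forest_S$ is a right factor of $\forest_{S'}$ then its carets are a subset of those of $\forest_{S'}$, i.e.\ $S\subseteq S'$. Therefore $S\mapsto\forest_S$ is an isomorphism of posets from $(2^C,\subseteq)$ onto the set of right factors of $\forest$ ordered by $\preceq$ (which, on these factors, is precisely right divisibility, with least element $1$ and greatest element $\forest$), and this poset is thus a boolean lattice. (The same argument, with the roles of top and bottom factors interchanged, gives the dual statement for left factors.)

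The routine-but-fiddly point I have glossed over is the verification, via the forest relation~\eqref{eq:forest_relation} and uniqueness of normal forms (Proposition~\ref{prop:monoid_pres}), that ``cutting'' is a faithful bookkeeping device: that reading off $\altforest_1$ and $\altforest$ from a choice of cut is well defined, and that every factorization $\forest=\altforest_1\altforest$ arises from such a cut. I expect this to be the main obstacle — it is the only place where one actually manipulates the monoid presentation and tracks the index shifts produced by~\eqref{eq:forest_relation}. Once that is in place, the geometric picture (an elementary forest is a disjoint union of $r=\abs{C}$ carets, and its factorizations merely distribute those carets between a top and a bottom factor) makes the boolean lattice structure transparent.
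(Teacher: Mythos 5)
The paper gives no proof of this observation — it is stated with a tombstone symbol and treated as self-evident — so there is no argument to compare against; your cut picture is exactly the geometric content the authors are taking for granted. Your description of $\forest_S$ as the subword of $\lambda_{k_1}\cdots\lambda_{k_r}$ indexed by $S$ is correct, and the bijectivity and order-correspondence follow from uniqueness of normal forms (Proposition~\ref{prop:monoid_pres}) as you indicate.

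One point worth making explicit. The relation $\preceq$, as defined in the paper, restricts $\le$, and $g\le h$ means $h=gm$ with $m\in\Fmonoid$ (left divisibility). Restricted to the right factors of an elementary $\forest$ this relation is \emph{not} a lattice: for instance $\lambda_3$ is a right factor of $\lambda_1\lambda_3$, yet $\lambda_3\not\le\lambda_1\lambda_3$, since $\lambda_3^{-1}\lambda_1\lambda_3=\lambda_3^{-1}\lambda_2\lambda_1$ has length~$1$ but is not any $\lambda_k$. What \emph{is} a boolean lattice, and what the cube structure on an interval $[x,y]$ actually needs, is the set of right factors under right divisibility — equivalently the set of left factors under $\le$. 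You state exactly this, writing that on these factors the order ``is precisely right divisibility'' and noting the dual left-factor version, so your argument addresses the intended statement; just be aware that the observation as literally printed conflates the two sides of divisibility. The ``routine but fiddly'' step you flag — that coherent cuts of an elementary $\forest$ biject with factorizations in $\Fmonoid$, and that the induced orders match — is a genuine step, but your subword description together with normal-form uniqueness are the right tools and the verification is indeed mechanical, so I would not call it a gap.
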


For $x \preceq y$ in $\Poset_1$ we consider the closed interval $[x,y]\defeq\{z\in\Poset_1\mid x\le z\le y\}$ as well as the open and half open intervals $(x,y)$, $[x,y)$ and $(x,y]$ that are defined analogously. As a consequence of Observation~\ref{obs:fmonoid_elementary_boolean} we obtain that the interval $[x,y]\defeq\{z\in\Poset_1\mid x\le z\le y\}$ is a boolean lattice and so $\realize{[x,y]}$ has the structure of a cube. The intersection of two such cubes $\realize{[x,y]}$ and $\realize{[z,w]}$ is empty if $y$ and $w$ do not have a common lower bound and is $\realize{[\sup(x,z),\inf(y,w)]}$ (which may be empty if the supremum is larger than the infimum) otherwise. In particular the intersection of cubes is either empty or is again a cube. Hence $\Stein{G_*}$ is a cubical complex in the sense of Definition~7.32 of~\cite{bridson99}.

\begin{observation}\label{obs:stein_space_locally_finite}
 For any vertex $x$ in $\Stein{G_*}$, there are only finitely many vertices $y$ in $\Stein{G_*}$ with $x\preceq y$.
\end{observation}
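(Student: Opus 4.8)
The plan is to exploit that, for a semisimple vertex with $n$ feet, any larger vertex in the $\preceq$ order is obtained from it by applying an \emph{elementary} forest of bounded rank, and that there are only finitely many such forests. Concretely, I would fix a lift $\tilde x \in \Preposet_{1,n}$ of $x$, where $n$ is its number of feet. Since $\preceq$ on $\Poset_1$ is the relation induced from the one on $\Preposet_1$ and is invariant under dangling (Section~\ref{sec:stein_space}), every vertex $y$ with $x \preceq y$ admits a lift of the form $\tilde y = \tilde x(E,g)$ with $E \in \Fmonoid$ elementary and $(E,g) \in \Fmonoid \bowtie G$. As recorded in Section~\ref{sec:stein_space}, for $\tilde x(E,g)$ to be semisimple (hence to lie in $\Preposet_1$) it is necessary that $E$ have rank at most $n$.

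Next I would count the relevant forests: an elementary forest of rank at most $n$ has each of its first $n$ trees equal to the trivial tree or to a single caret, with all later trees trivial, so there are at most $2^n$ of them, a finite set $\mathcal E_n$. It then remains to check that the dangling class of $\tilde x(E,g)$ depends on $E$ only, not on $g$. For this I use the identity $(E,g) = (E,1_G)(1_\Fmonoid,g)$ in $\Fmonoid \bowtie G$, which is immediate from the Zappa--Sz\'ep multiplication~\eqref{eq:zappa--szep_multiplication}; in $\Thomphat{G}$ this gives $\tilde x(E,g) = \tilde x(E,1_G)\cdot g$, whence $\bigl(\tilde x(E,g)\bigr)^{-1}\tilde x(E,g') = g^{-1}g' \in G$, i.e.\ $\tilde x(E,g) \sim_G \tilde x(E,g')$.

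Putting these together, the assignment $E \mapsto [\,\tilde x(E,1_G)\,]_{\sim_G}$ is a well-defined map from the finite set $\mathcal E_n$ \emph{onto} $\{\,y \in \Poset_1 : x \preceq y\,\}$, so the latter set is finite. I do not expect a genuine obstacle here; the only point requiring a little care is the bookkeeping between the relation $\preceq$ on $\Preposet_1$ and on its dangling quotient $\Poset_1$ — specifically, that one may replace $x$ by the single fixed lift $\tilde x$ and absorb the freedom in $g$ into the dangling equivalence, both of which follow from the dangling-invariance of $\preceq$ noted in Section~\ref{sec:stein_space}.
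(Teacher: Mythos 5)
Your proof is correct and follows essentially the same route as the paper's: reduce to a single lift $\tilde x$, observe that any $\preceq$-successor is represented by $\tilde x(E,1)$ for an elementary forest $E$ of bounded rank, and count that there are finitely many such $E$. The only difference is cosmetic — you spell out the identity $\tilde x(E,g)\sim_G\tilde x(E,1)$ that lets you drop the group element, while the paper compresses this into the phrase "using dangling."
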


\begin{proof}
 If $\tilde{x} \in \Preposet_1$ is a vertex representative (modulo dangling) for $x$, it is clear using dangling that every vertex $y$ with $x \preceq y$ has a representative $\tilde{y}$ with $\tilde{y} = \tilde{x}(\forest,1)$ for some some elementary forest $\forest$. In order for $\tilde{y}$ to be semisimple, $\forest$ can have rank at most $\len(\tilde{x}) - 1$, and there are only finitely many elementary forests of a given rank, so the result follows.
\end{proof}

The next step is to show that $\Stein{G_*}$ is itself contractible. The argument is similar to that given in Section~4 of \cite{brown92}. We follow the exposition in \cite{bux14}.

\begin{lemma}\label{lem:cube_lemma}
 For $x<y$ with $x\not\prec y$, $|(x,y)|$ is contractible.
\end{lemma}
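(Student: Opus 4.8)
The plan is to reduce the contractibility of $\realize{(x,y)}$ to a known covering argument, following the template of \cite[Section~4]{brown92} and its adaptation in \cite{bux14}. First I would fix a representative $\tilde{x} \in \Preposet_1$ of $x$; then every $z$ with $x \le z \le y$ has a unique representative of the form $\tilde{x}(\forest_z, g_z)$, and $x \prec z$ exactly when $\forest_z$ is elementary. Since $x \not\prec y$, the forest $\forest_y$ is not elementary, i.e.\ some tree of $\forest_y$ has at least three leaves, equivalently $\forest_y$ involves a subword $\lambda_k \lambda_{k+1}$ (after normal form). The idea is to cover $\realize{(x,y)}$ by the closed stars (in $\realize{(x,y)}$, or rather the subposets $(x,y]_{\preceq}$ of elements $\preceq$-below a given maximal elementary expansion) of the finitely many ``elementary pieces'' sitting between $x$ and $y$, and to check that every nonempty finite intersection of these is contractible because it has a maximum or a minimum.

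More concretely, here is the order of steps. (1) Translate everything by $\tilde{x}^{-1}$ so we may assume $x = [1,1,1]$ is the trivial element and $y$ is represented by $(\forest, g)$ with $\forest$ a non-elementary forest; by dangling we may further absorb $g$ and assume $y$ is represented by a non-elementary forest $\forest$ (the group element only permutes trees and does not affect the interval structure, exactly as noted before Definition~\ref{def:stein_space}). (2) Identify $(x,y)$ with the poset of proper right factors of $\forest$ strictly above $1$, i.e.\ the open interval $(1,\forest)$ in the divisibility poset of $\Fmonoid$ (modulo units, but $\Fmonoid$ has none by Lemma~\ref{lem:fmonoid_properties}\eqref{eq:fmonoid_no_units}). (3) For each caret $c$ of $\forest$ that can be ``split off at the top'' — i.e.\ each $\lambda_k$ that is a left factor of $\forest$ — together with each maximal elementary left factor, one gets a subinterval; the union of the corresponding subcomplexes is all of $\realize{(x,y)}$ because every $z$ in $(x,y)$ lies below some maximal elementary expansion of $1$ inside $\forest$, and lies above some atom $\lambda_k \le \forest$. (4) Run the Nerve Lemma: each such piece is a cube (hence contractible) by Observation~\ref{obs:fmonoid_elementary_boolean}, and any intersection of such cubes is again an interval $\realize{[\sup, \inf]}$, which is a (possibly empty) cube by the same lattice computation already used to give $\Stein{G_*}$ its cubical structure — so all nonempty intersections are contractible and the nerve is contractible, provided the nerve itself is non-empty and contractible.

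The main obstacle, and the part that needs genuine care rather than bookkeeping, is arranging the cover so that the nerve is contractible — equivalently, choosing the ``elementary pieces'' so that the combinatorics of which pieces meet is itself contractible (in \cite{brown92} this is handled by choosing a single distinguished caret, or by a discrete-Morse/poset-fibration style argument collapsing toward one chosen atom). The cleanest route is probably not the nerve of an arbitrary cover but a direct deformation retraction: pick one caret $\lambda_k$ with $\lambda_k \le \forest$ and push $z \mapsto z \vee \lambda_k$ (join in the semilattice $\Poset_1$, which exists and stays in $(x,y)$ since $\lambda_k \prec \forest$ and $x \not\prec y$ guarantees $z \vee \lambda_k \ne y$ is \emph{not} automatic — here one must use that $\forest$ is non-elementary so that even after adjoining $\lambda_k$ the result is still a proper, possibly non-elementary, right factor), giving a conical contraction exactly as in the cited references. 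I expect the hypothesis $x \not\prec y$ to be used precisely to ensure $\forest$ has a caret-pair $\lambda_k\lambda_{k+1}$, so that adjoining a single $\lambda_j$ never jumps all the way up to $y$, keeping the homotopy inside the open interval. Verifying that $z \le z \vee \lambda_k < y$ throughout — i.e.\ that the join never escapes $(x,y)$ — is the one estimate I would write out carefully, using the normal form of Proposition~\ref{prop:monoid_pres} and Observation~\ref{obs:forest_semisimple_elements}.
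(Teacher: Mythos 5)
Your final plan---dropping the nerve cover for a direct conical contraction---is a genuinely different argument from the paper's, but dual in flavor. The paper runs the contraction \emph{downward}: for $z \in (x,y]$ it sets $z_0$ to be the maximal elementary left factor of $z$, so that $z \ge z_0 \le y_0$ is a Quillen zigzag with cone point $y_0$. The hypothesis $x \not\prec y$ enters only to guarantee $y_0 < y$, hence $y_0 \in (x,y)$. That route makes no choice: $y_0$ is canonical. Your version pushes up toward an atom, and the atom must be chosen carefully; this is where the gap lies.

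As written, ``pick one caret $\lambda_k$ with $\lambda_k \le \forest$'' is not enough: if $\lambda_k$ is an \emph{isolated} caret of $\forest$ (a two-leaf tree), the map escapes the interval. For instance with $\forest = \lambda_1\lambda_1\lambda_4$ (a three-leaf left comb on tree $1$ plus a single caret on tree $2$), the atoms are $\lambda_1$ and $\lambda_2$; taking $\lambda_k = \lambda_2$ and $z = \lambda_1\lambda_1$ gives $z \vee \lambda_2 = \forest$, which is $y$. The correct choice is the root caret of a tree of $\forest$ with at least three leaves, which exists exactly because $\forest$ is non-elementary. Such a $\lambda_k$ corresponds to a minimal element $c_0$ of the caret poset $C$ that has a proper descendant, so $C \setminus \{c_0\}$ is not an order ideal; hence no proper ideal $I$ satisfies $I \cup \{c_0\} = C$, which is precisely the statement $z \vee \lambda_k < y$ for all $z \in (x,y)$. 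Your phrase ``caret-pair $\lambda_k\lambda_{k+1}$'' misses the left-comb case $\lambda_k\lambda_k$ and is not quite what to extract from $x\not\prec y$; also, in step~(2) you mean \emph{left} factors of $\forest$, not right. With $\lambda_k$ pinned down this way, your conical contraction is complete.
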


\begin{proof}
 For any $z\in(x,y]$ let $z_0$ be the unique largest element of $[x,z]$ such that $x\preceq z_0$.  By hypothesis $z_0\in[x,y)$, and by the definition of $\preceq$ it is clear that $z_0\in(x,y]$, so in fact $z_0\in(x,y)$.  Also, $z_0\le y_0$ for any $z\in(x,y)$.  The inequalities $z\ge z_0\le y_0$ then imply that $|(x,y)|$ is contractible, by Section~1.5 of~\cite{quillen78}.
\end{proof}

\begin{proposition}\label{prop:stein_space_contractible}
 $\Stein{G_*}$ is contractible.
\end{proposition}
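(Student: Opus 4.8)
The plan is to run Brown's collapsing argument from \cite{brown92}, in the form used in \cite{bux14}. By Lemma~\ref{lem:poset_semilattice} the ambient complex $\realize{\Poset_1}$ is contractible, so it is enough to show that the inclusion $\Stein{G_*} \into \realize{\Poset_1}$ is a homotopy equivalence. This I would do by filtering $\realize{\Poset_1}$ by subcomplexes that grow out of $\Stein{G_*}$ one ``length level'' at a time.

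For a simplex $\sigma = (x_0 < \dots < x_k)$ of $\realize{\Poset_1}$ put $d(\sigma) \defeq \len(x_k) - \len(x_0) \in \Nz$, using the length morphism of Corollary~\ref{cor:length}; recall that $\sigma$ lies in $\Stein{G_*}$ precisely when $x_0 \preceq x_k$. If $\sigma$ is not elementary then necessarily $d(\sigma) \ge 2$: for $d(\sigma) = 0$ forces $x_0 = x_k$, and $d(\sigma) = 1$ forces $x_k = x_0(\lambda_j,g)$ for a single caret $\lambda_j$, whence $x_0 \preceq x_k$. For $n \ge 1$ let $K_n$ consist of all simplices of $\realize{\Poset_1}$ that are either elementary or satisfy $d(\sigma) \le n$. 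Using the implication ``$x \preceq z$ and $x \le y \le z$ imply $x \preceq y \preceq z$'' (noted right after the definition of $\preceq$) one sees that the elementary simplices form a subcomplex, and since passing to a face never increases $d$, each $K_n$ is a subcomplex; moreover $K_1 = \Stein{G_*}$ and $\bigcup_{n \ge 1} K_n = \realize{\Poset_1}$.

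The crux is the passage from $K_n$ to $K_{n+1}$. Its new simplices are exactly the non-elementary $\sigma$ with $d(\sigma) = n+1$. I would group them by the pair $(x,y) = (x_0, x_k)$ of their minimum and maximum; then $x < y$, $x \not\prec y$ and $\len(y) - \len(x) = n+1$, and the new simplices with this $(x,y)$ are precisely the simplices of the cone $C_{x,y} \defeq \{x\} * \realize{(x,y)} * \{y\}$ that contain both $x$ and $y$. Such a simplex determines and is recovered from the pair $(x,y)$, so these collections are pairwise disjoint; and every other simplex of $C_{x,y}$ has $d \le n$ and hence lies in $K_n$, so $C_{x,y} \cap K_n = A_{x,y} \defeq (\{x\} * \realize{(x,y)}) \cup (\realize{(x,y)} * \{y\})$. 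Thus $K_{n+1}$ is obtained from $K_n$ by gluing, for each such pair, the complex $C_{x,y}$ along $A_{x,y}$. Here the Cube Lemma (Lemma~\ref{lem:cube_lemma}) applies to $x < y$ with $x \not\prec y$ and gives that $\realize{(x,y)}$ is contractible (and it is nonempty, as a non-elementary pair $x < y$ admits a strict intermediate vertex, e.g.\ a one-caret expansion of $x$). Since $C_{x,y}$ is a cone it is contractible, and $A_{x,y}$ --- the union of the two contractible cones $\{x\} * \realize{(x,y)}$ and $\realize{(x,y)} * \{y\}$ meeting in the contractible complex $\realize{(x,y)}$ --- is contractible as well; hence the inclusion $A_{x,y} \into C_{x,y}$ is a homotopy equivalence. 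Gluing in all the $C_{x,y}$ simultaneously therefore makes $K_n \into K_{n+1}$ a homotopy equivalence, by the gluing lemma for cofibrations (the $A_{x,y}$ being subcomplexes).

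Putting the pieces together, $\Stein{G_*} = K_1 \into \realize{\Poset_1} = \colim_{n} K_n$ is a homotopy equivalence, and as $\realize{\Poset_1}$ is contractible, so is $\Stein{G_*}$. The only substantive input is the Cube Lemma, which is already in hand; everything else is bookkeeping with $\len$ and the definition of $\preceq$. The step I expect to require the most care is the identification $C_{x,y} \cap K_n = A_{x,y}$ together with the disjointness of the new simplices attached to different pairs $(x,y)$ --- that is, checking that a newly attached simplex is pinned down by its minimum and maximum --- since this is exactly what licenses the gluing lemma.
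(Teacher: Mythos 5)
Your proof is correct and takes essentially the same route as the paper: build up from $\Stein{G_*}$ to $\realize{\Poset_1}$ by attaching the cones $\realize{[x,y]}$ with $x\not\preceq y$ in increasing order of $\len(y)-\len(x)$, each glued along $\realize{[x,y)}\cup\realize{(x,y]}$, whose contractibility follows from Lemma~\ref{lem:cube_lemma}. The only cosmetic difference is that you verify contractibility of the attaching locus as a union of two cones over a contractible intersection, whereas the paper identifies it as the suspension of $\realize{(x,y)}$; the explicit filtration $K_n$ and the disjointness check make the bookkeeping, which the paper leaves implicit, precise.
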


\begin{proof}
 We know that $|\Poset_1|$ is contractible by Lemma~\ref{lem:poset_semilattice}. We can build up from $\Stein{G_*}$ to $|\Poset_1|$ by attaching new subcomplexes, and we claim that this never changes the homotopy type, so $\Stein{G_*}$ is contractible. Given a closed interval $[x,y]$, define $r([x,y])\defeq \len(y)-\len(x)$. As a remark, if $x\preceq y$ then $r([x,y])$ is the dimension of the cube given by $[x,y]$.  We attach the contractible subcomplexes $|[x,y]|$ for $x\not\preceq y$ to $\Stein{G_*}$ in increasing order of $r$-value. When we attach $|[x,y]|$ then, we attach it along $|[x,y)|\cup|(x,y]|$.  But this is the suspension of $|(x,y)|$, and so is contractible by the previous lemma.  We conclude that attaching $|[x,y]|$ does not change the homotopy type, and since $|\Poset_1|$ is contractible, so is $\Stein{G_*}$.
\end{proof}

\begin{lemma}[Stabilizers]\label{lem:stabs}
Assume that the cloning system is properly graded.  The stabilizer in $\Thomp{G_*}$ of a vertex in $\Stein{G_*}$ with $n$ feet is isomorphic to $G_n$. The stabilizer in $\Thomp{G_*}$ of an arbitrary cell is isomorphic to a finite index subgroup of some $G_n$.
\end{lemma}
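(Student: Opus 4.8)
The plan is to compute stabilizers of vertices first and then deduce the case of an arbitrary cube.

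\emph{Standard vertices.} Fix $n$ and a semisimple tree $\tree$ with $n$ leaves, and set $v_n \defeq [\tree,1_G,1_\Fmonoid]$, a vertex of $\Stein{G_*}$ with $n$ feet. First I would observe that the subgroup $G_\tree=\{[\tree,g,\tree]:g\in G_n\}\cong G_n$ fixes $v_n$: a one-line multiplication in $\Thomphat{G}$ gives $[\tree,g,\tree]\cdot v_n=[\tree,g,1_\Fmonoid]$, and $[\tree,g,1_\Fmonoid]\sim_G v_n$ because their quotient is $g\in G_n\le G$. Conversely, $\Thomp{G_*}$ acts on $\Preposet_1$ by left multiplication (Lemma~\ref{lem:simple_times_semisimple}), and since simple elements have length $0$ while $\len$ is a homomorphism (Corollary~\ref{cor:length}), the action preserves the number of feet. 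Hence for $s\in\Stab(v_n)$ both $v_n$ and $sv_n$ lie in $\Preposet_{1,n}$, and the dangling lemma (Lemma~\ref{lem:dangling})---this is the one place proper gradedness enters---yields $v_n^{-1}sv_n\in G_n$. So $s\mapsto v_n^{-1}sv_n$ is an injective homomorphism $\Stab(v_n)\to G_n$; it restricts on $G_\tree$ to the bijection $[\tree,g,\tree]\mapsto g$, so it is surjective and $\Stab(v_n)=G_\tree\cong G_n$.

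\emph{Arbitrary vertices.} Next I would show that every vertex $x$ of $\Stein{G_*}$ with $n$ feet lies in the $\Thomp{G_*}$-orbit of $v_n$. Pick a representative $[\forest_-,g,\forest_+]$ of $x$ with $(\forest_-,g)$ semisimple, say with $N$ feet. Counting feet gives $\len(\forest_+)=N-n$, and since $\forest_+$ has at most $N$ feet its rank is at most $n$; therefore $\tree\forest_+$ is semisimple with $N$ feet (Observation~\ref{obs:forest_semisimple_elements}), so $s\defeq[\forest_-,g,\tree\forest_+]$ is \emph{simple}, i.e.\ lies in $\Thomp{G_*}$. Because $\tree\forest_+$ is a right multiple of $\tree$, a short computation with the Zappa--Sz\'ep multiplication gives $s\cdot v_n=[\forest_-,g,\forest_+]=x$. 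Consequently $\Stab(x)=s\,\Stab(v_n)\,s^{-1}\cong G_n$, which is the first assertion; the isomorphism is conjugation by the representative $\tilde x\defeq sv_n$.

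\emph{Cells.} Every cell of the cube complex $\Stein{G_*}$ has the form $\realize{[x,y]}$ for some $x\preceq y$ in $\Poset_1$. The action of $\Thomp{G_*}$ on $\Poset_1$ is by poset automorphisms (left multiplication preserves $\le$ since $(sx)^{-1}(sy)=x^{-1}y$), so an element fixing the cube $\realize{[x,y]}$ restricts to an order automorphism of its vertex set $[x,y]$ and hence fixes the minimum $x$ and the maximum $y$; conversely anything fixing $x$ and $y$ preserves $[x,y]$. Thus $\Stab(\realize{[x,y]})=\Stab(x)\cap\Stab(y)$. Writing $n$ for the number of feet of $x$, a representative $\tilde x$, and $\tilde y=\tilde x\,(\altforest,a)$ with $\altforest$ of rank at most $n$, an element $s\in\Stab(x)$ with $h\defeq\tilde x^{-1}s\tilde x\in G_n$ lies in $\Stab(y)$ iff $(\altforest,a)^{-1}h(\altforest,a)\in G$ (automatically in $G_m$, $m$ the number of feet of $y$, by Lemma~\ref{lem:dangling}), and a direct computation identifies this with the condition $h\cdot\altforest=\altforest$, i.e.\ $\rho_n(h)$ stabilizes $\altforest$. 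Since $\symm_n$ is finite, $\{h\in G_n:\rho_n(h)\cdot\altforest=\altforest\}$ has finite index in $G_n$, and it is isomorphic to $\Stab(\realize{[x,y]})$ via conjugation by $\tilde x$.

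\emph{Main obstacle.} The crux, and the only point where properness of the grading is used, is invoking Lemma~\ref{lem:dangling} to force the conjugated group element into $G_n$ rather than merely into $G$; without this the stabilizer could be strictly larger than $G_n$. The remaining steps are bookkeeping: the feet/rank count ensuring $[\forest_-,g,\tree\forest_+]$ is simple, and the identity that $(\altforest,a)^{-1}h(\altforest,a)$ lands in $G$ exactly when $\rho_n(h)$ fixes $\altforest$; both are routine manipulations using the normal form for $\Fmonoid$ (Proposition~\ref{prop:monoid_pres}) and the Zappa--Sz\'ep axioms.
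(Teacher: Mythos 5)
Your proof is correct, and the overall structure (dangling lemma for vertices, finite-index argument for higher-dimensional cells) matches the paper's. But you take a genuinely different and more explicit route in two places, which is worth comparing.

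For vertex stabilizers, the paper works directly with an arbitrary representative $\tilde x$: for $g\in\Stab(x)$, Lemma~\ref{lem:dangling} yields $\tilde{x}^{-1}g\tilde{x}\in G_n$, and the paper asserts $g\mapsto\tilde{x}^{-1}g\tilde{x}$ is an isomorphism, leaving the surjectivity (that $\tilde{x}G_n\tilde{x}^{-1}\subseteq\Thomp{G_*}$) implicit. You instead reduce to a distinguished vertex $v_n=[\tree,1,1_\Fmonoid]$, observe $G_\tree\subseteq\Stab(v_n)$ directly, and get surjectivity for free from $G_\tree$; the transitivity argument you insert is essentially the content of Lemma~\ref{lem:cocompact}, which the paper proves separately in the next section and which does not depend on Lemma~\ref{lem:stabs}, so there is no circularity. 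Your version is more self-contained on the surjectivity point; the paper's is shorter by deferring the orbit computation.

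For cells, the difference is larger. The paper is slick and abstract: $G_\sigma$ fixes the min $x$ and max $y$ of the cube, hence sits inside $G_x$, and it contains the kernel of the (finite-image) map $G_x\to\Symm(\{w\mid x\preceq w\preceq y\})$, giving finite index without pinning the subgroup down. You instead compute $\Stab(\realize{[x,y]})=\Stab(x)\cap\Stab(y)$ explicitly and identify it, via conjugation by $\tilde x$, with $\{h\in G_n\mid h\cdot\altforest=\altforest\}$, where $\tilde y=\tilde x(\altforest,a)$. The equivalence $(\altforest,a)^{-1}h(\altforest,a)\in G\iff h\cdot\altforest=\altforest$ is correct (it follows from unique factorization in $\Fmonoid\bowtie G$), and the subgroup $\{h\in G_n\mid h\cdot\altforest=\altforest\}$ has finite index because $G_n$ acts on the finite set of elementary forests of rank $\le n$ and length $\len(\altforest)$. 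One small wording caution: your gloss ``i.e.\ $\rho_n(h)$ stabilizes $\altforest$'' is slightly informal, since $\rho_n(h)$ does not act on forests directly; the precise statement is that $h\cdot\altforest$ depends only on $\rho_n(h)$ (which is what makes the index finite). Your identification buys you an explicit description of the cell stabilizer, not just its commensurability class, which could be useful elsewhere; the paper's argument is shorter and avoids the normal-form bookkeeping.
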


\begin{proof}
 First consider the stabilizer of a vertex $x$ with $n$ feet. We claim that $\Stab_{\Thomp{G_*}}(x)\cong G_n$. Choose $\tilde{x}\in\Preposet_1$ representing $x$ and let $g\in \Stab_{\Thomp{G_*}}(x)$. By the definition of dangling, and by Lemma~\ref{lem:dangling}, there is a (unique) $h \in G_n$ such that $g\tilde{x} = \tilde{x}h$. Then the map $g\mapsto h = \tilde{x}^{-1} g \tilde{x}$ is a group isomorphism.
 
 Now let $\sigma = \realize{[x,y]}$, $x \preceq y$ be a an arbitrary cube. Since the action of $\Thomp{G_*}$ preserves the number of feet, the stabilizer $G_\sigma$ of $\sigma$ fixes $x$ and $y$. Hence $G_\sigma$ is contained in $G_x$ and contains the kernel of the map $G_x \to \Symm(\{w\mid x\preceq w \preceq y\})$, the image of which is finite by Observation~\ref{obs:stein_space_locally_finite}.
\end{proof}

We close this section by providing the proof of Lemma~\ref{lem:torfree}, left out in the last section, which says that $\Thomp{G_*}$ is torsion-free as soon as all the $G_n$ are.

\begin{proof}[Proof of Lemma~\ref{lem:torfree}]
 The vertices in $\Stein{G_*}$ coincide with the vertices of $\realize{\Poset_1}$, and, as we just proved, any vertex has some $G_n$ as a stabilizer. Hence it suffices to prove that if $g\in\Thomp{G_*}$ has finite order then it fixes an element of the directed poset $\Poset_1$. By Lemma~\ref{lem:poset_semilattice}, $\Poset_1$ is a join-semilattice, so any finite collection of elements has a unique least upper bound. But then if $g$ has finite order, for any $x\in\Poset_1$ the unique least upper bound of the finite set $\gen{g}.x$ is necessarily fixed by $g$.
\end{proof}

%-------------------------------
\section{Finiteness properties}\label{sec:finiteness_props}

One of our main motivations for defining the functor $\Thomp{-}$ is to study how it behaves with respect to finiteness properties. Recall that a group $G$ if said to be \emph{of type~$\F_n$} if there is a $K(G,1)$ whose $n$-skeleton is compact. Most of the known Thompson's groups are of type~$\F_\infty$, that is, of type~$\F_n$ for all $n$. To efficiently speak about groups that are not of type~$\F_\infty$ recall that the \emph{finiteness length} of $G$, denoted $\phi(G)$, is the supremum over all $n \in \N$ such that $G$ is of type~$\F_n$.

We will see below that proofs of the finiteness properties of $\Thomp{G_*}$ depend on the finiteness properties of the individual groups $G_n$ as well as on the asymptotic connectivity of certain descending links, which is infinite in many cases. Since finite initial intervals of $G_*$ can always be ignored by Proposition~\ref{prop:truncation_isomorphism} we ask:

\begin{question}\label{ques:finiteness_conjecture}
 For which directed systems of groups $G_*$ equipped with properly graded cloning systems do we have
 \[
 \phi(\Thomp{G_*}) = \liminf \phi(G_*)\text{?}
 \]
\end{question}

Note that for any directed system of groups $G_*$ one can take all $\rho_k$ to be trivial and all $\clone_k^n$ to be $\iota_{n,n+1}$. In this case $\Thomp{G_*} = (\lim_n G_n) \times F$, which would seem to give a negative answer to Question~\ref{ques:finiteness_conjecture}. However, in order to be properly graded in this example we would need $\image \iota_{n,n+1} \subseteq \image \iota_{n-1,n+1}$, and this implies that the $\iota_{n,n+1}$ are all isomorphisms. Thus, in fact this does provide a positive answer to the question.

\subsection{Morse theory}\label{sec:morse}

One of the main tools to study connectivity properties of spaces, and thus to study finiteness properties of groups, is combinatorial Morse theory. We collect here the main ingredients that will be needed later on.

Let $X$ be a Euclidean cell complex. A map $h \colon X^{(0)} \to \Nz$ is called a \emph{Morse function} if the maximum of $h$ over the vertices of a cell of $X$ is attained in a unique vertex. We typically think of $h$ as assigning a \emph{height} to each vertex. If $h$ is a Morse function and $r \in \R$, the sublevel set $X_r = X^{\le r}$ consists of all cells of $X$ whose vertices have height at most $r$. For a vertex $x \in X^{(0)}$ of height $r$, the \emph{descending link} $\dlk(x)$ of $x$ is the subcomplex of $\lk (x)$ spanned by all vertices of strictly lower height. The main observation that makes Morse theory work is that keeping track of the connectivity of descending links allows one to deduce global (relative) connectivity statements:

\begin{lemma}[Morse Lemma]\label{lem:morse}
 Let $X$ be a Euclidean cell complex and let $h \colon X^{(0)} \to \Nz$ be a Morse function on $X$. Let $s,t \in \R \cup \{\infty\}$ with $s < t$. If $\dlk(x)$ is $(k-1)$-connected for every vertex in $X_t \setminus X_s$ then the pair $(X_t,X_s)$ is $k$-connected.
\end{lemma}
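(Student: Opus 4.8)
The plan is to prove the Morse Lemma by the standard technique of building up $X_t$ from $X_s$ one vertex at a time, in order of increasing height, and showing that each attachment does not affect $k$-connectivity of the pair. Concretely, I would enumerate the vertices $x_1, x_2, \ldots$ of $X_t \setminus X_s$ so that $h(x_1) \le h(x_2) \le \cdots$, and for each $j$ let $Y_j$ be the subcomplex of $X_t$ consisting of $X_s$ together with all closed cells all of whose vertices lie in $X_s \cup \{x_1, \ldots, x_j\}$. Then $Y_0 = X_s$ (or rather $X_s$ with possibly some extra cells not meeting $X_t\setminus X_s$ — one checks $Y_0$ deformation retracts appropriately, or simply that $Y_0 = X_s$ since $h$ is integer-valued we may assume $s,t\in\Nz$), the union $\bigcup_j Y_j = X_t$, and each $Y_j$ is obtained from $Y_{j-1}$ by attaching cells.

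The key step is to identify precisely what gets attached when passing from $Y_{j-1}$ to $Y_j$: the new cells are exactly those closed cells $\sigma$ of $X_t$ having $x_j$ as their unique highest vertex (uniqueness is where the Morse function hypothesis is used). Such a $\sigma$ decomposes as a join of $\{x_j\}$ with the face $\sigma'$ of $\sigma$ opposite $x_j$, and $\sigma'$ lies in the descending link $\dlk(x_j)$; conversely every simplex of $\dlk(x_j)$ arises this way. Hence $Y_j$ is obtained from $Y_{j-1}$ by coning off a copy of $\dlk(x_j)$ — more precisely, by gluing in the cone $x_j * \dlk(x_j)$ along $\dlk(x_j) \subseteq Y_{j-1}$. (One should be slightly careful with cube complexes versus simplicial complexes, but the link of a vertex in a cube complex is simplicial, and the cells of $X$ incident to $x_j$ correspond to simplices of $\lk(x_j)$, so the join description goes through; this is the standard setup, e.g.\ as in \cite{brown92, bux14}.)

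Given this, the homotopy-theoretic heart is routine: since $\dlk(x_j)$ is $(k-1)$-connected by hypothesis, attaching its cone $x_j*\dlk(x_j)$ to $Y_{j-1}$ has the same effect, up to homotopy, as attaching cells of dimension $\ge k+1$ (a $(k-1)$-connected complex is homotopy equivalent to one with cells only in dimensions $0$ and $\ge k$, whose cone then has cells only in dimension $0$ and $\ge k+1$; gluing a cone kills the subspace and the relative cells have dimension one higher, $\ge k+1$). Attaching cells of dimension $\ge k+1$ does not change $\pi_i$ for $i < k$ and is surjective on $\pi_k$, so $(Y_j, Y_{j-1})$ is $k$-connected. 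Composing these inclusions (and passing to the limit over $j$, using that any map of a sphere or disk of bounded dimension into $X_t$ has compact, hence finite, image) shows $(X_t, X_s)$ is $k$-connected.

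The main obstacle — really the only subtle point — is handling the case $t = \infty$ together with the direct-limit argument: one must check that homotopy groups commute with the directed union $X_t = \bigcup_j Y_j$ in the relevant range, which follows from compactness of spheres and disks, but deserves an explicit sentence. A secondary bookkeeping point is the identification of the cells newly added at stage $j$ with the cone on the descending link, which relies on the defining property of a Morse function (unique maximum on each cell) to guarantee that a new cell is attached to exactly one $x_j$ rather than being split across several; I would state this carefully but not belabor it, citing the analogous treatment in \cite{brown92}.
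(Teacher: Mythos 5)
The paper does not prove this lemma; it is stated as background (it is essentially the combinatorial Morse lemma of Bestvina--Brady, also used in \cite{brown92, bux14}), so there is no proof of the paper's own to compare against. Your outline is the right one, and most of the technical points you flag --- the cube-versus-simplex bookkeeping, the compactness/direct-limit step, the case $Y_0 = X_s$ --- are handled correctly.

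There is, however, one genuine gap: the enumeration $x_1, x_2, \ldots$ with $h(x_1) \le h(x_2) \le \cdots$ need not exist as an exhaustive sequence of order type $\omega$. In the complexes of interest here (e.g.\ the Stein--Farley complex $\Stein{G_*}$), a single height level $h^{-1}(r)$ typically contains infinitely many vertices, in which case no enumeration can place every vertex of height $< r+1$ at a finite position before any vertex of height $r+1$; your sequence would stall at the first infinite level and $\bigcup_j Y_j$ would not be $X_t$. Even if one tried to interleave, the requirement that every vertex of $\dlk(x_j)$ appear before $x_j$ cannot be met once infinitely many lower-height vertices exist. The fix is the standard one: process one \emph{height level} at a time, building $X_s \subseteq X_{s+1} \subseteq \cdots \subseteq X_t$, and at stage $r+1$ attach all cones $x * \dlk(x)$ with $h(x) = r+1$ \emph{simultaneously}. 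This is legitimate precisely because of the Morse-function hypothesis: if a cell had two distinct vertices of the same height neither would be the unique maximum, so the cells newly added at level $r+1$ partition according to their unique top vertex, and the cones meet pairwise only inside $X_r$. The connectivity of $(X_{r+1},X_r)$ is then read off from the (possibly infinite) wedge $\bigvee_x \Sigma(\dlk(x))$ that appears as the quotient $X_{r+1}/X_r$, together with excision (or Blakers--Massey) --- which is also a cleaner route than your ``replace by a complex with cells in dimensions $\ge k$'' argument, since the latter requires a mapping-cylinder replacement to be compatible with the gluing and you should not gloss over that. With the level-by-level decomposition substituted for the vertex-by-vertex one, your argument is correct.
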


The connection between connectivity of spaces and finiteness properties of groups is most directly made using Brown's criterion. A Morse function on $X$ gives rise to a filtration $(X_r)_{r \in \Nz}$ by subcomplexes. We say that the filtration is \emph{essentially $k$-connected} if for every $i \in \Nz$ there exists a $j \ge i$ such that $\pi_\ell(X_i \to X_j)$ is trivial for all $\ell \le k$.

Now assume that a group $G$ acts on $X$. If $h$ is $G$-invariant then so is the filtration $(X_r)_r$. We say that the filtration is \emph{cocompact} if the quotient $G \backslash X_r$ is compact for all $r$. This is the setup for Brown's criterion, see \cite[Theorems~2.2,~3.2]{brown87}.

\begin{theorem}[Brown's criterion]
 Let $n \in \N$ and assume a group $G$ acts on an $(n-1)$-connected CW complex $X$. Assume that the stabilizer of every $p$-cell of $X$ is of type~$\F_{n-p}$. Let $(X_r)_{r \in \Nz}$ be a $G$-cocompact filtration of $X$. Then $G$ is of type~$\F_n$ if and only if $(X_r)_r$ is essentially $(n-1)$-connected.
\end{theorem}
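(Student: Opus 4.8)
The statement is Brown's criterion \cite{brown87}, and the plan is to prove it through the Borel construction. First I would pass from $X$ to the homotopy quotient $Y \defeq EG \times_G X$, filtered by the subcomplexes $Y_r \defeq EG \times_G X_r$, so that $Y = \bigcup_r Y_r$. The fibre bundle $X \to Y \to BG$ has $(n-1)$-connected fibre, so a diagram chase in the long exact homotopy sequence shows that $Y \to BG$ is $n$-connected; consequently $G$ is of type $\F_n$ if and only if $Y$ is homotopy equivalent to a CW complex with finite $n$-skeleton. Here I would invoke, without reproving them, the standard facts that make this precise — Wall's passage between being ``dominated by'' and being ``homotopy equivalent to'' a complex with finite $n$-skeleton, and the fact that type $\FP_n$ together with type $\F_2$ implies type $\F_n$ (Brown's homological version of the criterion is proved by the same argument, omitting the $\F_2$ bookkeeping) — and I would dispatch the case $n = 1$, which just says $G$ is finitely generated if and only if the filtration is eventually connected in the direct-system sense, by a short direct argument.

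Secondly I would show that each $Y_r$ is itself homotopy equivalent to a complex with finite $n$-skeleton. Since $G \backslash X_r$ is compact, $X_r$ has only finitely many orbits of cells; assembling $Y_r$ one orbit at a time attaches, for each orbit of $p$-cells with stabilizer $G_\sigma$, a copy of $EG \times_G (G/G_\sigma \times D^p) \simeq BG_\sigma \times D^p$ along $BG_\sigma \times \partial D^p$, and as $G_\sigma$ is of type $\F_{n-p}$ the space $BG_\sigma$ admits a model with finite $(n-p)$-skeleton, hence contributes only finitely many cells in dimensions $\le n$ to the attached piece. Finitely many such attachments give the claim.

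The heart of the matter is a direct-limit criterion: for the filtered union $Y = \bigcup_r Y_r$ built as above, the union $Y$ is homotopy equivalent to a complex with finite $n$-skeleton if and only if the filtration $(X_r)_r$ is essentially $(n-1)$-connected. The reason is that the defining condition — for each $i$ some $j \ge i$ with $\pi_\ell(X_i \to X_j)$ trivial for $\ell \le n-1$ — is precisely what makes the direct system of $\Z G$-chain complexes $(C_*(X_r))_r$ (where $C_*(X_r)$ is the cellular chain complex of $X_r$ with its $G$-action, equivalently the chain complex of the pullback to $Y_r$ of the covering $EG \times X \to Y$) essentially acyclic in degrees $\le n - 1$. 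Granting this criterion, the two preceding steps close the argument in both directions: $G$ is of type $\F_n$ $\iff$ $Y$ has finite $n$-type $\iff$ $(X_r)_r$ is essentially $(n-1)$-connected. The ``only if'' half of the criterion follows by running the telescope comparison below in reverse together with a compactness reduction; the ``if'' half is where the real work lies: assuming the essential acyclicity, one passes to a cofinal subsequence $r_0 < r_1 < \cdots$ along which all homology maps in degrees $\le n-1$ vanish, forms the mapping telescope (homotopy equivalent to $Y$), and performs surgery on it dimension by dimension, trading the finitely many cells contributed through dimension $n$ by the finite-$n$-skeleton models of the $Y_{r_i}$ against the vanishing of those homology maps, to extract a homotopy-equivalent complex with finite $n$-skeleton. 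This telescope-surgery step, together with the Wall-type finiteness bookkeeping of the first step, is the only ingredient that is not a formal manipulation of the Borel construction, and is where I expect essentially all of the difficulty to lie.
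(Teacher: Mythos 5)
The paper does not prove this theorem; it cites it from \cite{brown87} (Theorems~2.2 and 3.2), so there is no in-paper proof to compare against. That said, your Borel-construction strategy is the standard geometric repackaging of Brown's original equivariant-homology argument, so the framework is sound; the issue is with one of your intermediate claims.

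The chain ``$G$ is of type~$\F_n$ $\iff$ $Y$ has finite $n$-type $\iff$ $(X_r)_r$ is essentially $(n-1)$-connected'' has a genuine gap: the middle term is not equivalent to either end. Take $G=\{1\}$ and $X=\bigvee_{i\in\N} S^n$ with filtration $X_r=\bigvee_{i\le r} S^n$. Every stabilizer is trivial (so of type~$\F_\infty$), each $X_r$ is compact, $X$ is $(n-1)$-connected, and the filtration is essentially $(n-1)$-connected because $\pi_\ell(X_r)=0$ for $\ell\le n-1$. Yet $Y=EG\times_G X\simeq X$ has $H_n(Y)\cong\bigoplus_{\N}\Z$, which is not finitely generated, so $Y$ is not homotopy equivalent to (nor even dominated by) any complex with finite $n$-skeleton. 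This falsifies both the ``only if'' direction of your first biconditional and the ``if'' direction of your second. The underlying reason is that essential $(n-1)$-connectivity of the filtration controls $\pi_\ell$ only for $\ell\le n-1$ and says nothing about $\pi_n$ or $H_n$, so the telescope you build from the $Y_{r_i}$'s can perfectly well accumulate infinitely many $n$-cells that never cancel. The place where this excess $n$-dimensional material \emph{does} get absorbed is in passing from $Y$ to $BG$: because $Y\to BG$ is only required to be $n$-connected, one is free to attach cells of dimension $\ge n+1$ to kill the extra $\pi_n$, and a model of $BG$ with finite $n$-skeleton can exist even when $Y$ has none. So the surgery on the telescope has to be carried out with the target being a model of $BG$ rather than $Y$ itself (or, as in Brown's actual argument, one works homologically with the $\Z G$-chain complex $C_*(X)$ and the Bieri--Eckmann criterion, where the relevant finiteness is $\FP_n$ of a chain complex up to degree $n-1$, not finite $n$-type of a space). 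Your surrounding scaffolding---Step~2 on the $Y_r$ having finite $n$-type, the Wall-type bookkeeping, the $\F_2+\FP_n\Rightarrow\F_n$ reduction---is fine; the fix is to drop ``$Y$ has finite $n$-type'' as a stepping stone and phrase the telescope surgery so that it produces a finite $n$-skeleton for $BG$ directly.
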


Putting both statements together we obtain the version that we will mostly use.

\begin{corollary}\label{cor:brown_crit_use}
 Let $G$ act on a contractible Euclidean cell complex $X$ and let $h \colon X^{(0)} \to \Nz$ be a $G$-invariant Morse function. Assume that the stabilizer of every $p$-cell of $X$ is of type~$\F_{n-p}$ and that the sublevel sets $X_r$ are cocompact. If there is an $s \in \R$ such that $\dlk(x)$ is $(n-1)$-connected for all vertices $x \in X^{(0)} \setminus X_s$ then $G$ is of type~$\F_n$.
\end{corollary}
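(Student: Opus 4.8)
The plan is to reduce the statement directly to Brown's criterion, with the Morse Lemma (Lemma~\ref{lem:morse}) supplying the one hypothesis of that criterion that is not immediate, namely essential $(n-1)$-connectivity of the sublevel filtration.

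First I would record that $(X_r)_{r \in \Nz}$ is a genuine $G$-cocompact filtration of $X$. Each $X_r$ is a subcomplex, being the sublevel set $X^{\le r}$ of the Morse function $h$; it is $G$-invariant because $h$ is $G$-invariant; and $\bigcup_r X_r = X$ because every cell of $X$ has only finitely many vertices, each of finite height. Cocompactness of each $G\backslash X_r$ is assumed. Since $X$ is a contractible Euclidean cell complex it is in particular an $(n-1)$-connected CW complex, and the stabilizer of every $p$-cell is of type~$\F_{n-p}$ by hypothesis; so the only remaining input needed for Brown's criterion is the essential $(n-1)$-connectivity of $(X_r)_r$.

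Next I would use the descending-link hypothesis to show that $X_r$ is $(n-1)$-connected for every $r \ge s$. Since $h$ is integer-valued we may replace $s$ by $\lceil s\rceil$ and assume $s \in \Nz$; fix $r \ge s$. Every vertex $x$ with $h(x) > r$ also has $h(x) > s$, so $\dlk(x)$ is $(n-1)$-connected by hypothesis. Applying the Morse Lemma with thresholds $r$ and $t = \infty$, so that $X_t = X$, the pair $(X, X_r)$ is $n$-connected. Since $X$ is contractible, the long exact homotopy sequence of this pair (using $\pi_{\ell+1}(X, X_r) = 0$ for $\ell+1 \le n$ and $\pi_\ell(X) = 0$) forces $\pi_\ell(X_r) = 0$ for all $\ell \le n-1$; that is, $X_r$ is $(n-1)$-connected whenever $r \ge s$.

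Finally I would assemble essential connectivity and invoke Brown's criterion. Given $i \in \Nz$, put $j \defeq \max\{i,s\}$. If $i \ge s$ then $X_i$ is itself $(n-1)$-connected, and if $i < s$ then $X_j = X_s$ is $(n-1)$-connected; in either case $\pi_\ell(X_j) = 0$ for $\ell \le n-1$, so the map $\pi_\ell(X_i \to X_j)$ is trivial for all such $\ell$. Hence $(X_r)_r$ is essentially $(n-1)$-connected, and Brown's criterion yields that $G$ is of type~$\F_n$. I do not expect a genuine obstacle here: the substance is already packaged inside Lemma~\ref{lem:morse} and Brown's criterion, and the only point that needs a little care is the legitimacy of taking $t = \infty$ in the Morse Lemma and then passing, via contractibility of $X$ and the long exact sequence, from the relative statement about $(X, X_r)$ to the absolute statement that $X_r$ is highly connected.
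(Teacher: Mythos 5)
Your proof is correct and is exactly the argument the paper intends; the paper gives no explicit proof, only the remark ``Putting both statements together we obtain the version that we will mostly use,'' and your write-up supplies precisely those details: apply the Morse Lemma with $t=\infty$ to get $(X,X_r)$ $n$-connected for $r \ge s$, deduce from contractibility of $X$ and the long exact sequence that each $X_r$ with $r \ge s$ is $(n-1)$-connected, hence the filtration is essentially $(n-1)$-connected, and conclude via Brown's criterion.
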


If $G_*$ is a system of groups equipped with a properly graded cloning system then $\Thomp{G_*}$ acts on the Stein--Farley complex $\Stein{G_*}$, which is contractible (Proposition~\ref{prop:stein_space_contractible}) with stabilizers from $G_*$ (Lemma~\ref{lem:stabs}). Our next goal is to define an invariant, cocompact Morse function and to describe the descending links.

\subsection{The Morse function}\label{sec:morse_function}
Recall that the vertices of $\Stein{G_*}$ are classes $[\forest,g,\altforest]$ of semisimple elements modulo dangling. The height function we will be using assigns to such a vertex its number of feet (see~Section~\ref{sec:semisimple}). That is, $\Stein{G_*}_n = \realize{\Poset_{1,n}}\cap \Stein{G_*}$. This height function is $\Thomp{G_*}$-invariant because it is induced by the morphism $\len \colon \Thomphat{G} \to \Z$ and every element of $\Thomp{G_*}$ has length $0$.

\begin{lemma}[Cocompactness]\label{lem:cocompact}
 The action of $\Thomp{G_*}$ is transitive on vertices of $\Stein{G_*}$ with a fixed number of feet. Consequently the action of $\Thomp{G_*}$ on $\Stein{G_*}_n$ is cocompact for every $n$.
\end{lemma}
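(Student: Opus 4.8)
The plan is to reduce the statement to the claim about vertex-transitivity, since cocompactness of $\Stein{G_*}_n$ follows formally: by Observation~\ref{obs:stein_space_locally_finite} there are only finitely many vertices $y$ with $x \preceq y$, so every cube of $\Stein{G_*}$ has its vertices among finitely many ``$\preceq$-types'' above its bottom vertex, and the number of feet of a cube's vertices ranges over an interval $\{m, \dots, m + d\}$ where $d$ is the dimension; combined with vertex-transitivity in each feet-count, there are only finitely many $\Thomp{G_*}$-orbits of cubes all of whose vertices have at most $n$ feet. Hence $\Thomp{G_*} \backslash \Stein{G_*}_n$ is a finite complex. So the real content is vertex-transitivity.

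To prove that $\Thomp{G_*}$ acts transitively on the set of vertices of $\Stein{G_*}$ with exactly $n$ feet, I would first pick a base vertex. Recall that a vertex is a class $[\forest, g, \altforest]$ modulo dangling, where $(\forest,g)$ is semisimple with $n$ feet (and $\altforest$ has at most $n$ feet); the number of feet is $\len(\forest) - \len(\altforest) + 1 = n$. Using dangling (the relation $\sim_G$) we may normalize: multiplying on the right by an element of $G$ we can arrange $g = 1$, and then $[\forest, 1, \altforest]$ with $\forest$ semisimple having $n$ feet. Using expansions/reductions as in Section~\ref{sec:BZS_to_thomp} — or rather, working modulo dangling as permitted — one sees every vertex with $n$ feet is represented by some $[\forest, 1, \altforest]$ where $\forest$ is the fixed semisimple forest $\tree_n$ with $n$ feet on the first strand (since any two semisimple forests with the same number of feet differ by... well, they are literally the same shape: a semisimple forest with $n$ feet is $\tree_n$, the left tree being the unique caret-tree determined by $n$ leaves? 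No — there are many trees with $n$ leaves). Let me instead take the base vertex $v_n \defeq [\forest_n, 1, \forest_n]$ for a fixed choice of semisimple forest $\forest_n$ with $n$ feet, i.e. $\forest_n$ has first tree some fixed binary tree $\tree$ with $n$ leaves.

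Now given an arbitrary vertex $x = [\forest, g, \altforest]$ with $n$ feet, where $\forest$ is semisimple with $n$ feet, set $t \defeq [\forest, g, \forest_n] \in \Thomphat{G}$. This is simple: $\forest$ and $\forest_n$ are both semisimple with $n$ feet and $g \in G_n$ (Definition~\ref{def:simple}), so $t \in \Thomp{G_*}$. Then $t \cdot v_n = t [\forest_n, 1, \forest_n]$; computing the product as in the proof of Proposition~\ref{prop:product_of_simple} with $\forest_n$ serving as the common right multiple of $\forest_n$ (the $\altforest_+$ of $t$) and $\forest_n$ (the $\forest_-$ of $v_n$), one gets $t v_n = [\forest, g, \forest_n]$, which equals $x$ modulo dangling since $[\forest, g, \forest_n] \sim_G [\forest, g, \altforest]$? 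That last equivalence needs $\altforest$ and $\forest_n$ to represent the same dangling class, which is \emph{not} automatic — $\altforest$ is an arbitrary forest with at most $n$ feet, not necessarily $\forest_n$.

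So the correct argument handles the two sides asymmetrically. The dangling relation on $\Preposet_1$ already collapses $[\forest, g, \altforest]$ to $[\forest, g, \altforest']$ whenever the two differ by right multiplication by an element of $G$; but changing the tree part $\altforest$ requires a genuine reduction/expansion. The key point is that a semisimple vertex $[\forest, g, \altforest]$ only depends, up to the $\Thomp{G_*}$-action, on the pair $(\len(\forest), \len(\altforest))$, hence only on the number of feet: given $x = [\forest, g, \altforest]$, first reduce $\altforest$ to the trivial forest by an expansion of $x$ (absorbing $\altforest$ and adjusting $\forest$ and $g$ accordingly — this is the move $[\tree_1, g, \alttree_1] \mapsto$ something with trivial right tree, obtained from $(\forest_-(g\cdot\altforest), g^{\altforest}, \forest_+ \altforest)$-type expansions applied to make $\forest_+ = $ trivial... wait, expansions make forests bigger). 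Let me instead just expand $x$ so that its right tree becomes any prescribed shape with enough leaves: the hard part, and the one to get right, is checking that any two semisimple elements with $n$ feet are carried onto one another by a simple element, which amounts to: the set of vertices with $n$ feet is a single orbit because $\Thomphat{G}$ acts transitively on $\Poset_{1,n} \setminus \Poset_{1,n-1}$ via simple elements — and this follows by writing any such vertex as $t \cdot v_n$ with $t \defeq x \cdot v_n^{-1}$, where one must verify $t$ is \emph{simple}, i.e. that $x v_n^{-1}$, a priori only in $\Thomphat{G}$, is represented by a triple with both trees semisimple and $n$-footed. I expect this verification — essentially re-running the multiplication computation of Proposition~\ref{prop:product_of_simple} and invoking Lemma~\ref{lem:semisimple_elements} parts \eqref{item:type_length} and \eqref{item:monoid_semisimplicity_common_multiple} — to be the main (though routine) obstacle, the subtlety being the bookkeeping of feet on both the left and right tree simultaneously and the use of Lemma~\ref{lem:dangling} to know that the stabilizer computation is consistent. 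Once vertex-transitivity is in hand, cocompactness of $\Stein{G_*}_n$ is the formal consequence described above.
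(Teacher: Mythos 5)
Your reduction of cocompactness to vertex-transitivity together with local finiteness (Observation~\ref{obs:stein_space_locally_finite}) is correct and matches the paper, and the final strategy you land on — produce a simple element carrying one $n$-footed vertex to another by forming a quotient and verifying simplicity — is also what the paper does. But there are two genuine problems in the execution.

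First, your base vertex $v_n=[\forest_n,1,\forest_n]$ is the \emph{identity} of $\Thomphat{G}$ (any triple $(\forest,1_G,\forest)$ represents $1$), so as a vertex it has one foot, not $n$, and $t\defeq x\,v_n^{-1}$ is just $x$ — a semisimple but not simple element. The $n$-footed base vertex you want is $[\forest_n,1,1]$, the class of the semisimple monoid element $(\forest_n,1)$. With this fix one computes $t=x\,v_n^{-1}=[\forest,g,\forest_n\altforest]$ for $x=[\forest,g,\altforest]$; writing $n'$ for the number of feet of $\forest$, the feet count $n=\len(\forest)-\len(\altforest)+1$ gives $\rk(\altforest)\le n'-\len(\altforest)=n$, so $\forest_n\altforest$ is semisimple by Observation~\ref{obs:forest_semisimple_elements}~\eqref{item:forest_right_multiple_rank} with $n'$ feet, and $t$ is simple. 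Second, you postpone this verification to ``re-running Proposition~\ref{prop:product_of_simple}'' via Lemma~\ref{lem:semisimple_elements}~\eqref{item:type_length},~\eqref{item:monoid_semisimplicity_common_multiple}, but item~\eqref{item:monoid_semisimplicity_common_multiple} requires both $(\forest_1,g)$ and $(\forest_2,h)$ to be \emph{semisimple} with the same feet; for a merely semisimple $\tilde x$ the right-hand forest $\forest_+$ has rank at most $n$ but need not be semisimple, so that lemma does not apply. The paper sidesteps base-vertex normalization entirely: it takes two arbitrary semisimple elements $\tilde x,\tilde y$ with $n$ feet, forms a common right multiple of $\forest_+,\altforest_+$ of rank at most $n$ via Observation~\ref{obs:forest_semisimple_elements}~\eqref{item:forest_semisimple_right_common_multiple}, and then uses~\eqref{item:forest_right_multiple_rank} to see the resulting left forests are semisimple. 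Finally, Lemma~\ref{lem:dangling} plays no role here — it enters only in the stabilizer computation (Lemma~\ref{lem:stabs}), not in transitivity.
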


\begin{proof}
Let $\tilde{x} = [\forest_-,g,\forest_+]$ and $\tilde{y} = [\altforest_-,h,\altforest_+]$ be semisimple with $n$ feet. We know that $\tilde{x}\tilde{y}^{-1}$ takes $\tilde{y}$ to $\tilde{x}$, so it suffices to show that $\tilde{x}\tilde{y}^{-1}$ is simple. Note that $\forest_+$ and $\altforest_+$ have rank at most $n$. By Observation~\ref{obs:forest_semisimple_elements}~\eqref{item:forest_semisimple_right_common_multiple} they admit a common right multiple $\forest_+\forest = \altforest_+\altforest$ of rank at most $n$. Let the length of this multiple be $m$, so it has at most $m+n$ feet. Then
 \[
 \tilde{x}\tilde{y}^{-1} = [\forest_-(g \cdot \forest),g^\forest(h^{\altforest})^{-1},\altforest_-(h \cdot \altforest)]
 \]
 and both $\forest_-(g \cdot \forest)$ and $\altforest_-(h \cdot \altforest)$ are semisimple by Observation~\ref{obs:forest_semisimple_elements}~\eqref{item:forest_right_multiple_rank}. They have $m+n$ feet and both $g^\forest$ and $h^\altforest$ lie in $G_{n+m}$. Thus $\tilde{x}\tilde{y}^{-1}$ is simple.

The second statement now follows from Observation~\ref{obs:stein_space_locally_finite}.
\end{proof}

\subsection{Descending links}\label{sec:dlks}

Let $x$ be a vertex in $\Stein{G_*}$, with $n$ feet. We want to describe the descending link of $x$. A vertex $y$ is in the link of $x$ if either $x \preceq y$ or $y \preceq x$. In the first case $y$ is ascending so the descending link is spanned by vertices $y$ with $y \preceq x$. These are by definition of the form $x(\forest,g)^{-1}$ for $\forest$ an elementary forest and $g \in G_n$. In particular, for a fixed $n$, the descending links of any vertices of height $n$ look the same, and are all isomorphic to the simplicial complex of products $g \forest^{-1}$ where $g \in G_n$ and $\forest$ is an elementary forest with at most $n$ feet, modulo the relation $\sim_G$.

\medskip

It is helpful to describe this complex somewhat more explicitly. In doing so we slightly shift notation by making use of the fact that elementary forests can be parametrized by subgraphs of linear graphs.

Let $L_n$ be the graph with $n$ vertices, labeled $1$ through $n$, and a single edge connecting $i$ to $i+1$, for each $1\le i\le n-1$. This is the \emph{linear graph} with $n$ vertices. Denote the edge from $i$ to $i+1$ by $e_i$. We will exclusively consider \emph{spanning} subgraphs of $L_n$, that is, subgraphs whose vertex set is $\{1,\ldots,n\}$. We call the spanning subgraph without edges \emph{trivial}. A \emph{matching} on a graph is a spanning subgraph in which no two edges share a vertex. For an elementary forest $\forest$ with at most $n$ feet, define $\Gamma(\forest)$ to be the spanning subgraph of $L_n$ that has an edge from $i$ to $i+1$ if and only if the $i$th and $(i+1)$st leaves of $\forest$ are leaves of a common caret. Note that this is a matching. Conversely, given a matching $\Gamma$ of $L_n$, there is an elementary forest $E(\Gamma) = \lambda_{i_k} \cdots \lambda_{i_1}$ where $\Gamma$ has edges $e_{i_1}, \ldots,e_{i_k}$. Both operations are inverse to each other so we conclude:

\begin{observation}\label{obs:matchings_to_forests}
 There is a one-to-one correspondence between matchings of $L_n$ and elementary forests with at most $n$ feet. \qed
\end{observation}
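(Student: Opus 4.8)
The statement to prove is Observation~\ref{obs:matchings_to_forests}: a one-to-one correspondence between matchings of $L_n$ and elementary forests with at most $n$ feet. The plan is to verify directly that the two maps $\forest \mapsto \Gamma(\forest)$ and $\Gamma \mapsto E(\Gamma)$ already described in the text are mutually inverse bijections, so the work is mostly bookkeeping about the normal form for elementary forests.

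\textbf{Step 1: $\Gamma(\forest)$ is well defined and is a matching.} Given an elementary forest $\forest$ with at most $n$ feet, every tree of $\forest$ has at most two leaves, so each leaf of $\forest$ either is a lone leaf (its tree is trivial) or shares a single caret with exactly one neighbouring leaf. The leaves that share a caret are consecutive (the two leaves of a caret are adjacent in the leaf order), so $\Gamma(\forest)$ is a spanning subgraph of $L_n$ in which no vertex meets two edges; that is exactly a matching. This uses only the definition of ``elementary'' and the definition of the leaf order on forests from Section~\ref{sec:forest_monoid}.

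\textbf{Step 2: $E(\Gamma)$ is a well-defined elementary forest with at most $n$ feet.} Given a matching $\Gamma$ of $L_n$ with edges $e_{i_1},\dots,e_{i_k}$, listed so that $i_1 < \dots < i_k$, the fact that $\Gamma$ is a matching means $i_{j+1} \ge i_j + 2$ for all $j$, i.e.\ the indices are ``spread out'' exactly in the sense required for $\lambda_{i_k}\cdots\lambda_{i_1}$ to be an elementary forest (recall a forest $\lambda_{k_1}\cdots\lambda_{k_r}$ is elementary iff $k_{i+1} > k_i + 1$; after applying the forest relations~\eqref{eq:forest_relation} to bring the word to the appropriate order, the separation condition on the $i_j$ is what guarantees no two carets stack). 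Since $\Gamma$ is a spanning subgraph of $L_n$, every caret sits among the first $n$ leaf-positions, so $E(\Gamma)$ has at most $n$ feet. Here one should be mildly careful about the indexing convention in $E(\Gamma) = \lambda_{i_k}\cdots\lambda_{i_1}$ versus the normal form $\lambda_{k_1}\le\cdots\le\lambda_{k_r}$ of Proposition~\ref{prop:monoid_pres}; the two differ only by a reindexing coming from the relations $\lambda_j\lambda_i = \lambda_i\lambda_{j+1}$, and tracking this shift is the one genuinely fiddly point.

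\textbf{Step 3: the two maps are mutually inverse.} For $\Gamma \mapsto E(\Gamma) \mapsto \Gamma(E(\Gamma))$: by construction the $i$th and $(i+1)$st leaves of $E(\Gamma)$ lie on a common caret precisely when $e_i \in \Gamma$, so $\Gamma(E(\Gamma)) = \Gamma$. For $\forest \mapsto \Gamma(\forest) \mapsto E(\Gamma(\forest))$: since $\forest$ is elementary, $\forest$ is (up to the forest relations) determined by which pairs of consecutive leaves share a caret, and that data is exactly $\Gamma(\forest)$; comparing with the explicit recipe for $E(\Gamma(\forest))$ gives back $\forest$. I expect the main obstacle to be nothing conceptual but rather getting the index shift in Step 2 exactly right — writing out a small example (say $n=4$) to pin down the convention, then arguing in general — after which both composites collapse immediately, giving the claimed bijection.
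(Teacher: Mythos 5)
Your overall plan — define the two maps and verify they are mutually inverse — is exactly what the paper does (the observation is stated with a \textsc{qed} precisely because the authors treat this as immediate). However, the index bookkeeping in Steps 2 and 3 contains a genuine error, and it is exactly the ``fiddly point'' you flagged and then did not actually resolve.

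You set $E(\Gamma) = \lambda_{i_k}\cdots\lambda_{i_1}$ with $i_1 < \cdots < i_k$, a word whose subscripts \emph{decrease} from left to right, and you propose to reach the normal form by applying the forest relations $\lambda_j\lambda_i = \lambda_i\lambda_{j+1}$. If one carries this out, the normal form is $\lambda_{i_1}\lambda_{i_2+1}\lambda_{i_3+2}\cdots\lambda_{i_k+(k-1)}$: the shift accumulates, it does not cancel. A small example then falsifies Step~3: for $\Gamma = \{e_1,e_3\}$ in $L_5$ you get $E(\Gamma) = \lambda_3\lambda_1 = \lambda_1\lambda_4$, whose carets pair leaves $(1,2)$ and $(4,5)$, so $\Gamma(E(\Gamma)) = \{e_1,e_4\} \neq \Gamma$. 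Thus under the reading adopted in Step~2, the assertion in Step~3 that ``the $i$th and $(i+1)$st leaves of $E(\Gamma)$ lie on a common caret precisely when $e_i \in \Gamma$'' is simply false, and the two maps are not inverse.

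The correct recipe involves no shift at all: take $E(\Gamma) = \lambda_{i_1}\lambda_{i_2}\cdots\lambda_{i_k}$ with $i_1 < \cdots < i_k$, the \emph{increasing}-subscript word. Because $\Gamma$ is a matching we have $i_{j+1} \ge i_j + 2$, which is precisely the elementarity condition $k_{j+1} > k_j + 1$, so this word is already the unique normal form of an elementary forest and no relations are invoked. A short induction on $k$ (attaching $\lambda_{i_j}$ to the $i_j$th leaf of $\lambda_{i_1}\cdots\lambda_{i_{j-1}}$, noting that all carets so far sit strictly to the left of leaf $i_j-1$ and hence do not disturb its position) shows that the caret coming from $\lambda_{i_j}$ covers leaves $i_j$ and $i_j+1$ in the final numbering. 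Hence $\Gamma(E(\Gamma)) = \Gamma$ on the nose, and $E(\Gamma(\forest)) = \forest$ follows from the uniqueness of normal form in Proposition~\ref{prop:monoid_pres}. The paper's notation $\lambda_{i_k}\cdots\lambda_{i_1}$ is to be read with the $e_{i_j}$ enumerated so that the subscripts of the resulting word increase from left to right, not as an invitation to apply the forest relations.
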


In particular, if $\Gamma$ is a matching with $m$ edges and $n$ vertices we obtain a cloning map $\clone_\Gamma \colon G_{n-m} \to G_n$ which is just the cloning map of $E(\Gamma)$ as defined before Observation~\ref{obs:BZS_lclm}. We also get an action of $G_{n-m}$ on $\Gamma$ which is given by the action of $\rho(G_{n-m})$ on connected components. For future reference we also note:

\begin{observation}\label{obs:graphs_to_hedges}
 There is a one-to-one correspondence between spanning subgraphs of $L_n$ and hedges with at most $n$ feet. \qed
\end{observation}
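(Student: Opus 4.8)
The plan is to exhibit explicit mutually inverse maps between the two sets, in exact parallel with the matching/elementary-forest correspondence of Observation~\ref{obs:matchings_to_forests}, of which this is simply the non-matching version. Since $L_n$ is a path, the connected components of any spanning subgraph $\Gamma$ of $L_n$ are subintervals of $\{1,\dots,n\}$; list them as $I_1<\dots<I_r$ in increasing order. I would send $\Gamma$ to the map $h(\Gamma)\colon\N\to\N$ that is constantly $j$ on $I_j$ for $1\le j\le r$ and sends $n+s$ to $r+s$ for $s\ge 1$. A one-line check shows $h(\Gamma)$ is monotone and surjective, hence lies in $\Hmonoid$, and that $h(\Gamma)(m)=h(\Gamma)(m-1)+1$ for every $m>n$, which is what it means for $h(\Gamma)$ to have at most $n$ feet. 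When $\Gamma$ happens to be a matching — equivalently, when every component $I_j$ has at most two elements — this $h(\Gamma)$ is exactly $c(E(\Gamma))$ for the morphism $c\colon\Fmonoid\to\Hmonoid$ and the elementary forest $E(\Gamma)$ from before, so the present correspondence genuinely extends the earlier one.

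For the inverse, take a hedge $f$ with at most $n$ feet, so that $f(m)=f(m-1)+1$ for all $m>n$; writing $r\defeq f(n)$, this says $f$ restricts to a monotone surjection $\{1,\dots,n\}\onto\{1,\dots,r\}$ and $f(n+s)=r+s$ for $s\ge 1$, so $f$ is recovered from that restriction. I would then let $\Gamma(f)$ be the spanning subgraph of $L_n$ whose edge set is $\{e_i : f(i)=f(i+1),\ 1\le i\le n-1\}$. Monotonicity of $f$ forces each nonempty fiber $f^{-1}(j)\cap\{1,\dots,n\}$ to be a subinterval, and these subintervals are precisely the connected components of $\Gamma(f)$; from this it is immediate that $\Gamma(h(\Gamma))=\Gamma$ and $h(\Gamma(f))=f$, so the two constructions are inverse bijections. (As above, matchings correspond under these bijections to hedges all of whose fibers have at most two elements, recovering Observation~\ref{obs:matchings_to_forests}.)

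The whole argument is bookkeeping and there is no substantial obstacle. The only points deserving a moment of care are the indexing conventions: one must check that ``at most $n$ feet'' for a hedge unwinds to the stated pure-shift condition beyond position $n$ (so that $f$ is determined by $f|_{\{1,\dots,n\}}$), and one must track the degenerate cases — isolated vertices of $\Gamma$, corresponding to singleton fibers (a ``leaf'' that is simultaneously a root), and the trivial graph, corresponding to the identity hedge. Once these conventions are pinned down, verifying that $\Gamma\mapsto h(\Gamma)$ and $f\mapsto\Gamma(f)$ are mutually inverse is routine.
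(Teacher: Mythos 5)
Your proof is correct. The paper gives no written proof for this observation (it is stated with an immediate \qed, implicitly as the non-matching extension of Observation~\ref{obs:matchings_to_forests}), and the explicit bijection you describe — components of $\Gamma$ $\leftrightarrow$ fibers of the hedge on $\{1,\dots,n\}$, with the shift condition $f(m)=f(m-1)+1$ for $m>n$ encoding ``at most $n$ feet'' — is exactly the intended one. Your unwinding of the feet condition for hedges and your check that the correspondence restricts to the matching/elementary-forest case and to the trivial graph/identity hedge case are all correct.
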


Now define a simplicial complex $\dlkmodel{G_*}{n}$ as follows. A simplex in $\dlkmodel{G_*}{n}$ is represented by a pair $(g,\Gamma)$, where $g\in G_n$ and $\Gamma$ is a non-trivial matching of $L_n$. Two such pairs $(g_1,\Gamma_1)$, $(g_2,\Gamma_2)$ are \emph{equivalent (under dangling)} if the following conditions hold:
\begin{enumerate}
 \item $\Gamma_1$ and $\Gamma_2$ both have $m$ edges for some $1\le m\le n/2$,
 \item $g_2^{-1} g_1$ lies in the image of $\clone_{\Gamma_1}$, and
 \item $\Gamma_2=(g_2^{-1} g_1)\clone_{\Gamma_1}^{-1} \cdot \Gamma_1$.
\end{enumerate}

We make $\dlkmodel{G_*}{n}$ into a simplicial complex with face relation given by passing to subgraphs of the second term in the pair. Denote the equivalence class of $(g,\Gamma)$ under dangling by $[g,\Gamma]$. In summary,
\[
\dlkmodel{G_*}{n} \text{ has simplex set } \{[g,\Gamma]\mid \Gamma \text{ is a matching of } L_n \text{ and } g\in G_n\} \text{.}
\]

\begin{observation}\label{obs:descending_link_model}
 If $x$ has $n$ feet, the correspondence $(g,\Gamma) \mapsto x g E(\Gamma)^{-1}$ induces an isomorphism $\dlkmodel{G_*}{n} \to \dlk (x)$. \qed
\end{observation}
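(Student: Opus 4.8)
The statement to prove is Observation~\ref{obs:descending_link_model}: if $x$ is a vertex of $\Stein{G_*}$ with $n$ feet, then the assignment $(g,\Gamma)\mapsto xgE(\Gamma)^{-1}$ induces an isomorphism of simplicial complexes $\dlkmodel{G_*}{n}\to\dlk(x)$. The plan is to check four things: (i) the assignment lands in the descending link; (ii) it is well defined on dangling classes; (iii) it is a bijection on vertices, equivalently on simplices, with inverse induced by the obvious recipe; and (iv) it and its inverse preserve the face relation.

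First I would recall from Section~\ref{sec:dlks} that the vertices $y$ of $\dlk(x)$ are exactly those with $y\prec x$, which by definition of $\preceq$ means $y$ has a representative of the form $x(\forest,g)^{-1}=xg^{-1}\cdot(g\cdot\forest)^{-1}$ with $\forest$ an elementary forest of rank at most $n$ (equivalently, using Observation~\ref{obs:matchings_to_forests}, with $\forest=E(\Gamma)$ for a non-trivial matching $\Gamma$ of $L_n$) and $g\in G_n$. Replacing $g^{-1}$ by $g$ and absorbing the $\rho(g)$-permutation of the trees of $\forest$ into the matching (this is exactly the point that $g\cdot E(\Gamma)=E(\rho(g)\cdot\Gamma)$, since $G_n$ acts on $\Gamma$ via $\rho$ on connected components), one sees that every vertex of $\dlk(x)$ is hit by some $(g,\Gamma)$; so (iii) surjectivity is immediate. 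For well-definedness (ii), I would observe that $xg_1E(\Gamma_1)^{-1}$ and $xg_2E(\Gamma_2)^{-1}$ represent the same vertex of $\Stein{G_*}$ — i.e.\ are equal modulo $\sim_G$ — precisely when $xg_1E(\Gamma_1)^{-1}\bigl(xg_2E(\Gamma_2)^{-1}\bigr)^{-1}\in G$, and after cancelling the $x$'s and using Lemma~\ref{lem:dangling} (here is where properly graded is used, via Lemma~\ref{lem:semisimple_elements} and the structure of $\Fmonoid\bowtie G$) this rearranges, using the Zappa--Sz\'ep relation $gE(\Gamma)=E(\rho(g)\cdot\Gamma)\,(g)\clone_{\Gamma}$ and the hypothesis $\len E(\Gamma_1)=\len E(\Gamma_2)=m$, to exactly the three dangling conditions: $\Gamma_1,\Gamma_2$ have the same number $m$ of edges, $g_2^{-1}g_1\in\image\clone_{\Gamma_1}$, and $\Gamma_2=(g_2^{-1}g_1)\clone_{\Gamma_1}^{-1}\cdot\Gamma_1$. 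This simultaneously gives injectivity, so (iii) is complete.

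For (iv), the face relation in $\dlkmodel{G_*}{n}$ is "pass to a subgraph of the matching," and the face relation in $\dlk(x)$ is "pass to a subchain," i.e.\ $y'\le y\prec x$. Under the correspondence, a subgraph $\Gamma'\subseteq\Gamma$ corresponds to a right factor $E(\Gamma')$ of $E(\Gamma)$ in $\Fmonoid$ (carets of a sub-matching are a subset of the carets), and by Observation~\ref{obs:fmonoid_elementary_boolean} the right factors of the elementary forest $E(\Gamma)$ form exactly a boolean lattice under $\preceq$; translating through $x$ and through dangling, $xgE(\Gamma')^{-1}\le xgE(\Gamma)^{-1}$ iff $\Gamma'\subseteq\Gamma$. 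Since the assignment is an order-reversing bijection on vertices intertwining these two boolean-lattice structures, it is an isomorphism of simplicial complexes. I would present the inverse explicitly as: given a vertex $y\prec x$, write $y=x(E(\Gamma),1)^{-1}h^{-1}$ for the unique matching $\Gamma$ and $h\in G_n$ furnished by unique factorization in $\Fmonoid\bowtie G$ together with properly gradedness, and send it to $[h,\Gamma]$.

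**Main obstacle.** The routine calculations (expanding the Zappa--Sz\'ep multiplication, tracking which level the group elements live on) are bookkeeping; the one genuine point is well-definedness and injectivity in step (ii), i.e.\ correctly deriving the three dangling conditions from the relation "$xg_1E(\Gamma_1)^{-1}\sim_G xg_2E(\Gamma_2)^{-1}$." This is where properly gradedness enters essentially, through Lemma~\ref{lem:dangling}: without it one could not conclude that the ambient group element actually lies in $G_n$ and is therefore recorded faithfully by the pair $(g,\Gamma)$. Since all of this is already packaged in the preceding lemmas, the proof is short, which is presumably why it is stated as an observation with a "\qed" and no written-out argument.
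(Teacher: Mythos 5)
Your plan is correct and is presumably the argument the authors had in mind when leaving this as an unproved observation: surjectivity comes from the description of vertices $y\prec x$ as $x(\forest,g)^{-1}$ together with Observation~\ref{obs:matchings_to_forests}, well-definedness and injectivity come from Lemma~\ref{lem:dangling} (where properly gradedness enters), and preservation of the face relation comes from Observation~\ref{obs:fmonoid_elementary_boolean}. One small slip worth flagging: the intermediate equation $x(\forest,g)^{-1}=xg^{-1}\cdot(g\cdot\forest)^{-1}$ should simply be $x(\forest,g)^{-1}=xg^{-1}\forest^{-1}$, after which surjectivity is immediate by taking $h=g^{-1}$ and $\Gamma=\Gamma(\forest)$, with no need to \emph{absorb a $\rho(g)$-permutation} into the matching; the $\rho$-action on matchings only enters the dangling (well-definedness) step.
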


In particular, the $\dlkmodel{G_*}{n}$ are indeed simplicial complexes as claimed, since $\Stein{G_*}$ is a cubical complex.

We now have all the pieces together to apply Brown's criterion to our setting.

\begin{proposition}\label{prop:generic_finiteness}
 Let $G_*$ be equipped with a properly graded cloning system. If $G_k$ is eventually of type~$\F_n$ and $\dlkmodel{G_*}{k}$ is eventually $(n-1)$-connected then $\Thomp{G_*}$ is of type~$\F_n$.
\end{proposition}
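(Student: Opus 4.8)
The plan is to feed the action of $\Thomp{G_*}$ on the Stein--Farley complex $\Stein{G_*}$ into Brown's criterion in the form of Corollary~\ref{cor:brown_crit_use}. Most of the input is already on hand: $\Stein{G_*}$ is contractible by Proposition~\ref{prop:stein_space_contractible}, the number-of-feet function is a $\Thomp{G_*}$-invariant Morse function whose sublevel sets are cocompact by Lemma~\ref{lem:cocompact}, and by Observation~\ref{obs:descending_link_model} the descending link of a vertex with $m$ feet is $\dlkmodel{G_*}{m}$. Thus the descending-link hypothesis of Corollary~\ref{cor:brown_crit_use} is precisely the assumption that $\dlkmodel{G_*}{k}$ is $(n-1)$-connected for all sufficiently large $k$, and the only genuine point left to arrange is the condition on cell stabilizers.

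The subtlety is that cell stabilizers must be of type~$\F_{n-p}$ for \emph{every} $p$-cell, whereas the hypothesis on the groups $G_k$ is only eventual. By Lemma~\ref{lem:stabs} the stabilizer of a $p$-cell of $\Stein{G_*}$ is a finite-index subgroup of some $G_m$, and since type~$\F_{n-p}$ passes to finite-index subgroups this stabilizer is of type~$\F_{n-p}$ as soon as $G_m$ is of type~$\F_n$, which by hypothesis happens once $m \ge N$ for a suitable $N$. To discard the finitely many orbits of cells with fewer than $N$ feet, I would fix $N$ large enough that $G_k$ is of type~$\F_n$ and $\dlkmodel{G_*}{k}$ is $(n-1)$-connected for all $k \ge N$, and replace $\Stein{G_*}$ by its full subcomplex $Y$ on vertices with at least $N$ feet. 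This $Y$ is $\Thomp{G_*}$-invariant because the action preserves the number of feet, the sets $Y \cap \Stein{G_*}_r$ are still cocompact, and $Y$ is again contractible: the classes of semisimple elements with at least $N$ feet form a subposet $\Poset_1^{\ge N}$ of $\Poset_1$ that is closed under least common right multiples (these can only increase the number of feet), hence a join-semilattice and in particular directed, so $\realize{\Poset_1^{\ge N}}$ is contractible, and $Y$ is recovered from it by the attaching argument of Proposition~\ref{prop:stein_space_contractible} with Lemma~\ref{lem:cube_lemma} applied inside $\Poset_1^{\ge N}$.

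It then remains to run Corollary~\ref{cor:brown_crit_use} for the action of $\Thomp{G_*}$ on $Y$. The descending link in $Y$ of a vertex with $m$ feet is the full subcomplex of $\dlkmodel{G_*}{m}$ spanned by the simplices represented by matchings with at most $m-N$ edges, i.e.\ the $(m-N-1)$-skeleton of $\dlkmodel{G_*}{m}$; by the choice of $N$ this is $(n-1)$-connected once $m-N-1 \ge n$, so the descending-link hypothesis of Corollary~\ref{cor:brown_crit_use} holds with $s = N+n$. Combined with the cell-stabilizer bound from the previous paragraph, Corollary~\ref{cor:brown_crit_use} then gives that $\Thomp{G_*}$ is of type~$\F_n$. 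I expect the real work to be exactly this reconciliation of the ``eventual'' hypotheses with the ``all cells'' requirement of Brown's criterion: passing to the high-feet subcomplex, verifying it is still contractible, and checking that truncating the descending links to skeleta preserves $(n-1)$-connectivity. Everything else is bookkeeping with the machinery of Sections~\ref{sec:spaces}--\ref{sec:dlks}.
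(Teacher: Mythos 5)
Your proof is correct, and it takes a genuinely different route from the paper's. Both proofs begin identically: apply Corollary~\ref{cor:brown_crit_use} to the action on the Stein--Farley complex, using Proposition~\ref{prop:stein_space_contractible}, Lemma~\ref{lem:cocompact}, Lemma~\ref{lem:stabs}, and Observation~\ref{obs:descending_link_model}, and the only remaining issue is the mismatch between the ``eventual'' hypotheses and the ``all cells'' requirement of Brown's criterion. The paper resolves this by modifying the \emph{group-theoretic} data: it invokes the truncation isomorphism (Proposition~\ref{prop:truncation_isomorphism}) to replace $\Thomp{G_*}$ by the isomorphic $\Thomp{G_*'}$ where $G'_k=\{1\}$ for $k<t$, so that \emph{all} stabilizers become type~$\F_n$, and then observes that $\dlkmodel{G_*'}{m}$ agrees with $\dlkmodel{G_*}{m}$ through the $(m-t-1)$-skeleton, which is enough to inherit $(n-1)$-connectivity. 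You instead modify the \emph{space}: you restrict to the full subcomplex $Y$ of $\Stein{G_*}$ on vertices with at least $N$ feet. Your verifications are sound: $\Poset_1^{\ge N}$ is upward-closed, so it inherits the join-semilattice structure (hence $\realize{\Poset_1^{\ge N}}$ is contractible and the interval-based argument of Lemma~\ref{lem:cube_lemma}/Proposition~\ref{prop:stein_space_contractible} relativizes verbatim); the descending link in $Y$ of a vertex with $m$ feet is exactly the $(m-N-1)$-skeleton of $\dlkmodel{G_*}{m}$, and a subcomplex containing the $n$-skeleton of an $(n-1)$-connected complex is itself $(n-1)$-connected (its $n$-skeleton agrees, and $\pi_i$ for $i<n$ only sees the $n$-skeleton). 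What your route buys: you never leave the original cloning system, so you never have to re-examine whether the modified data retain properties such as being properly graded (which the paper's argument implicitly relies on when it reapplies Lemma~\ref{lem:stabs} to the truncated system); the price is a slightly longer verification that the high-feet subcomplex is contractible and that the descending-link skeleta remain highly connected. What the paper's route buys: it reuses the already-established Proposition~\ref{prop:truncation_isomorphism} and keeps the space construction literally unchanged, at the cost of importing the truncated cloning system.
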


\begin{proof}
 Suppose first that all $G_k$ are of type~$\F_n$. Let $X = \Stein{G_*}$, which is contractible by Proposition~\ref{prop:stein_space_contractible}. Our Morse function ``number of feet'' has cocompact sublevel sets by Lemma~\ref{lem:cocompact}. The stabilizer of any cell is a finite index subgroup of some $G_k$ by Lemma~\ref{lem:stabs}. Since finiteness properties are inherited by finite index subgroups, our assumption implies that all stabilizers are of type~$\F_n$. By the second assumption there is an $s$ such that $\dlkmodel{G_*}{k}$ is $(n-1)$-connected for $k > s$, which by Observation~\ref{obs:descending_link_model} means that descending links are $(n-1)$-connected from $s$ on. Applying Corollary~\ref{cor:brown_crit_use} we conclude that $\Thomp{G_*}$ is of type~$\F_n$.
 
 If the $G_k$ are of type~$\F_n$ only from $t$ on, we use Proposition~\ref{prop:truncation_isomorphism} to replace $\Thomp{G_*}$ by the isomorphic group $\Thomp{G_*'}$ where $G_k' = G_k$ for $k \ge t$ and $G_k = \{1\}$ for $k < t$. In particular, all of the $G_k'$ are of type~$\F_n$.
 
 Of course $\Stein{G_*'}$ is not isomorphic to $\Stein{G_*}$ and neither are the $\dlkmodel{G_*'}{m}$ isomorphic to the $\dlkmodel{G_*}{m}$. However, the $k$-skeleton of $\dlkmodel{G_*'}{m}$ is isomorphic to the $k$-skeleton of $\dlkmodel{G_*}{m}$ once $m > k + t$. Since $(n-1)$-connectivity only depends on the $n$-skeleton, if the $\dlkmodel{G_*}{m}$ are eventually $(n-1)$-connected then so are the $\dlkmodel{G_*'}{m}$.
\end{proof}

For a negative counterpart to this statement, that is, to show that $\Thomp{G_*}$ is not of type~$\F_n$, we would need stabilizers with good finiteness properties and a filtration that is not essentially $(n-1)$-connected -- at least as long as we are trying to apply Brown's criterion. Hence if we have groups $G_n$ whose finiteness lengths do not have a limit inferior of $\infty$, we would need an action on a different space to show that $\Thomp{G_*}$ answers Question~\ref{ques:finiteness_conjecture} affirmatively.

Returning to the positive statement, we remark that inspecting the homotopy type of $\dlkmodel{G_*}{n}$ does not seem possible uniformly. Instead, in what follows we will focus on examples and in particular find some instances of $\dlkmodel{G_*}{n}$ being highly connected. In the case where the $G_n$ are braid groups, these complexes were modeled by arc complexes in \cite{bux14}. In Section~\ref{sec:matrix_groups} below, where the $G_n$ are matrix groups, we will directly work with the combinatorial description. General tools that have turned out to be helpful will be collected in Sections~\ref{sec:high_connectivity} and~\ref{sec:relative_brown}.

We can make one positive statement about finiteness properties without knowing much at all about $G_*$. Before stating this as a lemma, we need to define the \emph{matching complex} of $L_n$. This is a simplicial complex, denoted $\match(L_n)$, whose simplices are matchings on $L_n$ and with face relation given by passing to subgraphs. It is well-known and not hard to see that $\match(L_n)$ is $(\floor{\frac{n-2}{3}}-1)$-connected. A precise description of the homotopy type is given in \cite[Proposition~11.16]{kozlov08} where $\match(L_n)$ arises as the independence complex $\independence(L_{n-1})$.

\begin{lemma}[Finite generation]\label{lem:fin_gen_case}
 Let $G_*$ be a family of groups equipped with a properly graded cloning system, with cloning maps $\clone_k^n$. Suppose that for $n$ sufficiently large, all $G_n$ are finitely generated and also are generated by the images of the cloning maps with codomain $G_n$. Then $\Thomp{G_*}$ is finitely generated.
\end{lemma}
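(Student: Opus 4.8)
The plan is to apply Brown's criterion (via Corollary~\ref{cor:brown_crit_use}) with $n = 1$ to the action of $\Thomp{G_*}$ on the Stein--Farley complex $\Stein{G_*}$. By Proposition~\ref{prop:stein_space_contractible} the complex is contractible, and by Lemma~\ref{lem:stabs} the cell stabilizers are (finite-index subgroups of) the groups $G_n$; after truncating at a suitable $N$ using Proposition~\ref{prop:truncation_isomorphism} we may assume all the $G_n$ are finitely generated and generated by images of cloning maps, so in particular all stabilizers are of type~$\F_1$. The Morse function ``number of feet'' has cocompact sublevel sets by Lemma~\ref{lem:cocompact}. Thus, by Corollary~\ref{cor:brown_crit_use}, it suffices to show that the descending links $\dlk(x)$ are $0$-connected (i.e.\ nonempty and connected) for all vertices $x$ with sufficiently many feet; equivalently, by Observation~\ref{obs:descending_link_model}, that $\dlkmodel{G_*}{n}$ is connected for $n$ large.

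First I would record that $\dlkmodel{G_*}{n}$ is nonempty for $n \ge 2$, since it contains a vertex $[1,\Gamma]$ for any single-edge matching $\Gamma$ of $L_n$. For connectivity, recall the vertices of $\dlkmodel{G_*}{n}$ are classes $[g,\Gamma]$ with $g \in G_n$ and $\Gamma$ a single edge $e_i$ of $L_n$, and two vertices are joined by an edge (the $1$-skeleton of a higher simplex) exactly when they admit a common coarsening into a $2$-edge matching, i.e.\ when there is a matching $\Gamma'$ with two edges and representatives exhibiting both $[g_1,\Gamma_1]$ and $[g_2,\Gamma_2]$ as faces of $[h,\Gamma']$. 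The key structural input is the matching complex $\match(L_n)$, which is connected for $n$ large (indeed $(\lfloor\frac{n-2}{3}\rfloor - 1)$-connected), together with the dangling equivalence relation.

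The argument then has two parts. First, fix the identity element and vary the matching: the full subcomplex of $\dlkmodel{G_*}{n}$ on vertices of the form $[1,\Gamma]$ is isomorphic to $\match(L_n)$ (this uses that dangling with $g=1$ acts trivially, since $\rho(1) = 1$), hence connected for $n$ large. Second, I must connect an arbitrary vertex $[g,e_i]$ to a vertex of the form $[1,e_j]$. Here is where the generation hypothesis enters: write $g$ as a product of elements each lying in the image of some cloning map $\clone_k^{n-1}\colon G_{n-1}\to G_n$. For a single such generator, $g = (h)\clone_k^{n-1}$, the vertex $[g,e_k]$ is by the dangling relation equal to $[1,e_k]$ (apply condition (3) in the definition of dangling with $\Gamma_1 = e_k$: $g^{-1}\cdot 1 = g$ lies in $\image\clone_{e_k} = \image\clone_k^{n-1}$, and $(g)\clone_{e_k}^{-1}\cdot e_k = h\cdot e_k = e_k$ since $\rho(h)$ fixes the single component spanned by $e_k$). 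Moving between $[g,e_i]$ for different $i$, and multiplying generators one at a time, I would chain together such identifications, each time using that a suitable matching edge can be chosen compatibly with the generator being absorbed; choosing $n$ large guarantees enough room in $L_n$ to find the required disjoint edges and to route the path through $\match(L_n)$.

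\textbf{Main obstacle.} The delicate point is the bookkeeping in the second part: absorbing the generators of $g$ one at a time forces one to pick, at each stage, an edge $e_k$ of $L_n$ on which the current generator is ``supported'' in the sense required by the dangling relation, and to verify that consecutive choices can be linked inside $\dlkmodel{G_*}{n}$ rather than only inside the ambient poset. Making this precise requires care with the interplay between $\rho$, the cloning maps, and which matchings are comparable --- essentially a finite, but fiddly, combinatorial path-construction. Everything else (nonemptiness, the $\match(L_n)$ subcomplex, applying Brown's criterion) is routine given the results already established.
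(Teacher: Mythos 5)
Your strategy matches the paper's almost exactly: reduce via Brown's criterion (Corollary~\ref{cor:brown_crit_use}) to showing $\dlkmodel{G_*}{n}$ is connected for $n$ large, then connect any vertex $[g,\Gamma]$ to the base vertex $[1,\OneEdge_1]$ by alternately routing through the $\match(L_n)$ structure with $g$ held fixed and peeling off one generator of $g$ at a time via the dangling relation.

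One slip, though. When you absorb a generator $g = (h)\clone_k^{n-1}$, you assert $[g,\OneEdge_k] = [1,\OneEdge_k]$ on the grounds that ``$\rho(h)$ fixes the single component spanned by $e_k$.'' There is no reason for that; the dangling relation gives $[g,\OneEdge_k] = [1,\,h\cdot\OneEdge_k] = [1,\,\OneEdge_{\rho(h)(k)}]$, which is in general a \emph{different} single-edge matching. This is precisely what the paper records as $[s_1\cdots s_r,\OneEdge_{k_r}]=[s_1\cdots s_{r-1},((s_r)\clone_{k_r}^{-1})\cdot\OneEdge_{k_r}]$. Your argument is unaffected, since all you need is to land on \emph{some} $[1,\OneEdge_j]$ and then connect it to $[1,\OneEdge_1]$ inside the embedded copy of $\match(L_n)$; but the parenthetical justification as written is false, and the ``bookkeeping'' you flag as the main obstacle is exactly this point --- you must track the new edge $\rho(h)(k)$ after each absorption rather than assume it stays at $k$.
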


\begin{proof}
 By the above discussion, we need only show that the $\dlkmodel{G_*}{n}$ are connected, for large enough $n$. Suppose $n$ is large enough that: (a) $G_n$ is generated by images of cloning maps, and (b) $n\ge 5$ so $\match(L_n)$ is connected. We will show that every vertex can be connected by an edge path to the vertex $[1,\OneEdge_1]$, where $\OneEdge_i$ denotes the spanning graph whose only edge connects the $i$th vertex to the $(i+1)$st. So let $[g,\Gamma]$ be a vertex of $\dlkmodel{G_*}{n}$ and write $g=s_1\cdots s_r$, where the $s_i$ are generators coming from images of cloning maps $s_i\in \image(\clone_{k_i})$ for some $k_i$. Since $\match(L_n)$ is connected, there is a path in $\dlkmodel{G_*}{n}$ from $[s_1\cdots s_r,\Gamma]$ to $[s_1\cdots s_r,\OneEdge_{k_r}]=[s_1\cdots s_{r-1},((s_r)\clone_{k_r}^{-1})\cdot\OneEdge_{k_r}]$. Repeating this $r$ times, we connect to $[1,\OneEdge_k]$ for some $k$, and then to $[1,\OneEdge_1]$.
\end{proof}

\subsection{Proving high connectivity}\label{sec:high_connectivity}

As we have seen, Morse theory is a tool that allows one to show that a pair $(X,X_0)$ is highly connected. We will eventually want to inductively apply this to the situation where $X = \dlkmodel{G_*}{n}$ and $X_0 = \dlkmodel{G_*}{n-k}$ for some $k \in \N$. This is insufficient to conclude that the connectivity tends to infinity though, because we would be trying to get $X$ to be more highly connected than $X_0$. The following lemma expresses the degree of insufficiency. The lemma is straightforward to prove but can be seen as a roadmap for the argument that follows.

\begin{lemma}\label{lem:relative_connectivity}
 Let $(X,X_0)$ be a $k$-connected CW-pair. Assume that $X_0$ is $(k-1)$-connected. Then $X$ is $k$-connected if and only if $\pi_k(X_0 \to X)$ is trivial.
\end{lemma}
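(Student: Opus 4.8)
The plan is to prove this as a straightforward application of the long exact sequence of homotopy groups of the pair $(X, X_0)$, combined with a standard Hurewicz-type argument to promote connectivity from $\pi_i$ to $H_i$ where needed. Since the statement is an ``if and only if,'' I would handle the two directions separately, but both rely on the same exact sequence.

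First I would set up the long exact sequence of homotopy groups (for simplicity assume the spaces are connected and pointed; the connectivity hypotheses guarantee this for the relevant range):
\[
\cdots \to \pi_{i+1}(X,X_0) \to \pi_i(X_0) \to \pi_i(X) \to \pi_i(X,X_0) \to \cdots
\]
Since $(X,X_0)$ is $k$-connected, $\pi_i(X,X_0) = 0$ for $i \le k$. Since $X_0$ is $(k-1)$-connected, $\pi_i(X_0) = 0$ for $i \le k-1$. Feeding these vanishing statements into the exact sequence immediately shows $\pi_i(X) = 0$ for $i \le k-1$, so $X$ is automatically $(k-1)$-connected. The only remaining obstruction to $X$ being $k$-connected is $\pi_k(X)$. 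Looking at the segment $\pi_k(X_0) \xrightarrow{j_*} \pi_k(X) \to \pi_k(X,X_0) = 0$, we see $j_* \colon \pi_k(X_0) \to \pi_k(X)$ is surjective. Hence $\pi_k(X) = 0$ if and only if $j_*$ is the zero map, i.e.\ $\pi_k(X_0 \to X)$ is trivial. That gives both directions at once.

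One point I would be careful about is the fundamental-group case $k=1$: then $X_0$ is $0$-connected (path-connected), and I want $\pi_1(X,X_0)$ — which is only a pointed set, not a group — but the relevant portion $\pi_1(X_0) \to \pi_1(X) \to \pi_1(X,X_0)$ is still exact as pointed sets, and $\pi_1(X,X_0) = \ast$ (being $1$-connected) forces $\pi_1(X_0) \to \pi_1(X)$ to be surjective, which is all that is needed. For $k=0$ the statement is essentially vacuous or trivial. So the argument is uniform once one is slightly careful with low degrees.

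I do not expect a genuine obstacle here — this really is a bookkeeping exercise with the long exact sequence — but if there is a subtle point it is making sure ``$k$-connected pair'' is being used in the sense that $\pi_i(X,X_0)=0$ for $i\le k$ (equivalently, $\pi_i(X_0)\to\pi_i(X)$ is an isomorphism for $i<k$ and surjective for $i=k$), and that the notation $\pi_k(X_0\to X)$ means the image (or rather the induced map) in degree $k$ being trivial. Once conventions are fixed, the proof is three lines of diagram chasing.

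\begin{proof}
 Consider the long exact sequence of homotopy groups of the pair,
 \[
 \cdots \to \pi_{k+1}(X,X_0) \to \pi_k(X_0) \xrightarrow{j_*} \pi_k(X) \to \pi_k(X,X_0) \to \cdots
 \]
 Since $X_0$ is $(k-1)$-connected we have $\pi_i(X_0) = 0$ for $i \le k-1$, and since $(X,X_0)$ is $k$-connected we have $\pi_i(X,X_0) = 0$ for $i \le k$. From the portion $\pi_i(X_0) \to \pi_i(X) \to \pi_i(X,X_0)$ with $i \le k-1$ we conclude $\pi_i(X) = 0$ for $i \le k-1$, so $X$ is $(k-1)$-connected. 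It remains to analyze $\pi_k(X)$. From the portion
 \[
 \pi_k(X_0) \xrightarrow{j_*} \pi_k(X) \to \pi_k(X,X_0) = 0
 \]
 we see that $j_*$ is surjective (this also holds at the level of pointed sets when $k = 1$). Hence $\pi_k(X) = 0$ if and only if $j_* = 0$, i.e.\ if and only if $\pi_k(X_0 \to X)$ is trivial. Combined with the $(k-1)$-connectivity already established, this shows $X$ is $k$-connected if and only if $\pi_k(X_0 \to X)$ is trivial.
\end{proof}
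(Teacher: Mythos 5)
Your proof is correct and is essentially identical to the paper's: both use the long exact sequence of homotopy groups of the pair to deduce that $\pi_j(X)=0$ for $j<k$ and that $\pi_k(X_0)\to\pi_k(X)$ is surjective, hence $\pi_k(X)=0$ iff that map is trivial. Your extra caution about the $k=1$ pointed-set case is reasonable but not something the paper spells out.
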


\begin{proof}
 Consider the part of the homotopy long exact sequence associated to $(X,X_0)$:
 \[
 \pi_{j+1}(X,X_0) \to \pi_j(X_0) \stackrel{\iota_j}{\to} \pi_j(X) \to \pi_j(X,X_0) \text{.}
 \]
 For $j < k$ the map $\iota_j$ is an isomorphism and $\pi_j(X_0)$ trivial. For $j = k$ it is an epimorphism, so indeed $\pi_k(X)$ is trivial if and only if $\iota_k$ is.
\end{proof}

In our applications we will know $X_0$ to be $(k-1)$-connected by induction and $(X,X_0)$ will be seen to be $k$-connected using Morse theory. To show that $\pi_k(X_0 \to X)$ is trivial we will use a relative variant of the Hatcher flow for arc complexes that was shown to us by Andrew Putman (Proposition~\ref{prop:putman_flow} below). Before we can prove it we need some technical preliminaries.

A \emph{combinatorial~$k$-sphere (respectively~$k$-disk)} is a simplicial complex that can be subdivided to be isomorphic to a subdivision of the boundary of a $(k+1)$-simplex (respectively to a subdivision of a $k$-simplex).  An $m$-dimensional \emph{combinatorial manifold} is an $m$-dimensional simplicial complex in which the link of every simplex~$\sigma$ of dimension $k$ is a combinatorial $(m-k-1)$-sphere.  In an $m$-dimensional \emph{combinatorial manifold with boundary} the link of a $k$-simplex $\sigma$ is allowed to be homeomorphic to a combinatorial~$(m-k-1)$-disk; its \emph{boundary} consists of all the simplices whose link is indeed a disk.

A simplicial map is called \emph{simplexwise injective} if its restriction to any simplex is injective. The following is Lemma~3.8 of \cite{bux14}, cf.\ also the proof of Proposition~5.2 in \cite{putman12}.

\begin{lemma}\label{lem:injectifying}
 Let $Y$ be a $k$-dimensional combinatorial manifold.  Let $X$ be a simplicial complex and assume that the link of every $d$-simplex in $X$ is $(k-2d-2)$-connected for $d \ge 0$.  Let $\psi \colon Y \to X$ be a simplicial map whose restriction to $\partial Y$ is simplexwise injective.  Upon changing the simplicial structure of $Y$, $\psi$ is homotopic relative $\partial Y$ to a simplexwise injective map.
\end{lemma}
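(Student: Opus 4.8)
I would prove this by a ``bad simplex'' surgery, i.e.\ the technique of Hatcher adapted to maps of combinatorial manifolds as in \cite{putman12} and used already in \cite{bux14}, inducting on $k=\dim Y$. Call a simplex of $Y$ \emph{bad} if $\psi$ is not injective on its vertex set, and for a vertex $w$ of $X$ let $\delta(w)$ be the maximal number of vertices of a single simplex of $Y$ that $\psi$ sends to $w$; then $\psi$ is simplexwise injective precisely when $\delta(w)\le1$ for all $w$. If not, I would choose $w$ with $\delta:=\delta(w)\ge2$ as large as possible, and then a simplex $\sigma$ of $Y$ with $\psi(\sigma)=\{w\}$ and $\dim\sigma=\delta-1$ which is maximal among simplices collapsed by $\psi$ to a single vertex of that dimension. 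Since $\psi|_{\partial Y}$ is simplexwise injective no edge of $\partial Y$ is collapsed by $\psi$, so $\sigma$ is not a simplex of $\partial Y$; hence $\lk_Y(\sigma)$ is a combinatorial $(k-\delta)$-sphere and the open star $\operatorname{st}_Y(\sigma)$ is disjoint from $\partial Y$.

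The surgery rests on two observations. First, maximality of $\sigma$ forces $\psi(\lk_Y(\sigma))\subseteq\lk_X(w)$: a vertex $c$ of $\lk_Y(\sigma)$ with $\psi(c)=w$ would make $\sigma\cup\{c\}$ a larger simplex collapsed to $\{w\}$. Second, $\lk_X(w)$ is $(k-2)$-connected by hypothesis and $k-2\ge k-\delta=\dim\lk_Y(\sigma)$ since $\delta\ge2$, so $\psi|_{\lk_Y(\sigma)}$ is nullhomotopic in $\lk_X(w)$ and, after subdividing, extends to a simplicial map $g\colon D\to\lk_X(w)$ from a combinatorial $(k-\delta+1)$-ball $D$ with $\partial D=\lk_Y(\sigma)$. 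Applying the lemma inductively in the strictly smaller dimensions $k-\delta$ and $k-\delta+1$ — with $X$ replaced by $\lk_X(w)$, whose links of $d$-simplices are links of $(d+1)$-simplices of $X$ — I may first make $\psi|_{\lk_Y(\sigma)}$ simplexwise injective and then make $g$ simplexwise injective; checking that the connectivity estimate ``links of $d$-simplices of $X$ are $(k-2d-2)$-connected'' supplies exactly what these lower-dimensional instances require over $\lk_X(w)$ is where the numerology must be done carefully.

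Given such a $g$, I would perform the surgery: replace the closed star $\st_Y(\sigma)=\sigma\ast\lk_Y(\sigma)$, a combinatorial $k$-ball, by the combinatorial $k$-ball $\partial\sigma\ast D$, which has the same boundary sphere $\partial\sigma\ast\lk_Y(\sigma)$, and define the new simplicial map on it by $\alpha\ast\beta\mapsto\{w\}\cup g(\beta)$. This agrees with $\psi$ on $Y\setminus\operatorname{st}_Y(\sigma)$, in particular on $\partial Y$; it is homotopic to $\psi$ rel $\partial Y$ because both maps send the respective $k$-balls into the contractible $\st_X(w)$ and agree on their common boundary; and the only simplices of $\partial\sigma\ast D$ on which it is non-injective are of the form $\alpha\ast\beta$ with $\alpha\subseteq\partial\sigma$ having at least two vertices, which collapse at most $\dim\sigma=\delta-1<\delta$ vertices to a single vertex of $X$ (and none when $\delta=2$, since then $\partial\sigma$ has no $1$-simplices). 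Distinct maximal simplices collapsed to $\{w\}$ have disjoint open stars, so performing the surgery at all of them lowers the lexicographic complexity $\bigl(\delta,\#\{\sigma\}\bigr)$ without raising $\delta(w')$ for any $w'$; iterating reaches $\delta\le1$. The base case $k\le0$ is vacuous.

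The main obstacle is the combinatorial bookkeeping around the surgery: verifying that it is genuinely rel $\partial Y$, that it introduces no new collisions beyond the controlled ones inside $\partial\sigma\ast D$, that the chosen complexity strictly decreases, and — most delicately — that the nested applications of the lemma used to produce the simplexwise injective cap $g$ are legitimate, i.e.\ that $(k-2d-2)$-connectivity of links of $X$ is exactly strong enough for the links of $\lk_X(w)$ in the lower-dimensional instances. Everything else (simplicial approximation rel a subcomplex; that replacing the closed star by $\partial\sigma\ast D$ yields a simplicial complex with the same underlying PL manifold) is routine.
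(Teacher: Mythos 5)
The paper does not prove this lemma---it is quoted verbatim as Lemma~3.8 of \cite{bux14}, with a pointer to the proof of Proposition~5.2 of \cite{putman12}. Your ``bad-simplex surgery'' outline is indeed the right family of ideas (it is the Hatcher--Wahl/Putman argument those references use), and the two structural observations you single out---that maximality of $\sigma$ forces $\psi(\lk_Y\sigma)\subseteq\lk_X(w)$, and that the replacement of $\st_Y(\sigma)$ by $\partial\sigma\ast D$ is a homotopy rel $\partial Y$ into the contractible $\st_X(w)$---are both correct and both essential.

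The place you flag as ``where the numerology must be done carefully'' is, however, a genuine gap and not a routine check. Applying the lemma inductively to $g\colon D\to\lk_X(w)$ with $D$ a $(k-\delta+1)$-ball requires the link of every $d$-simplex of $\lk_X(w)$ to be $\bigl((k-\delta+1)-2d-2\bigr)$-connected. But the link of a $d$-simplex of $\lk_X(w)$ is the link in $X$ of a $(d+1)$-simplex containing $w$, so the hypothesis on $X$ only gives $(k-2(d+1)-2)=(k-2d-4)$-connectivity, i.e.\ what one would need for a ball of dimension $k-2$. Thus $(k-2d-4)\ge (k-\delta+1)-2d-2$ amounts to $\delta\ge 3$. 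For $\delta=2$---a collapsed interior edge, which is precisely the last and unavoidable case since simplexwise injectivity is exactly the absence of collapsed edges---$D$ has dimension $k-1$ and the available connectivity is one short of what your inductive call demands. There is a second, smaller, issue in the same step: ``first make $\psi|_{\lk_Y(\sigma)}$ simplexwise injective'' is not a local operation, since any modification of $\psi$ on the subcomplex $\lk_Y(\sigma)$ has to be extended compatibly over all of $Y$; as written, the boundary hypothesis $g|_{\partial D}$ simplexwise injective needed for the inner inductive call is simply not supplied. Because of these two points, your complexity $(\delta,\#\sigma)$ is not shown to decrease when $\delta=2$: the cap $g$ may itself contain collapsed edges, each of which is a new bad edge of the same weight. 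The references close this loop with a more careful bookkeeping than the straight double induction you sketch, and this is exactly the content the proof cannot leave as a deferred verification.
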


In practice $Y$ will be a sphere, so the lemma allows us to restrict attention to simplexwise injective combinatorial maps when collapsing spheres.

For the proposition, we need one more technical definition. Let $X$ be a simplicial complex and $w$ a vertex. We say that $X$ is \emph{conical at} $w$ if for any simplex $\sigma$, as soon as every vertex of $\sigma$ lies in the closed star $\st(w)$ then so does $\sigma$ (that is, the star of $w$ is the cone over the link of $w$). In particular, if $X$ is a flag complex then it is conical at every vertex.

\begin{proposition}\label{prop:putman_flow}
 Let $X_0 \subseteq X_1 \subseteq X$ be simplicial complexes. Assume that $(X,X_0)$ is $k$-connected, that $X_0$ is $(k-1)$-connected and that the link of every $d$-simplex is $(k-2d-2)$-connected for $d \ge 0$. Further assume the following ``exchange condition'':
 \begin{enumerate}[label={(EXC)}, ref={EXC}, leftmargin=*]
  \item There is a vertex $w \in X$ at which $X$ is conical, such that for every vertex $v \in X_0$ that is not in $\st w$ there is a vertex $v' \in \st_{X_1} w$ such that $\lk_{X_1} v \subseteq \lk_{X_1} v'$ and $\lk_{X_1} v$ is $(k-1)$-connected.\label{item:exchange1}
 \end{enumerate}
 Then $X$ is $k$-connected.
\end{proposition}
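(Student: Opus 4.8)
The plan is to combine Lemma~\ref{lem:relative_connectivity} with a relative form of the Hatcher flow. By Lemma~\ref{lem:relative_connectivity}, since $(X,X_0)$ is $k$-connected and $X_0$ is $(k-1)$-connected, it suffices to show that $\pi_k(X_0\to X)$ is trivial, that is, that every map $f\colon S^k\to X_0$ is null-homotopic in $X$. Note that $X$ being conical at $w$ means precisely that the closed star $\st_X(w)$ equals the cone $w\ast\lk_X(w)$ and is therefore contractible; so it is enough to homotope $f$, through maps into $X$, to a map whose image lies in $\st_X(w)$.

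After simplicial approximation $f$ is simplicial, with a finite triangulation of $S^k$ since $S^k$ is compact; by Lemma~\ref{lem:injectifying}, applied to $Y=S^k$ (which has empty boundary) and using that the link of every $d$-simplex is $(k-2d-2)$-connected, we may change the triangulation of $S^k$ so that $f$ becomes simplexwise injective. Call a vertex $u$ of $S^k$ \emph{bad} if $f(u)\notin\st_X(w)$. For such a $u$ the vertex $v\defeq f(u)$ lies in $X_0$ and not in $\st w$, so the exchange condition~\eqref{item:exchange1} provides a vertex $v'\in\st_{X_1}(w)$ with $\lk_{X_1}v\subseteq\lk_{X_1}v'$ and with $\lk_{X_1}v$ being $(k-1)$-connected. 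We induct on the (finite) number of bad vertices. If there are none, then every vertex of $f(S^k)$ lies in $\st_X(w)$, hence by conicality at $w$ every simplex of $f(S^k)$ lies in $\st_X(w)$, so $f(S^k)\subseteq\st_X(w)$ and $f$ is null-homotopic in $X$.

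For the inductive step we surger $f$ at a bad vertex $u$. The link $\lk_{S^k}(u)$ is a combinatorial $(k-1)$-sphere bounding the $k$-disk $\st(u)$, and since $f$ is simplexwise injective no vertex of $\lk_{S^k}(u)$ is sent to $v$; hence $f$ carries $\lk_{S^k}(u)$ into $\lk_{X_1}v$, and thus into $\lk_{X_1}v'$. Because $\lk_{X_1}v$ is $(k-1)$-connected, $f|_{\lk_{S^k}(u)}$ extends to a map $g\colon\st(u)\to\lk_{X_1}v$. Now $f|_{\st(u)}$ and $g$ are two maps of the disk $\st(u)$ into the contractible complex $\st_{X_1}(v)=v\ast\lk_{X_1}v$ that agree on the boundary $\lk_{S^k}(u)$, hence are homotopic there rel $\lk_{S^k}(u)$; and $g$ and the map $f'$ of $\st(u)$ onto the cone $v'\ast f(\lk_{S^k}(u))$ sending $u$ to $v'$ are two maps into the contractible complex $\st_{X_1}(v')$ agreeing on $\lk_{S^k}(u)$, hence likewise homotopic rel $\lk_{S^k}(u)$. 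Replacing $f|_{\st(u)}$ by $f'$ (and keeping $f$ unchanged outside $\st(u)$) is therefore a homotopy, supported on $\st(u)$ and taking values in $X_1\subseteq X$. The new map is again simplicial and simplexwise injective, it agrees with the old one at every vertex other than $u$, its only new vertex value is $v'\in\st_{X_1}(w)\subseteq\st_X(w)$, and its image still lies in $X_1$; so the number of bad vertices has strictly dropped, which completes the induction.

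The crux is checking that this surgery move is a genuine homotopy \emph{inside $X$} — and that it truly reduces the number of bad vertices. The exchange condition is tailored exactly for this: the two coning-off maps appearing in the surgery factor through the contractible closed stars $\st_{X_1}(v)$ and $\st_{X_1}(v')$ respectively, these two stars overlap in the subcomplex $\lk_{X_1}v$ through which the common boundary sphere $f(\lk_{S^k}(u))$ is routed, the inclusion $\lk_{X_1}v\subseteq\lk_{X_1}v'$ is what makes the second coning off legitimate inside $X_1$, and the $(k-1)$-connectivity of $\lk_{X_1}v$ supplies the intermediate disk $g$. A secondary bookkeeping point, implicit above, is that along the flow the image of $f$ stays inside $X_1$ and its bad vertices stay inside $X_0$ (so that the exchange condition applies at every stage); this holds because each surgery only adds the vertex $v'$ and simplices of $\st_{X_1}(v')$.
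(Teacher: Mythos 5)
Your proof is correct and follows essentially the same strategy as the paper's: reduce to killing $\pi_k(X_0\to X)$ via Lemma~\ref{lem:relative_connectivity}, make the sphere simplexwise injective via Lemma~\ref{lem:injectifying}, and then push bad vertices one at a time into $\st w$ using the exchange vertex $v'$, with the homotopy on $\st(u)$ justified by the $(k-1)$-connectivity of $\lk_{X_1}v$. The only cosmetic difference is that you route the local homotopy through an explicit extension $g\colon\st(u)\to\lk_{X_1}v$ and two homotopies inside the contractible stars $\st_{X_1}v$ and $\st_{X_1}v'$, whereas the paper packages this as a single step using the $k$-connectivity of the pair $(\Sigma(\lk_{X_1}v),\lk_{X_1}v)$; these are equivalent. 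One small remark: your assertion that the surgered map remains simplexwise injective deserves a one-line justification, namely that $v'\notin\lk_{X_1}v$ (if it were, then $v'\in\lk_{X_1}v\subseteq\lk_{X_1}v'$, which is absurd), so $v'$ cannot coincide with $f(y)$ for any $y\in\lk_{S^k}(u)$.
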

%
%By taking $X_1 = X_0$ or $X_1 = X$ respectively we obtain the following special cases of~\eqref{item:exchange1}:
%\begin{enumerate}[label={(EXC0)}, ref={EXC0}, leftmargin=*]
% \item There is a vertex $w \in X$ at which $X$ is conical, such that for every vertex $v \in X_0$ that is not in $\st w$ there is a vertex $v' \in \st_{X_0} w$ such that $\lk_{X_0} v \subseteq \lk_{X_0} v'$ and $\lk_{X_0} v$ is $(k-1)$-connected.\label{item:exchange0}
%\end{enumerate}
%\begin{enumerate}[label={(EXC)}, ref={EXC}, leftmargin=*]
% \item There is a vertex $w \in X$ at which $X$ is conical, such that for every vertex $v \in X_0$ that is not in $\st w$ there is a vertex $v' \in \st w$ such that $\lk_X v \subseteq \lk_X v'$ and $\lk_X v$ is $(k-1)$-connected.\label{item:exchange}
%\end{enumerate}

\begin{proof}%[Proof of Proposition~\ref{prop:putman_flow}]
 Let $\iota \colon X_0 \to X$ denote the inclusion. In view of Lemma~\ref{lem:relative_connectivity}, all that needs to be shown is that if $\varphi \colon S^k \to X_0$ is a map from a $k$-sphere then $\bar{\varphi} \defeq \iota \circ \varphi$ is homotopically trivial.

 By simplicial approximation \cite[Theorem~3.4.8]{spanier66} we may assume $\varphi$ (and thus $\bar{\varphi}$) to be a simplicial map $Y \to X_0$ and by our assumptions and Lemma~\ref{lem:injectifying} we may assume it to be simplexwise injective. Our goal is to homotope $\bar{\varphi}$ to a map to $\st w$. Once we have achieved that, we are done since $\st w$ is contractible.

 The simplicial sphere $Y$ contains finitely many vertices $x$ whose image $v = \bar{\varphi}(x)$ does not lie in $\st w$. Pick one and define $\bar{\varphi}' \colon Y \to X$ to be the map that coincides with $\bar{\varphi}$ outside the open star of $x$ and takes $x$ to the vertex $v'$ from the statement. We claim that $\bar{\varphi}$ is homotopic to $\bar{\varphi}'$. Inductively replacing vertices then finishes the proof, since $X$ is conical at $w$.

 It remains to show that $\bar{\varphi} |_{\st x}$ and $\bar{\varphi}'|_{\st x}$ are homotopic relative to $\lk x$. Note that $\bar{\varphi}(\lk x) \subseteq \lk v$ by simplexwise injectivity. Furthermore the complex spanned by $v$, $v'$ and $\lk v$ is the suspension $\Sigma(\lk v)$ of $\lk v$ (unless $v$ and $v'$ are adjacent in which case there is nothing to show). So both $\bar{\varphi}|_{\st x}$ and $\bar{\varphi}'|_{\st x}$ are maps $(D^k, S^{k-1}) \cong (\st x, \lk x) \to (\Sigma(\lk v),\lk v)$. But $\lk v$ is $(k-1)$-connected by assumption so $(\Sigma(\lk v), \lk v)$ is $k$-connected and both maps are homotopic.
\end{proof}

\subsection{Proving negative finiteness properties}\label{sec:relative_brown}

We have already seen that if the $G_*$ are not eventually of type~$\F_n$, then Brown's criterion applied to the Stein--Farley complex cannot be used to show that $\Thomp{G_*}$ is not of type~$\F_n$. In Section~\ref{sec:mtx_neg_fin_props}, when the $G_n$ are matrix groups, we will instead use a different action, together with the following result. It is formulated in terms of the homological finiteness properties $\FP_n$. The relationship is explained for example in \cite[Chapter~8]{geoghegan08}, but we mostly just need to know the fact that a group of type~$\F_n$ is also of type~$\FP_n$. Note that for $\Lambda = \Gamma$ the following theorem is essentially one half of Brown's criterion.

\begin{theorem}\label{thrm:relative_brown}
 Let $\Lambda$ be a group and let $\Gamma$ be a subgroup. Let $Y$ be a CW complex on which $\Lambda$ acts. Assume that $Y$ is $(n-1)$-acyclic and that the stabilizer of every $p$-cell in $Y$ (in $\Lambda$ as well as in $\Gamma$) is of type~$\FP_{n-p}$. Let $Z$ be a $\Gamma$-cocompact subspace of $Y$ . Let $(Y_\alpha)_{\alpha \in I}$ be a $\Lambda$-cocompact filtration of $Y$. Assume that there is no $\alpha$ with $Z\subseteq Y_\alpha$ such that the map $\tilde{H}_{n-1}(Z \into Y_\alpha)$ is trivial. Then no group $\Delta$ through which the inclusion $\Gamma \into \Lambda$ factors is of type~$\FP_n$.
\end{theorem}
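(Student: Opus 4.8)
The plan is to argue by contradiction. Suppose the inclusion $\Gamma \into \Lambda$ factors as $\Gamma \into \Delta \xrightarrow{\pi} \Lambda$ and that $\Delta$ is of type $\FP_n$; I want to produce an index $\alpha$ with $Z \subseteq Y_\alpha$ for which $\tilde{H}_{n-1}(Z \into Y_\alpha)$ is trivial, contradicting the hypothesis. The guiding principle is that, for $\Lambda = \Gamma$, this is exactly the ``$\FP_n$ implies essential acyclicity'' half of the homological form of Brown's criterion (see \cite{brown87}): since $Z$ is $\Gamma$-cocompact it lies in some $Y_{\alpha_0}$, and Brown's criterion applied to the $\Gamma$-action on $Y$ with a suitable $\Gamma$-cocompact refinement of $(Y_\alpha)$ forces $\tilde{H}_{n-1}(Y_{\alpha_0} \into Y_\beta)$ to vanish for $\beta$ large, whence $\tilde{H}_{n-1}(Z \into Y_\beta)$ vanishes by functoriality. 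The real task is to reduce the general case to a situation where Brown's criterion can be applied \emph{to $\Delta$}.

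The obstruction is that $\Delta$ acts on $Y$ only through $\pi$, and the stabiliser of a cell $\sigma$ for this action is $\pi^{-1}(\Lambda_\sigma)$, about which nothing is known. The remedy is to replace $Y$ by an auxiliary $\Delta$-complex $\widehat{Y}$ in which the \emph{good} ($\Gamma$-)stabilisers reappear. Concretely, one would take the balanced product $\Delta \times_\Gamma Y$: its cells are $\Delta$-translates of cells $\sigma$ of $Y$ with stabiliser $\Gamma_\sigma$, which is of type $\FP_{n-p}$ for $p$-cells by hypothesis; it carries a $\Delta$-map to $Y$; and its positive reduced homology vanishes through degree $n-1$. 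It is, however, a disjoint union of $\abs{\Delta/\Gamma}$ copies of $Y$, so $H_0 = \Z[\Delta/\Gamma]$ rather than $\Z$. One repairs this using that $\Delta$, being of type $\FP_n$ and in particular finitely generated, acts freely on an $(n-1)$-acyclic CW complex $E$ with $E/\Delta$ compact --- built by realising, cell-orbit by cell-orbit, a free resolution of $\Z$ over $\Z\Delta$ that is finitely generated in degrees $\le n$ (only acyclicity, not asphericity, is needed, so the gap between $\FP_n$ and $\F_n$ is immaterial here). Gluing $E$ to $\Delta \times_\Gamma Y$ along a suitable free $\Delta$-orbit of cells --- chosen so as to connect $E$ to every component of $\Delta \times_\Gamma Y$ without spoiling $(n-1)$-acyclicity or the cell-stabiliser hypothesis --- yields a $\Delta$-complex $\widehat{Y}$ which one checks is $(n-1)$-acyclic and homotopy equivalent to $E \vee \bigvee_{\Delta/\Gamma} Y$. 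Choosing a directed family $(Z_j)$ of $\Gamma$-cocompact subcomplexes of $Y$ that all contain $Z$, exhaust $Y$, and include $Z$ itself as $Z_{j_0} = Z$, the subcomplexes $\widehat{Y}_j$ built in the same way from $\Delta \times_\Gamma Z_j$ form a $\Delta$-cocompact filtration of $\widehat{Y}$ with $\widehat{Y}_j \simeq E \vee \bigvee_{\Delta/\Gamma} Z_j$.

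Then Brown's criterion applies to $\Delta \curvearrowright \widehat{Y}$ with the filtration $(\widehat{Y}_j)$: since $\Delta$ is of type $\FP_n$, the filtration is essentially $(n-1)$-acyclic, so there is $j_1 \ge j_0$ with $\tilde{H}_{n-1}(\widehat{Y}_{j_0} \into \widehat{Y}_{j_1})$ trivial. Because $E$ is $(n-1)$-acyclic, under the wedge decompositions this map is the direct sum over $\Delta/\Gamma$ of copies of $\tilde{H}_{n-1}(Z \into Z_{j_1})$, so the latter is the zero map. Finally $Z_{j_1}$, being $\Gamma$-cocompact, lies in some $Y_\alpha$, and $Z \subseteq Z_{j_1} \subseteq Y_\alpha$; hence $\tilde{H}_{n-1}(Z \into Y_\alpha)$ factors through $\tilde{H}_{n-1}(Z \into Z_{j_1}) = 0$ and is trivial --- the desired contradiction.

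The step I expect to require the most care is the construction and homological bookkeeping of $\widehat{Y}$: one must correct the failure of $\Delta \times_\Gamma Y$ to be $(n-1)$-acyclic (its $H_0$ being $\Z[\Delta/\Gamma]$) using only that $\Delta$ is of type $\FP_n$, while arranging the gluing so that it neither violates the cell-stabiliser condition nor introduces spurious homology in degrees $\le n-1$ --- in particular so that $\tilde{H}_{n-1}$ of each filtration stage is exactly $\bigoplus_{\Delta/\Gamma}\tilde{H}_{n-1}(Z_j)$, which is the data encoding $\tilde{H}_{n-1}(Z \into Y_\alpha)$. The secondary technical point is the geometric realisation of an $\FP_n$-resolution as the complex $E$, and checking that all the wedge decompositions are compatible with the filtration inclusions.
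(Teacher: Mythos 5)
Your strategy is genuinely different from the paper's. The paper never passes to an auxiliary space: it works directly with equivariant homology, using the Bieri--Eckmann criterion (if a module $H_{n-1}(\Delta,\prod\Z\Delta)$ is sandwiched between a nonzero $H_{n-1}(\Gamma,\prod\Z\Gamma)$-source and $H_{n-1}(\Lambda,\prod\Z\Lambda)$, then $\Delta$ cannot be of type $\FP_n$), the isomorphism $H_*^G(X,M)\cong H_*(G,M)$ for $(n-1)$-acyclic $X$, and a spectral-sequence comparison to identify $H_{n-1}^\Gamma(Z,\prod\Z\Gamma)\cong\prod H_{n-1}(Z)$ and $H_{n-1}^\Lambda(Y_\alpha,\prod\Z\Lambda)\cong\prod H_{n-1}(Y_\alpha)$. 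Your plan is instead to manufacture a $\Delta$-complex to which Brown's criterion applies \emph{for $\Delta$}. That is an appealing idea, but the construction you propose does not actually produce an $(n-1)$-acyclic space, and I do not see how to repair it.

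Concretely: you take $\widehat{Y}$ to be $E$ glued to $\Delta\times_\Gamma Y$ along a free $\Delta$-orbit of edges $\Delta\times[0,1]$, and you assert that this is $(n-1)$-acyclic and homotopy equivalent to $E\vee\bigvee_{\Delta/\Gamma}Y$. Running Mayer--Vietoris with $U\simeq E$, $V\simeq\Delta\times_\Gamma Y$, $U\cap V\simeq\Delta$ (discrete) gives, for $n\ge 2$,
\[
0\to H_1(\widehat{Y})\to H_0(\Delta)\to H_0(E)\oplus H_0(\Delta\times_\Gamma Y),
\]
i.e.\ $H_1(\widehat{Y})\cong\ker\bigl(\Z[\Delta]\to\Z\oplus\Z[\Delta/\Gamma]\bigr)$. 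This kernel is the module of $\Z[\Delta]$-chains whose coefficient sum over each coset $\delta_0\Gamma$ vanishes; it is nonzero whenever $\abs{\Gamma}>1$ (e.g.\ $1-\gamma$ for $1\ne\gamma\in\Gamma$). So $H_1(\widehat{Y})\ne 0$, and $\widehat{Y}$ fails to be $1$-acyclic, let alone $(n-1)$-acyclic, so Brown's criterion does not apply. The claimed homotopy equivalence to the wedge is also not achievable equivariantly: a wedge point of $E$ would have to lie in a free $\Delta$-orbit while the wedge points of $\bigvee_{\Delta/\Gamma}Y$ form a $\Delta/\Gamma$-orbit with $\Gamma$-conjugate stabilizers, and these orbit types cannot be matched under a single identification. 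Attempting to kill the spurious $H_1$ by attaching higher cells is also blocked: the obvious candidate for controlling that module is generated by relations coming from $\Gamma$, and $\Gamma$ is precisely the group whose finiteness properties you cannot assume (indeed, taking $\Delta=\Gamma$ the theorem asserts $\Gamma$ is not of type $\FP_n$). Other fixes such as replacing the edge-gluing by the join $E*(\Delta\times_\Gamma Y)$ overshoot in the opposite direction: the join is much too acyclic, and $\tilde H_{n-1}$ of the filtration stages becomes identically zero, destroying exactly the homology class you need to track. For $n=1$ your construction is fine (connectedness is all that is needed and the gluing does give a connected space, matching the paper's separate $n=1$ argument), but for $n\ge 2$ there is a genuine gap.
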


The application is similar in spirit to that of \cite{krstic97}, where a morphism $\Gamma \to \Lambda$ is constructed that cannot factor through a finitely presented group. The proof should be compared to \cite[Theorem~2.2]{brown87}.

\begin{proof}
 For $n = 1$ suppose that $\Gamma$ is contained in a finitely generated subgroup $\gen{S}$ of $\Lambda$. Let $K$ be a compact subspace such that $\Gamma.K = Z$. Since $Y$ is connected, we can add finitely many edges to $K$ and take $Z$ to be its $\Gamma$-orbit, so without loss of generality $K$ is connected. For every $s \in S$ we may pick an edge path $p_s$ that connects $K$ to $s.K$. Let $P \defeq \bigcup\{p_s \mid s \in S\}$. Now any two points in $Z$ can be connected in $\Gamma.(K \cup P)$. In other words, the map $\tilde{H}_0(Z \to \Gamma.(K \cup P))$ is trivial. But $K \cup P$ and thus $\Gamma.(K \cup P)$ is contained in some $Y_\alpha$, contradicting the assumption.

 From now on we assume that $n > 1$. Our goal is to find an index set $J$ such that the map $H_{n-1}(\Gamma,\prod_J \Z\Gamma) \to H_{n-1}(\Lambda,\prod_J \Z\Lambda)$ is non-trivial. The result then follows from the Bieri--Eckmann criterion \cite[Proposition~1.2]{bieri74}, because if this map factors through $H_{n-1}(\Delta,\prod_J \Z\Delta)$ then the latter module cannot be zero. Note that $Z$ is contained in a subfiltration of $(Y_\alpha)_\alpha$ so we may assume without loss of generality that $Z$ is contained in all $Y_\alpha, \alpha \in I$.

 Let $J$ be a cofinal set in $I$ (for instance all of $I$) and for $\alpha \in J$ let $c_\alpha \in H_{n-1}(Z)$ be such that the image in $H_{n-1}(Y_\alpha)$ is non-trivial. By the arguments in the proof of \cite[Theorem~2.2]{brown87} we have the isomorphisms in the following diagram (essentially the two vertical arrows at the top are isomorphisms because $Y$ is $(n-1)$-acyclic and the two vertical arrows at the bottom are isomorphisms by cocompactness of the actions and the assumptions on the finiteness properties of the stabilizers).
 \begin{diagram}
 H_{n-1}(\Gamma,\prod_J \Z\Gamma) & \rTo & H_{n-1}(\Lambda,\prod_J \Z\Lambda)\\
 \uTo^\cong && \uTo^\cong\\
 H_{n-1}^\Gamma(Y, \prod_J \Z\Gamma) & \rTo & H_{n-1}^{\Lambda}(Y,\prod_J \Z\Lambda)\\
 \uTo && \uTo^\cong\\
 H_{n-1}^\Gamma(Z, \prod_J \Z\Gamma) & \rTo & \varinjlim H_{n-1}^{\Lambda}(Y_\alpha,\prod_J \Z\Lambda)\\
 \dTo^\cong && \dTo^\cong\\
 \prod_J H_{n-1}(Z) &\rTo & \varinjlim \prod_J H_{n-1}(Y_\alpha)\text{.}
 \end{diagram}

 Assuming that the diagram commutes, the chain $(c_\alpha)_{\alpha \in J} \in \prod_J H_{n-1}(Z)$ has non-trivial image in $\varinjlim \prod_J H_{n-1}(Y_\alpha)$ and we are done.

 The rest of the proof will be concerned with the commutativity of the diagram. The only square whose commutativity is not clear is the bottom one. In what follows, all products are taken over $J$ which we suppress from notation.

 Let $C_*$, $C_*^\alpha$, and $D_*$ be the cellular chain complexes of $Y$, $Y_\alpha$, and $Z$ (respectively). Let $P_* \to \Z$ be a resolution by projective $\Z\Lambda$-modules (which are also projective $\Z\Gamma$-modules). The third horizontal map is induced by the maps $P_q \otimes_{\Gamma} (D_p \otimes \prod \Z\Gamma) \to P_q \otimes_{\Lambda} (C_p^\alpha \otimes \prod \Z\Lambda)$. (Or equivalently $(P_q \otimes D_p) \otimes_{\Gamma} \prod \Z\Gamma \to (P_q \otimes C_p^\alpha) \otimes_{\Lambda} \prod \Z\Lambda)$, which is the same since the tensor product is associative and, using the notation from \cite[p.~55]{brown82}, also commutative.) The bottom horizontal map is just induced by $D_* \to C_*^\alpha$. The lower vertical maps come from spectral sequences
 \begin{align*}
 E^1_{pq} &= \Tor^\Gamma_q(D_p,\prod \Z\Gamma) \Rightarrow H_{p+q}^\Gamma(Z,\prod \Z\Gamma)\text{ and }\\
 E^1_{pq} &= \Tor^\Lambda_q(C_p^\alpha,\prod \Z\Lambda) \Rightarrow H_{p+q}^\Lambda(Y_\alpha,\prod \Z\Lambda)\text{.}
 \end{align*}
 The finiteness and cocompactness assumptions guarantee that $D_p$ is of type~$\FP_{n-p}$ over $\Z\Gamma$ and $C_p^\alpha$ is of type~$\FP_{n-p}$ over $\Z\Lambda$ so that the natural maps
 \begin{align*}
 \Tor^\Gamma_q(D_p,\prod \Z\Gamma) &\to \prod \Tor^\Gamma_q(D_p,\Z\Gamma) \text{ and}\\
 \Tor^\Lambda_q(C_p^\alpha,\prod \Z\Lambda) &\to \prod \Tor^\Lambda_q(C_p^\alpha,\Z\Lambda)
 \end{align*}
 are isomorphisms and the spectral sequences collapse on the second page. We have the commutative diagram of chain complexes
 \begin{diagram}
 &\Tor^\Gamma_0(D_*,\prod \Z\Gamma) & \rTo & \Tor^\Lambda_0(C_*^\alpha,\prod \Z\Lambda)&\\
 &\dTo^\cong && \dTo^\cong&\\
 \prod D_* = &\prod \Tor^\Gamma_0(D_*,\Z\Gamma) & \rTo & \prod\Tor^\Lambda_0(C_*^\alpha,\Z\Lambda)& = \prod C_*^\alpha
 \end{diagram}
 and taking homology in degree $n-1$ gives the commutative diagram
 \begin{diagram}
 H_{n-1}^\Gamma(Z,\prod \Z\Gamma) & \rTo & H_{n-1}^\Lambda(Y_\alpha,\prod \Z\Lambda)\\
 \dTo^\cong && \dTo^\cong\\
 \prod H_{n-1}(Z) & \rTo & \prod H_{n-1}(Y_\alpha)
 \end{diagram}
 that we were looking for.
\end{proof}

%--------------------------------------------------------
\section{A Thompson group for direct products of a group}\label{sec:direct_prods}
The examples in this section were constructed independently by Slobodan Tanusevski in his PhD thesis \cite{tanusevski14}, using entirely different techniques, and in discussions with him we have determined that his groups are identical to those discussed here.

Fix a group $G$. Let $G_n$ be the direct power $G^n$. We declare that $\rho_n$ is trivial for all $n$, and define cloning maps via $(g_1,\dots,g_k,\dots,g_n)\clone^n_k \defeq (g_1,\dots,g_k,g_k,\dots,g_n)$. This makes rather literal the word ``cloning.'' To verify that this defines a cloning system, observe that since the $\rho_n$ are trivial, we need only check that the cloning maps are homomorphisms (which they are) and that $\clone_\ell^n \circ \clone_k^{n+1} = \clone_k^n \circ \clone_{\ell+1}^{n+1}$ for $1\le k<\ell \le n$ (which is visibly true). These respectively handle conditions~\eqref{item:fcs_cloning_a_product} and~\eqref{item:fcs_product_of_clonings} of Definition~\ref{def:filtered_cloning_system}, and condition~\eqref{item:fcs_compatibility} is trivial. Lastly, the cloning system is visibly properly graded.

It turns out that this cloning system is an example answering Question~\ref{ques:finiteness_conjecture} positively, that is, the finiteness length of $\Thomp{G^*}$ is exactly that of $G$ (notationally, the asterisk is a superscript now because we are considering the family of direct powers $(G^n)_{n \in \N}$). The proof is due to Tanusevski and we sketch a version of it here, using our setup and language. For the positive finiteness properties, we just need that the complexes $\dlkmodel{G^*}{n}$ become increasingly highly connected. This follows by noting that every simplex fiber of the projection $\dlkmodel{G^*}{n} \to \match(L_n)$ is the join of its vertex fibers, and applying \cite[Theorem~9.1]{quillen78}. For the negative finiteness properties, we claim that there is a sequence of homomorphisms $G\to \Thomp{G^*}\to G$ that composes to the identity. This is sufficient by the Bieri--Eckmann criterion \cite[Proposition~1.2]{bieri74}; see \cite[Proposition~4.1]{bux04}. The first map in the claim is $g\mapsto [1,g,1]$, and the second is $[T_-,(g_1,\dots,g_n),T_+] \mapsto g_1$. One must check that this second map is well defined on equivalence classes under reduction and expansion, and is a homomorphism, but this is not hard to see.

A variation of these groups was recently studied using cloning systems, by Berns-Zieve, Fry, Gillings, and Mathews \cite{berns-zieve14}. With the above setup, they consider cloning maps of the form $(g_1,\dots,g_k,\dots,g_n)\clone^n_k \defeq (g_1,\dots,g_k,\phi(g_k),\dots,g_n)$ where $\phi\in\Aut(G)$. They prove that for $G$ finite, the resulting Thompson group is co$\mathcal{CF}$. If these groups turn out to not embed into $V$, which seems believable when $\phi\ne\id$, then they would be counterexamples to the conjecture that $V$ is universal co$\mathcal{CF}$.

%--------------------------------------------------------
\section{Thompson groups for matrix groups}\label{sec:matrix_groups}

Let $R$ be a unital ring and consider the algebra of $n$-by-$n$ matrices $M_n(R)$. We will define a family of injective functions $M_n(R) \to M_{n+1}(R)$, which will become cloning maps after we restrict to the subgroups of upper triangular matrices $B_n(R)$. Consider the map $\clone_k$ defined by
\[
\left(\left(
\begin{array}{ccc}
A_{<,<} & A_{<,k} & A_{<,>}\\
A_{k,<} & A_{k,k} & A_{k,>}\\
A_{>,<} & A_{>,k} & A_{>,>}
\end{array}
\right)\right)\clone_k
=
\left(
\begin{array}{cccc}
A_{<,<} & A_{<,k} & A_{<,k} & A_{<,>}\\
A_{k,<} & A_{k,k} & 0 & 0\\
0 & 0 & A_{k,k} & A_{k,>}\\
A_{>,<} & A_{>,k} & A_{>,k}  & A_{>,>}
\end{array}
\right)
\]
where the matrix has a block structure under which the middle column and row are the $k$th column and row of the full matrix respectively. Given the block structure it is not hard to see that $\clone_k$ is a morphism of monoids, but it generally fails to map invertible elements to invertible elements. We therefore restrict to the groups $B_n(R)$ of invertible upper triangular matrices. Let $B_\infty(R) = \varinjlim B_n(R)$.

\begin{lemma}\label{lem:up_tri_clone}
 The trivial morphisms $\rho_n$ and the maps $\clone_k^n$ defined above describe a properly graded cloning system on $B_*(R)$.
\end{lemma}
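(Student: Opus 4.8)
Since $\rho_n=0$ is trivial, conditions \eqref{item:fcs_cloning_a_product} and \eqref{item:fcs_compatibility} simplify drastically: \eqref{item:fcs_compatibility} is vacuous (both sides reduce to the identity permutation), and \eqref{item:fcs_cloning_a_product} just asks that each $\clone^n_k\colon B_n(R)\to B_{n+1}(R)$ be a group homomorphism. So the proof breaks into four checks: (i) $\clone_k$ maps $B_n(R)$ into $B_{n+1}(R)$ and is a monoid homomorphism there; (ii) $\clone_k$ is injective on $B_n(R)$; (iii) the product-of-clonings relation \eqref{item:fcs_product_of_clonings}, $\clone^n_\ell\circ\clone^{n+1}_k=\clone^n_k\circ\clone^{n+1}_{\ell+1}$ for $k<\ell$; (iv) proper gradedness, i.e.\ the pullback/confluence condition of Definition~\ref{def:properly_graded}.

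First I would record that the displayed block formula, when the input is upper triangular, produces an upper triangular output: the blocks $A_{k,<}$, $A_{>,<}$, $A_{>,k}$ vanish by upper-triangularity, so the $(k\!+\!1)$-st row of $\clone_k(A)$ has its only nonzero entries at or to the right of the diagonal, and similarly the new column is consistent. The diagonal entries of $\clone_k(A)$ are those of $A$ with $A_{k,k}$ repeated, so $\det\clone_k(A)=A_{k,k}\det A$, which is a unit when $A\in B_n(R)$; hence $\clone_k(A)\in B_{n+1}(R)$. That $\clone_k$ is multiplicative is the ``block structure'' remark already granted in the text: writing the ambient (not-necessarily-invertible) map $M_n(R)\to M_{n+1}(R)$ as duplicating the $k$-th row and column in the indicated way, one checks $\clone_k(AB)=\clone_k(A)\clone_k(B)$ by a direct block computation (the zeros in positions $(k\!+\!1,\le k)$ and $(k,\ge k\!+\!1)$ are exactly what make the cross terms cancel). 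Injectivity is immediate: $\clone_k$ is a section of the ``delete row/column $k\!+\!1$'' map on the relevant block, so one recovers $A$ from $\clone_k(A)$.

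For \eqref{item:fcs_product_of_clonings} I would argue combinatorially rather than by a giant matrix computation. The composite $\clone^n_k\circ\clone^{n+1}_{\ell+1}$ on an $n\times n$ matrix duplicates column/row $k$, then (in the new indexing, where old index $\ell$ has become $\ell+1$) duplicates column/row $\ell+1$; the composite $\clone^n_\ell\circ\clone^{n+1}_k$ duplicates $\ell$ first, then $k$. Since $k<\ell$ the two duplication sites are disjoint and the order does not matter: both composites are the map that, for each original index $i$, duplicates the $i$-th row and column when $i\in\{k,\ell\}$, filling in the same off-diagonal zeros in both cases. Matching up the resulting block descriptions entry by entry — or, more slickly, noting that both composites agree with the analogous composite of the ambient maps $M_n\to M_{n+2}$, which is symmetric in the two disjoint sites — gives the relation. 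This is the step most likely to generate tedious bookkeeping, so I would set it up so that it reduces to the disjointness observation and a single picture of which entries get duplicated.

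Finally, proper gradedness. I must show: if $A\in B_{n+1}(R)$ lies in $\image\clone^n_k$ and also in $\image\iota_{n,n+1}$, then $A\in\image(\iota_{n-1,n}\circ\clone^{n-1}_k)$. Being in $\image\iota_{n,n+1}$ means the last row and column of $A$ are those of the identity, i.e.\ $A_{n+1,n+1}=1$ and all other entries in row/column $n+1$ vanish. Being $\clone^n_k(B)$ for $B\in B_n(R)$ forces (reading off the block formula) that column $n$ of $A$, which equals column $n+1$, also vanishes off the diagonal and has $A_{n,n}=A_{n+1,n+1}=1$; tracing this back through the formula, $B$ itself has trivial last row and column, so $B\in\image\iota_{n-1,n}$, say $B=\iota_{n-1,n}(C)$, and then $A=\iota_{n,n+1}(\clone^{n-1}_k(C))$ provided $k\le n-1$ (the case $k=n$ being handled by the convention $\clone^n_n=\iota_{n,n+1}$). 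I expect this last verification — careful tracking of which entries are pinned down by the two membership conditions — to be the real obstacle, but it is a finite bookkeeping exercise; the structural content is that the defining square of Definition~\ref{def:properly_graded} is a pullback because $\clone_k$ faithfully records column $k$ in the duplicated slot.
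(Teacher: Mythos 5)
Your overall plan matches the paper's proof: since $\rho_*$ is trivial, conditions \eqref{item:fcs_cloning_a_product} and \eqref{item:fcs_compatibility} reduce to the cloning maps being homomorphisms and a vacuous condition, so one checks well-definedness, injectivity, the product-of-clonings relation, and proper gradedness in turn. Your combinatorial ``disjoint duplication sites'' argument for \eqref{item:fcs_product_of_clonings} is a clean alternative to the paper's case analysis, which instead uses the identity $((A)\clone_k)_{i,j}=A_{\pi_k(i),\pi_k(j)}$ for $i\ne k$, $i\le j$, and then copies the case distinctions from Example~\ref{ex:symm_gps}; both routes are valid.

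There is, however, a genuine error in the proper-gradedness verification. You assert that being $\clone^n_k(B)$ ``forces $\ldots$ that column $n$ of $A$, which equals column $n+1$, also vanishes off the diagonal and has $A_{n,n}=A_{n+1,n+1}=1$.'' This is not true: $\clone_k$ duplicates columns $k$ and $k+1$ (up to the forced zeros), not columns $n$ and $n+1$. For $k<n$, columns $n$ and $n+1$ of $\clone_k(B)$ are built from columns $n-1$ and $n$ of $B$ respectively, and $A_{n,n}=B_{n-1,n-1}$ need not be $1$. A concrete counterexample is $n=3$, $k=1$, $B=\bigl(\begin{smallmatrix}1&7&0\\0&1&0\\0&0&1\end{smallmatrix}\bigr)$: here $A=\clone_1(B)$ has $A\in\image\iota_{3,4}$, but column $3$ is $(0,7,1,0)^{T}\ne e_4$. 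Even for $k=n$ the two columns disagree in rows $n$ and $n+1$. Fortunately this intermediate claim is unnecessary, and the repair coincides exactly with the paper's argument: the last column of $A=\clone^n_k(B)$ is, by the block formula, the last column of $B$ with a $0$ inserted in position $k+1$; so if the last column of $A$ equals $e_{n+1}$, then the last column of $B$ equals $e_n$, and since $B$ is upper triangular and invertible this forces $B\in\image\iota_{n-1,n}$.
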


It may be noted that the action of $\Fmonoid$ on $B_\infty(R)$ factors through $\Hmonoid$, that is $\clone_\ell \clone_k = \clone_k \clone_{\ell+1}$ even for $\ell = k$.

\begin{proof}
 Since $\rho_*$ is trivial, condition~\eqref{item:fcs_cloning_a_product} asks that the cloning maps be group homomorphisms. That $\clone_k$ is multiplicative and takes $1$ to $1$ is straightforward to check. Also, $A$ is invertible if and only if all the $A_{i,i}$ are units, in which case $(A)\clone_k$ is also invertible.
 
 To check condition~\eqref{item:fcs_product_of_clonings} it is helpful to note that for any $A\in M_n(R)$, $((A)\clone_k)_{i,j} = A_{\pi_k(i),\pi_k(j)}$ unless $i = k$ or $i>j$ (here $\pi_k$ is as in Example~\ref{ex:symm_gps}). One can now distinguish cases similar to Example~\ref{ex:symm_gps}. The compatibility condition \eqref{item:fcs_compatibility} is vacuous for trivial $\rho_*$.

 To see that the cloning system is properly graded note that $g \in \image \iota_{n,n+1}$ if and only if the last column of $g$ is the vector $e_{n+1}$. If at the same time $g = (h)\clone_k$ then by the definition of $\clone_k$ the last column of $h$ has to be $e_n$. Hence $h \in \image \iota_{n-1,n}$.
\end{proof}

Having equipped $B_*(R)$ with a cloning system, we get a generalized Thompson group $\Thomp{B_*(R)}$. Elements are represented by triples $(T_-,A,T_+)$ for trees $T_\pm$ with $n$ leaves and matrices $A\in B_n(R)$, up to reduction and expansion. Figure~\ref{fig:borel_thomp_expansion} gives an example of an element of $\Thomp{B_*(R)}$, represented as a triple and an expansion of that triple.

\begin{figure}[ht]
 \centering
\begin{tikzpicture}[line width=0.8pt, scale=0.3]
  \begin{scope}
  \node at (-3,-1){$\Bigg[$};
  \draw
   (-2,-2) -- (0,0) -- (2,-2)   (0,-2) -- (-1,-1);
  \filldraw
   (-2,-2) circle (1.5pt)   (0,0) circle (1.5pt)   (2,-2) circle (1.5pt)   (-1,-1) circle (1.5pt)   (0,-2) circle (1.5pt);
   \node at (2.5,-2){,};
   \node at (5,-1){$\begin{psmallmatrix}
                    1&2&3\\0&4&5\\0&0&6
                   \end{psmallmatrix}$};
                   
   \node at (7.5,-2){,};
   \begin{scope}[xshift=4in]    
    \draw
   (-2,-2) -- (0,0) -- (2,-2)   (0,-2) -- (1,-1);
  \filldraw
   (-2,-2) circle (1.5pt)   (0,0) circle (1.5pt)   (2,-2) circle (1.5pt)   (1,-1) circle (1.5pt)   (0,-2) circle (1.5pt);
   \node at (3,-1){$\Bigg]$};
   \end{scope}
   \end{scope}

    \node at (14,-1){$=$};
   
   \begin{scope}[xshift=18cm]
    \node at (-3,-1){$\Bigg[$};
    \draw
   (-2,-2) -- (0,0) -- (2,-2)   (0,-2) -- (-1,-1)   (-1,-2) -- (-.5,-1.5);
    \filldraw
   (-2,-2) circle (1.5pt)   (0,0) circle (1.5pt)   (2,-2) circle (1.5pt)   (-1,-1) circle (1.5pt)   (0,-2) circle (1.5pt)   (-1,-2) circle (1.5pt);
   \node at (3,-2){,};
   \node at (6.5,-1){$\begin{psmallmatrix}
                    1&2&2&3\\0&4&0&0\\0&0&4&5\\0&0&0&6
                   \end{psmallmatrix}$};
                   
   \node at (9.75,-2){,};
   \begin{scope}[xshift=5in]    
    \draw
   (-2,-2) -- (0,0) -- (2,-2)   (0,-2) -- (1,-1)   (.5,-1.5) -- (1,-2);
  \filldraw
   (-2,-2) circle (1.5pt)   (0,0) circle (1.5pt)   (2,-2) circle (1.5pt)   (1,-1) circle (1.5pt)   (0,-2) circle (1.5pt)   (1,-2) circle (1.5pt);
   \node at (3,-1){$\Bigg]$};
   \end{scope}
   \end{scope}
  
\end{tikzpicture}
\caption[Expansion for matrix groups]{An example of expansion in $\Thomp{B_*(\Q)}$.}
\label{fig:borel_thomp_expansion}
\end{figure}

We are interested in finiteness properties of $\Thomp{B_*(R)}$ because of the following examples where the groups $B_*(R)$ themselves have interesting finiteness properties, see \cite[Theorem~A, Remarks~3.6,~3.7]{bux04}.

\begin{theorem}\label{thm:bux}
 Let $\GlobalField$ be a global function field, let $S$ be a finite nonempty set of places and $\calO_S$ the ring of $S$-integers. Then $B_n(\calO_S)$ is of type~$\F_{\abs{S}-1}$ but not of type~$\F_{\abs{S}}$ for any $n \ge 2$.
\end{theorem}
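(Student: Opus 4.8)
Since this statement is \cite[Theorem~A, Remarks~3.6,~3.7]{bux04}, I will only sketch the strategy one would follow. Write $B_n = T_n\ltimes U_n$ over $\GlobalField$, with $T_n$ the diagonal torus and $U_n$ the strictly upper triangular unipotent radical, so $B_n(\calO_S) = U_n(\calO_S)\rtimes T_n(\calO_S)$ and $T_n(\calO_S)\cong(\calO_S^\times)^n$. Two arithmetic inputs drive everything. First, the $S$-unit theorem for function fields gives $\calO_S^\times\cong\mathbb{F}_q^\times\times\Z^{\abs{S}-1}$, so $T_n(\calO_S)$ is virtually $\Z^{n(\abs{S}-1)}$, in particular of type~$\F_\infty$; this is where the positive part of the finiteness length has to come from. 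Second, $\calO_S$ is infinitely generated as an abelian group (it is infinite-dimensional over $\mathbb{F}_q$) but is, by the product formula, a cocompact lattice in $\prod_{v\in S}\GlobalField_v$; this tension is what will cap the finiteness length at $\abs{S}-1$, and it is cleanest to understand it first for $n=2$.

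For $n=2$ I would proceed as follows. Let $X_v$ be the Bruhat--Tits tree of $\GL_2(\GlobalField_v)$ and $X=\prod_{v\in S}X_v$, a contractible $\abs{S}$-dimensional $\mathrm{CAT}(0)$ cube complex on which $B_2(\calO_S)$ acts diagonally. Being solvable, $B_2(\GlobalField_v)$ fixes an end $\xi_v\in\partial_\infty X_v$, and the Busemann functions toward the $\xi_v$ assemble into a height map $X\to\R^S$ equivariant for a character $B_2(\calO_S)\to\R^S$ whose image, again by the product formula, lies in the hyperplane $\{\sum_v x_v=0\}\cong\R^{\abs{S}-1}$. Passing to the sublevel filtration of $h=\sum_v b_{\xi_v}$ one obtains a filtration of $X$ on which $B_2(\calO_S)$ acts with cocompact sublevel sets and with cell stabilizers that are finite-by-(finitely generated abelian), hence of type~$\F_\infty$. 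Now Brown's criterion reduces the problem to the essential connectivity of this filtration, which combinatorial Morse theory controls through descending links. One checks directly that a descending link is a join over $v\in S$ of finite discrete sets (the tree-neighbours of a vertex pointing away from $\xi_v$), hence is $(\abs{S}-2)$-connected but $(\abs{S}-1)$-dimensional with non-vanishing top homology; this gives type~$\F_{\abs{S}-1}$. For the negative half one tracks an explicit non-trivial class in $H_{\abs{S}-1}$ of the descending links (equivalently, computes the relevant $\Sigma$-invariant) to see that the filtration is not essentially $(\abs{S}-1)$-connected, so $B_2(\calO_S)$ is not even of type~$\FP_{\abs{S}}$.

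For general $n\ge2$ I would run the same scheme with $X=\prod_{v\in S}X_v$, $X_v$ the Bruhat--Tits building of $\GL_n(\GlobalField_v)$, using that $B_n(\GlobalField_v)$ fixes a chamber at infinity of $X_v$ and the corresponding parabolic height functions; the extra descending directions contributed by $U_n$ and by the non-fixed part of the flag collapse the descending links back down to a join of $\abs{S}$ factors, so the finiteness length does not grow with $n$. The two genuinely hard steps, as in \cite{bux04}, are (i) the reduction theory -- proving that the sublevel sets are cocompact, which is exactly where the arithmetic of $\calO_S\subseteq\prod_{v\in S}\GlobalField_v$ is used -- and (ii) the negative half, i.e.\ upgrading ``descending links not $(\abs{S}-1)$-connected'' to ``filtration not essentially $(\abs{S}-1)$-connected''. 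The positive connectivity estimates and the $\F_\infty$-ness of the stabilizers are comparatively routine once the space is in place.
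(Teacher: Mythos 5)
Your proposal accurately recognizes that the paper simply defers to \cite[Theorem~A, Remarks~3.6,~3.7]{bux04} for this result, and your sketch correctly identifies the ingredients of Bux's argument: the action of $B_n(\calO_S)$ on a product of Bruhat--Tits buildings, the product formula pinning the image of the valuation/character map to a codimension-one hyperplane, a Busemann-type height function, reduction theory to establish cocompactness of the filtration, and Brown's criterion together with descending-link Morse theory. One small imprecision worth noting: since $\calO_S$ is discrete in $\prod_{v\in S}\GlobalField_v$ and the ambient group $\prod_{v\in S}\GL_n(\GlobalField_v)$ acts on the product of buildings with compact open stabilizers, the cell stabilizers in $B_n(\calO_S)$ are in fact \emph{finite}, not merely finite-by-(finitely generated abelian); this only strengthens your conclusion that they are of type~$\F_\infty$.
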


For instance, when $R=\mathbb{F}_p[t,t^{-1}]$ then $B_n(\mathbb{F}_p[t,t^{-1}])$ is finitely generated but not finitely presented, for $n\ge2$. What is particularly interesting about Theorem~\ref{thm:bux} is that the finiteness properties of $B_n(\calO_S)$ depend on $\abs{S}$ but not on $n$.

A class of examples where the finiteness properties do depend on $n$ arises as subgroups of groups of the form $B_n(R)$. Let $\abels_n \le B_{n+1}$ be the group of invertible upper triangular $n+1$-by-$n+1$ matrices whose upper left and lower right entries are $1$. The groups $\abels_n(\Z[1/p])$ were studied by Abels and others and we call them the \emph{Abels groups}. Their finiteness length tends to infinity with $n$ \cite{abels87, brown87}:

\begin{theorem}\label{thm:abels}
For any prime $p$ the group $\abels_n(\Z[1/p])$ is of type~$\F_{n-1}$ but not of type~$\F_n$ for $n \ge 1$.
\end{theorem}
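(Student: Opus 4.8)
\emph{This theorem is due to Abels \cite{abels87} and Brown \cite{brown87}; here is the plan one would follow.} Write $G_n=\abels_n(\Z[1/p])$; the natural framework is $S$-arithmetic with $S=\{p,\infty\}$. After replacing $G_n$ by a torsion-free finite-index subgroup (harmless for finiteness properties), one realizes $G_n$ as acting freely and properly on a contractible complex $X_n$ assembled from a contractible component of the real points $\abels_n(\R)$ and a product of $p$-adic Bruhat--Tits trees, on which $\abels_n(\Q_p)$ acts through the $\GL_2$-blocks strung along the diagonal --- the diagonal torus entries translating along tree axes while the unipotent radical acts ``horocyclically''. Reduction theory (built on the $S$-unit theorem, here of rank $|S|-1=1$ for each of the $n-1$ free diagonal slots) then furnishes a $G_n$-invariant, $G_n$-cocompact filtration $(X_{n,r})_r$ by the sublevel sets of a ``distance to the cusp'' Morse function. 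Since the action is free, Brown's criterion reduces the entire statement to the asymptotic connectivity of the descending links of this Morse function.

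\emph{Type $\F_{n-1}$.} One shows the descending links are eventually $(n-2)$-connected. They are built from subcomplexes of spherical-building type attached to the boundary directions of the tree factors, together with matching-type combinatorial data contributed by the off-diagonal entries; it is the pinning of the two corner entries to $1$ that makes this connectivity grow with $n$, and this is precisely where $\abels_n$ departs from the full Borel $B_n$ of Theorem~\ref{thm:bux}. The connectivity bound is proved by induction on $n$, peeling off one diagonal slot (one tree) at a time using the relative Morse Lemma~\ref{lem:morse} together with a Quillen-type fiber argument --- the same bookkeeping carried out for $B_n(\calO_S)$ in \cite{bux04}. Feeding $(n-2)$-connectivity into Brown's criterion yields $G_n$ of type $\F_{n-1}$. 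The base case $n=1$ is immediate: $\abels_1(\Z[1/p])\cong\Z[1/p]$ is of type $\F_0$ but, not being finitely generated, not of type $\F_1$.

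\emph{Not type $\F_n$.} Since type $\F_n$ implies type $\FP_n$, it suffices to show $G_n$ is not of type $\FP_n$, and here one uses the \emph{sharp} failure of the previous estimate: the same descending links are \emph{not} $(n-1)$-connected, there being an explicit nonzero $(n-1)$-cycle assembled from the $n-1$ non-contracting directions of the diagonal torus. Concretely, the filtration $(X_{n,r})_r$ is not essentially $(n-1)$-acyclic, so the homological ``negative half'' of Brown's criterion --- the case $\Lambda=\Gamma=\Delta$ of Theorem~\ref{thrm:relative_brown} --- shows $G_n$ is not of type $\FP_n$. Equivalently, and closer to Abels' original argument, one invokes the Bieri--Eckmann criterion \cite{bieri74} to produce a family $J$ with $H_{n-1}(G_n;\prod_J\Z G_n)\neq 0$, the non-vanishing computed from the spectral sequence of the action on the product of trees.

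The main obstacle is exactly this two-sided connectivity analysis of the descending links: one needs the lower bound ($(n-2)$-connected, for the positive statement) \emph{and} a matching non-vanishing of $\widetilde H_{n-1}$ (equivalently $\pi_{n-1}$) of these links at infinity (for the failure of $\F_n$), maintained uniformly as the filtration parameter grows. Making both precise in full essentially reconstructs the combinatorial heart of \cite{abels87,brown87}.
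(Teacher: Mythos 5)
The paper does not prove Theorem~\ref{thm:abels}: it states it as a result of Abels \cite{abels87} and Brown \cite{brown87}, and no argument is given in this paper. There is therefore no ``paper's own proof'' to compare your sketch against, and you have correctly identified the provenance of the result.

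As a plan, your sketch is consistent with the general Morse-theoretic framework of \cite{brown87} (which the paper leans on heavily in Sections~\ref{sec:finiteness_props} and~\ref{sec:mtx_fin_props} for its own results), and your base case $n=1$ is correct. A few points are loose, though. First, $\abels_n$ is solvable, so it has no ``$\GL_2$-blocks'' in the usual sense; what acts on the Bruhat--Tits trees is the image of the diagonal Borel $2\times2$ blocks in $\PGL_2$, which is exactly the structure the paper later exploits via $\pi_i$ in Section~\ref{sec:mtx_neg_fin_props} for $\barB_n$. Second, passing from ``descending links are not $(n-1)$-connected'' to ``the filtration is not essentially $(n-1)$-acyclic'' requires showing a specific nontrivial class persists along the filtration; this persistence step is the technical heart of the negative direction and cannot be read off from link connectivity alone. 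Your appeal to the Bieri--Eckmann criterion and the spectral sequence of the action is the right category of tool, but as you acknowledge, the real work is precisely there. None of this contradicts anything in the paper; it simply reconstructs, at a heuristic level, the arguments the paper delegates entirely to the references.
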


For any ring $R$, the cloning system described above for $B_n(R)$ preserves the Abels groups $\abels_{n-1}(R)$. By restriction we obtain a generalized Thompson group $\Thomp{\abels_{*-1}(R)}$ which we will just denote by $\Thomp{\abels_*(R)}$.

% --------------------------------------
\section{Finiteness properties of Thompson groups for matrix groups}\label{sec:mtx_fin_props}

We will prove below that the finiteness length of $\Thomp{B_*(\calO_S)}$ is the same as that of all the $B_n(\calO_S)$. For consistency, we can state this as
\[
\phi(\Thomp{B_*(\calO_S)}) = \liminf_n \phi(B_n(\calO_S))\text{.}
\]
The inequality $\ge$, i.e., that $\Thomp{B_*(\calO_S)}$ is of type~$\F_{\abs{S}-1}$, is proved in Section~\ref{sec:mtx_pos_fin_props}, and follows the general strategy outlined in Sections~\ref{sec:spaces} and~\ref{sec:finiteness_props}. In fact, it applies to arbitrary rings. To show the inequality $\le$, i.e., that $\Thomp{B_*(\calO_S)}$ is not of type~$\FP_{\abs{S}}$, we develop some new tools in Section~\ref{sec:mtx_neg_fin_props}, and make use of the criterion established in Theorem~\ref{thrm:relative_brown}.

The proof showing the inequality $\ge$ above also applies to $\Thomp{\abels_*(R)}$. Since the right hand side is infinite this time, this directly gives the full equation
\[
\phi(\Thomp{\abels_*(\Z[1/p])}) = \liminf_n \phi(\abels_n(\Z[1/p]))\text{.}
\]

\subsection{Positive finiteness properties}\label{sec:mtx_pos_fin_props}

The first main result of this section is that the group $\Thomp{B_*(R)}$ has all the finiteness properties that the individual groups $B_*(R)$ eventually have:

\begin{theorem}\label{thrm:borel_thomp_pos_fin_props} 
 $\displaystyle\phi(\Thomp{B_*(R)}) \ge \liminf_n(\phi(B_n(R)))$.
\end{theorem}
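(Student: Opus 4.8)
The strategy is a direct application of Corollary~\ref{cor:brown_crit_use} (Brown's criterion packaged with the Morse Lemma), via the intermediate statement Proposition~\ref{prop:generic_finiteness}. By Lemma~\ref{lem:up_tri_clone} the cloning system on $B_*(R)$ is properly graded, so $\Thomp{B_*(R)}$ acts on the contractible Stein--Farley complex $\Stein{B_*(R)}$ (Proposition~\ref{prop:stein_space_contractible}) with cell stabilizers that are finite-index subgroups of the various $B_n(R)$ (Lemma~\ref{lem:stabs}), and with the $\Thomp{B_*(R)}$-invariant, cocompact Morse function ``number of feet'' (Lemma~\ref{lem:cocompact}). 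By Proposition~\ref{prop:generic_finiteness} it therefore suffices to prove: if $n \le \liminf_k \phi(B_k(R))$, i.e.\ if $B_k(R)$ is of type~$\F_n$ for all large $k$, then the descending-link complexes $\dlkmodel{B_*(R)}{k}$ are eventually $(n-1)$-connected. Since $n$ is arbitrary below the liminf, this yields $\phi(\Thomp{B_*(R)})\ge\liminf_k\phi(B_k(R))$, noting that if the liminf is $0$ there is nothing to prove and if it is $\infty$ the argument runs for every $n$.

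So the real content is a connectivity statement about the complexes $\dlkmodel{B_*(R)}{k}$, whose simplices are classes $[g,\Gamma]$ with $g\in B_k(R)$ and $\Gamma$ a non-trivial matching of the linear graph $L_k$. The plan is to analyze the simplicial projection $\dlkmodel{B_*(R)}{k}\to \match(L_k)$ sending $[g,\Gamma]\mapsto\Gamma$. The base $\match(L_k)$ is known to be $(\lfloor\frac{k-2}{3}\rfloor - 1)$-connected, so its connectivity already tends to infinity; one wants to transfer this to the total space by controlling the fibers. Over a vertex $\OneEdge_i$ (the single-edge matching on $\{i,i+1\}$) the fiber consists of classes $[g,\OneEdge_i]$ with $g\in B_k(R)$ modulo dangling along $\clone_i$, i.e.\ modulo the image of $\clone_i^{k-1}\colon B_{k-1}(R)\to B_k(R)$; so the vertex fiber is the coset space $B_k(R)/\image(\clone_i^{k-1})$, and more generally the fiber over an $m$-edge matching $\Gamma$ is $B_k(R)/\image(\clone_\Gamma)$. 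Because $\rho$ is trivial for $B_*(R)$, these fibers fit together compatibly: the fiber of a face of $\Gamma$ maps \emph{onto} the fiber of $\Gamma$, and one should check (as in the direct-products example of Section~\ref{sec:direct_prods}, via \cite[Theorem~9.1]{quillen78}) that each simplex fiber is the join of its vertex fibers. The first, cheap conclusion is then Lemma~\ref{lem:fin_gen_case}: since $B_k(R)$ is, for large $k$, generated by the images of the cloning maps into it (a $2$-by-$2$ block decomposition shows every upper-triangular matrix splits as a product of such), $\dlkmodel{B_*(R)}{k}$ is connected, handling the case $n=1$.

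For $n\ge 2$ the plan is an induction on $n$ using the machinery of Section~\ref{sec:high_connectivity}. One wants to bootstrap connectivity of $\dlkmodel{B_*(R)}{k}$ from connectivity of the subcomplexes $\dlkmodel{B_*(R)}{k-j}$ (sitting inside via $\clone$-maps of matchings supported away from the last coordinates) together with a Morse argument on $\dlkmodel{B_*(R)}{k}$ itself, then use Lemma~\ref{lem:relative_connectivity} and the relative Hatcher-type flow Proposition~\ref{prop:putman_flow} to kill the residual $\pi_{n-1}$; this is exactly the template the paper sets up. The hypothesis that $B_{k-j}(R)$ is of type~$\F_n$ enters through Brown's criterion having already been discharged and, more relevantly, through showing that the fibers $B_k(R)/\image(\clone_\Gamma)$ behave like highly connected objects after enough iterations — this is where finiteness of the individual matrix groups is used, via comparing $\dlkmodel{B_*(R)}{k}$ with the join-of-fibers description and an $E_\infty$/spectral-sequence or direct Morse-theoretic count. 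I expect \textbf{this last step — proving the precise asymptotic $(n-1)$-connectivity of $\dlkmodel{B_*(R)}{k}$, i.e.\ verifying the exchange condition \eqref{item:exchange1} and the link-connectivity hypotheses needed for Proposition~\ref{prop:putman_flow} in the matrix-group setting — to be the main obstacle}; the rest is assembling already-established general machinery. The remark that this argument never uses anything special about $\calO_S$ and works for an arbitrary ring $R$ should fall out of the proof, since at no point does one need more than that $B_k(R)$ is generated by cloning images and eventually of type~$\F_n$.
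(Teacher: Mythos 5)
Your high-level reduction to Proposition~\ref{prop:generic_finiteness} is exactly right, and you correctly identify that the content is the asymptotic connectivity of the $\dlkmodel{B_*(R)}{n}$, to be proved via Lemma~\ref{lem:relative_connectivity}, Proposition~\ref{prop:putman_flow}, and the surrounding Morse machinery. But two substantive points in your plan go wrong.

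First, the proposed route through the projection $\dlkmodel{B_*(R)}{n}\to\match(L_n)$ with ``each simplex fiber is the join of its vertex fibers'' does not hold for $B_*(R)$. That join decomposition is precisely what makes the direct-product case and the $\barB_*(R)$ case easy (see Proposition~\ref{prop:barb_thomp_pos_fin_props}), and the paper explicitly remarks immediately afterward that this simple approach fails for $B_*(R)$: the fiber over an $m$-edge matching $\Gamma$ is the coset space $B_n(R)/\image(\clone_\Gamma)$, and for full upper-triangular groups this is not the join of the vertex fibers because the off-diagonal entries couple the different carets. This is the whole reason the paper has to run the heavier relative-connectivity argument (Proposition~\ref{prop:putman_flow} with the exchange condition, via Lemmas~\ref{lem:mtx_cpx_flag}, \ref{lem:links_are_cpxes}, \ref{lem:borel_relative_connectivity}, \ref{lem:mut_lks_are_lks}), and to induct over the auxiliary family $\dlkmodel{B_*(R);\Delta}{n}$ indexed by spanning subgraphs $\Delta$ with Morse parameter $e(\Delta)$, rather than induct on the connectivity degree $n$ as you suggest.

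Second, and more importantly, you misplace where the hypothesis $\phi(B_k(R))\ge n$ enters the argument. You write that it enters ``more relevantly'' in showing the fibers $B_n(R)/\image(\clone_\Gamma)$ are highly connected — but the connectivity of $\dlkmodel{B_*(R)}{n}$ (Proposition~\ref{prop:borel_thomp_fin_props_strong}) is a purely combinatorial statement with no finiteness hypotheses on the groups at all; it is proved for an arbitrary ring $R$. The only place the finiteness length of $B_k(R)$ is used is in the stabilizer condition of Brown's criterion (Corollary~\ref{cor:brown_crit_use} via Lemma~\ref{lem:stabs}). The two ingredients of Proposition~\ref{prop:generic_finiteness} — stabilizers of type~$\F_n$ and descending links $(n-1)$-connected — are genuinely independent, and this decoupling is exactly what makes the theorem give a clean $\ge\liminf$ for any $R$. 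If the descending-link connectivity depended on the finiteness properties of the $B_k(R)$, the argument would be circular.
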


In particular, together with Theorem~\ref{thm:bux} this implies:

\begin{corollary}
 $\Thomp{B_*(\calO_S)}$ is of type~$\F_{\abs{S}-1}$.
\end{corollary}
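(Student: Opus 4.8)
The plan is to apply Proposition~\ref{prop:generic_finiteness} with $G_* = B_*(R)$ and $n = \liminf_k \phi(B_k(R))$. The cloning system on $B_*(R)$ is properly graded by Lemma~\ref{lem:up_tri_clone}, so Proposition~\ref{prop:generic_finiteness} reduces the theorem to two tasks: checking that $B_k(R)$ is eventually of type~$\F_n$ (which, when $n$ is finite, holds by the very definition of $\liminf$, and when $n=\infty$ means all $B_k(R)$ are eventually of type~$\F_m$ for every $m$), and showing that the descending-link complexes $\dlkmodel{B_*(R)}{k}$ are eventually $(n-1)$-connected --- in fact we will prove the stronger statement that their connectivity tends to infinity, so that the same argument also yields $\phi(\Thomp{B_*(R)}) = \infty$ whenever $\liminf_k \phi(B_k(R)) = \infty$, covering $\abels_*(\Z[1/p])$ as well.

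The heart of the proof is therefore the high connectivity of $\dlkmodel{B_*(R)}{k}$, and this is where I expect the main obstacle to lie. Recall that $\dlkmodel{B_*(R)}{k}$ has simplices $[g,\Gamma]$ with $g \in B_k(R)$ and $\Gamma$ a nontrivial matching of the linear graph $L_k$, with the dangling equivalence identifying $(g_1,\Gamma_1)$ and $(g_2,\Gamma_2)$ when $g_2^{-1}g_1 \in \image \clone_{\Gamma_1}$ and $\Gamma_2 = (g_2^{-1}g_1)\clone_{\Gamma_1}^{-1}\cdot\Gamma_1$. Since $\rho$ is trivial here, the $G_{k-m}$-action on a matching $\Gamma$ with $m$ edges is trivial, so this is genuinely a fibered object over the matching complex $\match(L_k)$: there is a simplicial projection $p \colon \dlkmodel{B_*(R)}{k} \to \match(L_k)$ sending $[g,\Gamma] \mapsto \Gamma$, and the fiber over a vertex $\OneEdge_i$ is the set of cosets $B_k(R)/\image(\clone_i)$, i.e. a discrete set. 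The plan is to analyze this projection using the fiber connectivity criterion of Quillen \cite[Theorem~9.1]{quillen78}, exactly as in the direct-products case of Section~\ref{sec:direct_prods}: one checks that the preimage of a simplex of $\match(L_k)$ is the join of the preimages of its vertices (this is where the block structure of the cloning maps $\clone_k$ --- specifically that $\image\clone_\Gamma = \bigcap_{e_i \in \Gamma}\image\clone_i$ and that these images are ``independent'' in the appropriate coset-combinatorial sense --- must be verified), so that each simplex-fiber is highly connected, and then concludes that $\dlkmodel{B_*(R)}{k}$ is at least as highly connected as $\match(L_k)$. Since $\match(L_k)$ is $(\lfloor\frac{k-2}{3}\rfloor - 1)$-connected, its connectivity tends to infinity with $k$, and we are done.

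The delicate point in the Quillen argument is verifying the join/independence property of the cloning images: one must show that for disjoint edges $e_{i_1},\dots,e_{i_m}$ of $L_k$, a coset of $\image\clone_{i_1}$, a coset of $\image\clone_{i_2}$, etc., are ``compatible'' (have nonempty common refinement) exactly when the corresponding vertices of $\dlkmodel{B_*(R)}{k}$ span a simplex, and that the poset of such compatible tuples is the face poset of a join of discrete sets. Concretely this comes down to the statement that $\image\clone_i \cap \image\clone_j = \image\clone_{\{e_i,e_j\}}$ for disjoint $e_i, e_j$ (and more generally over any matching), together with a coset-counting argument; both follow from the explicit block form of $\clone_k$ given in Section~\ref{sec:matrix_groups} and the fact that the cloning system is properly graded. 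Once the descending links are handled, Proposition~\ref{prop:generic_finiteness} gives $\phi(\Thomp{B_*(R)}) \ge \liminf_n \phi(B_n(R))$ directly, and in particular, combining with Theorem~\ref{thm:bux}, we get that $\Thomp{B_*(\calO_S)}$ is of type~$\F_{\abs{S}-1}$.
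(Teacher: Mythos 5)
Your high-level strategy (invoke Proposition~\ref{prop:generic_finiteness} together with Lemma~\ref{lem:up_tri_clone} and Theorem~\ref{thm:bux}, and reduce to proving that the descending links $\dlkmodel{B_*(R)}{k}$ become highly connected) is exactly the paper's, and the reduction is correct. However, the Quillen fiber argument you propose for the connectivity of $\dlkmodel{B_*(R)}{k}$ has a genuine gap: the fiber of the projection $p \colon \dlkmodel{B_*(R)}{k} \to \match(L_k)$ over a simplex of $\match(L_k)$ is \emph{not} a join of the vertex fibers, and the paper explicitly warns about this (see the remark immediately following the proof of Proposition~\ref{prop:barb_thomp_pos_fin_props}). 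The join condition is strictly stronger than the intersection condition $\image\clone_i \cap \image\clone_j = \image\clone_{\{e_i,e_j\}}$ that you isolate; it requires that \emph{every} pair of cosets $g_1\,\image\clone_{i_1}$ and $g_2\,\image\clone_{i_2}$ (with $|i_1-i_2|\geq 2$) intersects, i.e., that $\image\clone_{i_1}\cdot\image\clone_{i_2} = B_k(R)$. This fails already for $B_4(R)$ with $i_1 = 1$, $i_2 = 3$: any product $(A)\clone_1\,(B)\clone_3$ with $A,B\in B_3(R)$ has its $(1,3)$- and $(1,4)$-entries equal, so the product of the two images is a proper subgroup. Consequently not every pair of vertices over $\OneEdge_1$ and $\OneEdge_3$ spans an edge, and the fiber over $\{e_1,e_3\}$ is a bipartite graph that is not complete bipartite, hence not a join (it is not even connected in general). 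This is precisely why the paper handles $\barB_*(R)$ by the Quillen argument (where the blocks decouple and the fibers genuinely are joins) but treats $B_*(R)$ differently.

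The paper's actual proof of the connectivity of $\dlkmodel{B_*(R)}{k}$ (Proposition~\ref{prop:borel_thomp_fin_props_strong}) is an induction over restricted complexes $\dlkmodel{B_*(R);\Delta}{n}$ using the relative-connectivity criterion of Proposition~\ref{prop:putman_flow}. The inductive engine requires establishing that these complexes are flag (Lemma~\ref{lem:mtx_cpx_flag}), that simplex links are isomorphic to lower-rank copies of the same family (Lemma~\ref{lem:links_are_cpxes}), that removing the first two edges from $\Delta$ gives a highly connected pair (Lemma~\ref{lem:borel_relative_connectivity}), and that an exchange/shared-link condition (Lemma~\ref{lem:mut_lks_are_lks}) holds at the distinguished vertex $[I_n,\OneEdge_1]$. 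These ingredients substitute for the fiber decomposition that you were hoping for; without them the join argument cannot be repaired.
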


In view of Proposition~\ref{prop:generic_finiteness}, to prove Theorem~\ref{thrm:borel_thomp_pos_fin_props} it suffices to show that the connectivity of $\dlkmodel{B_*(R)}{n}$ goes to infinity with $n$. In fact, we will induct, so we need to consider a slightly larger class of complexes.

For a spanning subgraph $\Delta$ of the linear graph $L_n$, define $\dlkmodel{B_*(R);\Delta}{n}$ to be the subcomplex of $\dlkmodel{B_*(R)}{n}$ whose elements only use graphs that are subgraphs of $\Delta$. Define $e(\Delta)$ to be the number of edges of $\Delta$. Define $\eta(m)\defeq \lfloor\frac{m-1}{4}\rfloor$. Taking $\Delta = L_n$, Theorem~\ref{thrm:borel_thomp_pos_fin_props} will follow from:

\begin{proposition}\label{prop:borel_thomp_fin_props_strong}
 $\dlkmodel{B_*(R);\Delta}{n}$ is $(\eta(e(\Delta))-1)$-connected.
\end{proposition}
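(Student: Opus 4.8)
The plan is to prove the statement by induction on $n$ (equivalently on $e(\Delta)$) using combinatorial Morse theory on the complex $\dlkmodel{B_*(R);\Delta}{n}$, with the ``number of feet used'' or rather the position of the last available edge of $\Delta$ as a height, together with the relative connectivity machinery of Section~\ref{sec:high_connectivity} (Lemma~\ref{lem:relative_connectivity} and Proposition~\ref{prop:putman_flow}). The base cases $e(\Delta)$ small are trivial since $\eta(e(\Delta))-1 < 0$ there, so we may assume $e(\Delta)$ is large and that the claim holds for all spanning subgraphs of shorter linear graphs (in particular for all proper ``sub-intervals'' of $\Delta$). First I would record the key structural feature: a vertex of $\dlkmodel{B_*(R);\Delta}{n}$ is a class $[g,\OneEdge_i]$ with $e_i$ an edge of $\Delta$, and the link of such a vertex decomposes, after dangling along $\clone_i$, as a join-like combination of (a copy of) $B_{n-1}(R)$ acting — which collapses the $g$-coordinate — and the descending-link complex of the linear graph with $e_i$ (and its neighbours $e_{i-1},e_{i+1}$) removed. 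Concretely I expect $\lk[g,\OneEdge_i] \cong \dlkmodel{B_*(R);\Delta'}{n-2}$ up to a harmless join with something contractible, where $\Delta'$ is $\Delta$ with the edge $e_i$ and the (at most two) edges adjacent to it deleted; so $e(\Delta') \ge e(\Delta)-3$ and the inductive hypothesis gives that this link is $(\eta(e(\Delta)-3)-1)$-connected, hence roughly $(\eta(e(\Delta))-2)$-connected. This is exactly the ``$d=0$'' connectivity-of-links input needed to feed into Lemma~\ref{lem:injectifying} and Proposition~\ref{prop:putman_flow}, and one checks the $d$-simplex links behave similarly (deleting $d+1$ edges plus their neighbours, losing at most $3(d+1)$ edges, which is comfortably enough for the $(k-2d-2)$ requirement given the slack built into $\eta(m)=\lfloor\frac{m-1}{4}\rfloor$ versus the $3$-per-step loss).

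The Morse-theoretic heart goes as follows. Fix the last edge $e_{n-1}$ of $\Delta$ (if $\Delta$ has no edge near the right end one shifts attention to wherever the rightmost edge is) and let $X = \dlkmodel{B_*(R);\Delta}{n}$ and $X_0 = \dlkmodel{B_*(R);\Delta_0}{n}$ where $\Delta_0$ is $\Delta$ with $e_{n-1}$ removed; then $X_0$ is covered by the inductive hypothesis and is $(\eta(e(\Delta)-1)-1)$-connected, which is $\ge (\eta(e(\Delta))-1)-1$. The pair $(X,X_0)$: a simplex of $X$ not in $X_0$ is one of the form $[g,\Gamma]$ with $e_{n-1}\in\Gamma$; running the Morse function ``does your matching use $e_{n-1}$'' and examining the descending links of the bad vertices $[g,\OneEdge_{n-1}]$, which are copies of $\dlkmodel{B_*(R);\Delta''}{n-2}$ for $\Delta''$ obtained by deleting $e_{n-1},e_{n-2}$ — these are $(\eta(e(\Delta)-2)-1)$-connected by induction — the Morse Lemma (Lemma~\ref{lem:morse}) shows $(X,X_0)$ is $\eta(e(\Delta))$-connected, hence certainly $(\eta(e(\Delta))-1)$-connected. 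By Lemma~\ref{lem:relative_connectivity} it then remains to show $\pi_{\eta(e(\Delta))-1}(X_0 \to X)$ is trivial. This is where Proposition~\ref{prop:putman_flow} enters, with $X_1$ a suitable intermediate complex and $w = [1,\OneEdge_{n-1}]$ the ``cone point'': one must verify the exchange condition, namely that a vertex $[g,\OneEdge_i]$ of $X_0$ with $i < n-2$ can be traded, using an edge of $X_1$ through $w$, for a vertex in $\st_{X_1} w$ whose link in $X_1$ dominates $\lk_{X_1}[g,\OneEdge_i]$ and that the latter link is $(\eta(e(\Delta))-2)$-connected. The domination is a concrete matching/dangling computation: $\OneEdge_i$ and $\OneEdge_{n-1}$ are disjoint edges of $L_n$ (since $i < n-2$), so $[g,\OneEdge_i]$ and $w$ span an edge in a copy $X_1$ built using both edges, and the ``exchanged'' vertex can be taken to be $w$ itself or a near-by vertex $[g',\OneEdge_{n-1}]$, with the link comparison reducing to the statement that dangling along $\clone_i$ is compatible with the two cloning maps commuting (which, as remarked after Lemma~\ref{lem:up_tri_clone}, holds here because the action factors through $\Hmonoid$, so $\clone_i\clone_{n-1} = \clone_{n-1}\clone_i$).

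I expect the main obstacle to be the bookkeeping in the exchange condition \eqref{item:exchange1}: one has to pick $X_1$ carefully (large enough to contain the needed edges through $w$, small enough that the link-domination is exact and the link-connectivity hypothesis $(k-1)$-connected is actually satisfied by induction), and verify that $\lk_{X_1}[g,\OneEdge_i] \subseteq \lk_{X_1} v'$ genuinely holds at the level of dangling-equivalence classes rather than just set-theoretically on representatives — this is precisely the place where the properly-graded hypothesis (via Lemma~\ref{lem:dangling}) and the commuting of the matrix cloning maps get used, and getting the indices to line up so that the numerical bound $\eta(e(\Delta)-3)-1 \ge \eta(e(\Delta))-2$ (i.e. that losing three edges costs at most one in connectivity, which is why the divisor in $\eta$ is $4$ and not $3$) is tight but holds. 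A secondary point requiring care is that $\dlkmodel{B_*(R);\Delta}{n}$ is a flag-like complex so that ``conical at $w$'' is automatic, and that the link-of-$d$-simplex connectivity hypothesis of Lemma~\ref{lem:injectifying}/Proposition~\ref{prop:putman_flow} is met — this follows from the inductive hypothesis applied to the smaller $\Delta$'s obtained by deleting a $d$-simplex's worth of edges together with their neighbours, using again that each deleted edge removes at most three edges total from the relevant linear subgraph.
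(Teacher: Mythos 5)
Your high-level strategy matches the paper's: induct on $e(\Delta)$, show the complex is flag (so ``conical at $w$'' is automatic), identify links of simplices with lower-rank copies of the same complexes, and feed all of this into Proposition~\ref{prop:putman_flow}. The numerical bookkeeping you sketch (losing at most $3(d+1)$ edges per $d$-simplex, which $\eta(m)=\lfloor\frac{m-1}{4}\rfloor$ absorbs) is also essentially the paper's, up to a small off-by-one: the link of a vertex is a copy of $\dlkmodel{B_*(R);\Delta'}{n-1}$, not $n-2$ (Lemma~\ref{lem:links_are_cpxes} gives $n-(d+1)$ for a $d$-simplex).

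However, there are two genuine gaps, both centered on the three-level structure $X_0 \subseteq X_1 \subseteq X$ required by Proposition~\ref{prop:putman_flow}. First, you take $X_0$ to be $\dlkmodel{B_*(R);\Delta\setminus\{e_{n-1}\}}{n}$, i.e.\ remove a single edge. That leaves no room for an intermediate $X_1$ other than $X$ itself. But the exchange condition \eqref{item:exchange1} crucially works with $\lk_{X_1} v$, and the whole point of choosing $X_1$ to omit the distinguished edge is that the link inclusion $\lk_{X_1} v \subseteq \lk_{X_1} v'$ becomes achievable: in the paper's Lemma~\ref{lem:mut_lks_are_lks}, a simplex $[B,\Gamma]$ of $\lk_1([I_n,\OneEdge_k])$ has $\Gamma$ avoiding $e_1$, which (after reducing $B$ to $B'$ via Lemma~\ref{lem:borel_reduction}) forces the first column of $B'$ to be the standard vector $e_1$, so that $B'$ commutes with the elementary matrix $E_{1k}(\lambda)$ and the dangling computation goes through. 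With $X_1 = X$ these matchings may use the distinguished edge and the inclusion fails. The paper therefore removes \emph{two} edges for $X_0$ ($e_1, e_2$) and \emph{one} for $X_1$ ($e_1$); you should similarly take $X_0$ to remove $e_{n-1}$ and $e_{n-2}$, and $X_1$ to remove only $e_{n-1}$. (Removing two for $X_0$ also neatly avoids the vertices $[A,\OneEdge_{n-2}]$, which can never share a simplex with $w = [1,\OneEdge_{n-1}]$ and so cannot be handled by the exchange condition at all.)

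Second, your description of the exchange vertex $v'$ is backwards: you suggest $v'$ can be ``$w$ itself or a nearby vertex $[g',\OneEdge_{n-1}]$,'' i.e.\ a vertex carrying the same matching edge as $w$. But if $v'$ carries $\OneEdge_{n-1}$ it cannot span an edge with $w$ (the matching cannot repeat an edge), so $v' \in \st_{X_1} w$ forces $v' = w$, and $\lk_{X_1} v \subseteq \lk_{X_1} w$ is far too strong. The correct $v'$ (as in Lemma~\ref{lem:mut_lks_are_lks}) has the \emph{same} edge $\OneEdge_k$ as $v = [A,\OneEdge_k]$ but a \emph{modified} matrix $A'$, obtained by clearing the $(1,k)$-entry of the reduced representative $A$; this is exactly what puts $v'$ into $\lk w$ while keeping $\lk_{X_1} v \subseteq \lk_{X_1} v'$. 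Your appeal to ``cloning maps commute'' (the hedge relation $\clone_i\clone_{n-1}=\clone_{n-1}\clone_i$) does not actually deliver this inclusion; what does the work is the matrix-level commutation of $E_{1k}(\lambda)$ with representatives reduced relative to a matching avoiding the distinguished edge. With these two corrections, the rest of your outline — the relative connectivity of $(X,X_0)$ via a two-level Morse function, and the connectivity estimates for links — lines up with the paper's proof.
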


The base case is that $\dlkmodel{B_*(R);\Delta}{n}$ is non-empty provided $e(\Delta) \ge 1$, which is clearly true.

We need to do a bit of preparation before we can prove the proposition. To work with simplices of $\dlkmodel{B_*(R)}{n}$ it will be helpful to have simple representatives for dangling classes. To define them we have to recall some of the origins of $\dlkmodel{B_*(R)}{n}$: by Observation~\ref{obs:matchings_to_forests} matchings $\Gamma$ of $L_n$ correspond to elementary forests. Using this correspondence, it makes sense to denote the corresponding cloning map by $\clone_\Gamma$. In fact, since our cloning maps factor through the hedge monoid, we even get a cloning map $\clone_\Gamma$ for any spanning subgraph $\Gamma$ of $L_n$ using Observation~\ref{obs:graphs_to_hedges}. For the sake of readability, we describe this map explicitly. Let $D_k(\lambda)$ be the $k$-by-$k$ matrix with all diagonal entries $\lambda$ and all other entries $0$. Let $F_{k,\ell}(\lambda)$ be the $k$-by-$\ell$ matrix whose bottom row has all entries $\lambda$ and all other entries are $0$ and let $C_{k,\ell}(\lambda)$ be defined analogously for the top row. Assume that $\Gamma$ has $m$ connected components which we think of as numbered from left to right. Then
\[
\clone_\Gamma \colon M_m(R) \to M_n(R)
\]
can be described as follows. The image $\clone_\Gamma(A)$ has a block structure where columns and rows are grouped together if their indices lie in a common component of $\Gamma$. More precisely, the $(i,j)$-block has $k$ rows and $\ell$ columns if the $i$th (respectively $j$th) component of $\Gamma$ has $k$ (respectively $\ell$) vertices. The block is $D_k(A_{i,i})$, $F_{k,\ell}(A_{i,j})$ or $C_{k,\ell}(A_{i,j})$ depending on whether $i = j$, $i<j$, or $i>j$ (see Figure~\ref{fig:cloning}).

\begin{figure}[th]
\begin{multline*}
\left(
\begin{array}{ccc}
a_{1,1}&a_{1,2}&a_{1,3}\\
a_{2,1}&a_{2,2}&a_{2,3}\\
a_{3,1}&a_{3,2}&a_{3,3}
\end{array}
\right)
\stackrel{\clone_\Gamma}{\mapsto}
\left(
\begin{array}{ccc}
D_2(a_{1,1})&F_{2,4}(a_{1,2})&F_{2,3}(a_{1,3})\\
C_{4,2}(a_{2,1})&D_4(a_{2,2})&F_{4,3}(a_{2,3})\\
C_{3,2}(a_{3,1})&C_{3,4}(a_{3,2})&D_3(a_{3,3})
\end{array}
\right)\\[.5cm]
=\hspace{.5cm}
\left(
\begin{array}{c@{}cc|cccc|ccc}
& \tikzmark{t1} & \tikzmark{t2} & \tikzmark{t3} & \tikzmark{t4} & \tikzmark{t5} & \tikzmark{t6} &
\tikzmark{t7} & \tikzmark{t8}& \tikzmark{t9}\\[-.3cm]
\tikzmark{l1}&a_{1,1} & &                    &&&&                     &&\\
\tikzmark{l2}& &a_{1,1} &                    a_{1,2}&a_{1,2}&a_{1,2}&a_{1,2}&                     a_{1,3}&a_{1,3}&a_{1,3}\\
\hline
\tikzmark{l3}&a_{2,1} &a_{2,1} &                    a_{2,2}&&&&                     &&\\
\tikzmark{l4}&& &                    &a_{2,2}&&&                     &&\\
\tikzmark{l5}&& &                    &&a_{2,2}&&                     &&\\
\tikzmark{l6}&& &                    &&&a_{2,2}&                     a_{2,3}&a_{2,3}&a_{2,3}\\
\hline
\tikzmark{l7}&a_{3,1} &a_{3,1} &                    a_{3,2}&a_{3,2}&a_{3,2}&a_{3,2}&                     a_{3,3}&&\\
\tikzmark{l8}&& &                    &&&&                     &a_{3,3}&\\
\tikzmark{l9}&& &                    &&&&                     &&a_{3,3}
\end{array}
\right)
\end{multline*}
\tikz[remember picture, overlay,yshift=.5cm]
   \filldraw
     (pic cs:t1) circle (1.5pt)
     (pic cs:t2) circle (1.5pt)
     (pic cs:t3) circle (1.5pt)
     (pic cs:t4) circle (1.5pt)
     (pic cs:t5) circle (1.5pt)
     (pic cs:t6) circle (1.5pt)
     (pic cs:t7) circle (1.5pt)
     (pic cs:t8) circle (1.5pt)
     (pic cs:t9) circle (1.5pt);
\tikz[remember picture, overlay,yshift=.5cm]
   \draw
     (pic cs:t1) -- (pic cs:t2)  (pic cs:t3)  -- (pic cs:t4)  -- (pic cs:t5)  -- (pic cs:t6)  (pic cs:t7)  -- (pic cs:t8)  -- (pic cs:t9) ;
\tikz[remember picture, overlay,xshift=-.7cm]
   \filldraw
     (pic cs:l1) circle (1.5pt)
     (pic cs:l2) circle (1.5pt)
     (pic cs:l3) circle (1.5pt)
     (pic cs:l4) circle (1.5pt)
     (pic cs:l5) circle (1.5pt)
     (pic cs:l6) circle (1.5pt)
     (pic cs:l7) circle (1.5pt)
     (pic cs:l8) circle (1.5pt)
     (pic cs:l9) circle (1.5pt);
\tikz[remember picture, overlay,xshift=-.7cm]
   \draw
     (pic cs:l1) -- (pic cs:l2)   (pic cs:l3)  -- (pic cs:l4)  -- (pic cs:l5)  -- (pic cs:l6)  (pic cs:l7)  -- (pic cs:l8)  -- (pic cs:l9) ;

\caption[Cloning map of a graph]{Visualization of the cloning map of a graph. The graph $\Gamma$ is drawn on top and to the left of the last matrix.}
\label{fig:cloning}
\end{figure}

Recall that we denote by $e_k$ the $k$th edge of $L_n$. We denote by $\OneEdge_k$ the matching of $L_n$ whose only edge is $e_k$ (as we did in Lemma~\ref{lem:fin_gen_case}). For a spanning subgraph $\Gamma$ of $L_n$ we say that an index $i$ is \emph{fragile} if $e_i \in \Gamma$ and we say that $i$ is \emph{stable} otherwise. In other words, $i$ is stable if it is the rightmost vertex of its component in $\Gamma$. A matrix $A \in M_n(R)$ is said to be \emph{modeled on} $\Gamma$ if $A_{i,j} = 0$ whenever both $i$ and $j$ are stable in $\Gamma$ (see Figure~\ref{fig:modeled}).

\begin{figure}[th]
\[
\left(
\begin{array}{c@{}cc|cccc|ccc}
& \tikzmark{tt1} & \tikzmark{tt2} & \tikzmark{tt3} & \tikzmark{tt4} & \tikzmark{tt5} & \tikzmark{tt6} &
\tikzmark{tt7} & \tikzmark{tt8}& \tikzmark{tt9}\\[-.3cm]
\tikzmark{ll1}&* &* &    *                &*&*&*&  *                  &*&*\\
\tikzmark{ll2}&* & 0&         *           &*&*&0&             *       &*&0\\
\hline
\tikzmark{ll3}&* &* & *                   &*&*&*& *                   &*&*\\
\tikzmark{ll4}&*& *&  *                  &*&*&*& *                    &*&*\\
\tikzmark{ll5}&*& *& *                   &*&*&*&   *                  &*&*\\
\tikzmark{ll6}&*&0 & *                   &*&*&0& *                    &*&0\\
\hline
\tikzmark{ll7}&* &* &*                    &*&*&*&*                    &*&*\\
\tikzmark{ll8}&*&* &   *                 &*&*&*&  *                   &*&*\\
\tikzmark{ll9}&*& 0&       *             &*&*&0&       *              &*& 0
\end{array}
\right)
\quad\quad\quad
\left(
\begin{array}{c@{}cc|cccc|ccc}
& \tikzmark{ttt1} & \tikzmark{ttt2} & \tikzmark{ttt3} & \tikzmark{ttt4} & \tikzmark{ttt5} & \tikzmark{ttt6} &
\tikzmark{ttt7} & \tikzmark{ttt8}& \tikzmark{ttt9}\\[-.3cm]
\tikzmark{lll1}&* &* &    *                &*&*&*&  *                  &*&*\\
\tikzmark{lll2}& & 1&         *           &*&*&0&             *       &*&0\\
\hline
\tikzmark{lll3}& & &                  * &*&*&*& *                   &*&*\\
\tikzmark{lll4}&& &                    &*&*&*& *                    &*&*\\
\tikzmark{lll5}&& &                    &&*&*&   *                  &*&*\\
\tikzmark{lll6}&& &                    &&&1& *                    &*&0\\
\hline
\tikzmark{lll7}& & &                    &&&&*                    &*&*\\
\tikzmark{lll8}&& &                    &&&&                     &*&*\\
\tikzmark{lll9}&& &                    &&&&                     && 1
\end{array}
\right)
\]
\tikz[remember picture, overlay,yshift=.5cm]
   \filldraw
     (pic cs:tt1) circle (1.5pt)
     (pic cs:tt2) circle (1.5pt)
     (pic cs:tt3) circle (1.5pt)
     (pic cs:tt4) circle (1.5pt)
     (pic cs:tt5) circle (1.5pt)
     (pic cs:tt6) circle (1.5pt)
     (pic cs:tt7) circle (1.5pt)
     (pic cs:tt8) circle (1.5pt)
     (pic cs:tt9) circle (1.5pt);
\tikz[remember picture, overlay,yshift=.5cm]
   \draw
     (pic cs:tt1) -- (pic cs:tt2)  (pic cs:tt3)  -- (pic cs:tt4)  -- (pic cs:tt5)  -- (pic cs:tt6)  (pic cs:tt7)  -- (pic cs:tt8)  -- (pic cs:tt9) ;
\tikz[remember picture, overlay,xshift=-.7cm]
   \filldraw
     (pic cs:ll1) circle (1.5pt)
     (pic cs:ll2) circle (1.5pt)
     (pic cs:ll3) circle (1.5pt)
     (pic cs:ll4) circle (1.5pt)
     (pic cs:ll5) circle (1.5pt)
     (pic cs:ll6) circle (1.5pt)
     (pic cs:ll7) circle (1.5pt)
     (pic cs:ll8) circle (1.5pt)
     (pic cs:ll9) circle (1.5pt);
\tikz[remember picture, overlay,xshift=-.7cm]
   \draw
     (pic cs:ll1) -- (pic cs:ll2)   (pic cs:ll3)  -- (pic cs:ll4)  -- (pic cs:ll5)  -- (pic cs:ll6)  (pic cs:ll7)  -- (pic cs:ll8)  -- (pic cs:ll9) ;
\tikz[remember picture, overlay,yshift=.5cm]
   \filldraw
     (pic cs:ttt1) circle (1.5pt)
     (pic cs:ttt2) circle (1.5pt)
     (pic cs:ttt3) circle (1.5pt)
     (pic cs:ttt4) circle (1.5pt)
     (pic cs:ttt5) circle (1.5pt)
     (pic cs:ttt6) circle (1.5pt)
     (pic cs:ttt7) circle (1.5pt)
     (pic cs:ttt8) circle (1.5pt)
     (pic cs:ttt9) circle (1.5pt);
\tikz[remember picture, overlay,yshift=.5cm]
   \draw
     (pic cs:ttt1) -- (pic cs:ttt2)  (pic cs:ttt3)  -- (pic cs:ttt4)  -- (pic cs:ttt5)  -- (pic cs:ttt6)  (pic cs:ttt7)  -- (pic cs:ttt8)  -- (pic cs:ttt9) ;
\tikz[remember picture, overlay,xshift=-.7cm]
   \filldraw
     (pic cs:lll1) circle (1.5pt)
     (pic cs:lll2) circle (1.5pt)
     (pic cs:lll3) circle (1.5pt)
     (pic cs:lll4) circle (1.5pt)
     (pic cs:lll5) circle (1.5pt)
     (pic cs:lll6) circle (1.5pt)
     (pic cs:lll7) circle (1.5pt)
     (pic cs:lll8) circle (1.5pt)
     (pic cs:lll9) circle (1.5pt);
\tikz[remember picture, overlay,xshift=-.7cm]
   \draw
     (pic cs:lll1) -- (pic cs:lll2)   (pic cs:lll3)  -- (pic cs:lll4)  -- (pic cs:lll5)  -- (pic cs:lll6)  (pic cs:lll7)  -- (pic cs:lll8)  -- (pic cs:lll9) ;
\caption[Matrix reduced relative to a graph]{A matrix that is modeled on a graph (left) and an upper triangular matrix that is reduced relative to a graph (right).}
\label{fig:modeled}
\end{figure}

\begin{lemma}\label{lem:borel_reduction}
 Let $\Gamma$ be a spanning subgraph of $L_n$ with $m$ components and let $A \in B_n(R)$. There is a representative $B$ in the coset $A(B_m(R))\clone_\Gamma$ such that $B-I_n$ is modeled on $\Gamma$. Moreover, rows of zeroes in $A$ (off the diagonal) can be preserved in $B$.
\end{lemma}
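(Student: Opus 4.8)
The plan is to produce $B$ explicitly as $B=A\cdot(C)\clone_\Gamma$ for a matrix $C\in B_m(R)$ depending only on the stable rows and columns of $A$. Write $s_1<\dots<s_m$ for the stable indices of $\Gamma$, so that $s_p$ is the last vertex of the $p$-th component; for an $n$-by-$n$ matrix $M$ let $M^{\mathrm{st}}$ be the $m$-by-$m$ matrix whose $(p,q)$ entry is $M_{s_p,s_q}$. Then $B-I_n$ is modeled on $\Gamma$ exactly when $B^{\mathrm{st}}=I_m$, so it suffices to choose $C$ with $(A\cdot(C)\clone_\Gamma)^{\mathrm{st}}=I_m$; the resulting $B$ then automatically represents the coset $A\,(B_m(R))\clone_\Gamma$ and lies in $B_n(R)$, since $B_n(R)$ is a group containing both $A$ and $(C)\clone_\Gamma$.

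The key step is the identity
\[
(A\cdot(C)\clone_\Gamma)^{\mathrm{st}}=A^{\mathrm{st}}\cdot C\qquad\text{for all }C\in B_m(R)\text{.}
\]
This follows by reading off the stable columns of $(C)\clone_\Gamma$ from the block description preceding the lemma: the column indexed by $s_q$ is the last column of the $q$-th block-column, and in it the only nonzero entries of $(C)\clone_\Gamma$ occur in the rows indexed by $s_p$ with $p\le q$, where the value is $C_{p,q}$ --- it is the diagonal entry of the block $D_k(C_{q,q})$ when $p=q$, it lies in the bottom row of the block $F_{k,\ell}(C_{p,q})$ when $p<q$, and the block vanishes when $p>q$ because $C$ is upper triangular. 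Hence $(A\cdot(C)\clone_\Gamma)_{s_i,s_j}=\sum_p A_{s_i,s_p}C_{p,j}=(A^{\mathrm{st}}C)_{i,j}$. Since $A\in B_n(R)$, the matrix $A^{\mathrm{st}}$ is upper triangular with unit diagonal, so $A^{\mathrm{st}}\in B_m(R)$, and taking $C:=(A^{\mathrm{st}})^{-1}$ gives $B^{\mathrm{st}}=I_m$, as required.

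For the ``moreover'' clause, suppose the $r$-th row of $A$ vanishes off the diagonal; then $B_{r,j}=A_{r,r}\cdot((C)\clone_\Gamma)_{r,j}$ for every $j$. If $r$ is fragile it is not the last vertex of its component, and the block description shows that its row in $(C)\clone_\Gamma$ is zero off the diagonal (a $D$-block contributes only on the diagonal, an $F$-block only in its bottom row, which is indexed by a stable vertex, and a $C$-block not at all), so the $r$-th row of $B$ is zero off the diagonal. If $r=s_p$ is stable, the hypothesis forces $A^{\mathrm{st}}_{p,q}=0$ for $q\ne p$, i.e.\ the $p$-th row of $T:=A^{\mathrm{st}}$ is zero off the diagonal; for any column $w$ of $T^{-1}$, writing $Tw=e_q$ with $q\ne p$ gives $0=(Tw)_p=T_{p,p}w_p$, hence $w_p=0$ as $T_{p,p}$ is a unit, so the $p$-th row of $C=T^{-1}$ is zero off the diagonal as well; in particular $C_{p,q}=0$ for $q>p$, and the block description then shows the $s_p$-th row of $(C)\clone_\Gamma$, hence of $B$, is zero off the diagonal.

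The only genuine work is the block bookkeeping behind the displayed identity --- equivalently, the observations that $((C)\clone_\Gamma)^{\mathrm{st}}=C$ for every $C\in B_m(R)$ and that every such $(C)\clone_\Gamma$ has its stable columns supported on the stable rows. I do not expect an obstacle beyond keeping the component indices, the fragile/stable dichotomy, and the block sizes straight.
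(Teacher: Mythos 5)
Your proof is correct, and it takes a genuinely different route from the paper. The paper's argument is an inductive elimination: multiply $A$ on the right by elementary matrices $(E_{k,\ell}(\lambda))\clone_\Gamma$ to first normalize the stable diagonal entries and then clear the above-diagonal stable entries one at a time, with a separate observation that these operations leave a vanishing row untouched. You instead produce the coset representative in closed form as $B = A\,((A^{\mathrm{st}})^{-1})\clone_\Gamma$, where $A^{\mathrm{st}}$ is the $m\times m$ submatrix of $A$ indexed by the stable vertices. The heart of your argument is the identity $(A\,(C)\clone_\Gamma)^{\mathrm{st}} = A^{\mathrm{st}} C$, which rests on the block-structural observation that the stable columns of any $(C)\clone_\Gamma$ are supported only on stable rows, and that $((C)\clone_\Gamma)^{\mathrm{st}} = C$. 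That observation is exactly the bookkeeping the paper does implicitly when it says the elementary cloned matrices affect only the intended stable entries, so the two proofs draw on the same facts about $\clone_\Gamma$; yours packages them once into a single linear-algebraic identity. What you gain is an explicit formula for the reduced representative, which also makes the ``moreover'' clause transparent (a zero off-diagonal row of $A$ passes to a zero off-diagonal row of $A^{\mathrm{st}}$, hence of $(A^{\mathrm{st}})^{-1}$, hence of $(C)\clone_\Gamma$ by the block description). The paper's inductive approach is more elementary to state but requires checking at each step which entries are disturbed. Two small stylistic points: your phrase ``unit diagonal'' for $A^{\mathrm{st}}$ should be read as ``the diagonal entries are units'' (not ``equal to $1$''), which is what is needed for $A^{\mathrm{st}}\in B_m(R)$; and when you say a $C$-block ``contributes not at all'' in the fragile case, the more precise statement is that if $r$ is the first vertex of its component the top row of a $C$-block does land in row $r$, but the block is zero since $C$ is upper triangular — the conclusion is right either way.
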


\begin{proof}
 We inductively multiply $A$ on the right by matrices in $(B_m(R))\clone_\Gamma$ to eventually obtain $B$. Let $E_{i,j}(\lambda)$ denote the matrix that coincides with the identity matrix in all entries but $(i,j)$ and is $\lambda$ there.

 We begin by clearing the diagonal. Let $i$ be the (stable) rightmost vertex of the $k$th component of $\Gamma$ and let $\lambda = A_{i,i}^{-1}$. Then $A(E_{k,k}(\lambda))\clone_\Gamma$ has $(i,i)$-entry one and no other diagonal entry with stable indices was affected.

 Now we clear the region above the diagonal. We proceed inductively by rows and columns. Let $(i,j)$ be the (lexicographically) minimal pair of stable indices of $\Gamma$ such that $0 \ne A_{i,j} \eqdef - \lambda$. Let $i$ and $j$ lie in the $k$th respectively $\ell$th component of $\Gamma$. Then $A(E_{k,\ell}(\lambda))\clone_{m}$ has $(i,j)$-entry zero and no other entry with stable indices was affected.

 For the last statement assume that the $i$th row of $A$ was zero off the diagonal. Then none of the matrices by which we multiplied had a nonzero off-diagonal entry in the $i$th row. If $i$ is fragile no such matrix even lies in $(B_m(R))\clone_\Gamma$. If $i$ is stable then the only matrices we might have used of this form were meant to clear the $i$th row, but since the entries there were zero, nothing happened in these steps.
\end{proof}

\begin{corollary}[Reduced form]\label{cor:borel_reduced_form}
 Every simplex in $\dlkmodel{B_*(R)}{n}$ has a representative $(A,\Gamma)$ such that the matrix $A - I_n$ is modeled on $\Gamma$. \qed
\end{corollary}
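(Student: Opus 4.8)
The plan is to deduce the corollary directly from Lemma~\ref{lem:borel_reduction}; the only real work is to recognize that the coset appearing in that lemma is exactly a dangling class. First I would unwind what a simplex of $\dlkmodel{B_*(R)}{n}$ is: a dangling class $[A,\Gamma]$ with $A \in B_n(R)$ and $\Gamma$ a matching of $L_n$; writing $m$ for the number of connected components of $\Gamma$, the map $\clone_\Gamma$ is an injective homomorphism $B_m(R) \to B_n(R)$ (injective and homomorphic since the matrix cloning maps are). Since the cloning system on $B_*(R)$ has trivial $\rho$ (Lemma~\ref{lem:up_tri_clone}), the element $(A_2^{-1}A_1)\clone_\Gamma^{-1} \in B_m(R)$ acts trivially on $\Gamma$, so the dangling relation of Section~\ref{sec:dlks} collapses: $(A_1,\Gamma)$ and $(A_2,\Gamma)$ represent the same simplex exactly when $A_2^{-1}A_1 \in \image(\clone_\Gamma)$, i.e.\ when $A_1$ and $A_2$ lie in a common left coset of the subgroup $(B_m(R))\clone_\Gamma \le B_n(R)$.

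Given that, the corollary follows immediately: applying Lemma~\ref{lem:borel_reduction} to the representative $(A,\Gamma)$ produces a matrix $B$ in the coset $A(B_m(R))\clone_\Gamma$ with $B - I_n$ modeled on $\Gamma$; by the identification above $(B,\Gamma)$ dangles to $(A,\Gamma)$, so $[B,\Gamma] = [A,\Gamma]$, and $(B,\Gamma)$ is the required representative. The last sentence of Lemma~\ref{lem:borel_reduction} (preservation of off-diagonal zero rows) passes through verbatim if one wants it, but it is not needed for this statement.

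I do not expect any genuine obstacle here: all the content sits in Lemma~\ref{lem:borel_reduction}, whose proof is the explicit column-and-row reduction by right multiplication by elements of $(B_m(R))\clone_\Gamma$. The only two points deserving a sentence of care are (i) that the triviality of $\rho$ really does turn the dangling relation into a plain coset relation, and (ii) bookkeeping around the two roles of the letter $m$ — number of edges of $\Gamma$ versus number of components — which is harmless since $\Gamma$ is a matching and these two numbers sum to $n$.
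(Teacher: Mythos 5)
Your proposal is correct and takes essentially the same approach as the paper, which states the corollary as an immediate consequence of Lemma~\ref{lem:borel_reduction} (it carries a bare \qed). You correctly identify the one point the paper leaves implicit: that since $\rho$ is trivial for $B_*(R)$, condition~(3) of the dangling relation degenerates to $\Gamma_2=\Gamma_1$, so the equivalence class of $(A,\Gamma)$ with $\Gamma$ fixed is exactly the left coset $A\cdot\image(\clone_\Gamma)$ appearing in Lemma~\ref{lem:borel_reduction}; and your observation on the $m$-bookkeeping (edges versus components, summing to $n$ for a matching) is accurate.
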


We will refer to a matrix $A\in B_n(R)$ as being \emph{reduced relative $\Gamma$} if it satisfies the conclusion of Corollary~\ref{cor:borel_reduced_form}.

The next sequence of lemmas is a gradual checking of the hypotheses of Proposition~\ref{prop:putman_flow}, still in the context of an induction proof, ultimately leading to a proof of Proposition~\ref{prop:borel_thomp_fin_props_strong}.

\begin{lemma}[Flag complex]\label{lem:mtx_cpx_flag}
 $\dlkmodel{B_*(R);\Delta}{n}$ is a flag complex.
\end{lemma}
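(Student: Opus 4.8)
The plan is to show that $\dlkmodel{B_*(R);\Delta}{n}$ is flag, i.e.\ that a finite set of vertices that is pairwise joined by edges actually spans a simplex. Recall that vertices are dangling classes $[A,\OneEdge_i]$ with $A \in B_n(R)$ and $\OneEdge_i$ the single-edge matching at $e_i$, and a collection of such vertices spans a simplex precisely when it can be represented by a single pair $(A,\Gamma)$ with $\Gamma$ a matching containing the relevant edges and all the vertices obtained by restricting to the sub-one-edge-matchings. So the content is: if $[A_1,\OneEdge_{i_1}],\dots,[A_r,\OneEdge_{i_r}]$ are pairwise adjacent, then they have a common representative.

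First I would record the combinatorial shadow of the statement. Projecting to $\match(L_n)$ by $[A,\OneEdge_i]\mapsto e_i$, adjacency of two vertices forces in particular that the edges $e_{i_s}$ are pairwise disjoint (two vertices with $i_s = i_t$ or with $|i_s - i_t| = 1$ cannot be adjacent, since no matching contains both edges), so $\{e_{i_1},\dots,e_{i_r}\}$ is automatically a matching $\Gamma$. Thus the only real issue is the ``matrix coordinate'': I must produce a single $A$ whose various dangling classes along the sub-one-edge-matchings of $\Gamma$ recover the given $[A_s,\OneEdge_{i_s}]$. Here I would use the reduced-form representatives of Corollary~\ref{cor:borel_reduced_form}: choose each $A_s$ reduced relative to $\OneEdge_{i_s}$, so that $A_s - I_n$ is modeled on $\OneEdge_{i_s}$ — it has nonzero off-diagonal entries only in row $i_s$ (and a possible diagonal adjustment at $i_s$), since every index other than $i_s$ is stable in $\OneEdge_{i_s}$.

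Now the key step: using pairwise adjacency I would argue that the reduced representatives $A_s$ can be taken to have \emph{disjoint supports}, so that the ``product'' $A \defeq I_n + \sum_s (A_s - I_n)$ is a well-defined upper triangular matrix, lies in $B_n(R)$, is modeled on $\Gamma$, and has the property that cloning the restriction of $A$ to the $s$th one-edge-matching recovers $A_s$ up to the dangling equivalence (that is, $A$ and $A_s$ differ by an element of $\image \clone_{\OneEdge_{i_s}}$, because they agree in row $i_s$ and the discrepancy elsewhere is absorbed by the cloning map for $\OneEdge_{i_s}$, which only constrains row $i_s$ and the $(i_s{+}1)$-duplicated column). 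Concretely, adjacency of $[A_s,\OneEdge_{i_s}]$ and $[A_t,\OneEdge_{i_t}]$ says $A_t^{-1}A_s \in \image \clone_{\OneEdge_{i_s}}$ and the matching $\OneEdge_{i_s}$ is carried by this element to $\OneEdge_{i_s}$ — in reduced form this forces $A_s$ and $A_t$ to agree outside rows $i_s$ and $i_t$, and the two rows are independent because $e_{i_s}$ and $e_{i_t}$ are disjoint edges. Iterating over all pairs pins the $A_s$ down to a common ambient matrix on the complement of the fragile rows, and leaves each row $i_s$ free — which is exactly what is needed to glue them into one $A$.

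The main obstacle I expect is precisely this gluing/bookkeeping step: one must be careful that ``reduced relative to $\OneEdge_{i_s}$'' is not a unique normal form (the reduction in Lemma~\ref{lem:borel_reduction} leaves the fragile row essentially unconstrained), so a priori the $A_s$ could disagree in rows $i_t$ for $t\ne s$ in a way not visible to a single pairwise comparison. The remedy is to do the pairwise comparisons simultaneously: fix $A_1$ in reduced form, then for each $s$ replace $A_s$ by the reduced representative in its class that agrees with $A_1$ on all rows $i_t$ with $t\ne s$ (possible since those rows are stable for $\OneEdge_{i_s}$ and adjacency guarantees the classes are compatible there); then $A \defeq I_n + \sum_s(A_s - A_1) + (A_1 - I_n)$ — i.e.\ take row $i_s$ from $A_s$ and all other rows from $A_1$ — does the job, and one checks directly from the block description of $\clone_{\Gamma}$ in Figure~\ref{fig:cloning} that $A$ cloned down along each sub-one-edge-matching of $\Gamma$ gives back the $A_s$. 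This also makes transparent why the statement is special to the matrix case: the disjointness of fragile rows for a matching, together with the fact that $\clone_{\OneEdge_i}$ only duplicates a single column and only touches a single row, is what makes the supports combine without interference.
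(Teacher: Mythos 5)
Your high-level intuition — that pairwise adjacency should constrain the matrix coordinates enough to glue them, and that the localized nature of the cloning maps is what makes this possible for matrix groups — is the right one and is also what drives the paper's argument. But two concrete steps in your sketch fail. First, the description of the reduced form is wrong: if $A_s-I_n$ is modeled on $\OneEdge_{i_s}$, the nonzero off-diagonal entries live not only in row $i_s$ but also in \emph{column} $i_s$ (positions $(i,i_s)$ with $i<i_s$), since ``modeled on $\Gamma$'' only forbids $(A-I_n)_{i,j}\ne 0$ when \emph{both} $i$ and $j$ are stable. Those column entries cannot be cleared without violating the modeled-on condition in column $i_s+1$, because right-multiplication by $\image\clone_{i_s}$ changes columns $i_s$ and $i_s+1$ in lockstep; in fact for $n=3$, $i_s=2$ the reduced form is unique and the $(1,2)$-entry is a genuine invariant of the dangling class. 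Your glued matrix $A$, which takes row $i_s$ from $A_s$ and is otherwise the identity, throws this invariant away, so $[A,\OneEdge_{i_s}]$ will generally not equal $[A_s,\OneEdge_{i_s}]$. Second, your reading of adjacency as $A_t^{-1}A_s\in\image\clone_{\OneEdge_{i_s}}$ is only sufficient, not necessary: adjacency means there is a common representative $B$ with $A_s^{-1}B\in\image\clone_{i_s}$ and $A_t^{-1}B\in\image\clone_{i_t}$, which gives $A_t^{-1}A_s\in\image\clone_{i_t}\cdot\image\clone_{i_s}$ and not membership in a single one of these subgroups. So the compatibility constraint you extract from adjacency is not actually a consequence of adjacency, and the ``simultaneous normalization'' step has no justification.

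The paper's proof avoids both problems by proceeding differently. It inducts on the number of vertices, lets $\sigma=[A,\Gamma]$ (with $A$ reduced relative to $\Gamma$) be the $(r-1)$-simplex spanned by $v_2,\ldots,v_r$, normalizes $v_1=[I_n,\OneEdge_k]$ with $k$ strictly minimal, and shows $A\in\image\clone_k$. The two column conditions for membership in $\image\clone_k$ are automatic, because $k$ and $k+1$ are stable for $\Gamma$ (this uses minimality of $k$ plus the fact that matchings have no adjacent edges). The row-$k$ condition is then established entry by entry: for each fragile column index $\ell$, one takes the single-vertex reduced form $A_\ell$ of $v_\ell$, computes that the $(k,\ell)$-entries of $A$ and of $A_\ell D$ differ by a unit for \emph{every} $D\in\image\clone_\ell$, and then uses the existence of a common edge representative $B\in\image\clone_k$ (whose $k$th row vanishes) to force the entry to be $0$. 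That explicit unit-scaling computation is the real content of the lemma and exactly what your sketch replaces with the unjustified coset identity.
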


\begin{proof}
 We need to show that any collection of vertices $\{v_1,\dots,v_r\}$ that are pairwise connected by edges spans a simplex. We induct on $r$ (with the trivial base case of $r\le 2$). Each vertex $v_i$ in our collection is of the form $[A_i,\OneEdge_{k_i}]$ for $\OneEdge_{k_i}$ some single-edge subgraphs of $\Delta$. Assume without loss of generality that $k_1<k_i$ for all $1<i\le r$, so $v_1$ is the vertex whose lone merge occurs farthest to the left among all the $v_i$. By induction, $v_2,\dots,v_r$ span a simplex, $\sigma$. Thanks to the action of $B_n(R)$, without loss of generality $v_1$ is the vertex $[I_n,\OneEdge_k]$, where we have set $k\defeq k_1$ for brevity.
 
 Represent $\sigma=[A,\Gamma]$ with $A$ reduced relative to $\Gamma$. Since $k$ is less than the index of any edge of $\Gamma$, we know that the $k$th column of $A-I_n$ is all zeros. Since $v_1$ shares an edge with every vertex of $\sigma$, we know that in fact $k$ is even less than the index of any edge of $\Gamma$, minus one. Hence the $(k+1)$st column of $A-I_n$ is similarly all zeros. Our goal is to show that $A\in \image \clone_k$, since then $\sigma$ and $v_1$ will share a simplex. Thanks to the setup, it suffices to show that the $k$th row of $A-I_n$ is all zeros. Since $A$ is reduced relative $\Gamma$, non-zero entries of $A-I_n$ may only possibly occur in columns indexed by $k_2,\dots,k_r$.
 
 For each vertex $[A,\OneEdge_{k_i}]$, $2\le i\le r$, of $\sigma$, let $A_i$ be such that $[A_i,\OneEdge_{k_i}]=[A,\OneEdge_{k_i}]$ and $A_i$ is reduced relative $\OneEdge_{k_i}$. Let $\ell\in \{k_2,\dots,k_r\}$. Observe that $A_\ell$ is obtained from $A$ by right multiplication by an element $D$ of $\image(\clone_\ell)$. For $1\le i\le n$ denote by $M_{(i,*)}$ the $i$th row of an $n$-by-$n$ matrix $M$, and by $M_{(*,i)}$ the $i$th column. When we multiply by $D$ to get $AD=A_\ell$, the $(k,\ell)$-entry of $A_\ell$ is $A_{(k,*)}D_{(*,\ell)}$ and the $(k,\ell+1)$-entry is $A_{(k,*)}D_{(*,\ell+1)}$. Since $A_\ell$ is reduced relative $\OneEdge_\ell$, we know that its $(k,\ell+1)$-entry must be $0$. Also since $D\in\image(\clone_{\OneEdge_\ell})$, we have $D_{(*,\ell)}=D_{(*,\ell+1)}+d(e_\ell-e_{\ell+1})$ for some $d\in R^\times$. Let $a$ denote the $(k,\ell)$-entry of $A$, and note that the $(k,\ell+1)$-entry of $A$ is $0$. We calculate that the $(k,\ell)$-entry of $A_\ell$ is
 \begin{align*}
  A_{(k,*)}D_{(*,\ell)} &= A_{(k,*)}(D_{(*,\ell+1)}+d(e_\ell-e_{\ell+1})) \\
  &= A_{(k,*)}(d(e_\ell-e_{\ell+1})) \\
  &= da \text{.}
 \end{align*}
 Since $d$ is a unit, this shows that the $(k,\ell)$-entry of $A_\ell$ is zero if and only if the $(k,\ell)$-entry of $A$ is zero. By the same argument just given, this statement remains true with $A_\ell$ replaced by $A_\ell D$ for any $D\in\image(\clone_\ell)$. But by assumption $v_1$ shares an edge with $v_\ell$, and so some such $A_\ell D$ must have $(k,\ell)$-entry zero. We conclude that $A$ has $(k,\ell)$-entry zero. Since $\ell$ was arbitrary, the $k$th row of $A-I_n$ is all zeros and so $v_1$ and $\sigma$ share a simplex.
\end{proof}

Let $\Delta_0\defeq \Delta\setminus\{e_1\cup e_2\}$, and consider $\dlkmodel{B_*(R);\Delta_0}{n}$ as a subcomplex of the complex $\dlkmodel{B_*(R);\Delta}{n}$. For a vertex $[A,\OneEdge_k] \in  \dlkmodel{B_*(R);\Delta_0}{n}$ we write $\lk_0([A,\OneEdge_k])$ for the link in $\dlkmodel{B_*(R);\Delta_0}{n}$, to differentiate from the link in $\dlkmodel{B_*(R);\Delta}{n}$ which is just denoted $\lk([A,\OneEdge_k])$. To prove Proposition~\ref{prop:borel_thomp_fin_props_strong} we follow the strategy outlined by Proposition~\ref{prop:putman_flow}:  we want to show that $\dlkmodel{B_*(R);\Delta_0}{n}$ is $(\eta(e(\Delta))-2)$-connected, that $(\dlkmodel{B_*(R);\Delta}{n},\dlkmodel{B_*(R);\Delta_0}{n})$ is $(\eta(e(\Delta))-1)$-connected and that there is a vertex satisfying condition \eqref{item:exchange1}. That vertex is $w \defeq [I_n,\OneEdge_1]$ in our case. The following statements (up to the proof of Proposition~\ref{prop:borel_thomp_fin_props_strong}) are part of an induction, so we assume that Proposition~\ref{prop:borel_thomp_fin_props_strong} has been proven for graphs $\Delta'$ with $e(\Delta') < e(\Delta)$ and intend to prove it for $\Delta$.

\begin{lemma}[Links are lower rank complexes]\label{lem:links_are_cpxes}
 Let $\sigma$ be a simplex of dimension $d \ge 0$ in $\dlkmodel{B_*(R);\Delta}{n}$. Then $\lk(\sigma)$ is isomorphic to a complex of the form $\dlkmodel{B_{*}(R);\Delta'}{n-(d+1)}$ where $\Delta'$ is a spanning subgraph of $L_{n-(d+1)}$ with at least $e(\Delta)-3d-3$ edges. In particular, it is $(\eta(e(\Delta)-3d-3)-1)$-connected by induction.
\end{lemma}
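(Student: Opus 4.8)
The plan is to identify $\lk(\sigma)$ with a lower-rank descending-link model by ``collapsing'' the graph along the matching carried by $\sigma$. First I would choose a nice representative for $\sigma$. Since $\sigma$ has dimension $d$, it is represented by a pair $(A,\Gamma)$ where $\Gamma$ is a matching of $L_n$ with exactly $d+1$ edges, and by Corollary~\ref{cor:borel_reduced_form} we may take $A$ to be reduced relative to $\Gamma$. Using the action of $B_n(R)$ (which acts transitively on vertices, hence we can normalize one vertex, and then one checks the stabilizer still moves things enough — or, more robustly, we simply keep $A$ arbitrary reduced relative to $\Gamma$ and track it through the identification). The matching $\Gamma$ has $m \defeq n-(d+1)$ connected components (namely $n-2(d+1)$ isolated vertices and $d+1$ edges), so the cloning map $\clone_\Gamma \colon B_m(R) \to B_n(R)$ is defined, and $\Gamma$ itself is the image under the ``contract each component'' map of the trivial graph on $L_m$.

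Next I would build the isomorphism $\dlkmodel{B_*(R);\Delta'}{m} \to \lk(\sigma)$ for an appropriate spanning subgraph $\Delta'$ of $L_m$. A vertex $[B,\OneEdge_j]$ of $\dlkmodel{B_*(R);\Delta'}{m}$ should map to $[A\cdot(B)\clone_\Gamma,\, \Gamma \cup (\text{edge })E(\OneEdge_j)\text{ lifted}]$; that is, an edge $e_j$ of $L_m$ that is \emph{not} incident to any component already ``used up'' by $\Gamma$ lifts to an edge of $L_n$ between the rightmost vertex of the $j$th component and the leftmost vertex of the $(j+1)$st component of $\Gamma$, and this lifted edge is disjoint from all edges of $\Gamma$ precisely when the two components involved are both isolated vertices (or more generally when the lift does not collide with an edge of $\Gamma$). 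Working out exactly which edges of $L_m$ are ``legal'' gives the graph $\Delta'$: an edge $e_j$ of $\Delta'$ survives iff the corresponding lifted edge of $L_n$ lies in $\Delta$ and is vertex-disjoint from $\Gamma$. One then checks this assignment is a well-defined simplicial isomorphism onto $\lk(\sigma)$: the face relation is respected because subgraph inclusion on the $\Delta'$ side corresponds to subgraph inclusion (relative to the fixed $\Gamma$) on the $L_n$ side; it is injective and surjective because every vertex of $\lk(\sigma)$ is adjacent to $\sigma$, which forces its lone edge to avoid the $d+1$ edges of $\Gamma$ (after reducing its matrix relative to the combined graph, exactly as in the proof of Lemma~\ref{lem:mtx_cpx_flag}), hence it is the lift of a unique legal edge of $L_m$ together with a unique coset of $\clone_\Gamma$. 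Compatibility with dangling classes follows from condition~\eqref{item:fcs_product_of_clonings} (product of clonings), which is exactly what lets us compose $\clone_\Gamma$ with a further single cloning.

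Finally I would count edges. The matching $\Gamma$ uses up $d+1$ edges of $L_n$, but each such edge, together with the (at most two) edges of $L_n$ adjacent to it, is removed from consideration when passing to $\Delta'$: an edge of $L_n$ adjacent to an edge of $\Gamma$ cannot be the disjoint lift of any edge of $L_m$. So at most $3(d+1)$ edges of $\Delta$ fail to descend to $\Delta'$, giving $e(\Delta') \ge e(\Delta) - 3(d+1) = e(\Delta)-3d-3$. Combined with the inductive hypothesis (Proposition~\ref{prop:borel_thomp_fin_props_strong} for graphs with fewer edges, which applies since $e(\Delta') < e(\Delta)$ as soon as $d \ge 0$), we get that $\lk(\sigma) \cong \dlkmodel{B_*(R);\Delta'}{m}$ is $(\eta(e(\Delta)-3d-3)-1)$-connected, as claimed. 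The main obstacle I anticipate is the bookkeeping in the identification step: verifying that the normalization of matrix representatives (reduced form relative to $\Gamma$ and its enlargements) is consistent enough that the correspondence is genuinely well-defined on dangling classes and genuinely simplicial — this is where one has to be careful that "reduce relative to $\Gamma$, then reduce the leftover relative to the new edge" agrees with "reduce relative to the combined graph," which is the content one imports from Lemma~\ref{lem:borel_reduction} and the properly graded hypothesis.
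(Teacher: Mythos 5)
Your proposal is correct and follows essentially the same route as the paper: represent $\sigma$ by $(A,\Gamma)$, blow down the $d+1$ edges of $\Gamma$ to pass from $L_n$ to $L_{n-(d+1)}$, observe that each edge of $\Gamma$ together with its at most two neighbors is removed (losing at most $3(d+1)$ edges), and invoke the inductive hypothesis. The one place the paper is cleaner than your sketch is the normalization: since $B_n(R)$ acts simplicially on the whole complex by $h\cdot[g,\Gamma]=[hg,\Gamma]$, one can simply translate $\sigma$ by $g^{-1}$ and assume $\sigma=[1,\Gamma]$ from the start; then $\lk(\sigma)$ is literally $\dlkmodel{(B_*(R))\clone_\Gamma;\Delta^\sharp}{n}$, and applying $\clone_\Gamma^{-1}$ together with the blow-down map $b_\Gamma$ gives the isomorphism with no need to track reduced representatives through nested reductions. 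This sidesteps the bookkeeping worry you raise at the end — there is no ``reduce twice versus reduce once'' comparison to make once $g$ is normalized to the identity.
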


\begin{proof}
 The simplex $\sigma$ is of the form $[g,\Gamma]$ with $g \in B_n(R)$ and $\Gamma \subseteq \Delta$. If it has dimension $d$ then $\Gamma$ has $d+1$ edges, say $e_{i_1},\ldots,e_{i_{d+1}}$. Using the left action of $B_n(R)$ we may assume that $g = 1$. Then $\lk(\sigma)$ is $\dlkmodel{(B_{*}(R))\clone_\Gamma;\Delta^\sharp}{n}$, where $\Delta^\sharp$ is $\Delta$ with the edges $e_{i_j-1}, e_{i_j}, e_{i_j+1}$ removed for each $1 \le j \le d+1$. In particular $\Delta^\sharp$ has at least $e(\Delta)-3d-3$ edges. Now consider the map $b_\Gamma \colon L_n\to L_{n-(d+1)}$ given by blowing down the edges of $\Gamma$. The image of $\Delta^\sharp$ under $b_\Gamma$ is what we will call $\Delta'$. Note that $\Delta'$ still has at least $e(\Delta)-3d-3$ edges. Since $\clone_\Gamma$ is injective, we may now apply $\clone_\Gamma^{-1}$ paired with $b_\Gamma$ to $\dlkmodel{(B_{*}(R))\clone_\Gamma;\Delta^\sharp}{n}$ and get an isomorphism to $\dlkmodel{B_{*}(R);\Delta'}{n-(d+1)}$.
\end{proof}

\begin{lemma}\label{lem:borel_relative_connectivity}
 The pair $(\dlkmodel{B_*(R);\Delta}{n},\dlkmodel{B_*(R);\Delta_0}{n})$ is $(\eta(e(\Delta))-1)$-connected.
\end{lemma}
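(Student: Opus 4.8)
The plan is to prove this by a Morse-theoretic attaching argument, realizing $X\defeq\dlkmodel{B_*(R);\Delta}{n}$ as $X_0\defeq\dlkmodel{B_*(R);\Delta_0}{n}$ with the stars of a discrete family of vertices coned on. First I would identify the vertices of $X$ outside $X_0$. Since $\Delta_0=\Delta\setminus\{e_1\cup e_2\}$, a vertex $[A,\OneEdge_k]$ lies in $X_0$ exactly when $e_k\in\Delta_0$, i.e.\ when $k\ge 3$; the vertices $[A,\OneEdge_1]$ and $[A,\OneEdge_2]$ are the \emph{new} ones. The key combinatorial facts, both immediate, are: (i) no two new vertices span an edge of $X$, because an edge $[A_1,\OneEdge_{k_1}]$--$[A_2,\OneEdge_{k_2}]$ forces $\{e_{k_1},e_{k_2}\}$ to be a matching of $L_n$, which is impossible for $\{k_1,k_2\}\subseteq\{1,2\}$ as $e_1$ and $e_2$ share the vertex $2$; and (ii) for a new vertex $v=[A,\OneEdge_k]$ every neighbour of $v$ uses an edge $e_j$ disjoint from $e_k$, hence has $j\ge 3$, and the same holds for the matching of any simplex of $\lk(v)$, so $\lk(v)\subseteq X_0$. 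Also, any simplex of $X$ contains at most one new vertex (its matching contains at most one of the overlapping edges $e_1,e_2$), so $X=X_0\cup\bigcup_v \st(v)$ with the union over the new vertices.

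Next I would run the attaching argument itself: enumerate the new vertices as $v_1,v_2,\dots$ and set $X_i=X_0\cup\st(v_1)\cup\cdots\cup\st(v_i)$. By (i) and (ii), $X_i$ is obtained from $X_{i-1}$ by gluing the cone $\st(v_i)=v_i*\lk(v_i)$ along $\lk(v_i)\subseteq X_0\subseteq X_{i-1}$; since a cone is contractible, $\pi_j(\st(v_i),\lk(v_i))\cong\pi_{j-1}(\lk(v_i))$, so it suffices to show each $\lk(v_i)$ is $(\eta(e(\Delta))-2)$-connected. This is exactly where Lemma~\ref{lem:links_are_cpxes} is applied with $d=0$: $\lk(v_i)$ is isomorphic to some $\dlkmodel{B_*(R);\Delta'}{n-1}$ with $e(\Delta')\ge e(\Delta)-3$, and the induction hypothesis (Proposition~\ref{prop:borel_thomp_fin_props_strong} for graphs with fewer edges) makes it $(\eta(e(\Delta)-3)-1)$-connected. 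The only arithmetic needed is $\eta(m-3)=\lfloor(m-4)/4\rfloor=\lfloor m/4\rfloor-1\ge\lfloor(m-1)/4\rfloor-1=\eta(m)-1$, which upgrades this to $(\eta(e(\Delta))-2)$-connected. Hence each $(X_i,X_{i-1})$ is $(\eta(e(\Delta))-1)$-connected, and passing to the direct limit over the (possibly infinite) family of new vertices — harmless since spheres are compact and land in some finite stage — gives that $(X,X_0)$ is $(\eta(e(\Delta))-1)$-connected.

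The argument is essentially routine once the combinatorial picture is set up, and I do not anticipate a serious obstacle; the one point that genuinely needs care is fact (ii), namely that $\lk(v)$ lies \emph{entirely} in $X_0$ (not just that all vertices of $\lk(v)$ do), since otherwise the attaching would be along a proper subcomplex of the link and the connectivity bookkeeping would fail. Both (i) and (ii) reduce to the single observation that $\OneEdge_1$ and $\OneEdge_2$ overlap in the vertex $2$ of $L_n$, so no matching contains both and any edge compatible with either has index at least $3$. If one prefers to phrase the attaching step through the Morse Lemma (Lemma~\ref{lem:morse}) rather than directly, one uses a height function that is constant on $X_0$ and strictly larger and injective on the new vertices; fact (i) is precisely what guarantees the uniqueness-of-maximum condition, and it makes the descending link of each new vertex equal to its full link, already analysed above.
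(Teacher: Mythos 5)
Your proof is correct and takes essentially the same approach as the paper: the paper phrases the attaching step through the Morse Lemma with the two-valued height function (new vertices sent to~$1$, old to~$0$), which is precisely the coning-on argument you carry out directly, and the key observation — that the link of every new vertex lies entirely in $X_0$, hence in particular no two new vertices share a cell — is the same. You additionally spell out the arithmetic $\eta(e(\Delta)-3)-1 \ge \eta(e(\Delta))-2$ that the paper leaves implicit when combining Lemma~\ref{lem:links_are_cpxes} with the induction hypothesis.
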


\begin{proof}
 Note that for any vertex of $\dlkmodel{B_*(R);\Delta}{n} \setminus \dlkmodel{B_*(R);\Delta_0}{n}$, the entire link of the vertex lies in $\dlkmodel{B_*(R);\Delta_0}{n}$. Hence the function sending vertices of the former to $1$ and vertices of the latter to $0$ yields a Morse function in the sense of Section~\ref{sec:finiteness_props}, and to prove the statement we need only show that links of vertices in $\dlkmodel{B_*(R);\Delta}{n} \setminus \dlkmodel{B_*(R);\Delta_0}{n}$ are $(\eta(e(\Delta))-2)$-connected. By Lemma~\ref{lem:links_are_cpxes}, each descending link is isomorphic to a complex of the form $\dlkmodel{B_{*}(R);\Delta'}{n-1}$ for $\Delta'$ a graph with at least $e(\Delta)-3$ edges. By induction, these are $(\eta(e(\Delta))-2)$-connected as desired.
\end{proof}

In addition to the subcomplex $\dlkmodel{B_*(R);\Delta_0}{n}$ we will now need to consider $\dlkmodel{B_*(R);\Delta_1}{n}$ where $\Delta_1\defeq \Delta\setminus\{e_1\}$. We will write links in this complex using the symbol $\lk_1$.

\begin{lemma}[Shared links]\label{lem:mut_lks_are_lks}
 Let $k>2$ and let $A$ be reduced relative $\OneEdge_k$. Let $A'$ be obtained from $A$ by setting the $(1,k)$-entry to $0$. Then $\lk_1([A,\OneEdge_k]) \subseteq \lk_1([A',\OneEdge_k])$ and $[A',\OneEdge_k] \in \lk w$.
\end{lemma}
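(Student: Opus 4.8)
The claim has two parts: that $[A',\OneEdge_k] \in \lk w$ (recall $w = [I_n,\OneEdge_1]$), and that $\lk_1([A,\OneEdge_k]) \subseteq \lk_1([A',\OneEdge_k])$. For the first part, the plan is to verify directly that the simplices $[A',\OneEdge_k]$ and $[I_n,\OneEdge_1]$ span an edge in $\dlkmodel{B_*(R);\Delta}{n}$; since $k > 2$, the edges $e_1$ and $e_k$ of $L_n$ do not share a vertex, so $\OneEdge_1 \cup \OneEdge_k$ is a matching, and one only needs $A'$ (after the left $B_n(R)$-action is used to normalise $w$ to $[I_n,\OneEdge_1]$) to lie in the image of the relevant cloning map. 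Because $A$ is reduced relative to $\OneEdge_k$ and $A'$ is obtained from $A$ by killing its $(1,k)$-entry, the only possibly-nonzero off-diagonal entries of $A' - I_n$ lie in column $k$ in rows $> 1$, and in particular rows $1$ and $2$ of $A' - I_n$ are zero off the diagonal while columns $1$ and $2$ are zero off the diagonal as well. This is exactly the condition (as in the proof of Lemma~\ref{lem:mtx_cpx_flag}) for $A' \in \image\clone_1$, so $[A',\OneEdge_k]$ and $w$ share a simplex, giving $[A',\OneEdge_k] \in \lk w$.

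For the inclusion $\lk_1([A,\OneEdge_k]) \subseteq \lk_1([A',\OneEdge_k])$, the plan is to show that any simplex $\tau$ of $\dlkmodel{B_*(R);\Delta_1}{n}$ that forms a simplex with $[A,\OneEdge_k]$ also forms a simplex with $[A',\OneEdge_k]$. Here the crucial point is that inside $\dlkmodel{B_*(R);\Delta_1}{n}$ we have deleted the edge $e_1$, so every matching occurring in such a $\tau$ uses only edges $e_i$ with $i \ge 2$, and dangling equivalence is taken with respect to cloning maps of those graphs. The difference between $A$ and $A'$ is confined to the $(1,k)$-entry, i.e.\ row $1$; I would argue that modifying row $1$ does not affect whether $A$ (resp.\ $A'$) lies in the image of $\clone_\Gamma$ for any spanning subgraph $\Gamma$ of $\Delta_1$, and does not affect the dangling class of the common refinement, because the cloning maps $\clone_\Gamma$ with $e_1 \notin \Gamma$ act on row $1$ as on a ``stable'' index — they never duplicate row $1$, never insert the repeated-row pattern there, and the reduced-form bookkeeping in the $(1,*)$ entries is independent of what happens in rows $\ge 2$. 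Concretely, if $[B,\OneEdge_\ell]$ (some $\ell \ge 2$) together with $[A,\OneEdge_k]$ spans an edge, witnessed by a matrix $D \in \image\clone_{\OneEdge_k}$ with $AD$ reduced relative $\OneEdge_\ell$, one computes as in Lemma~\ref{lem:mtx_cpx_flag} that the entries of $AD$ governing this relation in rows $\ge 2$ are unchanged when $A$ is replaced by $A'$, and the row-$1$ entry of $A'D$ is likewise controlled (it is simply $0$ in the relevant spot since we zeroed it out), so $A'D$ is still reduced relative $\OneEdge_\ell$; the same $D$ witnesses that $[B,\OneEdge_\ell]$ spans an edge with $[A',\OneEdge_k]$. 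Iterating over the vertices of $\tau$ and using that $\dlkmodel{B_*(R);\Delta_1}{n}$ is flag (Lemma~\ref{lem:mtx_cpx_flag}) then promotes ``shares an edge with every vertex'' to ``shares a simplex,'' giving the inclusion of links.

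I expect the main obstacle to be the second part: carefully tracking how the cloning maps $\clone_\Gamma$ and the reduced-form normalisation interact with row $1$, and being sure that the witness $D$ for membership in a link can be chosen uniformly (independent of the row-$1$ modification) for \emph{each} vertex of an arbitrary simplex $\tau$ simultaneously. The key structural fact making this work is that in $\dlkmodel{B_*(R);\Delta_1}{n}$ the vertex $1$ of $L_n$ is stable for every graph that appears, so row $1$ and column $1$ behave ``rigidly'' under all the relevant cloning maps — this is precisely why one passes to $\Delta_1$ rather than $\Delta$ before asserting the link inclusion. Once that rigidity is spelled out, both parts reduce to the kind of block-matrix computation already carried out in the proof of Lemma~\ref{lem:mtx_cpx_flag}, so I would model the write-up closely on that proof.
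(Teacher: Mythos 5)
The first part of your argument (that $[A',\OneEdge_k] \in \lk w$) is essentially the paper's: since $A$ is reduced relative $\OneEdge_k$ and $k>2$, zeroing the $(1,k)$-entry makes the first row of $A'-I_n$ vanish and column $1$ is already $e_1$, so $A' \in \image\clone_1$. That part is fine.

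The second part has a concrete error. You claim that if $D \in \image\clone_k$ witnesses the edge and $AD$ is reduced relative $\OneEdge_\ell$, then $A'D$ is \emph{also} reduced relative $\OneEdge_\ell$. This is false. Since $D\in\image\clone_k$, row $k$ of $D$ equals $D_{kk}\,e_k^T$ with $D_{kk}$ a unit, so
\[
A'D - AD = (A'-A)D = -A_{1k}\,e_1 e_k^T D = -A_{1k}D_{kk}\,e_1 e_k^T\text{,}
\]
i.e.\ $A'D$ and $AD$ differ exactly in the $(1,k)$-entry by $-A_{1k}D_{kk}$. Since $\ell\ne 1,k$ the indices $1$ and $k$ are stable for $\OneEdge_\ell$, so reducedness of $AD$ forces $(AD)_{1k}=0$ and hence $(A'D)_{1k}=-A_{1k}D_{kk}\ne 0$ whenever $A_{1k}\ne 0$. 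So $A'D$ is \emph{not} reduced relative $\OneEdge_\ell$. The conclusion you want (same $D$ witnesses the edge with $[A',\OneEdge_k]$) does happen to hold, but for a different reason: $A'D = AD\cdot E_{1k}(\mu)$ with $\mu=\lambda D_{11}^{-1}D_{kk}$ (this is just $D^{-1}E_{1k}(\lambda)D=E_{1k}(\mu)$ using that row $k$ and column $1$ of $D$ are scalar multiples of $e_k^T$ and $e_1$), and $E_{1k}(\mu)\in\image\clone_\ell$ precisely because $\ell\notin\{1,k-1,k\}$, hence $[A'D,\OneEdge_\ell]=[AD,\OneEdge_\ell]=[B,\OneEdge_\ell]$. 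Your write-up never supplies this reasoning; the ``reduced-form bookkeeping in row $1$ is independent'' claim, as stated, is simply not true.

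The paper's proof is cleaner on this point: it first uses the left $B_n(R)$-action to normalize $A=I_n$ (so $A'=E_{1k}(\lambda)$), then for a simplex $[B,\Gamma]$ in $\lk_1([I_n,\OneEdge_k])$ picks a representative $B'$ reduced relative $\Gamma$ with $k$th row zero off the diagonal, observes that $B'$ \emph{commutes} with $E_{1k}(\lambda)$ (because its first column is $e_1$ and its $k$th row is $e_k^T$), and then uses $E_{1k}(-\lambda)\in\image\clone_\Gamma$. Note also that the paper handles arbitrary simplices directly and does not need the detour through vertices and the flag lemma. Your structural intuition — that the point of $\Delta_1$ is that index $1$ stays stable for every graph in play, so row and column $1$ behave rigidly under the relevant cloning maps — is exactly right; it just needs to be cashed out as a commutation/conjugation statement rather than as preservation of reducedness.
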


\begin{proof}
 As a first observation, note that since $A$ is reduced relative $\OneEdge_k$ and $k>2$, the $(1,1)$-entry and $(2,2)$-entry of $A$ are both $1$, and the entries of the top row of $A$ past the first entry is all $0$'s except possibly in the $k$th column. Let $-\lambda$ be the $(1,k)$-entry of $A$, and note that $A' = A E_{1k}(\lambda)$. The first row of $A'$ is now $(1,0,\ldots,0)$ and the $(2,2)$-entry is $1$, which tells us that $A' \in (B_{n-1}(R))\clone_1$. Hence $[A',\OneEdge_k] \in \lk_0 w$.

 To see that $\lk_1 ([A,\OneEdge_k]) \subseteq \lk_1([A',\OneEdge_k])$ we first multiply by $A^{-1}$ from the left and are reduced to showing that $\lk_1 ([I_n,\OneEdge_k]) \subseteq \lk_1([E_{1k}(\lambda),\OneEdge_k])$. An arbitrary simplex of $\lk_1([I_n,\OneEdge_k])$ is of the form $[B,\Gamma]$, with $B\in\image(\clone_k)$ and $\Gamma$ not containing any of $e_1$, $e_{k-1}$, $e_k$, or $e_{k+1}$. Note that the $k$th row of $B$ is zero off the diagonal. By Lemma~\ref{lem:borel_reduction} there is a $B' \in B \image(\clone_\Gamma)$ that is reduced relative $\Gamma$ and has $k$th row zero off the diagonal. We have $[B',\Gamma]=[B,\Gamma]$. Since $e_1 \not\in \Gamma$ and $B'$ is reduced relative $\Gamma$, the first column of $B'$ is $e_1$.
 
 We now claim that $B'$ commutes with $E_{1k}(\lambda)$. Indeed, left multiplication by $E_{1k}(\lambda)$ is the row operation $r_1\mapsto r_1+\lambda r_k$, and right multiplication by $E_{1k}(\lambda)$ is the column operation $c_k\mapsto c_k+\lambda c_1$. For our $B'$, both of these operations change the $(1,k)$-entry by adding $\lambda$ to it, and change no other entries. This proves the claim.
 
 Now we have
 \[
 [B,\Gamma] =[B',\Gamma]\\
 =[E_{1k}(\lambda)B'E_{1k}(-\lambda),\Gamma]\\
 =[E_{1k}(\lambda)B',\Gamma]\\
 =[E_{1k}(\lambda)B,\Gamma]\text{.}
 \]
 The second to last step works since $E_{1k}(-\lambda)\in\image(\clone_\Gamma)$ by virtue of $e_{k-1},e_k \not \in \Gamma$. This shows that our arbitrary simplex of $\lk_1([I_n,\OneEdge_k])$ is also in $\lk_1([E_{1k}(\lambda),\OneEdge_k])$.
\end{proof}

\begin{proof}[Proof of Proposition~\ref{prop:borel_thomp_fin_props_strong}]
 We want to apply Proposition~\ref{prop:putman_flow}. The complexes are $X = \dlkmodel{B_*(R);\Delta}{n}$, $X_1 = \dlkmodel{B_*(R);\Delta_1}{n}$ and $X_0 = \dlkmodel{B_*(R);\Delta_0}{n}$, and $k = \eta(e(\Delta))-1$. We check the assumptions. The pair $(\dlkmodel{B_*(R);\Delta}{n},\dlkmodel{B_*(R);\Delta_0}{n})$ is $k$-connected by Lemma~\ref{lem:borel_relative_connectivity}. Since $X$ is a flag complex (Lemma~\ref{lem:mtx_cpx_flag}), it is conical at every vertex, in particular at our vertex $w=[I_n,\OneEdge_1]$. The complex $\dlkmodel{B_*(R);\Delta_0}{n}$ is $(\eta(e(\Delta_0))-1)$-connected by induction. This is sufficient because $\eta(e(\Delta_0)) - 1 \ge \eta(e(\Delta) - 2) - 1 \ge \eta(e(\Delta)) - 2 = k -1$. The link of a $d$-simplex is $(\eta(e(\Delta)-3d-3)-1)$-connected by Lemma~\ref{lem:links_are_cpxes}. This is sufficient because $\eta(e(\Delta)-3d-3) - 1\ge \eta(e(\Delta)) - d - 2 = k - d - 1$. Finally condition \eqref{item:exchange1} is satisfied by Lemma~\ref{lem:mut_lks_are_lks} where $\lk_1([A,\OneEdge_k])$ is at least $(\eta(e(\Delta)-4)-1)$-connected and $\eta(e(\Delta)-4) - 1 = \eta(e(\Delta)) - 2 = k - 1$ as desired.
\end{proof}

Shifting focus to the Abels groups, thanks to the flexibility of Lemma~\ref{lem:borel_reduction}, the above arguments also show high connectivity of $\dlkmodel{\abels_*(\Z[1/p])}{n}$, and using Proposition~\ref{prop:generic_finiteness} and Theorem~\ref{thm:abels} we conclude:

\begin{theorem}\label{thrm:thomp_abels_Finfty}
 $\Thomp{\abels_*(\Z[1/p])}$ is of type~$\F_\infty$.
\end{theorem}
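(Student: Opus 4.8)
The plan is to deduce the theorem from Proposition~\ref{prop:generic_finiteness} exactly as the inequality $\ge$ in Theorem~\ref{thrm:borel_thomp_pos_fin_props} is deduced from Proposition~\ref{prop:borel_thomp_fin_props_strong}, the difference being that this time the finiteness lengths in the family are unbounded. Writing $G_k \defeq \abels_{k-1}(\Z[1/p])$ for the $k$th group in the family, Theorem~\ref{thm:abels} gives $\phi(G_k)=k-2$, so for every fixed $n$ the groups $G_k$ are eventually of type~$\F_n$. The cloning system on $\abels_{*-1}(\Z[1/p])$ is properly graded, being the restriction to a cloning-invariant subsystem of the properly graded cloning system on $B_*(\Z[1/p])$ from Lemma~\ref{lem:up_tri_clone}. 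Thus by Proposition~\ref{prop:generic_finiteness} it suffices to show that the connectivity of $\dlkmodel{\abels_*(\Z[1/p])}{n}$ tends to $\infty$ with $n$; since $n$ is arbitrary this yields type~$\F_n$ for every $n$. Unlike in Theorem~\ref{thrm:borel_thomp_full_fin_props}, no negative statement is needed here because $\liminf_n\phi(\abels_n(\Z[1/p]))=\infty$.

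For the connectivity I would simply re-run the induction of Section~\ref{sec:mtx_pos_fin_props} with $B_*(R)$ replaced throughout by $\abels_{*-1}(R)$: for a spanning subgraph $\Delta$ of $L_n$, define $\dlkmodel{\abels_*(R);\Delta}{n}$ as the subcomplex of $\dlkmodel{\abels_*(R)}{n}$ using only subgraphs of $\Delta$, and claim it is $(\eta(e(\Delta))-1)$-connected. The whole chain of auxiliary results — Lemma~\ref{lem:borel_reduction} and Corollary~\ref{cor:borel_reduced_form} on reduced representatives, Lemma~\ref{lem:mtx_cpx_flag} (flag), Lemma~\ref{lem:links_are_cpxes} (links are lower-rank complexes of the same shape), Lemma~\ref{lem:borel_relative_connectivity}, Lemma~\ref{lem:mut_lks_are_lks} (shared links), and the application of the Putman flow Proposition~\ref{prop:putman_flow} — should transfer once one checks that every matrix manipulation used stays inside the relevant Abels subgroups. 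Two observations make this routine. First, by the block description in Figure~\ref{fig:cloning} the first and last diagonal blocks of $\clone_\Gamma(A)$ are $D_k(A_{1,1})$ and $D_\ell(A_{m,m})$, so the top-left and bottom-right entries of $\clone_\Gamma(A)$ equal $1$ exactly when those of $A$ do; hence $\clone_\Gamma$ both carries the Abels groups into one another and reflects their defining condition, so the link of a simplex in $\dlkmodel{\abels_*(R);\Delta}{n}$ is again of the form $\dlkmodel{\abels_*(R);\Delta'}{n'}$ and the induction closes. Second, the right-multiplications in the reduction of Lemma~\ref{lem:borel_reduction} are by cloned elementary matrices $E_{k,\ell}(\lambda)$ and $E_{k,k}(\lambda)$: the off-diagonal ones automatically lie in the relevant Abels group, while a diagonal $E_{k,k}(\lambda)$ is the identity whenever the stable index being normalized lies in the extreme (first or last) component, since the corresponding entry already equals $1$ — and the flexibility clause of Lemma~\ref{lem:borel_reduction} ("rows of zeroes can be preserved") is precisely what keeps the reduction compatible with the pinning.

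I expect the main, though mild, obstacle to be exactly this bookkeeping near the two pinned corners: one must verify that whenever $e_1$ lies in the graph $\Gamma$, the reduced form and the flag- and shared-link arguments never actually require normalizing a diagonal entry inside the first component of $\Gamma$ — which is the case, since in Lemmas~\ref{lem:mtx_cpx_flag} and~\ref{lem:mut_lks_are_lks} the graph $\Gamma$ under consideration never contains $e_1$ — so that Lemma~\ref{lem:borel_reduction} and its consequences go through unchanged for $\abels_{*-1}(R)$. Granting this, Proposition~\ref{prop:borel_thomp_fin_props_strong} holds for $\abels_{*-1}(R)$, and taking $\Delta=L_n$ shows $\dlkmodel{\abels_*(\Z[1/p])}{n}$ is $(\eta(n-1)-1)$-connected. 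Since $\eta(n-1)=\lfloor (n-2)/4\rfloor\to\infty$, Proposition~\ref{prop:generic_finiteness} gives that $\Thomp{\abels_*(\Z[1/p])}$ is of type~$\F_n$ for every $n$, i.e.\ of type~$\F_\infty$.
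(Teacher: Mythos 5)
Your proposal is correct and matches the paper's approach: the paper's proof of this theorem is the single remark that ``thanks to the flexibility of Lemma~\ref{lem:borel_reduction}, the above arguments also show high connectivity of $\dlkmodel{\abels_*(\Z[1/p])}{n}$,'' combined with Proposition~\ref{prop:generic_finiteness} and Theorem~\ref{thm:abels}. Your more detailed account of why the argument transfers (cloning maps preserving and reflecting the Abels condition, the reduction lemma's ``rows of zeros'' clause, and the observation that the relevant graphs never contain $e_1$) is exactly the content hidden behind the paper's appeal to flexibility.
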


This, despite none of the $\abels_n(\Z[1/p])$ individually being $\F_\infty$.

\medskip

The remaining question is whether $\phi(\Thomp{B_*(R)})=\liminf_n(\phi(B_n(R)))$, that is whether negative finiteness properties of the $B_n(R)$ can impose negative finiteness properties on $\Thomp{B_*(R)}$. For $R$ the ring of $S$-integers of a global function field, we will answer this question affirmatively in the next section.

Before we do that, we need to treat one more relative of the family $B_n(R)$: Let $B_n^2$ be the normal subgroup of $B_n$ consisting of matrices that differ from the identity only from the second off-diagonal on (the second term of the lower central series), and let $\barB_n \defeq B_n/B_n^2$ be the quotient group. Set $\nu(n) = \lfloor \frac{n-2}{3} \rfloor$. One could check that the above proof for $B_*$ goes through for the family $\barB_*$ as well, but instead we will prove directly:

\begin{proposition}
\label{prop:barb_thomp_pos_fin_props}
The descending link $\dlkmodel{\barB_*(R)}{n}$ is $(\nu(n)-1)$-connected. Thus $\displaystyle \phi(\Thomp{\barB_*(R)}) \ge \liminf_n \phi(\barB_*(R))$.
\end{proposition}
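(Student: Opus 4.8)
The plan is to deduce the second (``$\phi$'') assertion from the first as an immediate application of Proposition~\ref{prop:generic_finiteness}, and to prove the connectivity assertion by projecting $\dlkmodel{\barB_*(R)}{n}$ onto the matching complex $\match(L_n)$ and analysing the fibers. For the first reduction: if $n\le\liminf_k\phi(\barB_k(R))$ then by definition the $\barB_k(R)$ are eventually of type~$\F_n$, and since $\nu(k)\to\infty$ the connectivity assertion makes $\dlkmodel{\barB_*(R)}{k}$ eventually $(n-1)$-connected, so Proposition~\ref{prop:generic_finiteness} gives that $\Thomp{\barB_*(R)}$ is of type~$\F_n$; letting $n$ range over all admissible values (all of $\N$ if the $\liminf$ is infinite) yields $\phi(\Thomp{\barB_*(R)})\ge\liminf_k\phi(\barB_k(R))$.

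So it remains to prove that $\dlkmodel{\barB_*(R)}{n}$ is $(\nu(n)-1)$-connected. I would first record the concrete structure that distinguishes $\barB_n(R)$ from $B_n(R)$: an element of $\barB_n(R)$ is determined by its diagonal $(d_1,\dots,d_n)\in(R^\times)^n$ and its first superdiagonal $(a_1,\dots,a_{n-1})\in R^{n-1}$, so $\barB_n(R)\cong R^{n-1}\rtimes(R^\times)^n$, and $\rho_n=0$. As for $B_*(R)$, the cloning maps on $\barB_*(R)$ factor through the hedge monoid, so every spanning subgraph $\Gamma$ of $L_n$ has a cloning map $\clone_\Gamma$, and $[\bar A,\Gamma]\mapsto\Gamma$ is a well-defined simplicial map
\[
p\colon\dlkmodel{\barB_*(R)}{n}\to\match(L_n)\text{.}
\]
Because $\rho_n=0$, dangling identifies $(\bar A,\Gamma)$ with $(\bar A',\Gamma)$ precisely when $\bar A^{-1}\bar A'\in\image\clone_\Gamma$; hence a simplex of $\dlkmodel{\barB_*(R)}{n}$ lying over a sub-matching $\{e_{i_0},\dots,e_{i_d}\}$ is simply a coset of $\image\clone_{\{e_{i_0},\dots,e_{i_d}\}}$, and in particular each vertex fiber $p^{-1}(e_i)$ is the non-empty discrete set $\barB_n(R)/\image\clone_{\OneEdge_i}$.

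The key step is the claim that over every simplex $\sigma$ of $\match(L_n)$, say with edge set $\{e_{i_0},\dots,e_{i_d}\}$, the subcomplex $p^{-1}(\bar\sigma)$ is the join $p^{-1}(e_{i_0})\ast\cdots\ast p^{-1}(e_{i_d})$ of its vertex fibers. Via the coset description this reduces to two statements about the single-edge cloning maps: (i) $\image\clone_{\{e_{i_0},\dots,e_{i_d}\}}=\bigcap_j\image\clone_{\OneEdge_{i_j}}$, and (ii) the map $\barB_n(R)\to\prod_j\barB_n(R)/\image\clone_{\OneEdge_{i_j}}$ is surjective. Both are checked directly from the formula $\image\clone_{\OneEdge_i}=\{\bar A:A_{i,i}=A_{i+1,i+1},\ A_{i,i+1}=0\}$, using that the edges of a matching are pairwise non-adjacent, so that the triples of coordinates $\{d_{i_j},d_{i_j+1},a_{i_j}\}$ constrained by the various $\clone_{\OneEdge_{i_j}}$ are pairwise disjoint and can therefore be pinned down independently; this ``independence'' is precisely the feature that fails for $B_n(R)$ and forces the delicate argument of Lemma~\ref{lem:mtx_cpx_flag}, and it makes $\barB_*$ behave like the direct-power example of Section~\ref{sec:direct_prods}. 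Granting the claim, $p^{-1}(\bar\sigma)$ is a join of $\dim\sigma+1$ non-empty discrete sets and hence $(\dim\sigma-1)$-connected, so since $\match(L_n)$ is $(\nu(n)-1)$-connected the fiber theorem \cite[Theorem~9.1]{quillen78}, applied exactly as in Section~\ref{sec:direct_prods}, shows that $\dlkmodel{\barB_*(R)}{n}$ is $(\nu(n)-1)$-connected.

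The main obstacle is the structural claim of the previous paragraph, and within it assertion~(ii): one must carefully track which diagonal and superdiagonal entries each single-edge cloning image fixes and argue that along a matching these prescriptions are jointly realizable, even though $\barB_n(R)$ is not literally a direct product. The bookkeeping is elementary but is exactly where the non-adjacency of the edges of a matching has to be used; everything else is either formal or a transcription of the direct-power argument.
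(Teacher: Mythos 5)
Your proof is correct and takes essentially the same approach as the paper. The paper reduces the complex $\dlkmodel{\barB_*(R)}{n}$ to the matching complex $\mathcal{M}(sL_n)$ by exhibiting unique normal forms (via Lemma~\ref{lem:borel_reduction}) and then projects to $\mathcal{M}(L_n)$, whereas you carry out the coset bookkeeping directly and show that fibers over a $d$-simplex are $(d+1)$-fold joins of vertex fibers via your claims~(i) and~(ii); both versions then invoke Quillen's fiber theorem identically. The two presentations are interchangeable: your claims~(i) and~(ii) are exactly what underwrite the uniqueness of the paper's normal form, and the disjointness of the coordinate triples $\{d_{i_j},d_{i_j+1},a_{i_j}\}$ along a matching is the same structural fact the paper is exploiting when it writes the representative block-by-block.
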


\begin{proof}
Using reductions as in Lemma~\ref{lem:borel_reduction} one can see the following: every simplex in $\dlkmodel{\barB_*(R)}{n}$ has a representative $[A,\Gamma]$ where the matrix $A$ has a diagonal block of the form
\[
\left(
\begin{array}{cc}
* & *\\
& 1
\end{array}
\right)
\]
above every edge of $\Gamma$ and otherwise equals the identity matrix (here the representative is modulo dangling as well as modulo $B_n^2(R)$). What makes this case particularly easy is that this representative is unique. That is, we may think of $\dlkmodel{\barB_*(R)}{n}$ as consisting of pairs $(A,\Gamma)$ where $A$ is as above and the face relation is given by removing an edge of $\Gamma$ and turning the diagonal block above it into an identity block.

Let $sL_n$ denote the linear graph with vertices $\{1,\ldots,n\}$ and with every pair of adjacent vertices $i$ and $i+1$ connected by $s$ distinct edges. By what we just said, $\dlkmodel{\barB_*(R)}{n}$ is isomorphic to the matching complex $\mathcal{M}(sL_n)$ where $s = \abs{R^* \times R}$. There is an obvious map $\mathcal{M}(sL_n) \to \mathcal{M}(L_n)$. The fiber of this map over a $k$-simplex is a $(k+1)$-fold join of $s$-element sets, thus $k$-spherical. Moreover $\mathcal{M}(L_n)$ is $(\nu(n)-1)$-connected by \cite[Proposition~11.16]{kozlov08} (and links in $\mathcal{M}(L_n)$ are highly connected as well, being joins of lower-rank copies of the complex). Thus we can apply \cite[Theorem~9.1]{quillen78} to conclude that $\mathcal{M}(sL_n)$ is $(\nu(n)-1)$-connected.
\end{proof}

As a remark, this simple approach for $\barB_*(R)$ would not have worked for $B_*(R)$, since the analogous fibers are not joins of vertex fibers.

\subsection{Negative finiteness properties}\label{sec:mtx_neg_fin_props}

In the last section we saw that for any $R$, the generalized Thompson group $\Thomp{B_*(R)}$ is of type~$\F_n$ if all but finitely many $B_k(R)$ are. In this section we prove the converse in the case we are most interested in (cf.~\cite{bux04}): Let $\GlobalField$ be a global function field and let $S$ be a non-empty set of places. Denote by $\calO_S$ the ring of $S$-integers in $\GlobalField$.

\begin{theorem}\label{thrm:borel_thomp_neg_fin_props}
 The group $\Thomp{B_*(\calO_S)}$ is not of type~$\FP_{\abs{S}}$.
\end{theorem}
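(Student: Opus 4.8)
The plan is to deduce the statement from Theorem~\ref{thrm:relative_brown}. Take $\Lambda = \Thomp{B_*(\calO_S)}$ and let $\Gamma = B_2(\calO_S)$ be the subgroup $\{[\tree,g,\tree]\mid g\in B_2(\calO_S)\}$ attached to the two-leaf tree $\tree$. The inclusion $\Gamma\into\Lambda$ factors through $\Lambda$ itself, so once the hypotheses of Theorem~\ref{thrm:relative_brown} are checked with $n=\abs{S}$, the conclusion applied to $\Delta=\Lambda$ gives that $\Lambda$ is not of type $\FP_{\abs{S}}$. Thus the real task is to produce: a $\Lambda$-CW-complex $Y$ that is $(\abs{S}-1)$-acyclic and whose $p$-cell stabilizers (in both $\Lambda$ and $\Gamma$) are of type $\FP_{\abs{S}-p}$; a $\Gamma$-cocompact subspace $Z\subseteq Y$; a $\Lambda$-cocompact filtration $(Y_\alpha)$ of $Y$; and to show that no $\alpha$ with $Z\subseteq Y_\alpha$ makes $\tilde H_{\abs{S}-1}(Z\into Y_\alpha)$ trivial.

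The raw material is the space used by Bux in \cite{bux04}. Writing $X_v$ for the Bruhat--Tits tree of $\GL_2(\GlobalField_v)$, the group $B_2(\calO_S)$ acts on the contractible $\abs{S}$-dimensional complex $Y_2\defeq\prod_{v\in S}X_v$ with finite cell stabilizers; reduction theory (Busemann/horoball functions) supplies a $B_2(\calO_S)$-cocompact subcomplex $Z_2\subseteq Y_2$ and a $B_2(\calO_S)$-cocompact filtration $(Y_{2,\alpha})$ of $Y_2$ for which $\tilde H_{\abs{S}-1}(Z_2\into Y_{2,\alpha})\ne 0$ for every $\alpha$ — this non-vanishing is the hard core of \cite{bux04} and is exactly what must be transported. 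Each $B_m(\calO_S)$ acts on $Y_2$ through the top-left-block retraction $\pi_m\colon B_m(\calO_S)\to B_2(\calO_S)$, compatibly with the directed-system maps (up to a mismatch with $\clone_1$, discussed below).

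I would assemble $Y$ over the Stein--Farley complex $\Stein{B_*(\calO_S)}$ as a complex of spaces: over a cell $\sigma$ whose stabilizer is a finite-index subgroup of some $B_m(\calO_S)$ one puts a copy of the Bux space for $B_m(\calO_S)$, the gluing maps over face inclusions being the ones forced by the cloning maps, and $Y$ is the realization. Contractibility of $Y$ should follow from contractibility of $\Stein{B_*(\calO_S)}$ and of every fibre; the filtration $(Y_\alpha)$ is obtained by pairing ``number of feet'' on the base with the Bux filtration on the fibres, and it is $\Lambda$-cocompact; the subcomplex $Z$ is the $\Gamma$-orbit of $Z_2$ in the fibre over the two-leaf vertex. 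The non-triviality of $\tilde H_{\abs{S}-1}(Z\into Y_\alpha)$ is then to be reduced to that of $\tilde H_{\abs{S}-1}(Z_2\into Y_{2,\alpha})$ by showing that the Stein--Farley directions of $Y$ cannot fill the Bux class below the stage at which it already fails to be filled in $Y_2$.

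I expect two genuine obstacles. First, the cell-stabilizer hypothesis fails for the naive ``blow up by $Y_2$'': a blown-up vertex over level $m$ would have stabilizer commensurable with $\ker\pi_m\cong\calO_S^{2(m-2)}\rtimes B_{m-2}(\calO_S)$, which need not be of type $\FP_{\abs{S}}$. One therefore has to replace the fibre over a level-$m$ cell by the product of Bruhat--Tits buildings of the $\GL_m(\GlobalField_v)$, on which $B_m(\calO_S)$ acts with finite cell stabilizers, and then verify — this is the technical nuisance — that the cloning maps, which are not restrictions of homomorphisms $\GL_m\to\GL_{m+1}$, still induce the gluing maps one needs between these buildings, and that the non-vanishing homology persists in the larger model (for $m=2$ it is literally the Bux tree). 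Second, and harder, is the transport of the non-vanishing class itself: the extra room in $\Lambda$ — expansions of trees, which can trivialize relations essential inside $B_2(\calO_S)$ — must be shown not to kill the class at any finite stage of $(Y_\alpha)$. Making this precise is where the bulk of the reduction-theoretic machinery of \cite{bux04} has to be rerun inside the forest combinatorics, and it is the step I would budget the most effort for.
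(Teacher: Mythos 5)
Your proposal misses the central idea of the paper's argument, and the gaps you flag at the end are not minor technicalities but precisely where the approach breaks down. The paper explicitly records (in the remark before Theorem~\ref{thrm:borel_thomp_neg_fin_props} and the remark after Lemma~\ref{lem:good_stab}) that no direct construction of a good space for $\Thomp{B_*(\calO_S)}$ is known: the kernel of $B_n(\calO_S) \to \prod_{s\in S}\PB_2(\GlobalField_s)$ contains groups of Abels type which are not of type~$\F_\infty$, so point stabilizers in any ``blow up'' fail the hypotheses of Brown-type criteria. The missing idea is to pass to the quotient system $\barB_* = B_*/B_*^2$: after killing the second term of the lower central series, the relevant stabilizer becomes the central product $Z_n(\calO_S)\barB_n(\CoefficientField)$ of a finitely generated abelian group and a finite group (Lemma~\ref{lem:good_stab}), which is of type~$\F_\infty$. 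This is what makes the whole argument go.

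Beyond this, your space $Y$ is structurally different from the paper's. You propose a complex of Bux spaces glued over the Stein--Farley complex via the cloning maps; but the cloning maps are not restrictions of homomorphisms $\GL_m\to\GL_{m+1}$, so the gluing is ill-defined, and you give no argument for contractibility of the assembled total space. The paper instead takes the restricted direct product $Y = V^{(D)}$, with $V=\prod_{s\in S}V_s$ a product of $\PGL_2$ Bruhat--Tits trees and $D$ the dyadic rationals in $(0,1)$: each coordinate sees only the $2\times 2$ diagonal blocks via $\alpha_i$, the $F$-part of $\Thomp{\barB_*(\calO_S)}$ permutes the $D$-coordinates, and contractibility and cocompactness of the filtration $Y_r = Y^{(r)}\cap V_r^{(D)}$ are immediate; non-vanishing of $\tilde H_{\abs{S}-1}(Y_k\to Y_m)$ then reduces to a K\"unneth argument (Lemma~\ref{lem:non-trivial_maps}), not to re-running reduction theory inside forest combinatorics. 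Your suggestion to replace the rank-$2$ trees by rank-$m$ buildings fibrewise would also change the target homology; it is the rank-$2$ Bux class that is being transported, and the paper keeps everything in rank $2$ by design.

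Finally, you apply Theorem~\ref{thrm:relative_brown} with $\Lambda = \Delta = \Thomp{B_*(\calO_S)}$, which collapses it to ordinary Brown's criterion and requires you to build the good space for $\Thomp{B_*(\calO_S)}$ directly — exactly what cannot be done. The paper's use is asymmetric: it takes $\Lambda = \Thomp{\barB_*(\calO_S)}$ (for which the good space exists), $\Gamma = B_2(\calO_S)$, $Z$ the fibre over $q=1/2$, and then concludes for $\Delta = \Thomp{B_*(\calO_S)}$ because the inclusion $B_2(\calO_S)\hookrightarrow \Thomp{\barB_*(\calO_S)}$ factors through it. That final step — transferring the negative finiteness property from a group you can compute with to the group you actually care about — is the whole point of Theorem~\ref{thrm:relative_brown}, and it is absent from your plan.
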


\begin{remark}
 Unlike the positive statement from the previous section, for the proof of Theorem~\ref{thrm:borel_thomp_full_fin_props} we cannot just use the \emph{results} from \cite{bux04} but have to use parts of the \emph{proof}. By using the more substantial parts of the proof, it is quite possible that the setup of this section could be used to prove the positive finiteness properties as well, but we will not do so.
\end{remark}

We will actually prove first that $\Thomp{\barB_*(\calO_S)}$ is not of type~$\FP_{\abs{S}}$. We then use the result from Section~\ref{sec:relative_brown} to deduce Theorem~\ref{thrm:borel_thomp_full_fin_props}. Instead of the Stein--Farley complex on which $\Thomp{\barB_*(\calO_S)}$ acts with stabilizers isomorphic to the $\barB_*(\calO_S)$ we will construct a new space $Y$ for which the stabilizers are themselves generalized Thompson groups of smaller cloning systems. In particular the stabilizers on $Y$ will have good finiteness properties and the negative finiteness properties of the $\barB_*(\calO_S)$ are reflected in bad connectivity properties.

\medskip

For any place $s \in S$ denote by $\GlobalField_s$ the completion of $\GlobalField$ at $s$, and by $\calO_s$ the ring of integers of $\GlobalField_s$. As before we let $B_n$ be the linear algebraic group of invertible upper triangular $n$-by-$n$ matrices, let $B_n^2$ be the normal subgroup of matrices that differ from the identity only from the second off-diagonal on, and let $\barB_n \defeq B_n/B_n^2$ be the quotient group. Let $Z_n \le B_n$ be the group of homotheties, i.e., scalar multiples of the identity matrix, and let $\PB_2 = B_2/Z_2$.

All of this is relevant to us for the following reason: For any of the local fields $\GlobalField_s$ the group $\PGL_2(\GlobalField_s)$ admits a Bruhat--Tits tree $V_s$ on which it acts properly. Since $\calO_S$ is discrete as a subset of $\prod_{s \in S} \GlobalField_s$ when embedded diagonally, we get a properly discontinuous action of $\PGL_2(\calO_S)$ on
$$V \defeq \prod_{s \in S} V_s \text{.}$$
Our goal is to use this action to understand finiteness properties of $\Thomp{\barB_*(\calO_S)}$. Note that the group $\PGL_2(\calO_s)$ is the stabilizer of a vertex in $V_s$, call it $z_s$. Define $z \defeq (z_s)_{s \in S}$, so $z$ is a vertex in $V$.

Denote the quotient morphism from $B_n$ to $\barB_n$ by $\bar{~} \colon B_n \to \barB_n \text{, } g \mapsto \bar{g}$. For $1 \le i \le n-1$ let $\pi_i$ denote the homomorphism $\barB_n \to \PB_2, [A] \mapsto [A_i]$ where $A_i$ is the $i$th diagonal $2$-by-$2$ block of $A$. For brevity we denote the composition $\pi_i \circ \bar{~}$ by $\bar{\pi}_i$. Now for any $i$, $1 \le i \le n-1$ consider the composition
\[
\alpha_i \colon \barB_n(\calO_S) \to \prod_{s \in S} \bar{B}_n(\GlobalField_s) \stackrel{\prod \pi_i(\GlobalField_s)}{\to} \prod_{s \in S} \PB_2(\GlobalField_s)
\]
where the first morphism is induced by the diagonal inclusion $\calO_S \to \prod_{s \in S} \GlobalField_s$.
Define
$$K_n \defeq \bigcap_{1 \le i \le n-1} \alpha_i^{-1}\left(\prod_{s \in S} \PB_2(\calO_s)\right)\text{.}$$

\medskip

\begin{lemma}\label{lem:good_stab}
 The group $K_n$ is of type~$\F_\infty$.
\end{lemma}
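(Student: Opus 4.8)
The plan is to realize $K_n$ as an iterated fibre product and to identify it with a finite-index subgroup of a group which is manifestly of type~$\F_\infty$ --- concretely, a generalized Thompson group of a small cloning system (or a suitable product of $B_2$-type groups), so that the positive finiteness results of Section~\ref{sec:mtx_pos_fin_props} apply. First I would unwind the definition: the homomorphisms $\alpha_i$ record the diagonal $2$-by-$2$ blocks of a matrix in $\barB_n(\calO_S)$, viewed in $\prod_{s\in S}\PB_2(\GlobalField_s)$, and $K_n$ is the preimage of the product of the compact-open subgroups $\prod_{s\in S}\PB_2(\calO_s)$ under the product map $(\alpha_1,\dots,\alpha_{n-1})$. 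Since $\PB_2(\calO_s)$ is the stabilizer of the vertex $z_s$ in the Bruhat--Tits tree $V_s$, and these are cocompact in $\PGL_2$, the key observation is that each factor $\PB_2(\calO_s)\le\PB_2(\GlobalField_s)$ is of type~$\F_\infty$ (it acts properly cocompactly on a tree, hence is virtually free), and moreover the ``diagonal blocks'' constraint cuts out of $\barB_n(\calO_S)$ a group whose off-diagonal part (the unipotent radical of $\barB_n$, which in $\barB_n$ is just the first off-diagonal, an abelian group of $\calO_S$-points) survives entirely.

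The cleanest route is to recognize that $K_n$ fits into a short exact sequence
\[
1 \to U_n(\calO_S) \to K_n \to \prod_{i=1}^{n-1}\Bigl(\text{(finite-index subgroup of) }\PB_2(\calO_S)\Bigr) \to 1
\]
where $U_n$ is the (abelian) unipotent radical of $\barB_n$, so $U_n(\calO_S)\cong\calO_S^{\,n-1}$ is finitely generated abelian, hence of type~$\F_\infty$; and each quotient factor is, by the $S$-arithmetic theory, of type~$\F_\infty$ because $\PB_2(\calO_S)$ with its block stabilizers intersected against $\prod_s\PB_2(\calO_s)$ is a lattice acting properly cocompactly on a product of trees of the relevant factors --- again $\F_\infty$. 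Actually, more carefully: the constraint $\alpha_i^{-1}(\prod_s\PB_2(\calO_s))$ forces the $i$th diagonal block to lie in an $S$-arithmetic subgroup of $\PB_2$ that stabilizes a vertex in each local tree, i.e.\ in a group commensurable with $\PB_2(\calO_S\cap\bigcap_s\calO_s)$; for a function field this intersection is the ring of constants, a finite field, so the block group is finite, or at worst virtually of type $\F_\infty$ by the same tree argument. Then $K_n$ is (virtually) an extension of a finitely generated abelian group by a group of type~$\F_\infty$, hence of type~$\F_\infty$, since type~$\F_\infty$ is closed under extensions and passes to and from finite-index subgroups.

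I expect the main obstacle to be pinning down \emph{exactly} what the quotient group $K_n/U_n(\calO_S)$ is and verifying it is of type~$\F_\infty$ rather than merely finitely presented: one must be careful that intersecting the $S$-arithmetic group $\barB_n(\calO_S)$ with the local conditions $\prod_s\PB_2(\calO_s)$ on \emph{each} of the $n-1$ blocks simultaneously still produces a group that acts properly \emph{cocompactly} on an appropriate contractible complex (a product of Bruhat--Tits trees, one factor per place in $S$ per block, or rather the relevant diagonal embedding thereof). The honest statement is that $K_n$ acts properly on $V^{\,n-1}=\prod_{i=1}^{n-1}\prod_{s\in S}V_s$ via $(\alpha_1,\dots,\alpha_{n-1})$ with the vertex-stabilizer condition making the action land in the stars of the $z_s$'s, and this action --- after accounting for the unipotent radical, which acts by a cocompact lattice in a vector group fixing the chosen vertices --- is cocompact; then $K_n$ is of type~$\F_\infty$ because it acts properly cocompactly on a contractible complex with stabilizers that are themselves of type~$\F_\infty$ (compact-open groups in the local factors, which are profinite, hence finite after the discreteness of $\calO_S$ forces finite stabilizers). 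The technical heart is the cocompactness verification; everything else (closure of $\F_\infty$ under extensions, finite index, and the input that vertex stabilizers in Bruhat--Tits trees restricted to $S$-arithmetic groups are finite) is standard and can be cited. If a direct cocompactness argument proves awkward, the fallback is to identify $K_n$ with the generalized Thompson-type group built from the $2$-by-$2$ block data and invoke Proposition~\ref{prop:generic_finiteness} together with Proposition~\ref{prop:barb_thomp_pos_fin_props}, whichever gives the cleaner bookkeeping.
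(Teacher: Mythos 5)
There are genuine gaps in this proposal, and the main one undermines the plan as stated.

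The claimed short exact sequence $1\to U_n(\calO_S)\to K_n\to(\cdots)\to 1$ is wrong: the unipotent radical over $\calO_S$ does \emph{not} ``survive entirely.'' Take $u = I + a e_{i,i+1}$ with $a\in\calO_S$. Then $\alpha_i(u)$ is (the class of) $\left(\begin{smallmatrix}1&a\\0&1\end{smallmatrix}\right)$, and this lies in $\PB_2(\calO_s)$ if and only if $a\in\calO_s$. Requiring this for all $s\in S$, in addition to $a\in\calO_S$, forces $a$ to be integral at every place, i.e.\ $a\in\CoefficientField$ (the constant field). So the unipotent part of $K_n$ is $U_n(\CoefficientField)\cong\CoefficientField^{\,n-1}$, a \emph{finite} group, not $\calO_S^{\,n-1}$. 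Moreover, even if $\calO_S^{\,n-1}$ did sit inside $K_n$, the argument ``$\calO_S^{\,n-1}$ is finitely generated abelian, hence $\F_\infty$'' is false: as an additive group $\calO_S$ is an infinite-dimensional $\CoefficientField$-vector space (already $\mathbb{F}_p[t]$ is not finitely generated), so $\calO_S^{\,n-1}$ is not even of type $\F_1$. There is a further confusion in the claim that $\PB_2(\calO_S)$ acting on products of trees gives a group of type $\F_\infty$ --- the entire point of this section of the paper is that $B_2(\calO_S)$ is of type $\F_{\abs{S}-1}$ and not $\F_{\abs{S}}$, so this cannot be used as a black box.

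Your ``more carefully'' aside does identify the crucial fact --- that the local conditions force each $2\times 2$ block into (a group commensurable with) $\PB_2$ over the constant field --- but you never go back and repair the wrong sequence, nor do you see that this insight in fact gives the entire answer with essentially no further work. The paper's proof computes $K_n$ outright: $\bigcap_i \pi_i(\GlobalField_s)^{-1}(\PB_2(\calO_s)) = Z_n(\GlobalField_s)\barB_n(\calO_s)$, and intersecting with $\barB_n(\calO_S)$ over all $s\in S$ gives $K_n = Z_n(\calO_S)\barB_n(\CoefficientField)$, a central product of a finitely generated abelian group (Dirichlet--Hasse--Chevalley $S$-unit theorem) and a finite group. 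No short exact sequence with an infinite unipotent kernel, and no cocompactness argument on a product of trees, is needed. I would recommend dropping the SES framing entirely and carrying the local computation through to the explicit description of $K_n$.
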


\begin{remark}
 The importance of the Lemma lies in the fact that the groups $K_n$ will appear in stabilizers of an action of $\Thomp{\barB_*(\calO_S)}$. It is worth noting that the statement does not remain true if $\barB_n(\calO_S)$ is replaced by $B_n(\calO_S)$ so that the strategy does not immediately carry over to $\Thomp{B_*(\calO_S)}$. Instead we will have to apply Theorem~\ref{thrm:relative_brown} in the end to conclude that $\Thomp{B_*(\calO_S)}$ is not of type~$\FP_{\abs{S}}$.
\end{remark}

\begin{proof}[Proof of Lemma~\ref{lem:good_stab}]
 We first study the map $\pi_i(\GlobalField_s) \colon \barB_n(\GlobalField_s) \to \PB_2(\GlobalField_s)$. The kernel $N_i(\GlobalField_s)$ is determined by the conditions that the $(i,i+1)$-entry of a matrix is $0$ and that the $(i,i)$ and the $(i+1,i+1)$-entry coincide. The inverse image of $\PB_2(\calO_s)$ under $\pi_i(\GlobalField_s)$ is thus generated by $N_i(\GlobalField_s)$ and a copy of $B_2(\calO_s)$. Intersecting over all $i$, we find that $\bigcap_{i} \pi_i(\GlobalField_s)^{-1}(\PB_2(\calO_s)) = Z_n(\GlobalField_s) \barB_n(\calO_s)$.

 The intersection of this group with $\barB_n(\calO_S)$ is $Z_n(\calO_S)\barB_n(\calO_{S \setminus \{s\}})$. Intersecting over all $s\in S$ we find that $K_n = Z_n(\calO_S)\barB_n(\CoefficientField)$ where $\CoefficientField \defeq \calO_\emptyset$ is the coefficient field of $k$, which is finite. In particular $\barB_n(\CoefficientField)$ is finite and of type~$\F_\infty$. By the Dirichlet Unit Theorem, as extended to $S$-units by Hasse and Chevalley, $Z_n(\calO_S)$ is finitely generated abelian and so of type~$\F_\infty$. Since $K_n$ is a central product of these groups, this finishes the proof.
\end{proof}

Now consider the action of $\barB_n(\calO_S)$ on $V^{n-1}$ via the maps $\alpha_i$.

\begin{corollary}\label{cor:all_stabs_F_infty}
 The stabilizers for the action of $\barB_n(\calO_S)$ on $V^{n-1}$ are all of type~$\F_\infty$.
\end{corollary}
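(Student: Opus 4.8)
The plan is to reduce every stabilizer to the group $K_n$ of Lemma~\ref{lem:good_stab} up to commensurability, and then to invoke that type~$\F_\infty$ is a commensurability invariant (being inherited both by and from finite‑index subgroups). Throughout, ``commensurable'' means that the intersection has finite index in each.

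First I would observe that, since $V$ is a product of locally finite trees, $V^{n-1}$ is a locally finite CAT(0) cube complex on which $\barB_n(\calO_S)$ acts by cubical automorphisms through the tuple $(\alpha_1,\dots,\alpha_{n-1})$; each $\alpha_i$ takes values in $\prod_{s\in S}\PB_2(\GlobalField_s)$, which acts on $V=\prod_{s\in S}V_s$ factorwise via the inclusions $\PB_2(\GlobalField_s)\le\PGL_2(\GlobalField_s)$. Because the complex is locally finite, the stabilizer of any cube (equivalently, of any point of $V^{n-1}$) is commensurable with the stabilizer of a vertex: a cube stabilizer permutes the finitely many vertices of the cube, and within a vertex stabilizer there are only finitely many cubes containing that vertex. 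So it suffices to treat a vertex $x=(x_1,\dots,x_{n-1})$, with $x_i=(x_{i,s})_{s\in S}$, for which
\[
\Stab_{\barB_n(\calO_S)}(x)=\bigcap_{i=1}^{n-1}\alpha_i^{-1}\Bigl(\textstyle\prod_{s\in S}\Stab_{\PB_2(\GlobalField_s)}(x_{i,s})\Bigr).
\]
The key point is that each $\Stab_{\PB_2(\GlobalField_s)}(x_{i,s})$ is a compact open subgroup of $\PB_2(\GlobalField_s)$: it is the intersection of the closed subgroup $\PB_2(\GlobalField_s)$ with a vertex stabilizer of $\PGL_2(\GlobalField_s)$ acting on its Bruhat--Tits tree, and the latter is a maximal compact open subgroup. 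Hence $\prod_s\Stab_{\PB_2(\GlobalField_s)}(x_{i,s})$ is compact open in $\prod_s\PB_2(\GlobalField_s)$; since $\prod_s\PB_2(\calO_s)$ is too, and any two compact open subgroups of a locally compact group are commensurable, $\prod_s\Stab_{\PB_2(\GlobalField_s)}(x_{i,s})$ is commensurable with $\prod_s\PB_2(\calO_s)$ for every $i$.

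To finish I would transport this through the preimages $\alpha_i^{-1}$ and through the finite intersection over $i$, using two elementary facts about commensurability in an ambient group $\Gamma$: if $\varphi\colon\Gamma\to Q$ is a homomorphism and $A,B\le Q$ are commensurable then $\varphi^{-1}(A)$ and $\varphi^{-1}(B)$ are commensurable (because $\varphi^{-1}(A)\cap\varphi^{-1}(B)=\varphi^{-1}(A\cap B)$ has index at most $[A:A\cap B]$ in $\varphi^{-1}(A)$, and symmetrically); and commensurability of two subgroups of $\Gamma$ is preserved by intersecting both with a fixed subgroup, hence, by induction on the number of factors, by finite intersections. Applying these with $\Gamma=\barB_n(\calO_S)$ and $\varphi=\alpha_i$ yields that $\Stab_{\barB_n(\calO_S)}(x)$ is commensurable with $\bigcap_{i=1}^{n-1}\alpha_i^{-1}\bigl(\prod_s\PB_2(\calO_s)\bigr)=K_n$, which is of type~$\F_\infty$ by Lemma~\ref{lem:good_stab}. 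Since type~$\F_\infty$ passes to and from finite‑index subgroups, every cell stabilizer is then of type~$\F_\infty$.

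I expect the argument to be essentially routine. The one input that is not formal is the (standard) description of the vertex stabilizers of $\PGL_2$ over a local field acting on its Bruhat--Tits tree as maximal compact open subgroups; the only thing that needs a little care is keeping the commensurability estimates organised correctly under the preimages $\alpha_i^{-1}$ and the intersection over $i$.
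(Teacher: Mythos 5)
Your proof is correct and takes essentially the same route as the paper's: both reduce every stabilizer to $K_n$ up to commensurability and then invoke Lemma~\ref{lem:good_stab} together with the fact that type~$\F_\infty$ is a commensurability invariant. The paper states the commensurability step tersely (``since $V^{n-1}$ is locally finite and the action is proper, every stabilizer is commensurable to $K_n$''), whereas you supply the missing details — identifying vertex stabilizers downstairs as compact open subgroups of $\prod_s \PB_2(\GlobalField_s)$, using that any two compact open subgroups of a locally compact group are commensurable, and then carefully propagating commensurability through the preimages $\alpha_i^{-1}$ and the finite intersection over $i$, plus the cube-versus-vertex reduction from local finiteness.
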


\begin{proof}
 The group $K_n$ is precisely the stabilizer of $(z,\dots,z)\in V^{n-1}$. Since the product of trees $V^{n-1}$ is locally finite and the action is proper, every stabilizer is commensurable to $K_n$ and therefore of type~$\F_\infty$ as well.
\end{proof}

We are about to define a space $Y$ for $\Thomp{\barB_*(\calO_S)}$ to act on. The advantage over the Stein--Farley complex will be that the stabilizers have better finiteness properties.  Let $D = \Z[1/2] \cap (0,1)$ be the set of dyadic points in $(0,1)$. Let $V^D$ be the set of all maps $D \to V$. We will usually regard these elements as tuples; that is, we write $x_q$ for the value of $x \in V^D$ at $q \in D$ and sometimes we write $x$ as $(x_q)_{q \in D}$. Let
\[
Y \defeq V^{(D)}
\]
be the subset consisting of those maps that evaluate to $z$ at all but finitely many points. An alternative description is as a direct limit $\varinjlim_{I \subseteq D \text{ finite}} V^I$. Note that this set is naturally equipped with a (unique) topology: the topology induced from the product topology and the CW topology coincide.

Note that Thompson's group $F$ acts on $D$ from the right, via $q.f = f^{-1}(q)$ for $f \in F$ and $q \in D$. To describe this action in terms of paired tree diagrams, note that every point in $D$ corresponds to a caret in the leafless rooted binary tree. Thus every finite rooted binary tree $\tree$ determines a finite subset $D(\tree)$ of $D$, namely that consisting of points that correspond to its carets. An element $[\tree,\alttree]$ of $F$ takes $D(\tree)$ to $D(\alttree)$ (preserving the order) and is linear between these break points.

As a consequence, $F$ acts from the left on the set $V^D$ via $(f.x)_q = x_{q.f}$ where $x \in V^D$, $q \in D$ and $f \in F$. Clearly this induces an action of $F$ on $Y$. Explicitly, the action of $F$ on $Y$ satisfies
\[
([\tree,\alttree].x)_{t_i} = x_{u_i}
\]
where $D(\tree) = \{t_1 < \ldots < t_{n-1}\}$ and $D(\alttree) = \{u_1 < \ldots < u_{n-1}\}$. Away from the break points, the values are interpolated linearly: $[\tree,\alttree].x_{s t_i + (1-s) t_{i+1}} = x_{s u_i + (1-s) u_{i+1}}$.

There is also an action of $\Thkern{\barB_*(\calO_S)}$ on $Y$ which is given as follows: if $\tree$ is a finite rooted binary tree and $D(\tree) = \{q_1 < \ldots < q_{n-1}\}$ then
\[
([\tree,g,\tree].x)_q = \left\{
\begin{array}{ll}
\alpha_i(g).x_{q_i} & \text{if }q = q_i\\
x_q & \text{else.}
\end{array}
\right.
\]
This is just the action obtained by taking the direct limit over the actions of $\barB_T(\calO_S)$ on $V^{D(\tree)}$.

These actions are compatible and so give an action of $\Thomp{\barB_*(\calO_S)}$ on $Y$, which is given by
\[
([\tree,g,\alttree].x)_{t_i} = \alpha_i(g).x_{u_i}
\]
and $([\tree,g,\alttree].x)_t = ([\tree,\alttree].x)_t$ for $t \not\in D(\tree)$; see Figure~\ref{fig:action_on_Y}.

\newsavebox{\barb}
\newsavebox{\barc}
\savebox{\barb}{$\bar{b} = \left(\begin{smallmatrix}1 & 2\\&4\end{smallmatrix}\right)b$}
\savebox{\barc}{$\bar{c} = \left(\begin{smallmatrix}4 & 5\\&6\end{smallmatrix}\right)c$}

\begin{figure}[htb]
\begin{tikzpicture}[scale=0.3,line width=0.8pt]
  \begin{scope}
  \draw
   (-2,-2) -- (0,0) -- (2,-2)   (0,-2) -- (-1,-1);
  \filldraw
   (-2,-2) circle (1.5pt)   (2,-2) circle (1.5pt)  (0,-2) circle (1.5pt);
  \end{scope}
  \begin{scope}[yshift=-3.5cm]
   \node at (0,-1){$\begin{pmatrix}
                    1&2& \\ &4&5\\ & &6
                   \end{pmatrix}$};
  \end{scope}
  \begin{scope}[yshift=-9cm,yscale=-1]
  \draw
   (-2,-2) -- (0,0) -- (2,-2)   (0,-2) -- (1,-1);
  \filldraw
   (-2,-2) circle (1.5pt)   (2,-2) circle (1.5pt)   (0,-2) circle (1.5pt);
  \end{scope}

  \begin{scope}[yshift=-9cm]
  \draw
   (-3,-3) -- (0,0) -- (3,-3)   (1,-3) -- (2,-2)  (-1,-3) -- (-2,-2);
  \filldraw
   (-3,-3) circle (1.5pt)  (-1,-3) circle (1.5pt)  (1,-3) circle (1.5pt)  (3,-3) circle (1.5pt);
  \end{scope}
  \begin{scope}[yshift=-12.5cm]
   \node at (0,-.5){$a \quad b \quad c$};
  \end{scope}
%  \begin{scope}[yshift=-18cm,yscale=-1]
%  \draw
%   (-3,-3) -- (0,0) -- (3,-3)   (1,-3) -- (2,-2)  (-1,-3) -- (-2,-2);
%  \filldraw
%   (-3,-3) circle (1.5pt)  (-1,-3) circle (1.5pt)  (1,-3) circle (1.5pt)  (3,-3) circle (1.5pt);
%  \end{scope}
  \node at (4,-6.5) {$=$};
  \begin{scope}[xshift=9cm,yshift=-3cm]
  \draw
   (-3.5,-3.5) -- (0,0) -- (3.5,-3.5)   (-.5,-3.5) -- (-2,-2)  (-2.5,-3.5) -- (-3,-3);
   \node at (0,-4.5){$a~\, \bar{b}\quad\quad\bar{c}\quad\quad \phantom{ }$};
  \end{scope}
\end{tikzpicture}
\hspace{1cm}
\begin{tikzpicture}[scale=2,line width=0.8pt]
 \draw
  (0,0) -- (1,0) -- (1,-1) -- (0,-1) -- cycle
  (.5,0) -- (.75,-1)
  (.25,0) -- (.5,-1);
  \fill[color=white,xshift=.5cm,yshift=-.5cm] {(-.25,-.2) -- (.25,-.2) -- (.25,.2) -- (-.25,.2) -- cycle};
  \node at (.5,-.5) {$\left(\begin{smallmatrix}1&2&\\&4&5\\&&6\end{smallmatrix}\right)$};
  \draw[yshift=-1.1cm]
  (0,0) -- (1,0) (0,.05) -- (0,-.05) (.25,.05) -- (.25,-.05)  (.5,.05) -- (.5,-.05)  (.75,.05) -- (.75,-.05)  (1,.05) -- (1,-.05);
  \node at (.5,-1.3) {$a \quad b \quad c$};
  \node at (1.2,-.5) {$=$};
  \draw[xshift=1.5cm, yshift= -.5cm]
  (0,0) -- (1,0) (0,.05) -- (0,-.05)  (.125,.05) -- (.125,-.05)  (.25,.05) -- (.25,-.05)  (.5,.05) -- (.5,-.05)  (1,.05) -- (1,-.05);
  \node at (2,-.7) {$a~\! \bar{b}\quad\bar{c}\quad\quad \phantom{ }$};
\end{tikzpicture}
\caption{Two points of view on the action of $\mathscr{T}(\barB_*(\calO_S))$ on $Y$. On the left the action is described in terms of tree diagrams, on the right in terms of piecewise linear homeomorphisms. In both pictures \usebox{\barb} and \usebox{\barc}. All unspecified values are $z$.}
\label{fig:action_on_Y}
\end{figure}

Next we want to understand stabilizers of this action. First observe that the action has a nontrivial kernel, namely the center of $\Thomp{\barB_*(\calO_S)}$, which is isomorphic to $\calO_S^\times$.

\begin{observation}\label{obs:center}
 Let $(G_*,(\clone_k)_k)$ be a cloning system and let $H$ be a group. Define a new cloning system $(H \times G_*,(\hat{\clone}_k)_k)$ by taking $\hat{\clone}_k \defeq \id \times \clone_k$. Then $\Thomp{H \times G_*} = H \times \Thomp{G_*}$.
\end{observation}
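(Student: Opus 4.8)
The plan is to verify directly that the cloning system data on $H \times G_*$ satisfies the axioms and then identify the resulting Thompson group.

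First I would check that $(H \times G_*, (\hat\clone_k)_k)$ together with the morphisms $\hat\rho_n \defeq \rho_n \circ \mathrm{pr}_{G_n}$ (projection to $G_n$ followed by $\rho_n$) really is a cloning system in the sense of Definition~\ref{def:filtered_cloning_system}. Since $\hat\clone_k = \id_H \times \clone_k$ and $\hat\rho_n$ ignores the $H$-coordinate, the three conditions \eqref{item:fcs_cloning_a_product}, \eqref{item:fcs_product_of_clonings}, \eqref{item:fcs_compatibility} decouple into the $H$-coordinate (where they hold trivially because $\hat\clone_k$ acts as the identity and $\hat\rho$ is trivial there) and the $G$-coordinate (where they hold by assumption). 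Injectivity of $\hat\clone_k$ is clear. Likewise, proper gradedness is inherited: $\image\hat\clone_k^n \cap \image\hat\iota_{n,n+1} = H \times (\image\clone_k^n \cap \image\iota_{n,n+1}) \subseteq H \times \image(\iota_{n-1,n}\circ\clone_k^n)$, using \eqref{eq:properly_graded} for $G_*$.

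Next I would compare the two constructions. By definition $\Thomp{H\times G_*}$ is the group of simple elements $[\forest_-, (h,g), \forest_+]$ in the group of right fractions of $\Fmonoid \bowtie (H\times G)$, where $G = \varinjlim G_n$. Because the action of $\Fmonoid$ on $H \times G$ is trivial on $H$ (the map $\rho$ factors through $G$, and $\hat\clone_k$ fixes $H$), the BZS product splits: $\Fmonoid \bowtie (H\times G) \cong H \times (\Fmonoid \bowtie G)$ as monoids, via $(\forest,(h,g)) \mapsto (h,(\forest,g))$. This is a routine check against \eqref{eq:zappa--szep_multiplication}. Passing to groups of right fractions (Lemma~\ref{lem:ore_fractions}, or just noting $H$ is already a group so no localization is needed in that factor) gives $\Thomphat{H\times G} \cong H \times \Thomphat{G}$, under which $[\forest_-,(h,g),\forest_+] \mapsto (h, [\forest_-,g,\forest_+])$. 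An element $[\forest_-,(h,g),\forest_+]$ is simple precisely when $\forest_\pm$ are semisimple with the same number of feet $n$ and $(h,g) \in H \times G_n$, which under the isomorphism is exactly the condition that $(h, [\forest_-,g,\forest_+]) \in H \times \Thomp{G_*}$. Hence $\Thomp{H\times G_*} = H \times \Thomp{G_*}$.

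I do not expect a genuine obstacle here; the statement is essentially a bookkeeping observation. The only mildly delicate point is making the isomorphism $\Fmonoid \bowtie (H \times G) \cong H \times (\Fmonoid \bowtie G)$ precise and checking it is compatible with the notion of ``simple'' (i.e.\ with the feet count), which is immediate from the definitions in Sections~\ref{sec:BZS} and~\ref{sec:BZS_to_thomp}. One should also record the auxiliary claim that $(\hat\rho_*, (\hat\clone_k^n)_k)$ is indeed a properly graded cloning system, so that $\Thomp{H\times G_*}$ is defined in the first place; this is the decoupling argument above.
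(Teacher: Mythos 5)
Your proof is correct and takes essentially the same approach as the paper, which simply states the isomorphism $(h,[\tree,g,\alttree]) \mapsto [\tree,hg,\alttree]$ (writing $hg$ for $(h,g)\in H\times G$) and leaves the verification implicit. You have filled in exactly the bookkeeping the paper omits — including the observation, not made explicit in the paper's statement, that $\hat\rho_n$ must factor through the projection to $G_n$.
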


\begin{proof}
 The isomorphism is given by $(h,[\tree,g,\alttree]) \mapsto [\tree,hg,\alttree]$.
\end{proof}

We now turn to one particular stabilizer.

\begin{observation}\label{obs:stabilizer}
 The cloning system on $\barB_*(\calO_S)$ induces a cloning system on $K_*$. The stabilizer in $\Thomp{\barB_*(\calO_S)}$ of the point $(z)_q$ is $\Thomp{K_*}$.
\end{observation}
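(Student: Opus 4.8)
The statement has two parts: that the cloning system on $\barB_*(\calO_S)$ restricts to a cloning system on $K_*$, and that the $\Thomp{\barB_*(\calO_S)}$-stabilizer of the constant point $(z)_q$ is exactly $\Thomp{K_*}$. For the first part, the key observation is that $K_n$ is a subgroup of $\barB_n(\calO_S)$ defined by congruence-type conditions on the diagonal $2$-by-$2$ blocks, and the cloning map $\clone_k$ essentially duplicates one of these diagonal blocks and fills the rest with a block-upper-triangular pattern; so I would check that $(g)\clone_k^n \in K_{n+1}$ whenever $g \in K_n$. Concretely, one reads off from the explicit description of $\clone_k$ in Section~\ref{sec:matrix_groups} what the diagonal $2$-by-$2$ blocks of $(g)\clone_k^n$ are (they are among the blocks of $g$, possibly repeated, together with the ``split'' block $\left(\begin{smallmatrix} A_{k,k} & 0 \\ 0 & A_{k,k}\end{smallmatrix}\right)$ coming from the doubled index), and verifies each satisfies the defining condition $\alpha_i(\,\cdot\,)\in\prod_s \PB_2(\calO_s)$. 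Since $A_{k,k}$ is a unit of $\calO_S$, its image under each $\alpha_i$ lands in $Z_2$, hence certainly in $\PB_2(\calO_s)$ for every $s$; this handles the split block, and the remaining diagonal blocks are literally (copies of) diagonal blocks of $g$, which lie in $K_n$ by hypothesis. One also needs $\rho_n(K_n)\subseteq \symm_n$, but this is automatic because $\rho_*$ is trivial. That the resulting system is still properly graded follows because proper gradedness of $\barB_*(\calO_S)$ was purely a statement about last columns and rows of matrices (as in the proof of Lemma~\ref{lem:up_tri_clone}), and that argument is inherited verbatim by the subgroups $K_n$.

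\textbf{The stabilizer computation.} By Observation~\ref{obs:thomp_morphism} the inclusion of cloning systems $K_* \into \barB_*(\calO_S)$ induces an injective homomorphism $\Thomp{K_*}\into\Thomp{\barB_*(\calO_S)}$, and I would first check that its image fixes $(z)_q$. An element of $\Thomp{K_*}$ has the form $[\tree,g,\alttree]$ with $g \in K_n$; applying the action formula $([\tree,g,\alttree].x)_{t_i}=\alpha_i(g).x_{u_i}$ with $x=(z)_q$ gives $\alpha_i(g).z$, and since $g\in K_n$ means precisely $\alpha_i(g)\in \prod_s \PB_2(\calO_s)=\Stab(z)$, this value is again $z$; all other coordinates are unchanged because they equal $z$ and the $\barB$-part only touches the finitely many break-point coordinates, so $[\tree,g,\alttree].(z)_q=(z)_q$. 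For the reverse inclusion, take $[\tree,g,\alttree]\in\Thomp{\barB_*(\calO_S)}$ fixing $(z)_q$. The underlying element of $F$, namely $[\tree,\alttree]$, permutes $D$ and hence moves coordinates around; to pin things down I would first pass to a common expansion so that $\tree=\alttree$, using Proposition~\ref{prop:truncation_isomorphism}-style expansion moves (concretely, replace $[\tree,g,\alttree]$ by an equivalent triple whose two trees agree — possible precisely because $(z)_q$ is the constant point, so expanding never changes the point being acted on and never changes whether the element fixes it). Once $\tree=\alttree$ with $D(\tree)=\{q_1<\dots<q_{n-1}\}$, the fixed-point condition reads $\alpha_i(g).z = z$ for each $i$, i.e.\ $\alpha_i(g)\in\Stab_V(z)=\prod_s\PB_2(\calO_s)$ for all $i$, which is exactly the statement $g\in K_n$. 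Hence $[\tree,g,\alttree]\in\Thomp{K_*}$.

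\textbf{Main obstacle.} The delicate point is the reduction to the case $\tree = \alttree$: an arbitrary element $[\tree,g,\alttree]$ of the stabilizer need not have equal trees, and the $F$-part $[\tree,\alttree]$ really does act nontrivially on $D$ in general — but it must fix the constant point $(z)_q$ purely because \emph{every} element of $F$ fixes $(z)_q$ (permuting coordinates that are all equal to $z$ does nothing). So I should argue: $[\tree,\alttree]^{-1}[\tree,g,\alttree]=[\tree,g,\tree]$ lies in $\Thkern{\barB_*(\calO_S)}$ and also fixes $(z)_q$; then the computation above applies to $[\tree,g,\tree]$, giving $g\in K_n$, hence $[\tree,g,\tree]\in\Thomp{K_*}$; and since $[\tree,\alttree]\in F\subseteq\Thomp{K_*}$ (as $F=\Thomp{\{1\}_*}$ embeds via the trivial cloning system, which factors through $K_*$), we conclude $[\tree,g,\alttree]\in\Thomp{K_*}$. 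The only genuinely substantive input beyond unwinding definitions is the identification $\Stab_V(z)=\prod_{s\in S}\PB_2(\calO_s)$, which is exactly how $z$ was chosen, together with properness of the $V_s$ to ensure this is the full stabilizer; everything else is bookkeeping with the action formulas already recorded before Figure~\ref{fig:action_on_Y}.
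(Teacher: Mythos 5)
Your proof is correct and, for the first assertion, takes a genuinely different route than the paper. The paper leans on the identification $K_n = Z_n(\calO_S)\barB_n(\CoefficientField)$ established inside the proof of Lemma~\ref{lem:good_stab}, and then merely observes that $\clone_k$ preserves both $Z_*(\calO_S)$ and $\barB_*(\CoefficientField)$. You instead work directly with the definition $K_n = \bigcap_i \alpha_i^{-1}\bigl(\prod_s \PB_2(\calO_s)\bigr)$ and track how $\clone_k$ acts on diagonal $2\times 2$ blocks: the blocks of $(A)\clone_k$ are precisely the blocks of $A$ (reindexed past $k$), together with the inserted scalar block $A_{k,k}I_2$, which dies in $\PB_2$. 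This is more explicit but fully correct, and it has the virtue of not depending on the arithmetic identification of $K_n$. For the second assertion the paper just says ``clear,'' so your detailed argument is filling a gap, and it is sound.

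Two small expository issues, neither fatal. First, your initial plan to ``pass to a common expansion so that $\tree = \alttree$'' is not actually available here: since $\rho$ is trivial, an expansion sends $[\tree,g,\alttree]$ to $[\tree E, (g)\clone_E, \alttree E]$, so $\tree E = \alttree E$ would force $\tree = \alttree$ by cancellativity. You correctly abandon this in the ``Main obstacle'' paragraph in favor of factoring off the $F$-part, which is the right move. Second, the factorization identity is stated with the trees swapped: $[\tree,\alttree]^{-1}[\tree,g,\alttree] = [\alttree,g,\alttree]$, not $[\tree,g,\tree]$ (the latter equals $[\tree,g,\alttree][\tree,\alttree]^{-1}$). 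This does not affect the argument since either triple yields $g \in K_n$. In fact, once you observe that $\alpha_i(g).z = z$ for all $i$ forces $g \in K_n$, the factorization is superfluous: $[\tree,g,\alttree]$ with $g\in K_n$ and both trees having $n$ leaves is, by definition, already an element of $\Thomp{K_*}$. Finally, the identification $\Stab_{\PB_2(\GlobalField_s)}(z_s)=\PB_2(\calO_s)$ you invoke is $\PB_2(\GlobalField_s)\cap\PGL_2(\calO_s)=\PB_2(\calO_s)$, which needs the (easy) remark that an upper-triangular class in $\PGL_2(\calO_s)$ has an upper-triangular representative in $\GL_2(\calO_s)$; this is a slight refinement of ``how $z$ was chosen'' and does not require properness of $V_s$.
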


\begin{proof}
 For the first statement it suffices to show that $(Z_n(\calO_S))\clone_k \subseteq Z_{n+1}(\calO_S)$ and that $(\barB_n(\CoefficientField))\clone_k \subseteq \barB_{n+1}(\CoefficientField)$ which is easy to see. The second statement is clear.
\end{proof}

\begin{corollary}\label{cor:Thomp(K_*)F_infty}
 The group $\Thomp{K_*}$ is of type~$\F_\infty$.
\end{corollary}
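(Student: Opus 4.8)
The plan is to apply Proposition~\ref{prop:generic_finiteness} to the cloning system that Observation~\ref{obs:stabilizer} puts on $K_*$. This requires two inputs: that the groups $K_n$ are of type~$\F_\infty$, which is precisely Lemma~\ref{lem:good_stab}; and that the descending link complexes $\dlkmodel{K_*}{n}$ are eventually $(m-1)$-connected for every fixed $m$, i.e.\ that their connectivity tends to infinity.

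For the second input I would relate $\dlkmodel{K_*}{n}$ to the complex $\dlkmodel{\barB_*(\CoefficientField)}{n}$ for the finite coefficient field $\CoefficientField = \calO_\emptyset$, which is $(\nu(n)-1)$-connected by Proposition~\ref{prop:barb_thomp_pos_fin_props}. The proof of Lemma~\ref{lem:good_stab} already exhibits $K_n$ as the central product $Z_n(\calO_S)\barB_n(\CoefficientField)$, where $Z_n(\calO_S)\cong\calO_S^\times$ is the group of homotheties, on which the cloning maps act by $\lambda I_n\mapsto\lambda I_{n+1}$. Since every $\clone_k$ is a group homomorphism (condition~\eqref{item:fcs_cloning_a_product} with $\rho$ trivial) that carries $\CoefficientField$-entries to $\CoefficientField$-entries and $\calO_S^\times$-homotheties to $\calO_S^\times$-homotheties, the subsystems $Z_*(\calO_S)$ and $\barB_*(\CoefficientField)$ of $K_*$ are preserved by cloning. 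By the $S$-unit theorem $\calO_S^\times\cong\CoefficientField^\times\times\Z^{\abs{S}-1}$, and since the central product is amalgamated exactly over the finite subgroup $\CoefficientField^\times = Z_n(\CoefficientField)$, this splitting identifies $K_n\cong\Z^{\abs{S}-1}\times\barB_n(\CoefficientField)$ compatibly in $n$; under this identification $\clone_k^{K_*}=\id_{\Z^{\abs{S}-1}}\times\clone_k^{\barB_*(\CoefficientField)}$ and $\rho$ is trivial on both factors, so $K_*$ and $\Z^{\abs{S}-1}\times\barB_*(\CoefficientField)$ are isomorphic as cloning systems.

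Granting this, Observation~\ref{obs:center} gives $\Thomp{K_*}\cong\Z^{\abs{S}-1}\times\Thomp{\barB_*(\CoefficientField)}$. The group $\barB_n(\CoefficientField)$ is finite, hence of type~$\F_\infty$, and the complexes $\dlkmodel{\barB_*(\CoefficientField)}{n}$ are $(\nu(n)-1)$-connected by Proposition~\ref{prop:barb_thomp_pos_fin_props}, so Proposition~\ref{prop:generic_finiteness} shows $\Thomp{\barB_*(\CoefficientField)}$ is of type~$\F_\infty$; as $\Z^{\abs{S}-1}$ is of type~$\F_\infty$ as well, so is the direct product. As an alternative to the structural identification, one may instead mimic the proof of Proposition~\ref{prop:barb_thomp_pos_fin_props} directly: reductions in the spirit of Lemma~\ref{lem:borel_reduction}, carried out inside $K_n$ (first using a homothety to move into $\barB_n(\CoefficientField)$), show that every simplex of $\dlkmodel{K_*}{n}$ has a unique reduced representative, whence $\dlkmodel{K_*}{n}\cong\match(sL_n)$ with $s=\abs{\CoefficientField^\times\times\CoefficientField}$, which is $(\nu(n)-1)$-connected.

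The one point needing care — the main obstacle — is the middle paragraph: verifying that the decomposition $K_n=Z_n(\calO_S)\barB_n(\CoefficientField)$ is not merely group-theoretic but is respected by the cloning maps, so that it genuinely upgrades to an isomorphism of cloning systems (equivalently, in the alternative route, that the normal form of Lemma~\ref{lem:borel_reduction} can be attained without leaving $K_n$). Everything else is an immediate application of results already in hand.
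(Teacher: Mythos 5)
Your proof is correct and follows essentially the same route as the paper: both reduce $\Thomp{K_*}$ to $\Thomp{\barB_*(\CoefficientField)}$ via the central product structure $K_n = Z_n(\calO_S)\barB_n(\CoefficientField)$ together with Observation~\ref{obs:center}, then invoke Proposition~\ref{prop:barb_thomp_pos_fin_props}. Your extra care in splitting $\calO_S^\times\cong\CoefficientField^\times\times\Z^{\abs{S}-1}$ so as to turn the central product into a genuine direct product before applying Observation~\ref{obs:center} is a worthwhile refinement, since the paper invokes Observation~\ref{obs:center} (stated for direct products) on a central product directly.
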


\begin{proof}
 By Observation~\ref{obs:center} $\Thomp{K_*}$ is isomorphic to a central product $\calO_S^\times\Thomp{\barB_*(\CoefficientField)}$. The second factor is of type~$\F_\infty$ by Proposition~\ref{prop:barb_thomp_pos_fin_props}.
\end{proof}

We now turn to general stabilizers. For a point $x \in V$ write $[x]$ for its $\PB_2(\calO_S)$-orbit. We call a point $(x_q)_q \in Y$ \emph{reduced} if $x_q = z$ whenever $[x_q] = [z]$.

\begin{lemma}\label{lem:reduced_points}
 Every point of $Y$ has a reduced point in its $\Thomp{\barB_*(\calO_S)}$-orbit.
\end{lemma}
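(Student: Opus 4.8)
The plan is to realize the reduction by a single ``diagonal'' element of the kernel subgroup $\Thkern{\barB_*(\calO_S)}$. So, starting from a point $x=(x_q)_{q\in D}\in Y$, I would first note that $A\defeq\{q\in D\mid x_q\ne z\}$ is finite and choose a finite rooted binary tree $\tree$ with $D(\tree)\supseteq A$; write $D(\tree)=\{q_1<\dots<q_{n-1}\}$. Since the tree diagram $[\tree,\tree]$ acts as the identity on $D$, an element $[\tree,g,\tree]$ with $g\in\barB_n(\calO_S)$ acts on $Y$ by $([\tree,g,\tree].x)_{q_i}=\alpha_i(g).x_{q_i}$ and leaves the coordinate untouched at every $q\notin D(\tree)$, in particular keeping the value $z$ there. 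Thus it suffices to produce $g$ with $\alpha_i(g).x_{q_i}=z$ for every $i$ with $[x_{q_i}]=[z]$.

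The one thing that really needs checking is an independence/surjectivity statement for the block maps. First I would observe that $\alpha_i(g)$ acts on $V=\prod_{s\in S}V_s$ exactly as $\pi_i(g)\in\PB_2(\calO_S)$ acts through the diagonal embedding $\PB_2(\calO_S)\hookrightarrow\prod_{s\in S}\PB_2(\GlobalField_s)$ that defines the $\PB_2(\calO_S)$-action on $V$ (all entries of $g$ lie in $\calO_S$). So the task reduces to prescribing the tuple $(\pi_1(g),\dots,\pi_{n-1}(g))\in\PB_2(\calO_S)^{n-1}$ arbitrarily, i.e.\ to showing that $g\mapsto(\pi_i(g))_i$ maps $\barB_n(\calO_S)$ onto $\PB_2(\calO_S)^{n-1}$. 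This is elementary: an element of $\barB_n(\calO_S)$ is recorded by diagonal entries $d_1,\dots,d_n\in\calO_S^\times$ and superdiagonal entries $e_1,\dots,e_{n-1}\in\calO_S$, and $\pi_i$ sends it to the class of $\bigl(\begin{smallmatrix}d_i&e_i\\0&d_{i+1}\end{smallmatrix}\bigr)$, which in the normal form $\bigl(\begin{smallmatrix}a&b\\0&1\end{smallmatrix}\bigr)$ for $\PB_2$ is $(d_id_{i+1}^{-1},\,e_id_{i+1}^{-1})$; given a target $\bigl((a_i,b_i)\bigr)_i$ one solves recursively by $d_1\defeq 1$, $d_{i+1}\defeq a_i^{-1}d_i$, $e_i\defeq b_id_{i+1}$, staying inside $\calO_S^\times$ and $\calO_S$.

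With this in hand I would finish as follows: for each $i$ with $[x_{q_i}]=[z]$ pick $h_i\in\PB_2(\calO_S)$ with $h_i.x_{q_i}=z$ (possible precisely because $x_{q_i}$ lies in the $\PB_2(\calO_S)$-orbit of $z$), set $h_i\defeq 1$ otherwise, choose $g\in\barB_n(\calO_S)$ with $\pi_i(g)=h_i$ for all $i$, and put $t\defeq[\tree,g,\tree]$. Then $t.x$ is reduced: at $q\notin D(\tree)$ one has $(t.x)_q=x_q=z$ since $A\subseteq D(\tree)$; and at $q=q_i$, since $\alpha_i(g)$ preserves $\PB_2(\calO_S)$-orbits, $[(t.x)_{q_i}]=[x_{q_i}]$, so this coordinate lies in $[z]$ exactly when $x_{q_i}$ did, and in that case $(t.x)_{q_i}=h_i.x_{q_i}=z$ by construction. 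I do not expect any genuine obstacle here — the proof is essentially bookkeeping once the surjectivity of $(\pi_i)_i$ onto $\PB_2(\calO_S)^{n-1}$ is recorded; that surjectivity is the only substantive ingredient, and it is what makes it possible to repair all bad coordinates simultaneously by one group element.
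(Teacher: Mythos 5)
Your proof is correct and follows the same route as the paper's: pick a tree $\tree$ whose break points cover the bad coordinates, choose $g_i\in\PB_2(\calO_S)$ blockwise, lift to $g\in\barB_n(\calO_S)$, and apply $[\tree,g,\tree]$. The only difference is that you spell out the surjectivity of $g\mapsto(\pi_i(g))_i$ onto $\PB_2(\calO_S)^{n-1}$ via the explicit recursion on diagonal and superdiagonal entries, a step the paper simply asserts; this is a correct and welcome piece of bookkeeping.
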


\begin{proof}
 If $(x_q)_q \in Y$ is arbitrary, let $\tree$ be a tree such that $D(\tree)$ contains all of the finitely many indices $q \in D$ for which $x_q \ne z$. Write $D(\tree) = \{q_1,\ldots,q_{n-1}\}$ where the indices are in increasing order. For each $i$ pick $g_i \in \PB_2(\calO_S)$ such that $g_i.x_{q_i} = z$ whenever possible (i.e., when $[x_{q_i}] = [z]$) and arbitrarily otherwise. Take $g \in \barB_n(\calO_S)$ such that $\alpha_i(g) = g_i$ for all $i$. Then $[\tree,g,\tree].(x_q)_q$ is reduced.
\end{proof}

\begin{lemma}\label{lem:stabilizer_of_reduced_point}
 The stabilizer in $\Thomp{\barB_*(\calO_S)}$ of any reduced point is of type~$\F_\infty$.
\end{lemma}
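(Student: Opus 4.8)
The plan is to reduce the stabilizer of a reduced point to a product of generalized Thompson groups associated to smaller cloning systems, each of which we already know to be of type $\F_\infty$. Let $x = (x_q)_q \in Y$ be reduced, and let $T$ be a tree with $D(T) = \{q_1 < \cdots < q_{n-1}\}$ containing all indices $q$ with $x_q \neq z$. Among the $q_i$ we distinguish the ``special'' indices, those $i$ with $[x_{q_i}] \neq [z]$ (so $x_{q_i}$ is moved nontrivially by every element of its $\PB_2(\calO_S)$-orbit), from the ``generic'' ones, where $x_{q_i} = z$ since $x$ is reduced. An element $[\altforest, g, \altforest']$ of $\Thomp{\barB_*(\calO_S)}$ fixes $x$ if and only if, after expanding to a common refinement so that both trees contain $D(T)$, the piecewise-linear homeomorphism $[\altforest,\altforest'] \in F$ permutes the breakpoints in a way that carries the pattern of $x$-values to itself, and the matrix $g$ acts via the $\alpha_i$ on each coordinate so as to fix $x_{q_i}$ at every breakpoint.

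The key structural observation is that such an element cannot move a special index to a different index at all: if $[\altforest,\altforest']$ sent $q_i$ (special) to some $q_j$ with $x_{q_j} \neq x_{q_i}$ this would already fail, and the reduced/orbit bookkeeping forces the special breakpoints to be fixed setwise, with the intervals between consecutive special breakpoints permuted only among themselves. Thus the stabilizer splits as a product, one factor for each maximal ``block'' of $D$ lying strictly between two consecutive special points (together with the block before the first and after the last special point), and one finite-order contribution from the stabilizers of the finitely many special points $x_{q_i}$ themselves. On each generic block, the relevant subgroup is a copy of a generalized Thompson group $\Thomp{K'_*}$ where $K'_n$ is the stabilizer of $z$ under the restricted action, i.e.\ $K'_* = K_*$ in the notation of Observation~\ref{obs:stabilizer}, and $\Thomp{K_*}$ is of type $\F_\infty$ by Corollary~\ref{cor:Thomp(K_*)F_infty}. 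On each special point $q_i$, the subgroup of $\PB_2(\calO_S)$ fixing $x_{q_i}$ is the stabilizer of a point in the product $V$ of Bruhat--Tits trees under the proper $\PB_2(\calO_S)$-action, hence commensurable with $K_2$ and of type $\F_\infty$ (again using Lemma~\ref{lem:good_stab} and Corollary~\ref{cor:all_stabs_F_infty}). Since type $\F_\infty$ is closed under finite direct products and under commensurability, the stabilizer of $x$ is of type $\F_\infty$.

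Concretely, I would argue as follows. First, using Lemma~\ref{lem:reduced_points} we may assume $x$ is reduced. Choose $T$ as above and let $1 \le i_1 < \cdots < i_r \le n-1$ be the special indices. Then show that the stabilizer $H$ of $x$ fits into a short exact sequence, or better a direct-product decomposition up to finite index: there is a finite-index subgroup $H_0 \le H$ with
\[
H_0 \;\cong\; \Big(\prod_{j=1}^{r+1} \Thomp{K_*}\Big) \times \prod_{j=1}^{r} \Stab_{\PB_2(\calO_S)}(x_{q_{i_j}}),
\]
where the $(r+1)$ Thompson-group factors correspond to the generic blocks cut out by the special breakpoints (each such block is an interval carrying its own ``leafless binary tree'' combinatorics, so $F$ restricted to it is again a copy of $F$, and the matrices act via restricted cloning systems isomorphic to the cloning system on $K_*$). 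The finite-index issue arises because an element of $H$ is allowed to permute blocks that happen to have the same $x$-pattern; since there are only finitely many blocks, this permutation part is finite, so passing to $H_0$ costs only finite index. Each factor on the right is of type $\F_\infty$ (the Thompson factors by Corollary~\ref{cor:Thomp(K_*)F_infty}, the point-stabilizer factors by properness of the $\PB_2(\calO_S)$-action on $V$ together with Lemma~\ref{lem:good_stab}), so $H_0$ and hence $H$ is of type $\F_\infty$.

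The main obstacle will be making the block decomposition precise, in particular verifying that the cloning system on $\barB_*(\calO_S)$ restricts, on each generic block, to a cloning system isomorphic to the one on $K_*$, and that an element of $H$ genuinely cannot mix a special coordinate with a generic one or with a special coordinate having a different orbit. This is where the explicit form of the action $([\tree,g,\alttree].x)_{t_i} = \alpha_i(g).x_{u_i}$ and the definition of ``reduced'' have to be used carefully: if $[\tree,g,\alttree]$ fixes $x$ and carries breakpoint $u_i$ to $t_i$, then $\alpha_i(g).x_{u_i} = x_{t_i}$, so the $\PB_2(\calO_S)$-orbits of $x_{u_i}$ and $x_{t_i}$ agree; combined with ``reduced'' this pins down which breakpoints can go where, yielding the block structure. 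Once that combinatorial bookkeeping is in place, the finiteness conclusion is immediate from the closure properties of type $\F_\infty$.

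\begin{proof}[Proof of Lemma~\ref{lem:stabilizer_of_reduced_point}]
By Lemma~\ref{lem:reduced_points} we may work with a reduced point $x = (x_q)_{q \in D} \in Y$. Pick a tree $T$ with $D(T) = \{q_1 < \cdots < q_{n-1}\}$ containing every index $q$ with $x_q \ne z$, and let $1 \le i_1 < \cdots < i_r \le n-1$ be the \emph{special} indices, those with $[x_{q_{i_j}}] \ne [z]$; since $x$ is reduced, $x_{q_i} = z$ for every non-special $i$.

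Let $H = \Stab_{\Thomp{\barB_*(\calO_S)}}(x)$. Given $h = [\altforest_-, g, \altforest_+] \in H$, expand so that $D(\altforest_-), D(\altforest_+) \supseteq D(T)$; writing $D(\altforest_+) = \{u_1 < \cdots < u_{N-1}\}$ and $D(\altforest_-) = \{t_1 < \cdots < t_{N-1}\}$, the action formula gives $\alpha_i(g).x_{u_i} = x_{t_i}$ for every $i$. In particular $[x_{u_i}] = [x_{t_i}]$, so the underlying element $[\altforest_-,\altforest_+] \in F$ sends the special breakpoints of $x$ (inside $D(T)$, hence inside $D(\altforest_+)$) to breakpoints with the same $\PB_2(\calO_S)$-orbit, i.e.\ again to special breakpoints; moreover a special breakpoint at which $x$ takes the value $x_{q_{i_j}}$ must go to one at which $x$ takes a value in the same orbit. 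Since $[\altforest_-,\altforest_+]$ is an orientation-preserving homeomorphism of $(0,1)$, it permutes the special breakpoints in an order-preserving way, hence fixes each of them. Consequently it maps each of the $r+1$ open intervals of $(0,1)$ cut out by the special breakpoints to itself.

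Restricting to one such generic interval $I$: on $I$ the point $x$ is constantly $z$, the homeomorphisms coming from $H$ restrict to a copy of Thompson's group $F$ acting on $I$ (identified with $(0,1)$ via an order isomorphism carrying $D \cap I$ to $D$), and the matrix part acts coordinatewise through $\alpha_i$, so that the stabilizer of the constant value $z$ over the breakpoints in $I$ is exactly $\Thomp{K_*}$ in the sense of Observation~\ref{obs:stabilizer}. Restricting to a special breakpoint $q_{i_j}$ (which is fixed): the matrix part must satisfy $\alpha_{i_j}(g).x_{q_{i_j}} = x_{q_{i_j}}$, so it contributes $\Stab_{\PB_2(\calO_S)}(x_{q_{i_j}})$, which is a stabilizer for the proper action of $\PB_2(\calO_S)$ on the locally finite complex $V$, hence commensurable with $K_2$ and of type $\F_\infty$ by Lemma~\ref{lem:good_stab}. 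The only freedom not yet accounted for is that $h$ may permute among themselves those generic intervals on which $x$ has identical data; there are at most $r+1$ such intervals, so this contributes a finite permutation group. Passing to the finite-index subgroup $H_0 \le H$ that fixes each interval, we obtain
\[
H_0 \;\cong\; \Big(\prod_{j=0}^{r} \Thomp{K_*}\Big) \times \prod_{j=1}^{r} \Stab_{\PB_2(\calO_S)}(x_{q_{i_j}}).
\]
Each Thompson factor is of type $\F_\infty$ by Corollary~\ref{cor:Thomp(K_*)F_infty}, and each point-stabilizer factor is of type $\F_\infty$ as noted. Since type $\F_\infty$ is preserved by finite direct products and by commensurability, $H_0$ and hence $H$ is of type $\F_\infty$.
\end{proof}
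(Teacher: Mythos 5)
Your high-level strategy matches the paper's: argue the special breakpoints are fixed pointwise (order-preservation), split the stabilizer into a ``matrix'' part at the special breakpoints and $\Thomp{K_*}$-factors on the generic blocks, and conclude from closure properties of type $\F_\infty$. The order-preservation step is correct and is also how the paper proceeds. But the explicit decomposition you write down is not correct, for two distinct reasons.

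First, the matrix factor you claim is $\prod_j \Stab_{\PB_2(\calO_S)}(x_{q_{i_j}})$, but the conditions $\alpha_{i_j}(g)\cdot x_{q_{i_j}} = x_{q_{i_j}}$ (over all $j$) constrain a \emph{single} matrix $g$, and the maps $\alpha_{i_j}$ are not independent: consecutive $2\times 2$ diagonal blocks share a diagonal entry, and all share the scalars. What you actually obtain is a point-stabilizer for the action of $\barB_n(\calO_S)$ on the product of trees $V^I$, which is what the paper uses and which is $\F_\infty$ by Corollary~\ref{cor:all_stabs_F_infty}; it is not a direct product of $\PB_2(\calO_S)$-stabilizers. (Also, the stabilizer of a vertex in $V$ under $\PB_2(\calO_S)$ is finite, hence not commensurable with $K_2 = Z_2(\calO_S)\barB_2(\CoefficientField)$ once $\abs{S}>1$, though that slip does not affect the $\F_\infty$ conclusion.) Second, and more fundamentally, the whole proposed direct product inside $H$ cannot hold: the action of $\Thomp{\barB_*(\calO_S)}$ on $Y$ has a nontrivial kernel $K$, the center $\calO_S^\times$, and $K$ sits inside \emph{every} one of your would-be factors --- each copy of $\Thomp{K_*}$ contains $\calO_S^\times$ centrally, since $\Thomp{K_*}$ is the central product $\calO_S^\times\Thomp{\barB_*(\CoefficientField)}$. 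So the factors overlap and there is no direct product of them inside $H$, not even up to finite index. The paper avoids this by proving the decomposition only for $H/K$, and then noting that $K$ itself is finitely generated abelian, hence $\F_\infty$, so that $H$ is an extension of $\F_\infty$ groups and therefore $\F_\infty$. That extension step is missing from your argument.

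Lastly, the closing ``pass to a finite-index subgroup $H_0$ that fixes each interval'' is both unnecessary and contradicts your own earlier, correct observation: since the $F$-element fixes each special breakpoint and is order-preserving, it already carries each generic interval to itself, so there is no permutation of blocks to quotient out.
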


\begin{proof}
 Let $(x_q)_{q \in D}$ be a reduced point and let $I = \{q \in D \mid x_q \ne z\} = \{q_1,\ldots,q_{n-1}\}$. Let $H$ be the stabilizer of $(x_q)_{q}$ in $\Thomp{\barB_*(\calO_S)}$ and let $K$ be the kernel of the action of $H$. Since $[x_q] \ne [z]$ for $q \in I$, we see that the stabilizer has to fix $I$ (when acting on $D$ via the canonical homomorphism to $F$). Thus the action of the stabilizer $H$ on $Y=V^{(D)}$ decomposes into an action on $V^I$ and on $V^{(D \setminus I)}$.

 Modulo $K$ we find that $H$ is a direct product of the pointwise stabilizer (in $H$) of $V^I$ and the pointwise stabilizer of $V^{(D \setminus I)}$. The action of the pointwise stabilizer of $V^{(D \setminus I)}$ in $\Thomp{\barB_*(\calO_S)}$ is isomorphic to $\barB_n(\calO_S)$ acting on $V^I$. Thus its intersection with $H$ is isomorphic to a point stabilizer in $\barB_n(\calO_S)$, and hence is of type~$\F_\infty$ by Corollary~\ref{cor:all_stabs_F_infty}.

 The pointwise stabilizer of $V^I$ decomposes further. Let $D_1 \defeq D \cap (0,q_1)$, $D_2 \defeq D \cap (q_1,q_2)$, \ldots, $D_n \defeq D \cap (q_{n-1},1)$. The pointwise stabilizer of $V^{(D \setminus D_j)}$ is itself isomorphic to a copy of $\Thomp{\barB_*(\calO_S)}$ and therefore the stabilizer of $(z)_{q \in D_J}$ in this stabilizer is isomorphic to a copy of $\Thomp{K_*}$, which is of type~$\F_\infty$ by Corollary~\ref{cor:Thomp(K_*)F_infty}.

 Putting everything together we find that $H/K$ is a product of groups of type~$\F_\infty$, and $K$ is of type~$\F_\infty$ as well, so $H$ is of type~$\F_\infty$.
\end{proof}

In summary we have:

\begin{proposition}\label{prop:Y_stabilizers}
 The group $\Thomp{\barB_*(\calO_S)}$ acts on $Y$ with stabilizers of type~$\F_\infty$.
\end{proposition}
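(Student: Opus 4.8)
The plan is to obtain the proposition as an immediate consequence of Lemmas~\ref{lem:reduced_points} and~\ref{lem:stabilizer_of_reduced_point}, once one records the standard fact that being of type~$\F_\infty$ is an invariant of the isomorphism type of a group, and in particular is preserved under conjugation within an ambient group: if $H_1 = g H_2 g^{-1}$ inside some group, then $H_1$ is of type~$\F_\infty$ if and only if $H_2$ is.

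First I would take an arbitrary point $x \in Y$. By Lemma~\ref{lem:reduced_points} there is an element $g \in \Thomp{\barB_*(\calO_S)}$ such that $g.x$ is reduced. For an action of a group on a set one always has $\Stab(g.x) = g\,\Stab(x)\,g^{-1}$, so $\Stab_{\Thomp{\barB_*(\calO_S)}}(x)$ is conjugate, hence isomorphic, to $\Stab_{\Thomp{\barB_*(\calO_S)}}(g.x)$. By Lemma~\ref{lem:stabilizer_of_reduced_point} the latter is of type~$\F_\infty$, and therefore so is $\Stab_{\Thomp{\barB_*(\calO_S)}}(x)$. Since $x$ was arbitrary, every point stabilizer for the action on $Y$ is of type~$\F_\infty$, which is the claim.

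There is essentially no obstacle remaining at this point: the genuine work — producing a reduced representative in each orbit, and decomposing the stabilizer of a reduced point as a product of finite groups, point stabilizers in $\barB_n(\calO_S)$ acting on $V^{n-1}$ (type~$\F_\infty$ by Corollary~\ref{cor:all_stabs_F_infty}), and copies of $\Thomp{K_*}$ (type~$\F_\infty$ by Corollary~\ref{cor:Thomp(K_*)F_infty}) — has already been carried out in the preceding two lemmas. What remains here is only the routine observation about conjugation-invariance of $\F_\infty$, so the proof is a one-line deduction.
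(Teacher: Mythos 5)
Your proof is correct and is essentially identical to the paper's: the paper also deduces the proposition directly from Lemma~\ref{lem:reduced_points} and Lemma~\ref{lem:stabilizer_of_reduced_point}, noting that every stabilizer is isomorphic to that of a reduced point. The only difference is that you spell out the conjugation identity $\Stab(g.x) = g\,\Stab(x)\,g^{-1}$, which the paper leaves implicit.
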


\begin{proof}
 Every point is in the orbit of a reduced point by Lemma~\ref{lem:reduced_points} so every stabilizer is isomorphic to that of a reduced point. Those are of type~$\F_\infty$ by Lemma~\ref{lem:stabilizer_of_reduced_point}.
\end{proof}

It remains to provide a cocompact filtration and determine its essential connectivity. For this purpose we will use the key result from \cite{bux04} used to show that $\PB_2(\calO_S)$ is not of type~$\FP_{\abs{S}}$:

\begin{theorem}[\cite{bux04}]
 There is a filtration $(V_r)_{r \in \N}$ of $V$ that is $\PB_2(\calO_S)$-invariant and -cocompact and is essentially $(\abs{S}-2)$-connected but not essentially $(\abs{S}-1)$-acyclic.
\end{theorem}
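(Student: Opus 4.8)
The plan is to reconstruct the geometric input of \cite{bux04}. Recall that $V=\prod_{s\in S}V_s$, where $V_s$ is the Bruhat--Tits tree of $\PGL_2(\GlobalField_s)$ --- a locally finite tree of valence $q_s+1$, with $q_s\ge 2$ the residue cardinality at $s$ --- so each $V_s$, and hence $V$, is contractible. The group $\PB_2=B_2/Z_2$ embeds in $\PGL_2$ as a Borel subgroup, so $\PB_2(\GlobalField_s)$ acts on $V_s$ fixing an end $e_s\in\partial V_s$; via the diagonal embedding $\calO_S\hookrightarrow\prod_{s\in S}\GlobalField_s$ the group $\PB_2(\calO_S)\cong\calO_S^\times\ltimes\calO_S$ acts on $V$ fixing $e=(e_s)_s$. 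This action is proper (the $S$-integral group $\PB_2(\calO_S)$ is discrete in $\prod_s\PB_2(\GlobalField_s)$, which acts properly on the locally finite complex $V$), so all cell stabilizers are finite; in particular Brown's criterion will be applicable once we have a cocompact filtration.

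First I would produce a $\PB_2(\calO_S)$-invariant height function. For each $s$ pick a Busemann function $\beta_s\colon V_s\to\Z$ centered at $e_s$; since $\PB_2(\GlobalField_s)$ fixes $e_s$ it merely translates $\beta_s$, with translation amount a homomorphism $\chi_s\colon\PB_2(\GlobalField_s)\to\Z$ read off the diagonal entries. The product formula gives $\sum_{s\in S}\chi_s\equiv 0$ on $\PB_2(\calO_S)$, so appropriate combinations of the $\beta_s$ are $\PB_2(\calO_S)$-invariant. From these Busemann data one extracts --- this is the Harder-style reduction theory of \cite{bux04} --- an invariant Morse function $h\colon V\to\R$ whose sublevel sets $V_r$ form a $\PB_2(\calO_S)$-cocompact filtration with $\bigcup_r V_r=V$: the cusps of $\PB_2(\calO_S)\backslash V$ are horoball directions controlled by the $\beta_s$, and $h$ measures depth into the cusps, so truncating yields the cocompact pieces $V_r$.

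Second, the positive half. Here one runs combinatorial Morse theory (Section~\ref{sec:morse}) on $(V_r)_r$: one computes the descending links of $h$ --- which, because $h$ is built from the coordinate Busemann functions, decompose as joins of branching sets in the individual trees $V_s$ --- and shows they are $(\abs{S}-2)$-connected. The Morse Lemma (Lemma~\ref{lem:morse}) then gives that the filtration is essentially $(\abs{S}-2)$-connected, which is exactly the content needed (via Brown's criterion and the finiteness of stabilizers) to see that $\PB_2(\calO_S)$ is of type $\F_{\abs{S}-1}$.

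Third --- and this is the main obstacle --- the negative half: showing the filtration is not essentially $(\abs{S}-1)$-acyclic, i.e.\ exhibiting an $r$ and a class in $\tilde{H}_{\abs{S}-1}(V_r)$ whose image in $\tilde{H}_{\abs{S}-1}(V_{r'})$ is non-zero for every $r'\ge r$. Such classes come from the product structure of the cusp: a cusp cross-section retracts onto a product of $\abs{S}$ pieces, one per place (arising from the unipotent radical acting on the horospheres of $V_s$), which carries non-trivial $(\abs{S}-1)$-dimensional homology, and the whole point of the cusp analysis in \cite{bux04} is that this homology cannot be pushed off to infinity inside any finite $V_r$. Establishing this persistence is the delicate step; it is the higher-dimensional incarnation of the classical facts that $\calO_S^\times\ltimes\calO_S$ fails to be finitely generated when $\abs{S}=1$ and finitely presented when $\abs{S}=2$. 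Granting it, $(V_r)_r$ is the asserted filtration, and the proof is complete by invoking the corresponding results of \cite{bux04}.
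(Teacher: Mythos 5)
Your proposal tries to reconstruct the entire geometric argument of \cite{bux04} (Busemann functions on the trees, reduction theory, descending-link analysis, cusp homology), but you explicitly leave the hardest step unfinished: in the third paragraph you say "Establishing this persistence is the delicate step" and then conclude with "Granting it, \dots the proof is complete by invoking the corresponding results of \cite{bux04}." That is circular --- you set up the scaffolding and then cite the theorem you were trying to prove precisely at the point where the real work would begin. The paper avoids all of this, and says so in the sentence immediately following the theorem: by Brown's criterion, \emph{any} $\PB_2(\calO_S)$-invariant cocompact filtration of $V$ must be essentially $(\abs{S}-2)$-connected but not essentially $(\abs{S}-1)$-acyclic. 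The inputs are that $V$ is contractible (a product of trees), the action is proper so that cell stabilizers are finite (hence of type $\F_\infty$), and the already-recorded Theorem~\ref{thm:bux} that $B_n(\calO_S)$ --- hence $\PB_2(\calO_S)$, which differs from $B_2(\calO_S)$ only by the central, finitely generated abelian factor $\calO_S^\times$ --- is of type $\F_{\abs{S}-1}$ but not $\FP_{\abs{S}}$. Brown's criterion is an if-and-only-if, so the essential connectivity of the filtration is forced by the group's finiteness length; no descending links need to be computed and no cusp cycles need to be exhibited.

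Concretely: the first paragraph of your proposal, where you build a $\PB_2(\calO_S)$-invariant Morse function from Busemann data and use it to produce a cocompact filtration $(V_r)_r$, is the only part of the proposal you actually need, and it is fine. (One does need to argue cocompactness of the sublevel sets, which is the content of the Harder-style reduction theory you gesture at; this is where the quoted reference does bear real weight.) The second and third paragraphs should be replaced by a one-line application of Brown's criterion in the reverse direction, exactly as the paper does. As a secondary point, when you appeal to Morse theory in the second paragraph you would anyway have to verify that the descending links of $h$ on the cube complex $V$ are $(\abs{S}-2)$-connected --- this is a genuine computation in \cite{bux04}, not something that falls out of the join decomposition for free, since in the unreduced directions the join factors can be empty.
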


In fact, by Brown's criterion \emph{any} cocompact filtration of $V$ has that property just because $B_2(\calO_S)$ is of type~$\F_{\abs{S}-1}$ but not of type~$\FP_{\abs{S}}$. We use this filtration to construct a cocompact filtration of $Y$ as follows. For $r \in \N$ let $Y^{(r)}$ be the set of all points $(x_q)_{q \in D}$ for which $\{q \in D \mid [x_q] \ne [z]\}$ has at most $r$ elements. Note that $Y^{(r)}$ is $\Thomp{\barB_*(\calO_S)}$-invariant. The filtration we want to consider is
\[
Y_r \defeq Y^{(r)} \cap V_r^{(D)}\text{.}
\]
The last piece that is missing to conclude that $\Thomp{\barB_*(\calO_S)}$ is not of type~$\FP_{\abs{S}}$ is the following:

\begin{proposition}\label{prop:Y_essential_connectivity}
 The filtration $(Y_r)_{r \in \N}$ is $\Thomp{\barB_*(\calO_S)}$-invariant and -cocompact. It is not essentially $(\abs{S}-1)$-acyclic.
\end{proposition}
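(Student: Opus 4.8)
The proposition has two parts: (1) the filtration $(Y_r)$ is $\Thomp{\barB_*(\calO_S)}$-invariant and cocompact, and (2) it is not essentially $(\abs{S}-1)$-acyclic. Part (1) should be mostly bookkeeping. Invariance of $Y^{(r)}$ was already noted in the text (the orbit $\PB_2(\calO_S)$-class $[x_q]$ of each coordinate is preserved because $\alpha_i$ maps into $\PB_2(\calO_S)$, and permuting coordinates via the $F$-action does not change the multiset of classes), and $V_r^{(D)}$ is invariant because $(V_r)_r$ is $\PB_2(\calO_S)$-invariant; so $Y_r = Y^{(r)}\cap V_r^{(D)}$ is invariant. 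For cocompactness: a point of $Y_r$ is supported (in the sense of having non-$z$ coordinates, or more precisely non-$[z]$-class coordinates) on at most $r$ dyadic points, and after acting by an element of $F$ we may assume those dyadic points lie among the first few carets; combined with $\PB_2(\calO_S)$-cocompactness of each $V_r$ and the fact that at each of the finitely many relevant coordinates the value lies in the cocompact set $V_r$, one gets finitely many orbits of cells. I would phrase this as: $Y_r$ is covered by finitely many translates of $W^I$ where $W\subseteq V_r$ is a compact fundamental domain for the (cocompact) $\PB_2(\calO_s)$-actions and $I$ ranges over size-$\le r$ subsets of $D(\tree)$ for a fixed tree $\tree$, using the $F$-action and Lemma~\ref{lem:reduced_points}-style reductions to move everything into bounded combinatorial position.

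**Part (2), the main point.** Here I would exploit that $Y$ contains, in an equivariant way, a single copy of $V$ sitting over one fixed dyadic coordinate, and that the filtration $(Y_r)$ restricts on that copy to (a cofinal reindexing of) the Bux filtration $(V_r)$ of $V$. Concretely, fix $q_0\in D$ and consider the embedding $j\colon V \to Y$ sending $x\mapsto$ the point with value $x$ at $q_0$ and value $z$ elsewhere. This is equivariant for the inclusion $\PB_2(\calO_S)=\barB_2(\calO_S)/Z\hookrightarrow \Thomp{\barB_*(\calO_S)}$ (choosing a caret whose dyadic point is $q_0$), and $j^{-1}(Y_r) = V_r$ (or $V_r$ up to cofinal reindexing) because having $x_{q_0}$ of non-$[z]$-class contributes exactly $1$ to the count bounding $Y^{(r)}$, and $V_r^{(D)}\cap \image j = V_r$. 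Now the hypothesis (quoted just above the proposition, from \cite{bux04}/Brown's criterion applied to $B_2(\calO_S)$, which is of type $\F_{\abs{S}-1}$ but not $\FP_{\abs{S}}$) says precisely that $(V_r)$ is not essentially $(\abs{S}-1)$-acyclic: for every $r$ there is a non-trivial class in $\tilde H_{\abs{S}-1}(V_r)$ that stays non-trivial in $V_{r'}$ for all $r'\ge r$. I would push such a class forward along $j$ and argue it remains non-trivial in $\tilde H_{\abs{S}-1}(Y_{r'})$ for all large $r'$, which is exactly the failure of essential $(\abs{S}-1)$-acyclicity.

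**The key step / main obstacle.** The crux is showing the pushed-forward homology class in $\tilde H_{\abs{S}-1}(Y_r)$ does not die as $r$ grows — i.e. that the embedding $j$ is, homologically, a genuine direct summand inclusion at each filtration level, not something that gets filled in by the rest of $Y$. The clean way is to produce a retraction, up to homotopy, $Y_r \to V_r$: there should be a map $\Thomp{\barB_*(\calO_S)}$-equivariantly collapsing the other dyadic coordinates. For instance, the ``forget all coordinates except $q_0$'' map $r_{q_0}\colon Y\to V$, $(x_q)_q\mapsto x_{q_0}$, is continuous and satisfies $r_{q_0}\circ j = \id_V$; one checks $r_{q_0}(Y_r)\subseteq V_r$ since the $q_0$-coordinate of a point in $Y_r$ lies in $V_r$ by definition of $V_r^{(D)}$. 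Then $(r_{q_0})_*\circ j_* = \id$ on $\tilde H_*(V_r)$, so $j_*$ is injective at every level, and injectivity is compatible with the maps $V_r\to V_{r'}$ and $Y_r\to Y_{r'}$ because $r_{q_0}$ and $j$ are defined on all of $Y$ and $V$. Hence a class in $\tilde H_{\abs{S}-1}(V_r)$ surviving to all $V_{r'}$ pushes to a class in $\tilde H_{\abs{S}-1}(Y_r)$ surviving to all $Y_{r'}$, and essential $(\abs{S}-1)$-acyclicity of $(Y_r)$ fails. The only genuine care needed is the reindexing: $r_{q_0}$ and $j$ need not make $(Y_r)$ and $(V_r)$ correspond on the nose, but they do so cofinally, which is all that ``essentially'' requires; I would state this carefully and otherwise treat the finiteness/cocompactness verifications as routine.
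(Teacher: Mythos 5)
Your proof is correct and the overall structure matches the paper's: both reduce cocompactness to $\PB_2(\calO_S)$-cocompactness of the $V_r$ by using the $F$-action to move the (finitely many) non-$[z]$ coordinates into bounded combinatorial position, and both then deduce the failure of essential $(\abs{S}-1)$-acyclicity of $(Y_r)$ from that of $(V_r)$ by exhibiting a copy of $V_r$ inside $Y_r$ over a single dyadic coordinate. The difference is in the mechanism used to show the transported class does not die. The paper sandwiches $V_k \times \{z\}^{D\setminus\{1/2\}} \subseteq Y_k$ and $Y_m \subseteq V_m \times V_m^{D\setminus\{1/2\}}$ and invokes a K\"unneth argument (their Lemma~\ref{lem:non-trivial_maps}) to show the outer composition is nonzero on $\tilde{H}_{\abs{S}-1}$, which forces the middle map $Y_k \to Y_m$ to be nonzero; this entails a digression about topologies on infinite products. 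You instead observe that the coordinate map $r_{q_0}\colon Y\to V$ together with the section $j\colon V\to Y$ satisfies $r_{q_0}\circ j=\id$, $j(V_r)\subseteq Y_r$, and $r_{q_0}(Y_r)\subseteq V_r$, so $j_*$ is split injective at every filtration level, compatibly with the connecting maps; hence a class surviving in $(V_r)$ pushes to a class surviving in $(Y_r)$. This is more direct and avoids the infinite-product topology issue entirely; it is also morally the same, since a retraction onto the first factor is exactly what makes the K\"unneth lemma work when $A_2$ is a point.

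One small correction: $r_{q_0}$ is not $\Thomp{\barB_*(\calO_S)}$-equivariant --- the $F$-part of the group permutes the dyadic coordinates, so ``forget all coordinates except $q_0$'' does not commute with the action. But this costs nothing, since your homology argument only uses that $r_{q_0}$ is a continuous filtration-compatible retraction of $j$, not that it is equivariant; you should simply drop the word ``equivariantly'' in that sentence.
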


Before we can prove the second part, we have to state a technical lemma which says that taking products does not help to kill cycles:

\begin{lemma}\label{lem:non-trivial_maps}
 Let $(X_1,A_1)$ and $(X_2,A_2)$ be pairs of CW complexes and assume that the map $\tilde{H}_n(A_1 \to X_1)$ is non-trivial and that $A_2$ is non-empty. Then the map $\tilde{H}_n(A_1 \times A_2 \to X_1 \times X_2)$ is non-trivial as well.
\end{lemma}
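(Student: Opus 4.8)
The plan is to reduce the statement to the Künneth theorem together with the naturality of the Künneth splitting. First I would replace $A_2$ by a single point $a_2 \in A_2$ (it is nonempty) and $X_2$ by the component of $X_2$ containing $a_2$; the inclusion $\{a_2\} \hookrightarrow X_2$ then induces a map on reduced homology that is split injective in degree $0$, since $\tilde H_0$ of a path component is $0$ and $\tilde H_*(\{a_2\})=0$ in positive degrees, so $\tilde H_*(\{a_2\})$ injects into $\tilde H_*(X_2)$ as a direct summand in every degree (trivially). More usefully, $H_0(\{a_2\};\Z)=\Z$ is a direct summand of $H_0(X_2;\Z)$ via the maps induced by $\{a_2\}\hookrightarrow X_2$ and the collapse $X_2 \to \{a_2\}$. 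So it suffices to show that $\tilde H_n(A_1\times\{a_2\}\to X_1\times\{a_2\})$, i.e.\ $\tilde H_n(A_1\to X_1)$, being nontrivial forces $\tilde H_n(A_1\times A_2\to X_1\times X_2)$ to be nontrivial, and for that I only need a left inverse, at the level of homology, to the inclusion $A_1 = A_1\times\{a_2\}\hookrightarrow A_1\times A_2$ that is compatible with the corresponding inclusion on the $X$-side.

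Concretely, let $j_1\colon A_1\hookrightarrow A_1\times A_2$ be $x\mapsto(x,a_2)$ and $p_1\colon A_1\times A_2\to A_1$ the projection, and similarly $\bar\jmath_1,\bar p_1$ on the $X$-side; then $p_1\circ j_1=\id$ and $\bar p_1\circ\bar\jmath_1=\id$, and the square relating these to the inclusions $A_i\hookrightarrow X_i$ commutes on the nose. Pick a class $c\in\tilde H_n(A_1)$ whose image in $\tilde H_n(X_1)$ is nonzero. Then $(j_1)_*(c)\in\tilde H_n(A_1\times A_2)$ has image in $\tilde H_n(X_1\times X_2)$ equal to $(\bar\jmath_1)_*$ applied to the image of $c$ in $\tilde H_n(X_1)$; applying $(\bar p_1)_*$ recovers the nonzero image of $c$, so the image of $(j_1)_*(c)$ in $\tilde H_n(X_1\times X_2)$ cannot be zero. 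This proves the lemma.

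The only mild subtlety — and the step I would be most careful about — is the bookkeeping with \emph{reduced} homology and the choice of basepoint, since the statement is about $\tilde H_n$ and the external product $\tilde H_n(A_1)\to\tilde H_n(A_1\times A_2)$ coming from a point inclusion in the second factor is exactly the "cross product with the point class", which is split by the projection; no Künneth theorem is actually needed once one uses the retraction $p_1$. One should just note that if $A_2$ were empty this fails (the product is empty), which is why nonemptiness of $A_2$ is hypothesized; no hypothesis on $X_2$ or on $(X_2,A_2)$ is needed beyond $A_2\neq\emptyset$, which matches the way the lemma will be applied (with $X_2 = V_r^{(D\setminus I)}$-type factors that are always nonempty). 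Hence the argument is essentially a naturality-of-splitting argument and carries no real obstacle.
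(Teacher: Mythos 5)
Your argument is correct, and it is genuinely \emph{more elementary} than the paper's. The paper handles $n=0$ separately and, for $n>0$, invokes the K\"unneth formula: it chooses a nontrivial $0$-cycle $d$ in $A_2$, places $c\otimes d$ in the K\"unneth summand $H_n(A_1)\otimes H_0(A_2)\hookrightarrow H_n(A_1\times A_2)$, and uses naturality of the K\"unneth splitting to carry this to a nonzero class in $H_n(X_1\times X_2)$. You instead observe that the point inclusion $j_1\colon A_1\to A_1\times A_2$, $x\mapsto(x,a_2)$, together with the projection $p_1$ (and the same maps $\bar\jmath_1,\bar p_1$ on the $X$-side) form retractions compatible with all the inclusions, so the square with horizontal maps $(j_1)_*$ and $(\bar\jmath_1)_*$ and vertical maps induced by $A_i\hookrightarrow X_i$ commutes, and applying $(\bar p_1)_*$ to the image of $(j_1)_*(c)$ recovers the nonzero class $i_*(c)$. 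This buys two things: (i) it avoids the K\"unneth theorem entirely (the only fact used is functoriality of $\tilde H_n$ applied to the retraction $p_1\circ j_1=\mathrm{id}$, $\bar p_1\circ\bar\jmath_1=\mathrm{id}$), and (ii) it treats $n=0$ uniformly with $n>0$ rather than as a separate easy case. Conceptually it is the same observation as in the paper --- $(j_1)_*(c)$ is the cross product $c\times[a_2]$, and $(\bar p_1)_*$ is the retraction that makes the K\"unneth summand split --- but your phrasing dispenses with the machinery. One cosmetic remark: the first paragraph's ``replace $X_2$ by the component of $a_2$'' is not actually used (or needed) in the second paragraph, which is self-contained; you could simply delete it.
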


\begin{proof}
 The case $n = 0$ is clear so assume $n > 0$ from now on.

 Let $c$ be an $n$-cycle in $A_1$ that is mapped non-trivially into $X_1$ and let $d$ be a non-trivial $0$-cycle in $A_2$. Consider the diagram
 \begin{diagram}
 H_n(A_1) \otimes H_0(A_2) & \rInto & H_n(A_1 \times A_2)\\
 \dTo && \dTo\\
 H_n(X_1) \otimes H_0(X_2) & \rInto & H_n(X_1 \times X_2) \text{.}
 \end{diagram}
 where the rows are parts of the K\"unneth formula (see \cite[Theorem~3B.6]{hatcher01}) and the columns are the maps induced from the inclusions. The diagram commutes by naturality of the K\"unneth formula. The cycle $c \otimes d$ in the upper left maps non-trivially into the lower left which injects into the lower right. Hence it has non-trivial image in the lower right. Since the diagram commutes, it follows that its image in the upper right also has non-trivial image in the lower right, which is what we want.
\end{proof}

\begin{proof}[Proof of Proposition~\ref{prop:Y_essential_connectivity}]
 For cocompactness let $C_r \subseteq V$ be compact such that its $\PB_2(\calO_S)$-translates cover $V_r$. Let $\hat{C}_r \subseteq Y$ be the product of $r$ copies of $C_r$ (say at positions $q_1,\ldots,q_r$) and $\{z\}$ otherwise. We claim that the translates of $\hat{C}_r$ cover $Y_r$.

 Indeed, let $(x_q)_q \in Y_r$ be arbitrary. Since it lies in $Y^{(r)}$ there are at most $r$ positions where $[x_q] \ne [z]$. Using the action of $F$ we can achieve that these positions are (some of) $q_1$ to $q_r$. Now, since each $x_{q_i}$ lies in $V_r$, we can move it into $C_r$, using an element of the form $[\tree,g,\tree]$, without moving any of the other $x_q$. At all other coordinates $q$, i.e., where $[x_q]=[z]$, we can move $x_q$ to $z$ using the same method. Since all but finitely many $x_q$ were $z$ to begin with, we have moved $(x_q)_q$ into $\hat{C}_r$ in finitely many steps.

 For the second statement let $N = \abs{S}-1$, so we want to show that $(\tilde{H}_N(Y_r))_r$ is not essentially trivial. Let $k$ be such that the map $H_{\abs{S}-1}(V_k \to V_m)$ is non-trivial for every $m \ge k$. For arbitrary $m \ge k$ take $A_1 = V_k$, $A_2 = \prod_{\substack{q \in D\\q \ne 1/2}} \{z\}$, $X_1 = V_m$, and $X_2 = \prod_{\substack{q \in D\\q \ne 1/2}} V_m$. Then $A_1 \times A_2 \subseteq Y_k$ and $Y_m \subseteq X_1 \times X_2$ (on the infinite products we take the CW topology, not the product topology). By Lemma~\ref{lem:non-trivial_maps} the map $H_N(A_1 \times A_2 \to X_1 \times X_2)$ is non-trivial. But this factors through the map $H_N(Y_k \to Y_m)$ which is therefore non-trivial as well. This shows that $(H_N(Y_r))_r$ is not essentially trivial.
\end{proof}

\begin{theorem}
 The group $\Thomp{\barB_*(\calO_S)}$ is not of type~$\FP_{\abs{S}}$.
\end{theorem}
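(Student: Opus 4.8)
The plan is to apply the homological form of Brown's criterion — equivalently, Theorem~\ref{thrm:relative_brown} in the degenerate case $\Lambda = \Gamma = \Delta = \Thomp{\barB_*(\calO_S)}$ — to the action of $\Thomp{\barB_*(\calO_S)}$ on the space $Y$ constructed above, with the filtration $(Y_r)_{r \in \N}$. All of the required inputs have already been assembled.

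First I would observe that $Y$ is contractible, and in particular $(\abs{S}-1)$-acyclic: each Bruhat--Tits tree $V_s$ is a tree, hence contractible, so the finite product $V = \prod_{s \in S} V_s$ is contractible, and $Y = V^{(D)} = \varinjlim_{I \subseteq D \text{ finite}} V^I$ is an increasing union of contractible subcomplexes along the closed cellular inclusions $V^I \hookrightarrow V^{I'}$, hence contractible. Next, by Proposition~\ref{prop:Y_stabilizers} every point stabilizer of the action on $Y$ is of type~$\F_\infty$; the stabilizer of a cell is commensurable with the stabilizer of its barycenter and so is of type~$\F_\infty$ as well, in particular of type~$\FP_{\abs{S}-p}$ for every $p$-cell. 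Finally, by Proposition~\ref{prop:Y_essential_connectivity} the filtration $(Y_r)_r$ is $\Thomp{\barB_*(\calO_S)}$-invariant and $\Thomp{\barB_*(\calO_S)}$-cocompact, and it is not essentially $(\abs{S}-1)$-acyclic; more precisely, inspecting its proof, there is a fixed $k$ (coming from $B_2(\calO_S)$ failing to be of type~$\FP_{\abs{S}}$, via \cite{bux04} and Lemma~\ref{lem:non-trivial_maps}) such that the map $\tilde{H}_{\abs{S}-1}(Y_k \into Y_m)$ is non-trivial for every $m \ge k$.

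Taking $Z = Y_k$ — a $\Thomp{\barB_*(\calO_S)}$-cocompact subspace, contained in precisely those $Y_m$ with $m \ge k$ — the hypotheses of Theorem~\ref{thrm:relative_brown} are satisfied with $n = \abs{S}$ and $\Lambda = \Gamma = \Delta = \Thomp{\barB_*(\calO_S)}$: $Y$ is $(n-1)$-acyclic, all cell stabilizers are of type~$\FP_{n-p}$, $(Y_r)_r$ is a $\Lambda$-cocompact filtration, and there is no $m$ with $Z \subseteq Y_m$ for which $\tilde{H}_{n-1}(Z \into Y_m)$ is trivial. The theorem then yields directly that $\Thomp{\barB_*(\calO_S)}$ — the group ``$\Delta$'' through which $\Gamma \into \Lambda$ trivially factors — is not of type~$\FP_{\abs{S}}$. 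I do not expect a genuine obstacle here: the delicate work was already carried out in Lemma~\ref{lem:good_stab} (the $S$-unit theorem making the relevant stabilizers $\F_\infty$) and in Proposition~\ref{prop:Y_essential_connectivity} (transporting the bad connectivity of the product of Bruhat--Tits trees into $(Y_r)_r$); the only point that merits an explicit remark in the writeup is that Brown's criterion needs only the \emph{filtration} $(Y_r)_r$ to be cocompact, not the $\Thomp{\barB_*(\calO_S)}$-action on $Y$ itself, which is far from cocompact.
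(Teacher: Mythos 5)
Your proposal is correct and matches the paper's proof: the paper also applies Brown's criterion to the action on $Y$ using the contractibility of $Y$, Proposition~\ref{prop:Y_stabilizers} for the $\F_\infty$ stabilizers, and Proposition~\ref{prop:Y_essential_connectivity} for the filtration's failure of essential $(\abs{S}-1)$-acyclicity. Invoking Theorem~\ref{thrm:relative_brown} in the degenerate case $\Lambda = \Gamma = \Delta$ is, as the paper notes, just a repackaging of (one half of) Brown's criterion, so there is no real difference in route.
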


\begin{proof}
 The group acts on $Y$, which is contractible, with stabilizers of type~$\F_\infty$ (Proposition~\ref{prop:Y_stabilizers}). There is an invariant, cocompact filtration $(Y_r)_r$ which is not essentially $(\abs{S}-1)$-acyclic (Proposition~\ref{prop:Y_essential_connectivity}). We conclude using Brown's criterion.
\end{proof}

\begin{remark}
 As far as we can tell, none of the established methods in the literature can be used now to show that $\Thomp{B_*(\calO_S)}$ is not of type~$\FP_{\abs{S}}$. The kernel of the morphism $\Thomp{B_*(\calO_S)} \to \Thomp{\barB_*(\calO_S)}$ is very unlikely to be even finitely generated (or else one could apply \cite[Proposition~2.7]{bieri76} or the following exercise, see also \cite[Theorem~7.2.21]{geoghegan08}). Also the projection does not split (or else one could apply the retraction argument \cite[Proposition~4.1]{bux04}). For this reason we will now use the new methods established in Section~\ref{sec:relative_brown}, which can be regarded as a generalization of the retraction argument. We should mention that one \emph{can} deduce that $\Thomp{B_*(\calO_S)}$ is not finitely generated if $\abs{S}=1$, without using this new machinery.
\end{remark}

\begin{proof}[Proof of Theorem~\ref{thrm:borel_thomp_neg_fin_props}]
 We apply Theorem~\ref{thrm:relative_brown} to the inclusion homomorphism $B_2(\calO_S) \into \Thomp{\barB_*(\calO_S)}$ that takes $g$ to $[\lambda_1,g,\lambda_1]$ where $\lambda_1$ is a single caret. We take $Z$ to be the subspace of $Y$ consisting of points $(x_q)_q$ with $x_q = z$ for $q \ne 1/2$, so $Z$ is $B_2(\calO_S)$-cocompact. Our $\Thomp{\barB_*(\calO_S)}$-cocompact filtration of $Y$ is $(Y_r)_{r\in\N}$. Observe that $H_{\abs{S}-1}(Z \to Y_r)$ is not eventually trivial by the proof of Proposition~\ref{prop:Y_essential_connectivity}. Thus we can apply Theorem~\ref{thrm:relative_brown}.

 Since the inclusion $B_2(\calO_S) \into \Thomp{\barB_*(\calO_S)}$ clearly factors through $\Thomp{B_*(\calO_S)}$ we conclude that this group is not of type~$\FP_{\abs{S}-1}$.
\end{proof}

Combining Theorem~\ref{thrm:borel_thomp_pos_fin_props} and Theorem~\ref{thrm:borel_thomp_neg_fin_props}, we obtain:

\begin{theorem}\label{thrm:borel_thomp_full_fin_props}
 The group $\Thomp{B_*(\calO_S)}$ is of type~$\F_{\abs{S}-1}$ but not of type~$\FP_{\abs{S}}$. \qed
\end{theorem}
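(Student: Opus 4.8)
The plan is simply to assemble the two halves, which have already been established separately in Sections~\ref{sec:mtx_pos_fin_props} and~\ref{sec:mtx_neg_fin_props}. For the positive direction I would invoke Theorem~\ref{thrm:borel_thomp_pos_fin_props}, which gives $\phi(\Thomp{B_*(\calO_S)}) \ge \liminf_n \phi(B_n(\calO_S))$. By Theorem~\ref{thm:bux} every $B_n(\calO_S)$ with $n \ge 2$ is of type $\F_{\abs{S}-1}$, and since an initial segment of the family is irrelevant by the truncation isomorphism (Proposition~\ref{prop:truncation_isomorphism}) the stray value at $n=1$ does not affect the limit inferior, so $\liminf_n \phi(B_n(\calO_S)) = \abs{S}-1$. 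Hence $\Thomp{B_*(\calO_S)}$ is of type $\F_{\abs{S}-1}$. For the negative direction, the assertion that $\Thomp{B_*(\calO_S)}$ is not of type $\FP_{\abs{S}}$ is precisely Theorem~\ref{thrm:borel_thomp_neg_fin_props}. Combining the two statements yields the claim.

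It is worth recalling where the actual content lies, since the theorem as stated is only a packaging of it. The inequality $\ge$ for the finiteness length reduces, through Proposition~\ref{prop:generic_finiteness} and Brown's criterion, to the high connectivity of the descending links $\dlkmodel{B_*(R)}{n}$; this is Proposition~\ref{prop:borel_thomp_fin_props_strong}, proved by an induction on the number of edges of the governing linear graph, feeding the subcomplexes $\dlkmodel{B_*(R);\Delta}{n}$ into the relative flow of Proposition~\ref{prop:putman_flow}, with the reduced-form normalisation of Corollary~\ref{cor:borel_reduced_form} handling the bookkeeping. The inequality $\le$, the failure of $\FP_{\abs{S}}$, is the genuinely new ingredient: one passes to $\Thomp{\barB_*(\calO_S)}$, lets it act on the space $Y = V^{(D)}$ built from products of Bruhat--Tits trees so that the stabilisers become $\F_\infty$ (Proposition~\ref{prop:Y_stabilizers}), produces the cocompact filtration $(Y_r)_r$ which is not essentially $(\abs{S}-1)$-acyclic (Proposition~\ref{prop:Y_essential_connectivity}), and then transports the conclusion from $\barB_*$ back to $B_*$ through the relative Brown criterion (Theorem~\ref{thrm:relative_brown}) applied to $B_2(\calO_S) \into \Thomp{\barB_*(\calO_S)}$ factoring through $\Thomp{B_*(\calO_S)}$.

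Accordingly, at the level of the final statement there is no remaining obstacle: the proof is the two-line combination above, and all the difficulty has been discharged in the cited propositions. The one point deserving a word of care is the treatment of the index $n=1$ in the limit inferior, which is dealt with cleanly by Proposition~\ref{prop:truncation_isomorphism}; beyond that the argument is purely formal.
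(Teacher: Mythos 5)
Your proof is correct and is exactly what the paper does: the theorem carries a \qed because the text immediately preceding it reads ``Combining Theorem~\ref{thrm:borel_thomp_pos_fin_props} and Theorem~\ref{thrm:borel_thomp_neg_fin_props}, we obtain,'' so the positive half is Theorem~\ref{thrm:borel_thomp_pos_fin_props} together with Theorem~\ref{thm:bux}, and the negative half is Theorem~\ref{thrm:borel_thomp_neg_fin_props}. Your accompanying summary of where the real content lives (Propositions~\ref{prop:borel_thomp_fin_props_strong}, \ref{prop:Y_stabilizers}, \ref{prop:Y_essential_connectivity}, and Theorem~\ref{thrm:relative_brown}) and your note on the harmless $n=1$ case via Proposition~\ref{prop:truncation_isomorphism} are accurate.
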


%--------------------------------------------------------
\section{Thompson groups for mock-symmetric groups}\label{sec:mock}

The groups discussed in this section are instances of what Davis, Januszkiewicz and Scott call ``mock reflection groups'' \cite{davis03}. These are groups generated by involutions, and act on associated cell complexes very much like Coxeter groups, with the only difference being that some of the generators may be ``mock reflections'' that do not fix their reflection mirror pointwise. Here we will only be concerned with one family of groups consisting of the minimal blow up of Coxeter groups of type $A_n$. These Coxeter groups are symmetric groups and so we call their blow ups \emph{mock symmetric groups}. For $n \in \N$ the mock symmetric group $\mock\symm_n$ is given by the presentation
\begin{align}
 \mock\symm_n = \gen{s_{i,j}, 1 \le i < j \le n \mid{}& s_{i,j}^2 = 1 \text{ for all } i, j\nonumber\\
 &s_{i,j}s_{k,\ell} = s_{k,\ell}s_{i,j} \text{ for } i < j < k < \ell\label{eq:mock_presentation}\\
 &s_{k,\ell}s_{i,j} = s_{k+\ell-j,k+\ell-i}s_{k,\ell} \text{ for }k \le i < j \le \ell}\text{.}\nonumber
\end{align}
We also set $\mock\symm_\infty = \varinjlim \mock\symm_n$. See Figure~\ref{fig:mock_relation} for a visualization of elements of $\mock\symm_n$, and a visualization of the last relation.

\begin{figure}[htb]
\centering
\begin{tikzpicture}[line width=0.8pt, scale=0.5]
   \node at (10,-2.5) {$=$};

   \draw
   (4,0) -- (6,-2)
   (6,0) -- (4,-2);
   \filldraw[color=white]
   (5,-1) circle (5pt);
   \draw
   (5,-1) circle (5pt);
   \filldraw
   (0,0) circle (1.5pt)   (2,0) circle (1.5pt)   (4,0) circle (1.5pt)   (6,0) circle (1.5pt)   (8,0) circle (1.5pt)   
   (0,-2) circle (1.5pt)   (2,-2) circle (1.5pt)   (4,-2) circle (1.5pt)   (6,-2) circle (1.5pt)   (8,-2) circle (1.5pt);
%   \node at (10,-1) {$\dots$};

  \begin{scope}[yshift=-2cm]
   \draw
   (0,0) -- (8,-2)
   (2,0) -- (6,-2)
   (4,0) -- (4,-2)
   (6,0) -- (2,-2)
   (8,0) -- (0,-2);
   \filldraw[color=white]
   (4,-1) circle (5pt);
   \draw
   (4,-1) circle (5pt);
   \filldraw
%   (0,0) circle (1.5pt)   (2,0) circle (1.5pt)   (4,0) circle (1.5pt)   (6,0) circle (1.5pt)   (8,0) circle (1.5pt)   
   (0,-2) circle (1.5pt)   (2,-2) circle (1.5pt)   (4,-2) circle (1.5pt)   (6,-2) circle (1.5pt)   (8,-2) circle (1.5pt);
%   \node at (10,-1) {$\dots$};
  \end{scope}

  \begin{scope}[xshift=12cm]
   \draw
   (0,0) -- (8,-2)
   (2,0) -- (6,-2)
   (4,0) -- (4,-2)
   (6,0) -- (2,-2)
   (8,0) -- (0,-2);
   \filldraw[color=white]
   (4,-1) circle (5pt);
   \draw
   (4,-1) circle (5pt);
   \filldraw
   (0,0) circle (1.5pt)   (2,0) circle (1.5pt)   (4,0) circle (1.5pt)   (6,0) circle (1.5pt)   (8,0) circle (1.5pt)   
   (0,-2) circle (1.5pt)   (2,-2) circle (1.5pt)   (4,-2) circle (1.5pt)   (6,-2) circle (1.5pt)   (8,-2) circle (1.5pt);
%   \node at (10,-1) {$\dots$};
  \end{scope}

  \begin{scope}[yshift=-2cm, xshift=12cm]
   \draw
   (2,0) -- (4,-2)
   (4,0) -- (2,-2);
   \filldraw[color=white]
   (3,-1) circle (5pt);
   \draw
   (3,-1) circle (5pt);
   \filldraw
%   (0,0) circle (1.5pt)   (2,0) circle (1.5pt)   (4,0) circle (1.5pt)   (6,0) circle (1.5pt)   (8,0) circle (1.5pt)   
   (0,-2) circle (1.5pt)   (2,-2) circle (1.5pt)   (4,-2) circle (1.5pt)   (6,-2) circle (1.5pt)   (8,-2) circle (1.5pt);
%   \node at (10,-1) {$\dots$};
  \end{scope}
\end{tikzpicture}
\caption[Mock relation]{The relation $s_{i,j}s_{k,\ell} = s_{k,\ell} s_{k+\ell-j,k+\ell-i}$ of $\mock\symm_n$ in the case $i = 3$, $j=4$, $k=1$, $\ell=5$, $n = 5$.}
\label{fig:mock_relation}
\end{figure}

Let $\bar{s}_{i,j} \in \symm_n$ be the involution $(i\ j) ((i+1)\ (j-1)) \cdots (\floor{\frac{i+j}{2}}\ \ceil{\frac{i+j}{2}})$ (this is the longest element in the Coxeter group generated by $(i\ i+1), \ldots, (j-1\ j)$). Taking $s_{i,j}$ to $\bar{s}_{i,j}$ defines a surjective homomorphism $\rho_n \colon \mock\symm_n \to \symm_n$. We define cloning maps $\clone^n_k \colon \mock\symm_n \to \mock\symm_{n+1}$ by first defining them on the generators:
\begin{equation}
\label{eq:cloning_mock}
(s_{i,j}) \clone^n_k = \left\{
\begin{array}{ll}
s_{i,j} & \text{for }j < k\\
s_{i,j+1}s_{k,k+1} & \text{for }i \le k \le j\\
s_{i+1,j+1} & \text{for }k < i\text{.}
\end{array}
\right.
\end{equation}
Now we extend $\clone^n_k$ to a map $\mock\symm_n \to \mock\symm_{n+1}$ as in the paragraph leading up to Lemma~\ref{lem:extend_zs-products}. See Figure~\ref{fig:mock_cloning} for an example of cloning.

\begin{figure}[htb]
\centering
\begin{tikzpicture}[line width=0.8pt, scale=0.5,yscale=-1]
   \node at (9,-2.5) {$=$};

  \begin{scope}[yshift=-1cm]
   \draw
%   (0,0) -- (0,-1)
%   (2,0) -- (2,-1)
%   (6,0) -- (6,-1);
   (3,0) -- (4,-1)
   (5,0) -- (4,-1);
   \filldraw
%   (3,0) circle (1.5pt) (5,0) circle (1.5pt)
   (3,0) circle (1.5pt) (5,0) circle (1.5pt)
   (0,-1) circle (1.5pt)   (2,-1) circle (1.5pt)   (4,-1) circle (1.5pt)   (6,-1) circle (1.5pt);
%   (0,0) circle (1.5pt)   (2,0) circle (1.5pt)   (3,0) circle (1.5pt) (5,0) circle (1.5pt)   (6,0) circle (1.5pt)   
   (0,-1) circle (1.5pt)   (2,-1) circle (1.5pt)   (4,-1) circle (1.5pt)   (6,-1) circle (1.5pt);
%   \node at (10,-1) {$\dots$};
   \end{scope}

  \begin{scope}[yshift=-2cm]
   \draw
   (0,0) -- (6,-2)
   (2,0) -- (4,-2)
   (4,0) -- (2,-2)
   (6,0) -- (0,-2);
   \filldraw[color=white]
   (3,-1) circle (5pt);
   \draw
   (3,-1) circle (5pt);
   \filldraw
%   (0,0) circle (1.5pt)   (2,0) circle (1.5pt)   (4,0) circle (1.5pt)   (6,0) circle (1.5pt)   (8,0) circle (1.5pt)   
   (0,-2) circle (1.5pt)   (2,-2) circle (1.5pt)   (4,-2) circle (1.5pt)   (6,-2) circle (1.5pt);
%   \node at (10,-1) {$\dots$};
  \end{scope}

  \begin{scope}[xshift=12cm,yshift=-2cm]
   \draw
   (0,0) -- (8,-2)
   (2,0) -- (6,-2)
   (4,0) -- (4,-2)
   (6,0) -- (2,-2)
   (8,0) -- (0,-2);
   \filldraw[color=white]
   (4,-1) circle (5pt);
   \draw
   (4,-1) circle (5pt);
   \filldraw
   (0,0) circle (1.5pt)   (2,0) circle (1.5pt)   (4,0) circle (1.5pt)   (6,0) circle (1.5pt)   (8,0) circle (1.5pt)   
%   (0,-2) circle (1.5pt)   (2,-2) circle (1.5pt)   (4,-2) circle (1.5pt)   (6,-2) circle (1.5pt)   (8,-2) circle (1.5pt);
   (0,-2) circle (1.5pt)   (2,-2) circle (1.5pt)   (4,-2) circle (1.5pt)   (6,-2) circle (1.5pt)   (8,-2) circle (1.5pt);
%   \node at (10,-1) {$\dots$};
  \end{scope}

  \begin{scope}[yshift=0cm, xshift=12cm]
   \draw
   (4,0) -- (6,-2)
   (6,0) -- (4,-2);
   \filldraw[color=white]
   (5,-1) circle (5pt);
   \draw
   (5,-1) circle (5pt);
   \filldraw
   (4,0) circle (1.5pt)   (6,0) circle (1.5pt);
%   (0,-2) circle (1.5pt)   (2,-2) circle (1.5pt)   (4,-2) circle (1.5pt)   (6,-2) circle (1.5pt)   (8,-2) circle (1.5pt);
%   \node at (10,-1) {$\dots$};
  \end{scope}

  \begin{scope}[yshift=-4cm, xshift=12cm]
   \draw
   (2,0) -- (3,-1)
   (4,0) -- (3,-1);
   \filldraw
   (2,0) circle (1.5pt)   (4,0) circle (1.5pt)
%   (0,0) circle (1.5pt)   (2,0) circle (1.5pt)   (4,0) circle (1.5pt)   (6,0) circle (1.5pt)   (8,0) circle (1.5pt)   
%   (3,-1) circle (1.5pt);
   (3,-1) circle (1.5pt);
%   (0,-1) circle (1.5pt)   (3,-1) circle (1.5pt)   (6,-1) circle (1.5pt)   (8,-1) circle (1.5pt);
%   \node at (10,-1) {$\dots$};
  \end{scope}
\end{tikzpicture}
\caption[Mock cloning]{The relation $s_{1,4}\lambda_3 = \lambda_2s_{1,5} s_{3,4}$ of $\Fmonoid \bowtie \mock\symm_\infty$.}
\label{fig:mock_cloning}
\end{figure}

\begin{proposition}\label{prop:mock_clone}
 The above data define a cloning system on $\mock\symm_*$.
\end{proposition}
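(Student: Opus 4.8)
The plan is to verify the three cloning-system axioms \eqref{item:fcs_cloning_a_product}, \eqref{item:fcs_product_of_clonings}, \eqref{item:fcs_compatibility} for the data $(\mock\symm_*, \rho_*, \clone^*_*)$, following the strategy of Remark~\ref{rmk:axioms_via_pres}: since $\mock\symm_n$ is given by the short presentation \eqref{eq:mock_presentation}, I would re-run the proof of Proposition~\ref{prop:BZS_existence} via Lemma~\ref{lem:extend_zs-products} rather than the trivial presentation. Concretely, take $X = \{\lambda_k\}$ with the forest relations $R$, and $Y = \{s_{i,j}\}$ with the relations $T$ from \eqref{eq:mock_presentation}. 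The maps $Y \times X \to Y^*$ and $Y\times X \to X$ are $s_{i,j}^{\lambda_k} \defeq (s_{i,j})\clone_k$ (a word of length $1$ or $2$ by \eqref{eq:cloning_mock}) and $s_{i,j}\cdot \lambda_k \defeq \lambda_{\bar s_{i,j}(k)}$, using $\rho_n(s_{i,j}) = \bar s_{i,j}$. Lemma~\ref{lem:extend_zs-products} then requires checking: (i) for each forest relation $\lambda_\ell\lambda_k = \lambda_k\lambda_{\ell+1}$ ($k<\ell$) and each generator $s_{i,j}$, that $s_{i,j}\cdot(\lambda_\ell\lambda_k)$ and $s_{i,j}\cdot(\lambda_k\lambda_{\ell+1})$ agree up to a forest relation and that $s_{i,j}^{\lambda_\ell\lambda_k} \sim_T s_{i,j}^{\lambda_k\lambda_{\ell+1}}$; (ii) for each defining relation of $\mock\symm_n$ (the three families in \eqref{eq:mock_presentation}) and each $\lambda_k$, compatibility of the two partial actions.

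For part (i), the statement about $\cdot$ is exactly the compatibility condition \eqref{item:fcs_compatibility} (in the form \eqref{eq:compatibility_cases}), and since $\rho$ factors through the symmetric group where we already know cloning is a well-defined cloning system (Example~\ref{ex:symm_gps}), this reduces to checking $\rho_{n+1}((s_{i,j})\clone_k) = (\bar s_{i,j})\symmclone_k$ as permutations — a direct computation from \eqref{eq:cloning_mock}, noting that $(s_{i,j+1}s_{k,k+1})$ maps under $\rho$ to $\bar s_{i,j+1}(k\ k{+}1)$, which is precisely $(\bar s_{i,j})\symmclone_k$ when $i\le k\le j$; in fact I expect \eqref{item:fcs_compatibility} to hold even for $i = k, k+1$, as noted after Definition~\ref{def:filtered_cloning_system}. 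The relation $s_{i,j}^{\lambda_\ell\lambda_k}\sim_T s_{i,j}^{\lambda_k\lambda_{\ell+1}}$ is the analogue of \eqref{item:cs_product_of_clonings}/\eqref{item:fcs_product_of_clonings} restricted to generators; one unwinds \eqref{eq:cloning_mock} in the relevant position cases ($j$ vs $k$ vs $\ell$, $i$ vs $k$, etc.) and uses the first family $s^2 = 1$ and the second family (commuting of disjoint generators) of \eqref{eq:mock_presentation} to reconcile the two words — the cases where both $k$ and $\ell$ land inside the interval $[i,j]$ are where the products $s_{\cdot,\cdot+1}s_{k,k+1}$ proliferate and require care.

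Part (ii) is the heart of the verification and is what I expect to be the main obstacle. For each of the three relation families in \eqref{eq:mock_presentation} we must show $s\cdot\lambda_k = s'\cdot\lambda_k$ (automatic once we know $\bar s = \bar{s'}$ in $\symm_n$, which holds since $\rho_n$ is a homomorphism) and, more substantively, $s^{\lambda_k}\sim_T (s')^{\lambda_k}$. For the involution relations $s_{i,j}^2 = 1$: apply $\clone_k$ and check $((s_{i,j})\clone_k)^2 \sim_T 1$, which for the middle case amounts to $(s_{i,j+1}s_{k,k+1})^2 = 1$ — this follows from $s_{i,j+1}^2 = s_{k,k+1}^2 = 1$ together with the third (conjugation) relation of \eqref{eq:mock_presentation} with $(k,\ell) = (i, j+1)$ applied to $s_{k,k+1}$, since $i \le k < k+1 \le j+1$ forces $s_{i,j+1}s_{k,k+1}s_{i,j+1} = s_{i+j+1-(k+1), i+j+1-k}$, and one checks the resulting word squares to the identity. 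For the commuting relations $s_{i,j}s_{k,\ell} = s_{k,\ell}s_{i,j}$ with $i<j<k<\ell$: cloning at an index $m$ keeps the two blocks disjoint (possibly shifting indices and appending an $s_{m,m+1}$ to one of them), and one checks the clones still commute using the second family of \eqref{eq:mock_presentation} plus, when $s_{m,m+1}$ is inserted into one block, that $s_{m,m+1}$ commutes with the other (disjoint) block. For the conjugation relations $s_{k,\ell}s_{i,j} = s_{k+\ell-j, k+\ell-i}s_{k,\ell}$ with $k\le i<j\le\ell$: this is the delicate family — apply $\clone_m$ for $m$ in each of the ranges $m<k$, $k\le m<i$, $i\le m\le j$, $j<m\le\ell$, $m>\ell$, read off the clone words from \eqref{eq:cloning_mock}, and verify the transformed identity in $\mock\symm_{n+1}$ using \eqref{eq:mock_presentation}; the figure-level identity $s_{i,j}\lambda_3 = \lambda_2 s_{1,5}s_{3,4}$ in Figure~\ref{fig:mock_cloning} is an instance of the geometry one is bookkeeping. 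Once all of (i)–(ii) check out, Lemma~\ref{lem:extend_zs-products} yields well-defined Zappa–Sz\'ep actions with $\mock\symm_* \cdot \{\lambda_k\} \subseteq \{\lambda_k\}$ and injective cloning maps (injectivity of $\clone_k^n$ can be read from \eqref{eq:cloning_mock}, or verified on the associated permutation plus the $s_{k,k+1}$-coordinate), so Proposition~\ref{prop:BZS_existence} and Observation~\ref{obs:filtered_and_filtration_preserving_cloning_systems} give the cloning system on $\mock\symm_*$, completing the proof.
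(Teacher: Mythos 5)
Your overall strategy---apply Lemma~\ref{lem:extend_zs-products} to the monoid presentation~\eqref{eq:mock_presentation}, verify the forest-relation conditions (compatibility, product of clonings) on generators, and verify the cloning-a-product condition for each defining relation of $\mock\symm_n$---is exactly what the paper does, and your sketches of the compatibility check, the product-of-clonings check, and the commutation and conjugation families are sound at the level of detail you give.

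There is, however, a genuine error in your handling of the involution relations. Since $\rho$ is nontrivial, the cloning map $\clone_k$ is \emph{not} a homomorphism; the condition standing in for~\eqref{item:fcs_cloning_a_product} for the relation $s_{i,j}^2=1$ is not $((s_{i,j})\clone_k)^2\sim_T 1$ but rather
\[
(s_{i,j})\clone_{\rho(s_{i,j})k}\,(s_{i,j})\clone_k \sim_T 1,
\]
and for $i\le k\le j$ the index $\rho(s_{i,j})k=\bar{s}_{i,j}(k)=i+j-k$ differs from $k$. This matters: the word you actually analyze, $(s_{i,j+1}s_{k,k+1})^2$, reduces via the conjugation relation to $s_{i+j-k,i+j-k+1}\,s_{k,k+1}$, which is \emph{not} the identity unless $k=(i+j)/2$; so your assertion that ``one checks the resulting word squares to the identity'' is false, and the step as written does not work. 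The word you should have analyzed, namely $s_{i,j+1}s_{i+j-k,i+j-k+1}\cdot s_{i,j+1}s_{k,k+1}$, does collapse: the conjugation relation gives $s_{i,j+1}s_{i+j-k,i+j-k+1}s_{i,j+1}=s_{k,k+1}$ and then $s_{k,k+1}^2=1$. So the proposition survives, but your justification needs to be replaced. For comparison, the paper itself dispatches the $s^2=1$ relations with the unelaborated remark that the resulting conditions are ``vacuous,'' and concentrates its explicit computations on the commutation and conjugation families.
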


\begin{proof}
 Note first that \eqref{eq:mock_presentation} is a presentation for $\mock\symm_n$ as a monoid because all the generators are involutions by the first relation. Following the advice from Remark~\ref{rmk:axioms_via_pres}, we will apply Lemma~\ref{lem:extend_zs-products} with this presentation rather than the trivial presentation used in Proposition~\ref{prop:BZS_existence}.

 We have to verify conditions coming from relations of $\Fmonoid$ and conditions coming from relations of $\mock\symm_n$, after which the proof proceeds as that of Proposition~\ref{prop:BZS_existence}. For the relations of $\Fmonoid$ we must verify the conditions~\eqref{item:fcs_product_of_clonings} (product of clonings) and~\eqref{item:fcs_compatibility} (compatibility)
 \begin{align}
 (s_{i,j})\clone_\ell \clone_k &= (s_{i,j})\clone_k \clone_{\ell + 1}&&\text{for }k < \ell\text{ and }i<j\label{eq:mock_product_of_clonings}\\
 \rho((s_{i,j})\clone_k) & = (\rho(s_{i,j}))\symmclone_k &&\text{for }i < j\text{.}\label{eq:mock_compatibility}
 \end{align}
(Note that we verified \eqref{item:fcs_compatibility} for all $i$, which is not technically necessary; see the remark after Observation~\ref{obs:sync}).
 For the relations of $\mock\symm_n$ we have to check that $\rho$ is a well defined homomorphism, and check that the following equations, standing in for~\eqref{item:cs_cloning_a_product} (cloning a product), are satisfied:
 \begin{align}
 (s_{i,j})\clone_{\rho(s_{k,\ell})p}(s_{k,\ell})\clone_p &=  (s_{k,\ell})\clone_{\rho(s_{i,j})p}(s_{i,j})\clone_p &&\text{for }i < j < k < \ell\label{eq:mock_cloning_commutator}\\
 (s_{k+\ell-j,k+\ell-i})\clone_{\rho(s_{k,\ell})p}(s_{k,\ell})\clone_p &= (s_{k,\ell})\clone_{\rho(s_{i,j})p}(s_{i,j})\clone_p&&\text{for } k \le i < j \le \ell\text{.}\label{eq:mock_cloning_mock_relation}
 \end{align}
 Note that the conditions coming from the relations $s_{i,j}^2 = 1$ are vacuous.

 Condition \eqref{eq:mock_product_of_clonings} is easy to check if $k< i$ or $\ell > j$ so we consider the situation where $i \le k < \ell \le j$. In this case we have
 \begin{multline*}
 (s_{i,j})\clone_\ell\clone_k = (s_{i,j+1}s_{\ell,\ell+1})\clone_k  = (s_{i,j+1})\clone_k(s_{\ell,\ell+1})\clone_k =\\
  s_{i,j+2} s_{k,k+1} s_{\ell+1,\ell+2} = s_{i,j+2} s_{\ell+1,\ell+2} s_{k,k+1}=\\
    (s_{i,j+1})\clone_{\ell+1}(s_{k,k+1})\clone_{\ell+1} = (s_{i,j+1}s_{k,k+1})\clone_{\ell+1} = (s_{i,j})\clone_k\clone_{\ell+1}
 \end{multline*}
 since $\rho(s_{k,k+1})(\ell+1) = (\ell+1)$, $\rho(s_{\ell,\ell+1})k = k$ and $s_{k,k+1}$ and $s_{\ell+1,\ell+2}$ commute.

 Condition \eqref{eq:mock_compatibility} amounts to showing that
 \[
 (\bar{s}_{i,j})\symmclone_k = 
 \left\{
 \begin{array}{ll}
 \bar{s}_{i+1,j+1}&k<i\\
 \bar{s}_{i,j+1}\bar{s}_{k,k+1}&i \le k \le j\\
 \bar{s}_{i,j}&k>j\text{.}
 \end{array}
 \right.
 \]
 The cases $k<i$ and $k > j$ are clear. For the remaining case we first note that
 \[
 \bar{s}_{i,j+1}\bar{s}_{k,k+1}(m) = \tau_{i+j-k} \bar{s}_{i,j} \pi_{k}(m) = ((\bar{s}_{i,j})\symmclone_k)(m)
 \]
 for $m \ne k,k+1$ (which is also the same as $\bar{s}_{i,j+1}(m)$). Here $\tau_k$ and $\pi_k$ are as in Example~\ref{ex:symm_gps}. Finally one checks that
 \[
 \bar{s}_{i,j+1}\bar{s}_{k,k+1}(k) = i+j-k = (\bar{s}_{i,j})\symmclone_k(k)
 \]
 and that
 \[ 
 \bar{s}_{i,j+1}\bar{s}_{k,k+1}(k+1) = i+j-k+1 = (\bar{s}_{i,j})\symmclone_k(k+1)\text{.}
 \]

 That $\rho$ is a well defined homomorphism amounts to saying that the defining relations of $\mock\symm_n$ hold in $\symm_n$ with $s_{i,j}$ replaced by $\bar{s}_{i,j}$, which they do.

 Condition \eqref{eq:mock_cloning_commutator} is also easy to check unless $i \le p \le j$ or $k \le p \le \ell$. We treat the case $i \le p \le j$, the other remaining case being similar. We have
 \begin{multline*}
 (s_{i,j})\clone_{\rho(s_{k,\ell})p}(s_{k,\ell})\clone_p = s_{i,j+1}s_{p,p+1}s_{k+1,\ell+1} =\\
  s_{k+1,\ell+1} s_{i,j+1} s_{p,p+1} = (s_{k,\ell})\clone_{\rho(s_{i,j})p}(s_{i,j}) \clone_p\text{.}
 \end{multline*}

 Finally, the interesting case of condition \eqref{eq:mock_cloning_mock_relation} is when $i \le p \le j$. We have
 \begin{multline*}
 (s_{k,\ell})\clone_{\rho(s_{i,j})p}(s_{i,j})\clone_p = s_{k,\ell+1}s_{i+j-p,i+j-p+1}s_{i,j+1}s_{p,p+1} = s_{k,\ell+1}s_{i,j+1}\\
 = s_{k+\ell-j,k+\ell-i+1} s_{k+\ell-p,k+\ell-p+1} s_{k,\ell+1}s_{p,p+1} = (s_{k+\ell-j,k+\ell-i})\clone_{\rho(s_{k,\ell})p}(s_{k,\ell})\clone_p
 \end{multline*}
 using the defining relations of $\mock\symm_n$ several times.

%To see that the cloning system is properly graded note that it suffices (by induction) to check condition \eqref{eq:properly_graded} on generators in the following sense: if $s_{i,j} \in \mock\symm_n$ satisfies $(s_{i,j})\clone_k \in \image \iota_{n,n+1}$ then $s_{i,j} \in \image \iota_{n-1,n}$. Looking at \eqref{eq:cloning_mock} we see that $(s_{i,j})\clone_k \in \image \iota_{n,n+1}$ only if $j < n$ if and only if $s_{i,j} \in \image \iota_{n-1,n}$.
\end{proof}

As a consequence we get

\begin{theorem}\label{thm:existence_thompson_mock}
 There is a generalized Thompson group $\Thomp{\mock\symm_*}$ which contains all the $\mock\symm_n$ and canonically surjects onto $V$. We denote it $\Vmock$.
\end{theorem}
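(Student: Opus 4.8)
The strategy is to collect what has already been established. By Proposition~\ref{prop:mock_clone} the quadruple $(\mock\symm_*,(\iota_{m,n}),(\rho_n),(\clone^n_k))$ is a cloning system on the injective directed system $\mock\symm_*$, so Definition~\ref{def:thompson} produces the generalized Thompson group $\Thomp{\mock\symm_*}$, which we name $\Vmock$. That $\Vmock$ contains every $\mock\symm_n$ is then immediate from the first Observation of Section~\ref{sec:basic_properties}: for a semisimple forest $\tree$ with $n$ feet the map $g\mapsto[\tree,g,\tree]$ is an injective homomorphism $\mock\symm_n\into\Vmock$.

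It remains to produce the canonical surjection onto $V=\Thomp{\symm_*}$. By Observation~\ref{obs:thomp_morphism} it suffices to check that $\rho_*\colon\mock\symm_*\to\symm_*$ is a surjective morphism of cloning systems in the sense of Section~\ref{sec:morphisms}, where $\symm_*$ carries the cloning system from Example~\ref{ex:symm_gps} (so its structure maps are the identities of the $\symm_n$ and its cloning maps are the $\symmclone^n_k$). Each $\rho_n\colon\mock\symm_n\to\symm_n$, $s_{i,j}\mapsto\bar s_{i,j}$, is a surjective homomorphism compatible with the directed systems, so the second condition for a morphism of cloning systems, $\rho^{\symm}_n\circ\rho_n=\rho_n$, holds trivially. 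For the first condition, $\rho_{n+1}((g)\clone^n_k)=(\rho_n(g))\symmclone^n_k$, the generator case $g=s_{i,j}$ is precisely equation~\eqref{eq:mock_compatibility}, which is verified in full (for all $i$) inside the proof of Proposition~\ref{prop:mock_clone}. One extends it to arbitrary $g\in\mock\symm_n$ by induction on word length: if $g=g'h$ with $h$ a generator, then
\[
\rho_{n+1}\big((g'h)\clone^n_k\big)=\rho_{n+1}\big((g')\clone^n_{\rho(h)k}\big)\,\rho_{n+1}\big((h)\clone^n_k\big)
\]
by~\eqref{item:fcs_cloning_a_product}, while $(\rho(g'h))\symmclone^n_k=(\rho(g'))\symmclone^n_{\rho(h)k}\,(\rho(h))\symmclone^n_k$ by the analogous cloning-a-product identity for $\symm_*$ (using that $\rho$ on $\symm_*$ is the inclusion, so $\rho(h)k=hk$), and the two right-hand sides agree by the inductive hypothesis together with the generator case. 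Thus $\rho_*$ is a morphism of cloning systems, and, being levelwise surjective, it induces via Observation~\ref{obs:thomp_morphism} a surjection $\Thomp{\rho}\colon\Vmock\onto V$; this is the canonical one.

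I expect no genuine obstacle here: all of the real content is in the verification of the cloning-system axioms carried out in Proposition~\ref{prop:mock_clone}, and the present statement is essentially bookkeeping. The one point deserving its own sentence is the passage of the compatibility identity from generators to all of $\mock\symm_n$, which is the short induction given above.
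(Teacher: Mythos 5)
Your proof is correct and follows essentially the same route as the paper, which presents the theorem as an immediate consequence of Proposition~\ref{prop:mock_clone} together with the general observations in Sections~\ref{sec:basic_properties} and~\ref{sec:morphisms}; you have simply spelled out those citations explicitly. One point worth commending: you correctly flag that getting a morphism of cloning systems $\rho_*\colon\mock\symm_*\to\symm_*$ requires the compatibility identity $\rho_{n+1}((g)\clone^n_k)=(\rho_n(g))\symmclone^n_k$ to hold \emph{for all} $i$ (not merely $i\neq k,k+1$ as the cloning-system axiom itself guarantees), and then note that the proof of Proposition~\ref{prop:mock_clone} explicitly verifies \eqref{eq:mock_compatibility} in this full form on generators, with a short induction extending it to all of $\mock\symm_n$ — this is exactly the caveat the paper acknowledges in the remark after Observation~\ref{obs:sync} but leaves implicit when invoking Observation~\ref{obs:thomp_morphism}.
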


\begin{conjecture}\label{conj:Vmock_conj}
 $\Vmock$ is of type~$\F_\infty$.
\end{conjecture}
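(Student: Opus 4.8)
The plan is to follow the same strategy that was used for $V$, $\Vbr$, and the matrix-group examples: realize $\Vmock = \Thomp{\mock\symm_*}$ as acting on the Stein--Farley complex $\Stein{\mock\symm_*}$, verify that the cloning system is properly graded (so Lemma~\ref{lem:stabs} applies and vertex stabilizers are the finite groups $\mock\symm_n$, hence of type~$\F_\infty$), and then apply Proposition~\ref{prop:generic_finiteness}. Since every $\mock\symm_n$ is finite, and hence trivially of type~$\F_\infty$, the entire problem reduces to showing that the descending link complexes $\dlkmodel{\mock\symm_*}{n}$ become increasingly highly connected as $n \to \infty$. So the heart of the argument is a connectivity estimate for $\dlkmodel{\mock\symm_*}{n}$.

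To carry this out I would first check properly gradedness: an element of $\image\iota_{n,n+1}$ is a word in the $s_{i,j}$ with $j \le n$, and by inspecting the cloning formula~\eqref{eq:cloning_mock} one sees that if such an element is $(h)\clone_k^n$ then $h$ already lives in $\mock\symm_n$; equivalently one verifies the pullback condition~\eqref{eq:properly_graded}, or even better checks the global confluence condition of Observation~\ref{obs:BZS_lclm}, using the fact (noted after the statement of Proposition~\ref{prop:mock_clone} in analogous cases) that the action factors nicely. Next I would set up a combinatorial model for $\dlkmodel{\mock\symm_*}{n}$ analogous to what was done for $B_*(R)$ in Section~\ref{sec:mtx_pos_fin_props}: simplices are pairs $[g,\Gamma]$ with $g \in \mock\symm_n$ and $\Gamma$ a matching of $L_n$, modulo dangling, and I would look for ``reduced'' representatives (as in Corollary~\ref{cor:borel_reduced_form}) to control the redundancy. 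There is a projection $\dlkmodel{\mock\symm_*}{n} \to \match(L_n)$ to the matching complex, which is already $(\lfloor\frac{n-2}{3}\rfloor-1)$-connected (\cite[Proposition~11.16]{kozlov08}), and I would try to understand its fibers. Unlike in the direct-power case, these fibers are unlikely to be joins of their vertex fibers, so one cannot simply invoke \cite[Theorem~9.1]{quillen78}; instead I expect to need the Morse-theoretic bootstrap of Section~\ref{sec:high_connectivity}, inducting on a subgraph parameter $e(\Delta)$ exactly as in the proof of Proposition~\ref{prop:borel_thomp_fin_props_strong}, using Proposition~\ref{prop:putman_flow} with a suitable ``cone vertex'' and exchange condition.

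The main obstacle will be establishing the analogues of the technical lemmas in Section~\ref{sec:mtx_pos_fin_props} — flagness of the relevant complex (Lemma~\ref{lem:mtx_cpx_flag}), the identification of links of simplices with lower-rank copies of the same family of complexes (Lemma~\ref{lem:links_are_cpxes}), and especially the ``shared links'' exchange lemma (Lemma~\ref{lem:mut_lks_are_lks}). For matrix groups these rested on explicit row/column operations; for mock symmetric groups one has to replace these by manipulations with the generators $s_{i,j}$ using the braid-like relations in~\eqref{eq:mock_presentation}, and the fact that $s_{k,\ell}$ conjugates $s_{i,j}$ (for $k \le i < j \le \ell$) to the ``reversed'' generator $s_{k+\ell-j,k+\ell-i}$ makes the bookkeeping genuinely more delicate than in the triangular-matrix setting. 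A reasonable hope is that, because the mock symmetric groups sit between the symmetric groups and are built out of the ``long element'' involutions, one can model $\dlkmodel{\mock\symm_*}{n}$ by an arc-complex-type object similar to the one used for braid groups in \cite{bux14}, and transport the high-connectivity argument from there; but making that identification precise, and ruling out pathologies coming from the non-pointwise-fixing mock reflections, is exactly the step I would expect to consume most of the work. Once the connectivity of $\dlkmodel{\mock\symm_*}{n}$ is shown to tend to infinity, Proposition~\ref{prop:generic_finiteness} immediately yields that $\Vmock$ is of type~$\F_\infty$, and the analogous arguments for $\Vloop$ and $\Floop$ (using the loop braid relations in place of~\eqref{eq:mock_presentation}) would complete Conjecture~\ref{conj:Vloop_Floop_conj} as well.
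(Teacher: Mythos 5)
The statement you are trying to prove is not, in fact, proved in the paper: it is stated as Conjecture~\ref{conj:Vmock_conj} and explicitly left open. Immediately after stating it the authors remark that, since each $\mock\symm_n$ is of type~$\F_\infty$ (being a finite group), it would suffice to show the cloning system is properly graded and that the connectivity of $\dlkmodel{\mock\symm_*}{n}$ tends to infinity; this is exactly the program you lay out, so your strategic framing matches the paper's own. But the paper stops there precisely because neither ingredient is verified, and your proposal does not verify them either.

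The genuine gap is therefore the two items you yourself flag as the hard part. Properly gradedness is not checked: your claim that ``one sees'' from \eqref{eq:cloning_mock} that $h \in \mock\symm_n$ is not obvious, because a word $(h)\clone_k^n$ gets rewritten by the mock relations when reduced, and it is not immediate that a reduced word in $\image\iota_{n,n+1}$ forces $h$ to avoid the letter $s_{i,n}$; this needs either a normal form for $\mock\symm_n$ or the confluence criterion of Observation~\ref{obs:BZS_lclm}, neither of which is carried out. More seriously, the connectivity of $\dlkmodel{\mock\symm_*}{n}$ is left entirely at the level of hope: the matrix-group arguments (Lemmas~\ref{lem:mtx_cpx_flag}, \ref{lem:links_are_cpxes}, \ref{lem:mut_lks_are_lks}) hinge on having canonical ``reduced'' representatives under dangling obtained by row/column elimination, and there is no analogue of Lemma~\ref{lem:borel_reduction} in hand for $\mock\symm_n$. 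Because the generators $s_{i,j}$ with $j-i>1$ act on overlapping intervals in a noncommutative way, the link of a simplex need not even visibly be a lower-rank copy of $\dlkmodel{\mock\symm_*}{m}$ without further work, so even the inductive setup of Proposition~\ref{prop:borel_thomp_fin_props_strong} is not available for free. Your suggestion to model the descending links by an arc- or tube-complex in the spirit of \cite{bux14} is plausible but unsubstantiated. In short: what you wrote is a correct restatement of the paper's suggested approach, not a proof, and you are right that the bulk of the work — establishing the exchange condition and high connectivity for these complexes — is exactly what is missing.
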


Since each $\mock\symm_n$ is of type~$\F_\infty$ \cite[Section~4.7, Corollary~3.5.4]{davis03}, to prove the conjecture it suffices to show that the the cloning system is properly graded and that the connectivity of the complexes $\dlkmodel{\mock\symm_*}{n}$ goes to infinity as $n$ goes to infinity.
%One can calculate by hand that the hypotheses of Lemma~\ref{lem:fin_gen_case} hold, and so $\Vmock$ is finitely generated.

%-----------------------------------------------------------
\section{Thompson groups for loop braid groups}\label{sec:loop}

Our next example of a cloning system comes from the family of \emph{loop braid groups} $\LB_n$, also known as groups $\SAut_n$ of \emph{symmetric automorphisms} of free groups, or as \emph{braid-permutation groups} (see \cite{damiani16} for an overview). This will produce a generalized Thompson group $\Vloop$ that contains both $\Vbr$ and $V$ as subgroups. There is also a pure version of this cloning system, using the \emph{pure loop braid groups}, which we will discuss as well, yielding a group $\Floop$.

We first describe the family of groups in terms of free group automorphisms. Fix a set of generators $\{x_1,\dots,x_n\}$ for $F_n$, and call an automorphism $\phi\in \Aut(F_n)$ \emph{symmetric} if for every $1\le i\le n$ there exists $1\le j\le n$ such that $\phi(x_i)$ is conjugate to $x_j$. If every $\phi(x_i)$ is even conjugate to $x_i$, call $\phi$ \emph{pure symmetric}. The group of symmetric automorphisms of $F_n$ is denoted $\SAut_n$, and the group of pure symmetric automorphisms is denoted $\PSAut_n$. The latter is also denoted by $PLB_n$, for \emph{pure loop braid group}. The reader is cautioned that in the literature ``symmetric'' sometimes allows for generators to map to conjugates of \emph{inverses of} generators, but we do not allow this.

The $\LB_n$ fit into a directed system. The map $\iota_{n,n+1} \colon \LB_n \into \LB_{n+1}$ is given by sending the automorphism $\phi$ of $F_n$ to the automorphism of $F_{n+1}$ that does nothing to the new generator and otherwise acts like $\phi$. This restricts to $\PLB_n$ as well, and so we have directed systems $\LB_*$ and $\PLB_*$.

Our presentation for $\LB_n=\SAut_n$ will be taken from~\cite{fenn97}. The generators are as follows, for $1\le i\le n$.
\begin{align*}
 &\beta_i \colon \left\{\begin{array}{lll} x_i & \mapsto x_{i+1} \\ x_{i+1} & \mapsto x_{i+1}^{-1} x_i x_{i+1} \\ x_j & \mapsto x_j & (j\neq i,i+1) \end{array}\right. \\
 &\sigma_i \colon \left\{\begin{array}{lll} x_i & \mapsto x_{i+1} \\ x_{i+1} & \mapsto x_i \\ x_j & \mapsto x_j & (j\neq i,i+1) \end{array}\right.
\end{align*}

\medskip

The $\beta_i$ together with the $\sigma_i$ generate $\SAut_n$. The $\beta_i$ by themselves generate a copy of $B_n$ in $\SAut_n$, and the $\sigma_i$ generate a copy of $\symm_n$. As seen in \cite{fenn97}, defining relations for $\SAut_n$ are as follows (with $1\le i\le n-1$):
\allowdisplaybreaks
\begin{align*}
 \beta_i \beta_j &= \beta_j \beta_i \hfill (|i-j|>1)\\
 \beta_i \beta_{i+1} \beta_i &= \beta_{i+1} \beta_i \beta_{i+1}\\
 \sigma_i^2 &= 1\\
 \sigma_i \sigma_j &= \sigma_j \sigma_i \hfill (|i-j|>1)\\
 \sigma_i \sigma_{i+1} \sigma_i &= \sigma_{i+1} \sigma_i \sigma_{i+1}\\
 \beta_i \sigma_j &= \sigma_j \beta_i \hfill (|i-j|>1)\\
 \sigma_i \sigma_{i+1} \beta_i &= \beta_{i+1} \sigma_i \sigma_{i+1}\\
 \beta_i \beta_{i+1} \sigma_i &= \sigma_{i+1} \beta_i \beta_{i+1} \text{.}
\end{align*}
\allowdisplaybreaks[0]

\medskip

This is a group presentation, and it becomes a monoid presentation after adding generators $\beta_i^{-1}$ with relations $\beta_i \beta_i^{-1} = \beta_i^{-1} \beta_i = 1$.

Since we already have cloning systems on $\symm_*$ (from Example~\ref{ex:symm_gps}) as well as on $B_*$ (from \cite{brin07}), we already know how the cloning system on $\LB_*=\SAut_*$ should be defined. The only thing to check is that it is actually well defined.

The homomorphism $\rho_n \colon \LB_n \to S_n$ just takes $\beta_i$ as well as $\sigma_i$ to $\sigma_i \in \symm_n$. This is easily seen to be well defined.

The cloning maps are defined as they are defined for the symmetric groups and braid groups respectively: for $\varepsilon \in \{\pm 1\}$ this means that
\begin{align}
 (\beta_i^\varepsilon)\clone_k \defeq \left\{\begin{array}{ll} \beta_{i+1}^\varepsilon &\text{ if } k<i \\
 \beta_i^\varepsilon \beta_{i+1}^\varepsilon &\text{ if } k=i \\
 \beta_{i+1}^\varepsilon \beta_i^\varepsilon &\text{ if } k=i+1 \\
 \beta_i^\varepsilon &\text{ if } k>i+1
 \end{array}
 \right.\label{eq:loop_braid_braid_clone}\\
 (\sigma_i)\clone_k \defeq \left\{\begin{array}{ll} \sigma_{i+1} &\text{ if } k<i \\
 \sigma_i \sigma_{i+1} &\text{ if } k=i \\
 \sigma_{i+1} \sigma_i &\text{ if } k=i+1 \\
 \sigma_i &\text{ if } k>i+1
 \end{array}
 \right.\label{eq:loop_braid_transposition_clone}
\end{align}

\begin{lemma}\label{lem:loop_bd_cloning}
The above data $\rho_*$ and $\clone^*_k$ define cloning systems on $\LB_*$ and on $\PLB_*$.
\end{lemma}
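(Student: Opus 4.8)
The plan is to verify the conditions \eqref{item:fcs_cloning_a_product}, \eqref{item:fcs_product_of_clonings} and \eqref{item:fcs_compatibility} of Definition~\ref{def:filtered_cloning_system} by applying Lemma~\ref{lem:extend_zs-products} to the monoid presentation of $\LB_n$ given above (generators $\sigma_i$, $\beta_i^{\pm 1}$, with the listed relations plus $\beta_i\beta_i^{-1}=\beta_i^{-1}\beta_i=1$), exactly as in Remark~\ref{rmk:axioms_via_pres} and as was done for the mock symmetric groups in Proposition~\ref{prop:mock_clone}. Properly gradedness will then follow afterwards. The first easy step is to observe that $\rho_n$ is a well defined homomorphism: each defining relation of $\LB_n$ maps to a relation that holds in $\symm_n$ when $\beta_i$ and $\sigma_i$ are both sent to the transposition $\sigma_i=(i\ i{+}1)$ (the braid relation and the mixed relations all collapse to the braid relation $\sigma_i\sigma_{i+1}\sigma_i=\sigma_{i+1}\sigma_i\sigma_{i+1}$ in $\symm_n$, and $\sigma_i^2=1$ holds). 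Likewise $\rho_n$ is compatible with the $\iota_{m,n}$ since the new generator is fixed.

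Next I would check the conditions coming from relations of $\Fmonoid$, namely \eqref{item:fcs_product_of_clonings} $\clone_\ell\clone_k=\clone_k\clone_{\ell+1}$ for $k<\ell$ and \eqref{item:fcs_compatibility} $\rho((g)\clone_k)(i)=(\rho(g))\symmclone_k(i)$, tested on the generators $g=\sigma_i,\beta_i^{\pm1}$. For \eqref{item:fcs_compatibility} this is precisely the content of Observation~\ref{obs:sync} applied to $\rho(\beta_i)=\rho(\sigma_i)=\symmclone_i$-image, so it reduces to the already-verified compatibility for the symmetric group cloning system of Example~\ref{ex:symm_gps}, since $\rho$ sends everything into $\symm_*$ and the formulas \eqref{eq:loop_braid_braid_clone}, \eqref{eq:loop_braid_transposition_clone} push forward under $\rho$ to the single-generator-to-product formulas defining $\symmclone_k$. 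For \eqref{item:fcs_product_of_clonings} on generators one distinguishes the cases $k<i$, $k=i$, $k=i+1$, $k>i+1$ against $\ell<i$, $\ell=i$, $\ell=i+1$, $\ell>i+1$ (with $k<\ell$); in the generic cases both sides are a single shifted generator and agree trivially, and in the overlapping cases one uses the far-commutation relations $\beta_a\beta_b=\beta_b\beta_a$, $\sigma_a\sigma_b=\sigma_b\sigma_a$ for $|a-b|>1$, just as in Brin's verification for $B_*$ and in Example~\ref{ex:symm_gps}; these are the same computations as for the two known cloning systems, run in parallel.

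Then I would check the conditions replacing \eqref{item:fcs_cloning_a_product} coming from each defining relation $(w_1,w_2)$ of $\LB_n$: for every generator $\lambda_k$ one needs $w_1\cdot\lambda_k=w_2\cdot\lambda_k$ (automatic, since $\rho(w_1)=\rho(w_2)$ in $\symm_n$) and $(w_1)\clone_k\sim(w_2)\clone_k$ in $\LB_{n+1}$, where $\clone_k$ on a word is computed via $(g h)\clone_k=(g)\clone_{\rho(h)k}(h)\clone_k$. For the relations involving only the $\beta_i$'s (far commutation, braid relation, $\beta_i\beta_i^{-1}=1$) this is exactly Brin's check for $B_*$; for the relations involving only the $\sigma_i$'s it is the check in Example~\ref{ex:symm_gps}. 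The genuinely new work is the three mixed relations $\beta_i\sigma_j=\sigma_j\beta_i$ ($|i-j|>1$), $\sigma_i\sigma_{i+1}\beta_i=\beta_{i+1}\sigma_i\sigma_{i+1}$, and $\beta_i\beta_{i+1}\sigma_i=\sigma_{i+1}\beta_i\beta_{i+1}$: for each I would cloning both sides with $\clone_k$, carefully tracking how $\rho$ of the trailing factors shifts the index $k$, and then reduce one side to the other using the mixed relations of $\LB_{n+1}$ (plus the pure braid and symmetric relations) — the bookkeeping mirrors the computations \eqref{eq:mock_cloning_mock_relation} in the proof of Proposition~\ref{prop:mock_clone}. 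The main obstacle is organizing this case analysis cleanly, because the index $k$ can fall in any of several positions relative to $i$, $i+1$, $i+2$, $j$, so each mixed relation splits into a handful of subcases; but each subcase is a short monoid computation with no conceptual difficulty. Finally, since the cloning maps restrict to $\PLB_*$ (the pure loop braid subgroup is generated by words in which the $\rho$-images are trivial, and the cloning formulas \eqref{eq:loop_braid_braid_clone} stay inside $PLB_{n+1}$) and $\rho_n$ restricts trivially there, the same verification gives the cloning system on $\PLB_*$. Proper gradedness follows as for $B_*$ and $\symm_*$: if $g\in\image\iota_{n,n+1}$ then $g$ fixes $x_{n+1}$ and $\rho(g)$ fixes $n+1$; if moreover $g=(h)\clone_k$ then inspecting the cloning formulas (each generator of $h$ contributes a generator of $g$ whose support avoids position $n+1$ only if $h$'s generator avoids position $n$) forces $h\in\image\iota_{n-1,n}$, so \eqref{eq:properly_graded} holds.
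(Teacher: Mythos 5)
Your overall plan matches the paper's proof essentially step for step: apply Lemma~\ref{lem:extend_zs-products} to the monoid presentation of $\LB_n$ (adding $\beta_i^{-1}$ and the inverse relations), observe $\rho_n$ is well defined, reduce the verifications of \eqref{item:fcs_product_of_clonings} and \eqref{item:fcs_compatibility} on generators to the already-established cloning systems on $\symm_*$ and $B_*$, then check \eqref{item:fcs_cloning_a_product} relation-by-relation, noting the pure $\symm$- and $B$-relations are already done and the genuinely new content is the three mixed relations, handled by a short case analysis on the position of $k$ relative to $i,i+1,i+2$. That is exactly what the paper does (the paper additionally saves a little work by noting the last two mixed relations can be treated formally in parallel, since the verification never uses $\sigma_i^2=1$ or $\beta_i^{\pm 1}$-cancellation — a small organizational shortcut, not a different argument). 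The restriction to $\PLB_*$ is indeed a consequence of compatibility holding for all $i$ (so that $\rho_n(g)=1$ forces $\rho_{n+1}((g)\clone_k)=1$), and you correctly invoke this, even if your phrasing about ``the cloning formulas staying inside $\PLB_{n+1}$'' is slightly off — the generators $\beta_i$ themselves are not pure; what is preserved is the kernel of $\rho$.

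The one thing to flag is your closing claim that \emph{proper gradedness} ``follows as for $B_*$ and $\symm_*$.'' This is outside the scope of the lemma, which only asserts the existence of a cloning system, and in fact the paper deliberately does \emph{not} claim it: the discussion following Theorem~\ref{thm:existence_thompson_loop} explicitly lists establishing proper gradedness of the $\LB_*$ and $\PLB_*$ cloning systems as part of the open work needed for Conjecture~\ref{conj:Vloop_Floop_conj}. The heuristic you give (each generator of $h$ contributing a generator of $g$ whose support avoids position $n+1$) is a statement about generator strings, not about group elements, and does not immediately give the set-theoretic pullback condition \eqref{eq:properly_graded} — a word for $h\in\LB_n$ can involve generators touching position $n$ even when $h$ itself lies in $\image\iota_{n-1,n}$. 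So either drop that sentence or treat proper gradedness as a separate claim requiring its own proof.
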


\begin{proof}
 We already noted that $\rho$ is a well defined group homomorphism. We have to check~\eqref{item:cs_product_of_clonings} (product of clonings) and~\eqref{item:cs_compatibility} (compatibility) on generators of $\LB_n$. But since every generator is a generator of either $\symm_n$ or of $B_n$, each verification needed has been performed in establishing the cloning systems on either $\symm_*$ or $B_*$.

 It remains to check that cloning a relation is well defined, standing in for~\eqref{item:cs_cloning_a_product} (cloning a product). Again, the relations involving only elements of $\symm_n$ or $B_n$ are already verified. This leaves the last three kinds of relations.

 For the first relation we have to check that
 \[
 (\beta_i)\clone_{\rho(\sigma_j)k} (\sigma_j)\clone_k = (\beta_i)\clone_{\sigma_j k} (\sigma_j)\clone_k = (\sigma_j)\clone_{\sigma_i k} (\beta_i)\clone_k = (\sigma_j)\clone_{\rho(\beta_i)k} (\beta_i)\clone_k
 \]
 which is easy to do case by case. For the other two relations we must show that
 \begin{align*}
 (\sigma_i)\clone_{(i~i+2~i+1)k} (\sigma_{i+1})\clone_{(i~i+1)k} (\beta_i)\clone_k &=  (\beta_{i+1})\clone_{(i~i+1~i+2)k} (\sigma_i)\clone_{(i+1~i+2)k} (\sigma_{i+1})\clone_k\\
 (\beta_i)\clone_{(i~i+2~i+1)k} (\beta_{i+1})\clone_{(i~i+1)k} (\sigma_i)\clone_k &= (\sigma_{i+1})\clone_{(i~i+1~i+2)k} (\beta_i)\clone_{(i+1~i+2)k} (\beta_{i+1})\clone_k
 \end{align*}
 which can be treated formally equivalently as long as we do not use either of the relations $\sigma_i^2 = 1$ or $\beta_i\beta_i^{-1} = \beta_i^{-1}\beta_i = 1$. The cases $k < i$ and $k > i+2$ are easy. For $k=i$ we apply only mixed relations to find
 \begin{multline*}
 (\sigma_i)\clone_{i+2} (\sigma_{i+1})\clone_{i+1} (\beta_i)\clone_i = \sigma_i \sigma_{i+1} \sigma_{i+2} \beta_i \beta_{i+1}\\
 = \beta_{i+1} \beta_{i+2} \sigma_i \sigma_{i+1} \sigma_{i+2} = (\beta_{i+1})\clone_{i+1} (\sigma_i)\clone_i (\sigma_{i+1})\clone_i\text{.}
 \end{multline*}
 Similarly for $k = i+1$ we get
 \begin{multline*}
 (\sigma_i)\clone_i (\sigma_{i+1})\clone_i (\beta_i)\clone_{i+1} = \sigma_i \sigma_{i+1} \sigma_{i+2} \beta_{i+1} \beta_i\\
 = \beta_{i+2} \beta_{i+1} \sigma_i \sigma_{i+1} \sigma_{i+2} =(\beta_{i+1})\clone_{i+2} (\sigma_i)\clone_{i+2} (\sigma_{i+1})\clone_{i+1} \text{.}
 \end{multline*}
 Lastly for $k = i+2$ we first apply a braid relation and then use the mixed relations to get
 \begin{multline*}
 (\sigma_i)\clone_{i+1} (\sigma_{i+1})\clone_{i+2} (\beta_i)\clone_{i+2} = \sigma_{i+1} \sigma_i \sigma_{i+2} \sigma_{i+1} \beta_i = \sigma_{i+1} \sigma_{i+2} \sigma_{i} \sigma_{i+1} \beta_i\\
 =  \beta_{i+2} \sigma_{i+1} \sigma_i \sigma_{i+2} \sigma_{i+1} = (\beta_{i+1})\clone_i (\sigma_i)\clone_{i+1} (\sigma_{i+1})\clone_{i+2} \text{.}
 \end{multline*}
 
 Finally, that the cloning system on $\LB_*$ restricts to one on $\PLB_*$ is straightforward.
\end{proof}

\begin{theorem}\label{thm:existence_thompson_loop}
 There are generalized Thompson groups
 \[
 \Vloop\defeq\Thomp{\LB_*} \quad \text{and}\quad \Floop\defeq\Thomp{\PLB_*}
 \]
 containing the loop braid groups and the pure loop braid groups, respectively. The group $\Vloop$ canonically surjects onto $V$, and the group $\Floop$ canonically surjects onto $F$.
\end{theorem}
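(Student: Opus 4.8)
The plan is to deduce the theorem from Lemma~\ref{lem:loop_bd_cloning} together with the general constructions of Section~\ref{sec:defining_data}, so that no new computation is required. First I would invoke Lemma~\ref{lem:loop_bd_cloning} to obtain cloning systems on the directed systems $\LB_*$ and $\PLB_*$; since the simple elements of $\Thomphat{G}$ always form a subgroup (Proposition~\ref{prop:product_of_simple}), Definition~\ref{def:thompson} then yields the generalized Thompson groups $\Vloop \defeq \Thomp{\LB_*}$ and $\Floop \defeq \Thomp{\PLB_*}$.

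For the containment assertions I would use the observation at the beginning of Section~\ref{sec:basic_properties}: for a semisimple forest $\tree$ with $n$ feet the map $g \mapsto [\tree,g,\tree]$ is an injective homomorphism of $G_n$ into $\Thomp{G_*}$. Taking $G_* = \LB_*$ (respectively $\PLB_*$) shows that $\Vloop$ contains every loop braid group $\LB_n$ and $\Floop$ contains every pure loop braid group $\PLB_n$.

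For the surjection onto $V$ I would verify that $\rho_* \colon \LB_* \to \symm_*$ is a morphism of cloning systems in the sense of Section~\ref{sec:morphisms}. The condition $\rho^{\symm}_n \circ \rho_n = \rho^{\LB}_n$ is automatic because $\rho^{\symm}_n$ is the identity of $\symm_n$; the condition $(\rho_n(g))\symmclone^n_k = \rho_{n+1}((g)\clone^n_k)$ is exactly the strong form of the compatibility axiom \eqref{item:fcs_compatibility}, which was checked on the generators $\beta_i$ and $\sigma_i$ in the proof of Lemma~\ref{lem:loop_bd_cloning}. Since $\rho_n(\sigma_i) = (i~i+1)$ and these transpositions generate $\symm_n$, each $\rho_n$ is surjective, so Observation~\ref{obs:thomp_morphism} gives that $\Thomp{\rho_*}\colon \Vloop \to \Thomp{\symm_*} = V$ is a surjective homomorphism, which is the canonical one.

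Finally, for the surjection onto $F$ the key point is that an element of $\PLB_n = \PSAut_n$ sends each free generator to a conjugate of itself, hence induces the trivial permutation; so $\rho_n$ restricts to the trivial homomorphism on $\PLB_n$ and the cloning system on $\PLB_*$ has $\rho = 0$. Observation~\ref{obs:thomp_kern} then gives $\Floop = \Thkern{\PLB_*} \rtimes F$, and the resulting retraction $\Floop \onto F$ (which is $\Thomp{\rho_*}$ with $\rho_* = 0$, landing in $\Thomp{\{1\}} = F$) is the asserted canonical surjection. I do not expect a genuine obstacle here: the real work is already contained in Lemma~\ref{lem:loop_bd_cloning}, and the only points needing a little care are recognizing $\rho_*$ as a morphism of cloning systems and observing that purity forces $\rho$ to vanish on $\PLB_*$.
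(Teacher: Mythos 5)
Your proposal is correct and follows essentially the same route as the paper, which states the theorem as an immediate consequence of Lemma~\ref{lem:loop_bd_cloning} together with Proposition~\ref{prop:product_of_simple}, the embedding observation at the start of Section~\ref{sec:basic_properties}, Observation~\ref{obs:thomp_morphism}, and Observation~\ref{obs:thomp_kern}. You correctly identify the only points that need explicit checking (that $\rho_*$ is a morphism of cloning systems because the strong form of \eqref{item:fcs_compatibility} holds, that $\rho_n$ is onto $\symm_n$ via the $\sigma_i$, and that $\rho$ vanishes on $\PLB_*$ by purity); these are exactly the facts the paper leaves implicit.
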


The group $\LB_n$ is known to be of type~$\F_\infty$, for instance it acts properly cocompactly on the contractible space of marked cactus graphs \cite{collins89}. For this reason understanding the finiteness properties of $\Vloop$ and $\Floop$ amounts to showing that the cloning systems are properly graded, and understanding the connectivity of $\dlkmodel{\LB_*}{n}$ and $\dlkmodel{\PLB_*}{n}$. We expect that these should be increasingly highly connected and thus:

\begin{conjecture}\label{conj:Vloop_Floop_conj}
 $\Vloop$ and $\Floop$ are of type~$\F_\infty$.
\end{conjecture}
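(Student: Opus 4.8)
The plan is to reduce everything to Proposition~\ref{prop:generic_finiteness}. Each $\LB_n$ is of type~$\F_\infty$ (it acts properly cocompactly on the contractible space of marked cactus graphs \cite{collins89}), and $\PLB_n$ is a finite-index subgroup of $\LB_n$, hence also of type~$\F_\infty$; so for both families it remains to check that (i)~the cloning systems of Lemma~\ref{lem:loop_bd_cloning} are properly graded, and (ii)~the descending link complexes $\dlkmodel{\LB_*}{n}$ and $\dlkmodel{\PLB_*}{n}$ are eventually $(n-1)$-connected, i.e.\ their connectivity tends to $\infty$ with $n$. Given (i) and (ii), Proposition~\ref{prop:generic_finiteness} immediately yields that $\Vloop=\Thomp{\LB_*}$ and $\Floop=\Thomp{\PLB_*}$ are of type~$\F_\infty$.

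For (i) I would argue directly from the automorphism description: $\phi\in\SAut_{n+1}$ lies in $\image\iota_{n,n+1}$ precisely when $\phi(x_{n+1})=x_{n+1}$. Reading off the cloning formulas \eqref{eq:loop_braid_braid_clone} and \eqref{eq:loop_braid_transposition_clone}, and using that cloning creates a parallel copy of a strand (as is literally visible in Example~\ref{ex:symm_gps}), one verifies that if $\phi=(\psi)\clone_k$ with $k\le n$ and $\phi$ fixes $x_{n+1}$, then $\psi$ fixes $x_n$, hence $\psi\in\image\iota_{n-1,n}$; this is exactly the pullback condition \eqref{eq:properly_graded}. Equivalently one can note that the $\LB_*$ cloning system restricts on the braid subgroup to the one of \cite{brin07} (properly graded by the analysis in \cite{bux14}) and on the symmetric subgroup to the one of Example~\ref{ex:symm_gps}, and patch the two checks; proper gradedness for $\PLB_*$ is then inherited. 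Note that the $\LB_*$ action does not factor through the hedge monoid, since braiding a strand with its clone is nontrivial, so the shortcut via Observation~\ref{obs:BZS_lclm} is unavailable and the pullback has to be checked by hand.

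Step (ii) is the substantive part, and the one I expect to be the main obstacle. A simplex of $\dlkmodel{\LB_*}{n}$ is a class $[g,\Gamma]$ with $g\in\LB_n$ and $\Gamma$ a nonempty matching of the linear graph $L_n$, and the projection $[g,\Gamma]\mapsto\Gamma$ maps onto the highly connected complex $\match(L_n)$; the fibre over an $m$-edge matching is governed by cosets of $\image\clone_\Gamma\cong\LB_{n-m}$. Unlike the direct-power case, these fibres are \emph{not} joins of vertex fibres --- the same difficulty that separates $B_*(R)$ from $\barB_*(R)$ in Section~\ref{sec:mtx_pos_fin_props} --- so Quillen's fibre theorem \cite[Theorem~9.1]{quillen78} cannot be applied directly. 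Following the template of \cite{bux14}, I would seek a geometric model of $\dlkmodel{\LB_*}{n}$ by arcs: the braided descending links were modelled by arcs on a punctured disk, and the loop-braided analogue should be a complex of arcs or tubes in a handlebody (or in $D^2\times S^1$ with $n$ marked loops) encoding the operation of making two adjacent loops parallel; one would then show that this complex and the links of its simplices are increasingly highly connected via the standard ``bad simplices'' combinatorial Morse argument together with a relative Hatcher-style flow, exactly in the spirit of Section~\ref{sec:high_connectivity} and Proposition~\ref{prop:putman_flow}. A second, more hands-on route avoids an explicit surface model: run a Morse induction on $n$ directly on $\dlkmodel{\LB_*}{n}$ as in Section~\ref{sec:mtx_pos_fin_props}, stratifying by a distinguished edge, identifying links of $d$-simplices with lower-rank copies $\dlkmodel{\LB_*}{n-(d+1)}$ after blowing down edges (as in Lemma~\ref{lem:links_are_cpxes}), and feeding these into Proposition~\ref{prop:putman_flow}; here the delicate point is the exchange condition \eqref{item:exchange1}, which for matrix groups rested on the commutation calculation of Lemma~\ref{lem:mut_lks_are_lks} and for loop braid groups demands an analogous understanding of how a single generator $\beta_k^{\pm1}$ or $\sigma_k$ interacts with the images of the cloning maps. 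The pure case $\dlkmodel{\PLB_*}{n}$ should be treated by the same scheme, restricting the geometric model (or the Morse argument) to the ``pure'' locus; since $\PLB_n$ is still of type~$\F_\infty$, only the connectivity estimate needs to be redone.
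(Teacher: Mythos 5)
This statement is explicitly posed as a \emph{conjecture} in the paper, not a theorem: immediately after stating it, the authors write ``We do not attempt to prove this conjecture here.'' There is therefore no internal proof to compare your proposal against, and I cannot certify the statement as established.

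That said, your proposal is a faithful and accurate roadmap. Your reduction---via Proposition~\ref{prop:generic_finiteness}, together with (i) proper gradedness of the cloning systems from Lemma~\ref{lem:loop_bd_cloning} and (ii) asymptotic high connectivity of $\dlkmodel{\LB_*}{n}$ and $\dlkmodel{\PLB_*}{n}$---is \emph{exactly} the reduction the paper itself records in the sentence preceding the conjecture. Your treatment of (i) is a genuine addition: the paper does not verify proper gradedness for the loop braid cloning system, and your argument from the automorphism description (that $\phi\in\image\iota_{n,n+1}$ iff $\phi(x_{n+1})=x_{n+1}$, then reading off the cloning formulas) has the right shape, though the phrase ``one verifies that'' still papers over the case analysis in~\eqref{eq:loop_braid_braid_clone} and~\eqref{eq:loop_braid_transposition_clone} that would have to be written out. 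Your remark that the $\LB_*$ action does not factor through the hedge monoid, so the shortcut of Observation~\ref{obs:BZS_lclm} is unavailable, is a correct and useful observation.

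However, your step (ii) is where the argument stops being a proof. You are candid that this is ``the main obstacle'' and you offer two candidate strategies (an arc/tube complex model in the style of \cite{bux14}, or a direct Morse induction with an exchange lemma in the style of Section~\ref{sec:mtx_pos_fin_props}), but you carry out neither. In particular, you do not produce a well-defined complex on which the putative Hatcher flow acts, you do not establish the exchange condition~\eqref{item:exchange1} in the loop-braid setting, and you correctly note but do not resolve the fact that the simplex fibres over $\match(L_n)$ are not joins of vertex fibres, so Quillen's fibre theorem cannot be invoked. Because the paper also leaves (ii) entirely open, this is not a defect relative to the source---but it does mean your proposal, like the paper, does not constitute a proof of the statement. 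What you have is a correct identification of what would suffice, one of the two missing ingredients sketched at a plausible level of detail, and an honest acknowledgment that the other ingredient remains to be supplied.
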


We do not attempt to prove this conjecture here. However, we end by sketching a more geometric viewpoint of these cloning systems, which could be useful in the future. To do so, we will view $\LB_n$ as a group of motions of loops (which is where the name comes from); see \cite{baez07}, \cite{brendle13} and \cite{wilson12}. Let $\R^3$ be Euclidean~$3$-space, and define a \emph{loop} $\gamma$ to be a smooth, unknotted, oriented embedded copy of the circle $S^1$ in $\R^3$. Now fix a set $L$ of $n$ pairwise disjoint, unlinked loops in $\R^3$, and let $C_n \defeq \coprod\limits_{\gamma \in L}\gamma$.  A \emph{motion} of $C_n$ is a path of diffeomorphisms $f_t\in\Diff(\R^3)$ for $t\in[0,1]$ such that $f_0$ is the identity and $f_1$ stabilizes $C_n$ set-wise, preserving orientations of the loops. Two motions $f_{t,0}$ and $f_{t,1}$ are considered equivalent if they are smoothly isotopic via an isotopy $f_{t,s}$ with $f_{0,s}$ and $f_{1,s}$ setwise stabilizing $C_n$. If $f_1$ also stabilizes each $\gamma \in L$ then the motion $f_t$ is a \emph{pure motion}. These constructions and the above ones yield isomorphic groups, that is to say $\LB_n$ is the group of motions, and $\PLB_n$ is the group of pure motions. This is explained, e.g., in \cite{goldsmith81} and \cite[Section~3]{wilson12}. One should picture $\sigma_i$ as the motion in which the $i$th and $(i+1)$st loops move around each other and take each other's old spots. Then $\beta_i$ is similar, except that during the motion the $(i+1)$st loop passes through the $i$th instead of around. See Figure~\ref{fig:loop_braid_gens} for an idea.

\begin{figure}[htb]\centering
\begin{tikzpicture}[line width=0.8pt]
  \draw (0,0) circle (0.5cm)   (3,0) circle (0.5cm);
  \draw[->] (0.5,-0.5) to [out=-30, in=210] (3.5,-0.5);
  \draw[->] (2.5,0.5) to [out=150, in=30] (-0.5,0.5);
  \node at (1.5,-1.5) {$\sigma_i$}; \node at (-0.75,0) {$i$}; \node at (4,0) {$i+1$};

  \begin{scope}[xshift=6.5cm]
   \draw[->] (2.25,0) -- (0,0);
   \draw[white,line width=4pt] (0,0) circle (0.5cm)   (3,0) circle (0.5cm);
   \draw (0,0) circle (0.5cm)   (3,0) circle (0.5cm);
   \draw[->] (0.5,-0.5) to [out=-30, in=210] (3.5,-0.5);
   \draw[->] (0.5,0.5) to [out=30, in=-210] (3.5,0.5);
  \node at (1.5,-1.5) {$\beta_i$}; \node at (-0.75,0) {$i$}; \node at (4,0) {$i+1$};
  \end{scope}
   
\end{tikzpicture}
\caption{Generators of $\LB_n$.}
\label{fig:loop_braid_gens}
\end{figure}

There is a bit of inconsistency in the literature: all that we have described here is as in, e.g., \cite{fenn97}, but in, e.g., \cite{brendle13}, instead of the generators $\beta_i$ their inverses are used (called $\rho_i$ there), and then the relevant relations look slightly different.

In \cite{baez07} there are some helpful diagrams, analogous to strand diagrams for braids, illustrating elements of $\LB_n$. The pictures are four-dimensional, and show one loop passing through another in a sort of movie. Using a bit of artistic license, we can draw similar diagrams to demonstrate cloning; see Figure~\ref{fig:tube_cloning}.

\begin{figure}[htb]\centering
\begin{tikzpicture}[line width=0.8pt]
  \coordinate (a) at (0,0); \coordinate (b) at (1,0); \coordinate (c) at (2,0); \coordinate (d) at (3,0); \coordinate (e) at (0,-3); \coordinate (f) at (1,-3); \coordinate (g) at (2,-3); \coordinate (h) at (3,-3); \coordinate (i) at (1.55,-0.725); \coordinate (j) at (2.13,-1.17); \coordinate (k) at (1.47,-0.85); \coordinate (l) at (0.9,-1.75); \coordinate (m) at (1.98,-1.33); \coordinate (n) at (1.33,-2.13); \coordinate (o) at (0.75,-1.5); \coordinate (p) at (1.5,-0.7); \coordinate (q) at (2.2,-1.3); \coordinate (r) at (2.17,-1.23); \coordinate (s) at (0.75,-1.88); \coordinate (t) at (1.19,-2.28); \coordinate (u) at (1.25,-4.5); \coordinate (v) at (2.25,-4.5); \coordinate (w) at (2.75,-4.5); \coordinate (x) at (3.75,-4.5);  \coordinate (y) at (2.5,-4.1);

  \draw (a) to[out=-90, in=-90] (b) to[out=90, in=90] (a); \draw (c) to[out=-90, in=-90] (d) to[out=90, in=90] (c); \draw (e) to[out=-90, in=-90] (f); \draw[gray,dotted] (f) to[out=90, in=90] (e); \draw (g) to[out=-90, in=-90] (h); \draw[gray,dotted] (h) to[out=90, in=90] (g);
  \draw[lightgray] (i) to[out=-15, in=130] (j);
  \draw[lightgray,dotted] (l) to[out=30, in=80, looseness=.4] (n); \draw[gray] (l) to[out=-150, in=-100, looseness=.4] (n);
  \draw (c) to[out=-90, in=60] (k); \draw[gray] (k) to[out=-120, in=50] (l); \draw (s) to[out=-130, in=90] (e);
  \draw (d) to[out=-90, in=50] (m); \draw[gray] (m) to[out=-130, in=50] (n); \draw (t) to[out=-130, in=90] (f);
  \draw (a) to[out=-90, in=90] (o);
  \draw (b) to[out=-90, in=140] (p); \draw (p) to[out=180, in=-90] (q) to[out=90, in=-50] (r); \draw (p) to[out=-15, in=160] (i); \draw (j) to[out=-50, in=130] (r);
  \draw (o) to[out=-90, in=90] (g); \draw (q) to[out=-90, in=90] (h);
  \draw (s) to[out=30, in=80, looseness=.4] (t); \draw (s) to[out=-150, in=-100, looseness=.4] (t);
  \draw[gray,dotted] (u) to[out=90, in=90] (v); \draw[gray,dotted] (w) to[out=90, in=90] (x); \draw (g) to[out=-90, in=90] (u); \draw (h) to[out=-90, in=90] (x); \draw (u) to[out=-90, in=-90] (v); \draw (w) to[out=-90, in=-90] (x); \draw (v) to[out=90, in=180] (y) to[out=0, in=90] (w);
  %\node at (a) {$a$}; \node at (b) {$b$}; \node at (c) {$c$}; \node at (d) {$d$}; \node at (e) {$e$}; \node at (f) {$f$}; \node at (g) {$g$}; \node at (h) {$h$}; \node at (i) {$i$}; \node at (j) {$j$}; \node at (k) {$k$}; \node at (l) {$l$}; \node at (m) {$m$}; \node at (n) {$n$}; \node at (o) {$o$}; \node at (p) {$p$}; \node at (q) {$q$}; \node at (r) {$r$}; \node at (s) {$s$}; \node at (t) {$t$}; \node at (u) {$u$}; \node at (v) {$v$}; \node at (w) {$w$}; \node at (x) {$x$}; \node at (y) {$y$};    %uncomment to see labels of coordinates
  
  \node at (4,-2) {$=$};
  
  \begin{scope}[xshift=5cm]
   \coordinate (a) at (0,0); \coordinate (b) at (1,0); \coordinate (c) at (2,0); \coordinate (d) at (3,0); \coordinate (e) at (0,-4); \coordinate (f) at (1,-4); \coordinate (i) at (1.55,-0.725); \coordinate (j) at (2.13,-1.17); \coordinate (k) at (1.47,-0.85); \coordinate (l) at (1.32,-1.02); \coordinate (m) at (1.98,-1.33); \coordinate (n) at (1.79,-1.54); \coordinate (o) at (0.2,-0.9); \coordinate (p) at (1.5,-0.7); \coordinate (q) at (2.2,-1.3); \coordinate (r) at (2.17,-1.23); \coordinate (s) at (0.5,-2.3); \coordinate (t) at (1,-2.7); \coordinate (u) at (1.5,-4); \coordinate (v) at (2.5,-4); \coordinate (w) at (3,-4); \coordinate (x) at (4,-4);  \coordinate (y) at (0.9,-0.9);
   \coordinate (A) at (1,-1.6); \coordinate (B) at (1.4,-2.1); \coordinate (C) at (1.25,-1.25); \coordinate (D) at (1.65,-1.66); \coordinate (E) at (0.95,-1.7); \coordinate (F) at (1.35,-2.15); \coordinate (G) at (0.7,-2.1); \coordinate (H) at (1.05,-2.55);
  \path[name path=AB] (A) to[out=160, in=-70, looseness=2] (B);
  \path[name path=CG] (C) -- (G);
  \path[name path=DH] (D) to[out=-110, in=58] (H);
  \path[name intersections={of=AB and CG}];
  \coordinate (E)  at (intersection-1);
  \path[name intersections={of=AB and DH}];
  \coordinate (F)  at (intersection-1);

  \draw[lightgray,dotted] (G) to[out=30, in=80, looseness=0.4] (H);\draw (a) to[out=-90, in=-90] (b) to[out=90, in=90] (a); \draw (c) to[out=-90, in=-90] (d) to[out=90, in=90] (c); \draw (e) to[out=-90, in=-90] (f); \draw[gray,dotted] (f) to[out=90, in=90] (e);
  \draw[lightgray] (i) to[out=-15, in=130] (j);
  \draw[lightgray,dotted] (l) to[out=-20, in=120] (n); \draw[gray] (l) to[out=-70, in=160, looseness=0.7] (n);
  \draw (c) to[out=-90, in=60] (k); \draw[gray] (k) to[out=-120, in=60] (l); \draw (s) to[out=-130, in=90] (e);
  \draw (d) to[out=-90, in=50] (m); \draw[gray] (m) to[out=-130, in=50] (n); \draw (t) to[out=-130, in=90] (f);
  \draw[gray] (E) to[out=-120, in=50] (G); \draw[gray] (F) to[out=-120, in=50] (H);
  \draw (a) to[out=-90, in=90] (o);
  \draw (b) to[out=-90, in=140] (p); \draw (p) to[out=180, in=-90] (q) to[out=90, in=-50] (r); \draw (p) to[out=-15, in=160] (i); \draw (j) to[out=-50, in=130] (r);
  \draw (s) to[out=30, in=80, looseness=.4] (t); \draw (s) to[out=-150, in=-100, looseness=.4] (t);
  \draw[gray,dotted] (u) to[out=90, in=90] (v); \draw[gray,dotted] (w) to[out=90, in=90] (x); \draw (o) to[out=-90, in=90] (u); \draw (q) to[out=-70, in=90] (x); \draw (u) to[out=-90, in=-90] (v); \draw (w) to[out=-90, in=-90] (x); \draw (v) to[out=90, in=-60] (B); \draw (A) to[out=120, in=180] (y); \draw (y) to[out=0, in=90,looseness=0.3] (w);
  \draw[lightgray] (A) to[out=-50, in=120] (B);
  \draw (A) to[out=160, in=-70, looseness=2] (B); \draw (C) to[out=180, in=-90] (D); \draw (C) to[out=-40, in=125] (D);
  \draw (C) to[out=-155, in=60] (A) -- (E); \draw (D) to[out=-115, in=60] (B) -- (F);
  \draw[gray] (G) to[out=-150, in=-100, looseness=0.4] (H);
  
  %\node at (a) {$a$}; \node at (b) {$b$}; \node at (c) {$c$}; \node at (d) {$d$}; \node at (e) {$e$}; \node at (f) {$f$}; \node at (i) {$i$}; \node at (j) {$j$}; \node at (k) {$k$}; \node at (l) {$l$}; \node at (m) {$m$}; \node at (n) {$n$}; \node at (o) {$o$}; \node at (p) {$p$}; \node at (q) {$q$}; \node at (r) {$r$}; \node at (s) {$s$}; \node at (t) {$t$}; \node at (u) {$u$}; \node at (v) {$v$}; \node at (w) {$w$}; \node at (x) {$x$}; \node at (y) {$y$}; \node at (A) {$A$}; \node at (B) {$B$}; \node at (C) {$C$}; \node at (D) {$D$};  \node at (E) {$E$}; \node at (F) {$F$}; \node at (G) {$G$}; \node at (H) {$H$};    %uncomment to see labels of coordinates
  \end{scope}

\end{tikzpicture}
\caption{An example of cloning, namely $(\beta_1)\clone_2^2 = \beta_2 \beta_1$. The picture shows $\beta_1 \lambda_2 = \lambda_1 \beta_2 \beta_1$. The vertical direction is time, while the missing spatial direction is indicated by breaking the surfaces; see \cite[p.~717]{baez07} for a detailed explanation.}
\label{fig:tube_cloning}
\end{figure}

Alternatively we can draw cloning using the welded braid diagrams from \cite{fenn97}. See Figure~\ref{fig:weld_cloning}.

\begin{figure}[htb]\centering
\begin{tikzpicture}[line width=0.8pt]
  \draw (2,0) to[out=-90, in=90] (1,-3);
  \draw[white, line width=4pt] (0,0) to[out=-90, in=90] (2,-3);
  \draw (0,0) to[out=-90, in=90] (2,-3)   (1,0) to[out=-90, in=90] (0,-3);
  \filldraw (0.65,-1.2) circle (3pt);
  \draw (1.5,-3.5) -- (2,-3) -- (2.5,-3.5);
  
  \node at (3,-1.5) {$=$};
  
  \begin{scope}[xshift=4cm]
   \draw (3,0) to[out=-90, in=90] (1,-3);
   \draw[white, line width=4pt] (0,0) to[out=-90, in=90] (2,-3)   (1,0) to[out=-90, in=90] (3,-3);
   \draw (2,0) to[out=-90, in=90] (0,-3)   (0,0) to[out=-90, in=90] (2,-3)   (1,0) to[out=-90, in=90] (3,-3);
   \filldraw (1,-1.5) circle (3pt) (1.5,-1.06) circle (3pt);
   \draw (0,0) -- (0.5,0.5) -- (1,0);
  \end{scope}

\end{tikzpicture}
\caption{Another example of cloning, now using welded braid diagrams. We see that $\sigma_1 \beta_2 \lambda_3 = \lambda_1 \sigma_2 \sigma_1 \beta_3 \beta_2$.}
\label{fig:weld_cloning}
\end{figure}

One might expect the descending links to be modeled on disjoint ``tubes'' in $3$-space with prescribed boundaries, or ``welded arcs'' of some sort. This is in analogy to the disjoint arcs in $2$-space with prescribed boundaries for descending links in the braid group case.

\newcommand{\etalchar}[1]{$^{#1}$}

\end{document}